\theoremstyle{plain}
\newtheorem{theorem}{Theorem}[section]	
\newtheorem{lemma}[theorem]{Lemma}	
\newtheorem{corollary}[theorem]{Corollary}
\newtheorem{proposition}[theorem]{Proposition}
\theoremstyle{definition}
\newtheorem{definition}[theorem]{Definition}
\newtheorem{example}[theorem]{Example}
\theoremstyle{remark}
\newtheorem{remark}[theorem]{Remark}
\DeclareMathOperator{\co}{co}
\DeclareMathOperator{\cl}{cl}
\DeclareMathOperator{\cone}{cone}
\DeclareMathOperator{\interior}{int}
\DeclareMathOperator{\relint}{ri}
\DeclareMathOperator{\dimens}{dim}
\DeclareMathOperator{\linhull}{span}
\DeclareMathOperator{\sign}{sign}
\DeclareMathOperator{\argmin}{arg\,min}
\DeclareMathOperator{\determ}{det}
\DeclareMathOperator{\rank}{rank}
\DeclareMathOperator{\diag}{diag}
\DeclareMathOperator{\trace}{Tr}
\DeclareMathOperator{\support}{supp}
\DeclareMathOperator{\dist}{dist}
\DeclareMathOperator{\affine}{aff}
\author{Dolgopolik M.V.\footnote{Institute for Problems in Mechanical Engineering, Saint Petersburg, Russia}
\footnote{The results presented in this article were supported by the President of Russian Federation grant for the
support of young Russian scientists (grant number MK-3621.2019.1).}}
\title{A Unified Study of Necessary and Sufficient Optimality Conditions for Minimax and Chebyshev Problems with Cone
Constraints}
\begin{document}

\maketitle

\begin{abstract}
We present a unified study of first and second order necessary and sufficient optimality conditions for minimax and
Chebyshev optimisation problems with cone constraints. First order optimality conditions for such problems can be
formulated in several different forms: in terms of a linearised problem, in terms of Lagrange multipliers (KKT-points),
in terms of subdifferentials and normal cones, in terms of a nonsmooth penalty function, in terms of cadres with
positive cadre multipliers, and in an alternance form. We describe interconnections between all these forms of necessary
and sufficient optimality conditions and prove that seemingly different conditions are in fact equivalent. We also
demonstrate how first order optimality conditions can be reformulated in a more convenient form for particular classes
of cone constrained optimisation problems and extend classical second order optimality condition for smooth cone
constrained problems to the case of minimax and Chebyshev problems with cone constraints. The optimality conditions
obtained in this article open a way for a development of new efficient structure-exploiting methods for solving cone
constrained minimax and Chebyshev problems.
\end{abstract}

\section{Introduction}

It is well-known that discrete minimax problems and discrete Chebyshev problems (problems of best
$\ell_{\infty}$-approximation) can be reduced to equivalent nonlinear programming problems. Many methods for solving
minimax problems are based on application of nonlinear programming algorithms to these equivalent reformulations of
minimax problems (see such methods based on, e.g. sequential quadratic programming methods
\cite{RustemNguyen98,YuGao2002,JianQuanZhang2007,HuChenLi2009}, sequential quadratically constrained quadratic
programming methods \cite{ChaoWangLiagnHu,JianChao2010,JianMoQiu2014}, interior point methods
\cite{RustemZakovicParpas,ObasanTzalRustem}, augmented Lagrangian methods \cite{HeZhou2011,HeNie2013,HeLiuWang2016}, 
etc.). On the other hand, efficient, superlinearly or even quadratically convergent methods for solving minimax problems
can be also based on a convenient characterisation of an optimal solution of a minimax problem, that is, on optimality
conditions that are specific for minimax or Chebyshev problems (cf. such methods for discrete minimax problems
\cite{ConnLi92}, problems of rational $\ell_{\infty}$-approximation \cite{BarrodalePowellRoberts}, and synthesis of a
rational filter \cite{MalozemovTamasyan}). To extend such methods to the case of minimax and Chebyshev problems with
cone constraints (e.g. problems with semidefinite or semi-infinite constraints), first and second order optimality
conditions for such problems are needed.

Optimality conditions for general smooth optimisation problems with cone constraints and their particular classes were
studied in detail in multiple papers and monographs
\cite{Kawasaki,Cominetti,Shapiro97,BonComShap98,BonComShap99,BonnansShapiro,BonnansRamirez,Shapiro2009}. In the
nonsmooth case, much less attention has been paid to this subject. Optimality conditions for general nonsmooth
optimisation problems with cone constraints were studied in \cite{MordukhovichNghia}. Optimality conditions for
nonsmooth semidefinite programming problems were obtained in \cite{ZhaoGao2006,GolestaniNobakhtian2015,Tung}, while in
the case of nonsmooth semi-infinite programming problems they were analysed in 
\cite{ZhenYang2007,KanziNobakhtian,Kanzi2011,CaristiFerrara,Gadhi}. However, to the best of the author's knowledge
optimality conditions for minimax problems and Chebyshev problems (problems of best $\ell_{\infty}$-approximation) with
cone constraints have not been thoroughly analysed in the literature.

In the case of unconstrained problems, optimality conditions for minimax problems can be formulated in many seemingly
non-equivalent forms some of which are not very well-known to researchers and relatively unusual in the context of
nonsmooth optimisation. In particular, optimality conditions for minimax problems can be formulated in terms of
so-called \textit{cadres} of minimax problems \cite{Descloux,ConnLi92} or in an \textit{alternance} form 
\cite{MalozemovPevnyi,DaugavetMalozemov75,Daugavet,DaugavetMalozemov81,DaugavetMalozemov79,Malozemov77}, which is often
used within approximation theory \cite{Rice,Cheney}. In \cite{DemyanovMalozemov_Alternance,DemyanovMalozemov_Collect} it
was shown that the classical optimality condition $0 \in \partial f(x)$, where $\partial f(x)$ is some convex
subdifferential, can be rewritten in an alternance form. However, interconnections between various types of optimality
conditions for minimax and Chebyshev problems (particularly, sufficient optimality conditions and optimality conditions
for constrained minimax problems) have not been analysed before.

The main goal of this paper is to present a unified study of various types of optimality conditions for minimax and
Chebyshev problems with cone constraints scattered in the literature. Namely, we study six different forms of first
order necessary and sufficient optimality conditions for such problems (conditions involving a linearised problem,
Lagrange multipliers, subdifferentials and normal cones, $\ell_1$ penalty function, cadres, and alternance conditions)
and show that all these conditions are equivalent. We also demonstrate how they can be refined for particular types of
cone constraints, namely, for problems with equality and inequality constraints, problems with second order cone
constraints, as well as problems with semidefinite and semi-infinite constraints. Finally, we show how well-known 
necessary and sufficient second order optimality conditions for cone constrained optimisation problems can be extended
to the case of minimax and Chebyshev problems and present several examples illustrating theoretical results.

It should be noted that although some results presented in this paper are straightforward generalisations of
corresponding results for smooth cone constrained optimisation problems to the minimax setting (e.g. optimality
conditions in terms of a linearised problem and Lagrange multipliers, Section~\ref{subsect:LagrangeMultipliers}, or
second order optimality conditions, Section~\ref{sect:SecondOrderOptCond}), many other results are completely new. In
particular, to the best of the author's knowledge interconnections between various forms of sufficient optimality
conditions for minimax problems and complete alternance (Thrms.~\ref{thrm:EquivOptCond_Subdiff} and
\ref{thrm:EquivOptCond_PenaltyFunc} and Section~\ref{subsect:Alternance_Cadre}), as well as alternance optimality
conditions for particular classes of minimax problems with cone constraints (Section~\ref{subsect:Examples}), have not
been studied before. 

The paper is organised as follows. In Section~\ref{sect:FirstOrderOptCond}, we study various forms of first order 
necessary and sufficient optimality conditions for cone constrained minimax problems.
Section~\ref{subsect:LagrangeMultipliers} is devoted to optimality conditions in terms of a linearised problem and
Lagrange multipliers. Optimality conditions involving subdifferentials, normal cones and a nonsmooth penalty function
are contained in Section~\ref{subsect:Subdifferentials_ExactPenaltyFunc}, while optimality conditions in terms of cadres
and in an alternance form are studied in Section~\ref{subsect:Alternance_Cadre}. A more detailed analysis of first order
optimality conditions for particular classes of cone constrained minimax problems is given in
Section~\ref{subsect:Examples}. Finally, Section~\ref{sect:SecondOrderOptCond} is devoted to second order necessary and
sufficient optimality conditions, while optimality conditions for Chebyshev (uniform approximation) problems are
discussed in Section~\ref{sect:ChebyshevProblems}.

\section{First order optimality conditions for cone constrained minimax problems}
\label{sect:FirstOrderOptCond}

Let $A \subseteq \mathbb{R}^d$ be a nonempty closed convex set, $Y$ be a Banach space, and $K \subset Y$ be a nonempty
closed convex cone. Denote by $Y^*$ the topological dual of $Y$, and by $\langle \cdot, \cdot \rangle$ either 
the canonical duality pairing between $Y$ and its dual or the inner product in $\mathbb{R}^s$, $s \in \mathbb{N}$,
depending on the context.

Let $W$ be a compact Hausdorff topological space, and $f \colon \mathbb{R}^d \times W \to \mathbb{R}$ and 
$G \colon \mathbb{R}^d \to Y$ be given functions. Throughout this article we suppose that the function 
$f = f(x, \omega)$ is differentiable in $x$ for any $\omega \in W$, and the functions $f$ and $\nabla_x f$ are
continuous jointly in $x$ and $\omega$, while $G$ is continuously Fr\'echet differentiable. However, for the main
results below to hold true it is sufficient to suppose that $f(x, \omega)$ is continuous and continuously differentiable
in $x$ only on $\mathcal{O}(x_*) \times W$, and $G$ is continuously Fr\'echet differentiable on $\mathcal{O}(x_*)$,
where $\mathcal{O}(x_*)$ is a neighbourhood of a given point $x_*$.

Denote $F(x) = \max_{\omega \in W} f(x, \omega)$ for any $x \in \mathbb{R}^d$. Hereinafter we study the following cone
constrained minimax problem:
$$
  \min F(x) \quad \text{subject to} \quad G(x) \in K, \quad x \in A.	\eqno{(\mathcal{P})}
$$
Our aim is obtain several different forms of first order necessary and sufficient optimality conditions for the problem
$(\mathcal{P})$ and analyse how they relate to each other.

\subsection{Lagrange multipliers and first order growth condition}
\label{subsect:LagrangeMultipliers}

Let us start with an analysis of necessary and sufficient optimality conditions for the problem $(\mathcal{P})$
involving Lagrange multipliers. The main results of this subsection are a straightforward extension of the first order
necessary optimality conditions for cone constrained optimisation problems from \cite[Sect.~3.1]{BonnansShapiro} to the
case of cone constrained \textit{minimax} problems.

Firstly, we apply a standard linearisation procedure to the problem $(\mathcal{P})$ in order to reduce an analysis of
optimality conditions to the convex case. Then with the use of the linearised convex problem we obtain optimality 
conditions involving Lagrange multipliers. To this end, we utilise the well-known \textit{Robinson's constraint
qualification} (RCQ) (see~\cite{Robinson75,Robinson76}). 

Recall that RCQ is said to hold at a feasible point $x_*$ of the problem $(\mathcal{P})$, if
\begin{equation} \label{eq:RCQ}
  0 \in \interior\Big\{ G(x_*) + D G(x_*)\big( A - x_* \big) - K \Big\},
\end{equation}
where $D G(x_*)$ is the Fr\'echet derivative of $G$ at $x_*$ and $\interior C$ stands for the topological interior of 
a set $C$. RCQ allows one to easily compute the contingent (Bouligand tangent) cone to the feasible set of the problem
$(\mathcal{P})$.

Recal that \textit{the contingent cone} to a subset $C$ of a normed space $X$ at a point $x_* \in C$, denoted by
$T_C(x_*)$, consists of all those vectors $h \in X$ for which one can find sequences 
$\{ \alpha_n \} \subset (0, + \infty)$ and $\{ h_n \} \subset X$ such that $\alpha_n \to 0$ and $h_n \to h$ as 
$n \to \infty$, and $x_* + \alpha_n h_n \in C$ for all $n \in \mathbb{N}$.

Denote by $\Omega = \{ x \in A \mid G(x) \in K \}$ the feasible region of the problem $(\mathcal{P})$. The following
lemma on the contingent cone to the set $\Omega$ is well-known. Nevertheless, we present its proof for the sake of
completeness.

\begin{lemma} \label{lem:ContingConeToFeasibleSet}
Let RCQ hold true at a feasible point $x_*$ of the problem $(\mathcal{P})$. Then
\begin{equation} \label{eq:ContingConeToFeasibleSet}
  T_{\Omega}(x_*) = \{ h \in T_A(x_*) \colon D G(x_*) h \in T_K(G(x_*)) \}.
\end{equation}
\end{lemma}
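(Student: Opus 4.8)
The plan is to prove the two inclusions in \eqref{eq:ContingConeToFeasibleSet} separately, with the nontrivial direction relying on RCQ via a linearisation/open-mapping argument. The easy inclusion ``$\subseteq$'' follows from the chain rule: if $h \in T_{\Omega}(x_*)$, pick sequences $\alpha_n \to 0^+$ and $h_n \to h$ with $x_* + \alpha_n h_n \in \Omega$ for all $n$. Since $x_* + \alpha_n h_n \in A$, we immediately get $h \in T_A(x_*)$. Moreover $G(x_* + \alpha_n h_n) \in K$, and by Fréchet differentiability of $G$ we have $G(x_* + \alpha_n h_n) = G(x_*) + \alpha_n DG(x_*) h_n + o(\alpha_n)$, so the vectors $g_n := DG(x_*) h_n + o(\alpha_n)/\alpha_n$ satisfy $G(x_*) + \alpha_n g_n \in K$ and $g_n \to DG(x_*) h$ (using continuity of the linear map $DG(x_*)$ and $h_n \to h$); hence $DG(x_*) h \in T_K(G(x_*))$.

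For the reverse inclusion ``$\supseteq$'', fix $h$ in the right-hand side, so $h \in T_A(x_*)$ and $DG(x_*) h \in T_K(G(x_*))$. The main work is to produce, for each small $\alpha > 0$, a feasible point $x(\alpha) \in \Omega$ of the form $x_* + \alpha h + o(\alpha)$. I would set up the map $\Phi \colon \mathbb{R}^d \to Y$, $\Phi(x) = G(x)$, restricted to perturbations staying in $A$, and invoke Robinson's stability theorem (the metric regularity consequence of RCQ \eqref{eq:RCQ}): under RCQ at $x_*$, the multifunction $x \mapsto G(x) - K$ is metrically regular at $(x_*, 0)$ relative to $A$, i.e. there is $c > 0$ with $\dist(x, \Omega) \le c\,\dist(G(x), K)$ for $x \in A$ near $x_*$. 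Now take sequences $\alpha_n \to 0^+$ and $a_n \to h$ with $x_* + \alpha_n a_n \in A$ (from $h \in T_A(x_*)$), and sequences $\beta_n \to 0^+$, $k_n \to DG(x_*)h$ with $G(x_*) + \beta_n k_n \in K$ (from $DG(x_*)h \in T_K(G(x_*))$); after passing to a common parametrisation in $\alpha$ one estimates $\dist(G(x_* + \alpha a), K) \le \|G(x_* + \alpha a) - (G(x_*) + \alpha DG(x_*) h)\| + \dist(G(x_*) + \alpha DG(x_*)h, K) = o(\alpha)$, the first term by differentiability and $a \to h$, the second because $DG(x_*)h \in T_K(G(x_*))$ and $K$ is a cone (so $\dist(G(x_*) + \alpha DG(x_*)h, K) \le \alpha\|DG(x_*)h - k_n\| + (\text{correction})= o(\alpha)$ along the subsequence, using convexity of $K$ to interpolate). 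Metric regularity then gives a point $x(\alpha) \in \Omega$ with $\|x(\alpha) - (x_* + \alpha a)\| \le c\,o(\alpha)$, so writing $x(\alpha) = x_* + \alpha h_\alpha$ we get $h_\alpha \to h$, which exhibits $h \in T_{\Omega}(x_*)$.

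The main obstacle is the technical handling of the ``$\supseteq$'' direction: one must reconcile the two different approximating sequences (one realising $h$ as a tangent to $A$, one realising $DG(x_*)h$ as a tangent to $K$) and obtain a single $o(\alpha)$-estimate on $\dist(G(x_* + \alpha a), K)$ that is uniform enough to feed into the metric regularity inequality, and then verify that the corrected points $x(\alpha)$ can be taken inside $A$ (which is where one genuinely needs RCQ, not just separate tangency). The cone structure of $K$ and convexity of both $A$ and $K$ are used here to pass from sequential tangency data to the required rate estimates. I would cite \cite{Robinson76} or \cite[Sect.~2.3.1]{BonnansShapiro} for the precise stability statement rather than reproving it, and present the distance estimate as the one genuinely new computation.
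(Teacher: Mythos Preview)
Your proposal is correct and would yield a valid proof, but it follows a different route from the paper. The paper avoids the direct sequential argument entirely: it introduces the augmented map $\Phi(x) = (x, G(x))$ so that $\Omega = \Phi^{-1}(A \times K)$, verifies (via \cite[Lemma~2.100]{BonnansShapiro}) that RCQ for $(\mathcal{P})$ implies the Robinson condition $0 \in \interior\{\Phi(x_*) + D\Phi(x_*)(\mathbb{R}^d) - A \times K\}$, and then cites \cite[Corollary~2.91]{BonnansShapiro} to obtain $T_{\Omega}(x_*) = \{h : D\Phi(x_*)h \in T_{A \times K}(\Phi(x_*))\}$. The only computation left is the identity $T_{A \times K}(x_*, G(x_*)) = T_A(x_*) \times T_K(G(x_*))$, which follows from convexity of $A$ and $K$. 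This is shorter because it packages both constraints into a single black-box linearisation result.

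Your approach via metric regularity is more hands-on and more transparent about where RCQ actually enters. One remark: the ``main obstacle'' you flag --- reconciling the two approximating sequences --- is not really an obstacle. Because $K$ is convex, $v \in T_K(G(x_*))$ is equivalent to $\dist(G(x_*) + tv, K) = o(t)$ as $t \to 0^+$ along \emph{every} sequence, not just some subsequence (use that for $0 < t \le t_0$ the point $G(x_*) + tv$ lies on the segment from $G(x_*) \in K$ to $G(x_*) + t_0 v$, and the distance to the convex set $K$ is convex). So you may simply take the sequence $\{\alpha_n\}$ realising $h \in T_A(x_*)$ and estimate $\dist(G(x_* + \alpha_n a_n), K) \le \alpha_n \|DG(x_*)(a_n - h)\| + o(\alpha_n) + \dist(G(x_*) + \alpha_n DG(x_*)h, K) = o(\alpha_n)$ directly, without any ``common parametrisation''. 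The metric-regularity estimate $\dist(x, \Omega) \le c\,\dist(G(x), K)$ for $x \in A$ near $x_*$ then finishes the argument exactly as you indicate.
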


\begin{proof}
Introduce a function $\Phi \colon \mathbb{R}^d \to \mathbb{R}^d \times Y$ by setting $\Phi(x) = (x, G(x))$ for any 
$x \in \mathbb{R}^d$. Clearly, $\Omega = \{ x \in \mathbb{R}^d \mid \Phi(x) \in A \times K \}$. By
\cite[Lemma~2.100]{BonnansShapiro} RCQ implies that
$$
  0 \in \interior\big\{ \Phi(x_*) + D \Phi(x_*)\big( \mathbb{R}^d \big) - A \times K \big\}.
$$
Hence with the use of \cite[Corollary~2.91]{BonnansShapiro} one obtains that
\begin{equation} \label{eq:ContingentConeViaDerivative}
  T_{\Omega}(x_*) = \big\{ h \in \mathbb{R}^d \bigm| D \Phi(x_*) h \in T_{A \times K} (\Phi(x_*)) \big\}.
\end{equation}
One can easily check that $T_{A \times K}(\Phi(x_*)) \subseteq T_A(x_*) \times T_K(G(x_*))$. On the other hand, if
$h \in T_A(x_*)$, then there exist sequences $\{ \alpha_n \} \subset (0, + \infty)$ and $\{ h_n \} \subset \mathbb{R}^d$
such that $\alpha_n \to 0$ and $h_n \to h$ as $n \to \infty$, and $x_* + \alpha_n h_n \in A$ for all $n \in \mathbb{N}$.
Consequently, for all $n \in \mathbb{N}$ one has $(x_* + \alpha_n h_n, G(x_*)) \in A \times K$ and 
$(h, 0) \in T_{A \times K}(\Phi(x_*))$. Similarly, for any $w \in T_K(G(x_*))$ one has 
$(0, w) \in T_{A \times K}(\Phi(x_*))$. Since $A \times K$ is a convex set, the contingent cone 
$T_{A \times K}(\Phi(x_*))$ is convex. Therefore for all $h \in T_A(x_*)$ and $w \in T_K(G(x_*))$ one has
$(h, w) = (h, 0) + (w, 0) \in T_{A \times K}(\Phi(x_*))$, which implies that
$T_{A \times K}(\Phi(x_*)) = T_A(x_*) \times T_K(G(x_*))$. Hence bearing in mind
\eqref{eq:ContingentConeViaDerivative} and the fact that $D \Phi(x_*) h = (h, D G(x_*) h)$ one obtains that equality
\eqref{eq:ContingConeToFeasibleSet} holds true.
\end{proof}

Let $K^* = \{ y^* \in Y^* \mid \langle y^*, y \rangle \le 0 \: \forall y \in K \}$ \textit{the polar cone} of
$K$ and $L(x, \lambda) = F(x) + \langle \lambda, G(x) \rangle$ be the Lagrangian for the problem $(\mathcal{P})$.
Recall that a vector $\lambda_* \in Y^*$ is called \textit{a Lagrange multiplier} of $(\mathcal{P})$ at a feasible point
$x_*$, if $\lambda_* \in K^*$, $\langle \lambda_*, G(x_*) \rangle = 0$, and 
$[L(\cdot, \lambda_*)]' (x_*, h) \ge 0$ for all $h \in T_A(x_*)$, where $[L(\cdot, \lambda_*)]' (x_*, h)$ is the
directional derivative of the function $L(\cdot, \lambda_*)$ at $x_*$ in the direction $h$. Finally, if $\lambda_*$ is a
Lagrange multiplier of $(\mathcal{P})$ at a feasible point $x_*$, then the pair $(x_*, \lambda_*)$ is called 
\textit{a KKT-pair} of the problem $(\mathcal{P})$.

\begin{theorem} \label{thrm:NessOptCond}
Let $x_*$ be a locally optimal solution of the problem $(\mathcal{P})$ and RCQ hold at $x_*$. Then: 
\begin{enumerate}
\item{$h = 0$ is a globally optimal solution of the linearised problem
\begin{equation} \label{probl:LinearisedProblem}
  \min_{h \in \mathbb{R}^d} \max_{\omega \in W(x_*)} \langle \nabla_x f(x_*, \omega), h \rangle \quad
  \text{subject to} \quad \enspace D G(x_*) h \in T_K\big( G(x_*) \big), \quad h \in T_A(x_*),
\end{equation}
where $W(x_*) = \{ \omega \in W \mid f(x_*, \omega) = F(x_*) \}$;
\label{stat:NessOpt_LinearisedProblem}}

\item{the set of Lagrange multipliers at $x_*$ is a nonempty, convex, bounded, and weak${}^*$ compact subset of $Y^*$.
\label{stat:NessOpt_LagrangeMult}}
\end{enumerate}
\end{theorem}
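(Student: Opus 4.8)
The plan is to deduce statement~\ref{stat:NessOpt_LinearisedProblem} from Danskin's theorem and Lemma~\ref{lem:ContingConeToFeasibleSet}, and then to obtain statement~\ref{stat:NessOpt_LagrangeMult} by applying convex duality to the (now convex) linearised problem~\eqref{probl:LinearisedProblem}, thereby reducing the situation to the smooth cone constrained setting of \cite[Sect.~3.1]{BonnansShapiro}. First I would note that, since $W$ is compact and $f$, $\nabla_x f$ are jointly continuous, the function $F$ is locally Lipschitz near $x_*$ and Hadamard directionally differentiable with $F'(x_*, h) = \max_{\omega \in W(x_*)} \langle \nabla_x f(x_*, \omega), h \rangle =: \phi(h)$ for all $h \in \mathbb{R}^d$ (Danskin's theorem; see, e.g., \cite{DemyanovMalozemov_Collect}), where $\phi$ is a finite sublinear, hence convex and continuous, function on $\mathbb{R}^d$. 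Then I would pick any $h \in T_{\Omega}(x_*)$ together with sequences $\alpha_n \downarrow 0$ and $h_n \to h$ such that $x_* + \alpha_n h_n \in \Omega$; by local optimality $F(x_* + \alpha_n h_n) \ge F(x_*)$ for all $n$ large enough, whence
$$
  \phi(h) = F'(x_*, h) = \lim_{n \to \infty} \frac{F(x_* + \alpha_n h_n) - F(x_*)}{\alpha_n} \ge 0 .
$$
By Lemma~\ref{lem:ContingConeToFeasibleSet} the cone $T_{\Omega}(x_*)$ coincides with the feasible region of~\eqref{probl:LinearisedProblem}; since $h = 0$ lies in this region with $\phi(0) = 0$, it follows that $h = 0$ is a globally optimal solution of~\eqref{probl:LinearisedProblem}, which is statement~\ref{stat:NessOpt_LinearisedProblem}.

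Next I would exploit that~\eqref{probl:LinearisedProblem} is convex: its objective $\phi$ is convex and continuous, and its feasible set $C = \{ h \in T_A(x_*) \mid DG(x_*) h \in T_K(G(x_*)) \}$ is the intersection of the closed convex cone $T_A(x_*)$ with the $DG(x_*)$-preimage of the closed convex cone $T_K(G(x_*))$, so $0 \in C$. Because $\phi$ is finite everywhere, the optimality of $h = 0$ reads $0 \in \partial(\phi + \delta_C)(0) = \partial \phi(0) + N_C(0)$, where $\delta_C$ is the indicator function of $C$ and $N_C(0)$ its normal cone at $0$; the regularity condition on the map $\Phi(x) = (x, G(x))$ established in the proof of Lemma~\ref{lem:ContingConeToFeasibleSet} (a consequence of RCQ) is precisely what is needed to compute $N_C(0) = T_A(x_*)^* + (DG(x_*))^* \bigl( T_K(G(x_*))^* \bigr)$ by conic duality (cf. \cite[Cor.~2.91]{BonnansShapiro}). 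Hence there would be $v \in \partial \phi(0)$, $\mu \in T_A(x_*)^*$ and $\lambda_* \in T_K(G(x_*))^*$ with $v + \mu + (DG(x_*))^* \lambda_* = 0$. Since $T_K(G(x_*))^* = \{ y^* \in K^* \mid \langle y^*, G(x_*) \rangle = 0 \}$, we get $\lambda_* \in K^*$ and $\langle \lambda_*, G(x_*) \rangle = 0$, while for every $h \in T_A(x_*)$, using $\langle \mu, h \rangle \le 0$ and $\phi(h) \ge \langle v, h \rangle$,
$$
  [L(\cdot, \lambda_*)]'(x_*, h) = \phi(h) + \langle \lambda_*, DG(x_*) h \rangle = \phi(h) - \langle v + \mu, h \rangle \ge 0 ,
$$
so $\lambda_*$ is a Lagrange multiplier at $x_*$ and the multiplier set $\Lambda(x_*)$ is nonempty. (Equivalently, one may rewrite $(\mathcal{P})$ as the smooth problem of minimising $t$ over $(x, t) \in A \times \mathbb{R}$ subject to $G(x) \in K$ and $t - f(x, \cdot) \ge 0$ in $C(W)$, check that RCQ for $(\mathcal{P})$ implies RCQ for this problem, and invoke \cite[Sect.~3.1]{BonnansShapiro} verbatim.)

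It would then remain to establish the structural properties of $\Lambda(x_*)$. Convexity and weak${}^*$ closedness are immediate, since each of the conditions $\lambda \in K^*$, $\langle \lambda, G(x_*) \rangle = 0$, and $\phi(h) + \langle \lambda, DG(x_*) h \rangle \ge 0$ (for a fixed $h \in T_A(x_*)$) carves out a convex, weak${}^*$ closed subset of $Y^*$. For boundedness I would argue as in \cite[Thm.~3.9]{BonnansShapiro}: if $\{ \lambda_n \} \subset \Lambda(x_*)$ were unbounded, normalise and pass to a weak${}^*$ limit point $\overline{\lambda}$ of $\lambda_n / \|\lambda_n\|$ (the term $\phi(h)/\|\lambda_n\|$ vanishing in the limit), obtaining $\overline{\lambda} \in K^*$, $\langle \overline{\lambda}, G(x_*) \rangle = 0$ and $\langle \overline{\lambda}, DG(x_*) h \rangle \ge 0$ for all $h \in T_A(x_*)$; then RCQ forces $\overline{\lambda} = 0$, contradicting the fact that the normalised sequence cannot tend weak${}^*$ to $0$. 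Thus $\Lambda(x_*)$ is norm bounded, and by the Banach--Alaoglu theorem it is weak${}^*$ compact.

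I expect the boundedness claim in the last step to be the main obstacle: in an infinite dimensional dual space a weak${}^*$ limit of unit vectors can be zero, so the desired contradiction requires the finer normalisation argument of \cite{BonnansShapiro} rather than a naive one. Apart from that, the only places where the minimax structure genuinely enters are Danskin's formula $F'(x_*, h) = \phi(h)$ in the first step and the identity $\partial \phi(0) = \co\{ \nabla_x f(x_*, \omega) \mid \omega \in W(x_*) \}$ used in the second (the convex hull being closed because $W(x_*)$ is compact and $\nabla_x f$ continuous); once these are available, the cone constrained machinery of \cite[Sect.~3.1]{BonnansShapiro} applies essentially without change.
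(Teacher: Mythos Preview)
Your argument for Part~\ref{stat:NessOpt_LinearisedProblem} is essentially identical to the paper's: Danskin's formula for $F'(x_*,\cdot)$, the standard limit along feasible curves, and Lemma~\ref{lem:ContingConeToFeasibleSet} to identify $T_\Omega(x_*)$ with the feasible set of~\eqref{probl:LinearisedProblem}.

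For Part~\ref{stat:NessOpt_LagrangeMult} your route diverges from the paper's. You work via convex subdifferential calculus, writing optimality of $h=0$ as $0\in\partial\phi(0)+N_C(0)$, computing $N_C(0)$ by conic duality, and then handling convexity, weak${}^*$ closedness and boundedness of $\Lambda(x_*)$ separately. The paper instead treats~\eqref{probl:LinearisedProblem} as a convex cone constrained problem in its own right (objective $\phi$, constraint $DG(x_*)h\in T_K(G(x_*))$, $h\in T_A(x_*)$), checks that RCQ for $(\mathcal{P})$ implies the Slater-type regularity $0\in\interior\{DG(x_*)(T_A(x_*))-T_K(G(x_*))\}$, and then invokes \cite[Thm.~3.6]{BonnansShapiro} once. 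The key additional step in the paper is to show that $\Lambda(x_*)$ \emph{coincides} with the Lagrange multiplier set of the linearised problem (the non-obvious direction being that every $\lambda_*\in\Lambda(x_*)$ satisfies $\lambda_*\in T_K(G(x_*))^*$); after this identification, existence, convexity, boundedness and weak${}^*$ compactness all come for free from the cited theorem.

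The practical advantage of the paper's route is that it sidesteps exactly the difficulty you flag: your normalisation argument for boundedness, as written, does not go through in a general Banach dual because a weak${}^*$ limit of unit vectors may well be zero. You are right that \cite{BonnansShapiro} contains the correct argument (it exploits RCQ directly to bound $\|\lambda\|$ rather than passing to a limit), but rather than reproducing it you could, like the paper, simply identify $\Lambda(x_*)$ with the multiplier set of a convex problem satisfying its own regularity condition and cite the result. Your alternative suggestion of lifting $(\mathcal{P})$ to the smooth problem in $(x,t)$ also works and is a third valid route.
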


\begin{proof}
\textbf{Part~\ref{stat:NessOpt_LinearisedProblem}.} Fix an arbitrary $h \in T_{\Omega}(x_*)$. By definition there exist
sequences $\{ \alpha_n \} \subset (0, + \infty)$ and $\{ h_n \} \subset \mathbb{R}^d$ such that $\alpha_n \to 0$ and
$h_n \to h$ as $n \to \infty$, and $x_* + \alpha_n h_n \in \Omega$ for all $n \in \mathbb{N}$.

As is well-known (see, e.g. \cite[Thrm.~4.4.3]{IoffeTihomirov}), from the fact that the function $f(x, \omega)$ is
differentiable in $x$, and the gradient $\nabla_x f(x, \omega)$ is continuous jointly in $x$ and $\omega$ it follows
that the function $F(x) = \max_{\omega \in W} f(x, \omega)$ is Hadamard directionally differentiable at $x_*$ and for
any $h \in \mathbb{R}^d$ its Hadamard directional derivative at $x_*$ has the from
\begin{equation} \label{eq:DirectDerivOfMaxFunc}
  F'(x_*, h) = \lim_{[h', \alpha] \to [h, +0]} \frac{F(x_* + \alpha h') - F(x_*)}{\alpha} =
  \max_{\omega \in W(x_*)} \langle \nabla_x f(x_*, \omega), h \rangle.
\end{equation}
Recall that $x_*$ is a locally optimal solution of the problem $(\mathcal{P})$. Therefore, for any sufficiently large 
$n \in \mathbb{N}$ one has $F(x_* + \alpha_n h_n) \ge F(x_*)$, which implies that
$$
  F'(x_*, h) = \lim_{n \to \infty} \frac{F(x_* + \alpha_n h_n) - F(x_*)}{\alpha_n} \ge 0.
$$
Thus, one has
$$
  F'(x_*, h) = \max_{\omega \in W(x_*)} \langle \nabla_x f(x_*, \omega), h \rangle \ge 0 \quad
  \forall \, h \in T_{\Omega}(x_*),
$$
which thanks to Lemma~\ref{lem:ContingConeToFeasibleSet} implies that $h = 0$ is a globally optimal solution of the
linearised problem \eqref{probl:LinearisedProblem}.

\textbf{Part~\ref{stat:NessOpt_LagrangeMult}.} Clearly, problem \eqref{probl:LinearisedProblem} is a \textit{convex}
cone constrained optimisation problem. For any $h \in \mathbb{R}^d$ and $\lambda \in Y^*$ denote by 
$L_0(h, \lambda) = F'(x_*, h) + \langle \lambda, D G(x_*) h \rangle$ the standard Lagrangian for this problem. Observe
that for all $h \in \mathbb{R}^d$ one has $L_0(h, \lambda) = [L(\cdot, \lambda)]'(x_*, h)$ .

From the facts that the sets $A$ and $K$ convex and $x_*$ is a feasible point it follows that 
$A - x_* \subseteq T_A(x_*)$ and $K - G(x_*) \subseteq T_K(G(x_*))$ (choose any sequence 
$\{ \alpha_n \} \subset (0, 1)$ converging to zero and for any $n \in \mathbb{N}$ define $h_n = z - x_*$ for $z \in A$
or $h_n = z - G(x_*)$ for $z \in K$). Hence RCQ (see~\eqref{eq:RCQ}) implies that
$$
  0 \in \interior\Big\{ D G(x_*) \big( T_A(x_*) \big) - T_K(G(x_*)) \Big\},
$$
i.e. the standard regularity condition (Slater's condition) for problem \eqref{probl:LinearisedProblem} holds true (see,
e.g. \cite[Formula~(3.12)]{BonnansShapiro}). Consequently, by \cite[Thrm.~3.6]{BonnansShapiro} there exists 
$\lambda_* \in T_K(G(x_*))^*$ such that $0 \in \argmin_{h \in T_A(x_*)} L_0(h, \lambda_*)$. 

Observe that $K + G(x_*) \subseteq K$, since $K$ is a convex cone and $G(x_*) \in K$. Consequently, one has
$K \subseteq K - G(x_*) \subseteq T_K(G(x_*))$. Hence bearing in mind the fact that $\lambda_* \in T_K(G(x_*))^*$ one
gets that $\lambda_* \in K^*$, which, in particular, implies that $\langle \lambda_*, G(x_*) \rangle \le 0$. On the
other hand, since $G(x_*) \in K$ and $K$ is a cone, one has $- G(x_*) \in T_K(G(x_*))$ (choose any sequence 
$\{ \alpha_n \} \subset (0, 1)$ converging to zero and put $h_n = - G(x_*)$ for all $n \in \mathbb{N}$), which yields 
$\langle \lambda_*, - G(x_*) \rangle \le 0$, i.e. $\langle \lambda_*, G(x_*) \rangle = 0$. Thus, one has
$\lambda_* \in K^*$, $\langle \lambda_*, G(x_*) \rangle = 0$, and
$$
  [L(\cdot, \lambda_*)]'(x_*, h) = L_0(h, \lambda_*) \ge 0 \quad \forall h \in T_A(x_*),
$$
i.e. $\lambda_*$ is a Lagrange multiplier of the problem $(\mathcal{P})$ at $x_*$. 

Let us show that the set of Lagrange multipliers of the problem $(\mathcal{P})$ at $x_*$, in actuality, coincides with
the set of Lagrange multipliers of the linearised problem \eqref{probl:LinearisedProblem}. Then taking into account the
fact that the set of Lagrange multipliers of the convex problem \eqref{probl:LinearisedProblem} is a convex, bounded,
and weak${}^*$ compact subset of $Y^*$ by \cite[Thrm.~3.6]{BonnansShapiro} we arrive at the required result.

Let $\lambda_*$ be a Lagrange multiplier of the problem $(\mathcal{P})$ at $x_*$. Since 
$L_0(h, \lambda) = [L(\cdot, \lambda)]'(x_*, h)$ for all $h \in \mathbb{R}^d$, by definition it is sufficient to prove
that $\lambda_* \in T_K(G(x_*))^*$. To this end, fix any $v \in T_K(G(x_*))$. By the definition of contingent cone
there exist sequences $\{ \alpha_n \} \subset (0, + \infty)$ and $\{ v_n \} \subset Y$ such that $\alpha_n \to 0$ and
$v_n \to v$ as $n \to \infty$, and $G(x_*) + \alpha_n v_n \in K$ for all $n \in \mathbb{N}$. Since $\lambda_*$ is a
Lagrange multiplier of the problem $(\mathcal{P})$ at $x_*$, one has $\langle \lambda_*, G(x_*) \rangle = 0$ and
$\lambda_* \in K^*$, which implies that 
$0 \ge \langle \lambda_*, G(x_*) + \alpha _n v_n \rangle = \alpha_n \langle \lambda_*, v_n \rangle$ for all 
$n \in \mathbb{N}$. Therefore $\langle \lambda_*, v \rangle \le 0$ for any $v \in T_K(G(x_*))$, i.e. 
$\lambda_* \in T_K(G(x_*))^*$, and the proof is complete.
\end{proof}

Let us now turn to sufficient optimality conditions. Typically, sufficient optimality conditions ensure not only that a
given point is a locally optimal solution of an optimisation problem under consideration, but also that a certain
(usually, second order) growth condition holds at this point. Therefore it is natural to study sufficient optimality
conditions simultaneously with growth conditions.

Recall that \textit{the first order growth condition} (for the problem $(\mathcal{P})$) is said to hold true at a
feasible point $x_*$ of the problem $(\mathcal{P})$, if there exist $\rho > 0$ and a neighbourhood $\mathcal{O}(x_*)$ of
$x_*$ such that $F(x) \ge F(x_*) + \rho | x - x_* |$ for any $x \in \mathcal{O}(x_*) \cap \Omega$, where, as above,
$\Omega$ is the feasible region of $(\mathcal{P})$ and $| \cdot |$ is the Euclidean norm.

By Theorem~\ref{thrm:NessOptCond} the condition
$$
  \max_{\omega \in W(x_*)} \langle \nabla_x f(x_*, \omega), h \rangle \ge 0
  \quad \forall h \in T_A(x_*) \colon D G(x_*) h \in T_K\big( G(x_*) \big)
$$
is a first order necessary optimality condition for the problem $(\mathcal{P})$. Keeping this condition in mind, let us
obtain the natural ``no gap'' sufficient optimality condition that is, in fact, equivalent to the validity of the first
order growth condition.

\begin{theorem} \label{thrm:SuffOptCond}
Let $x_*$ be a feasible point of the problem $(\mathcal{P})$. If
\begin{equation} \label{eq:SuffOptCond}
  \max_{\omega \in W(x_*)} \langle \nabla_x f(x_*, \omega), h \rangle > 0
  \quad \forall h \in T_A(x_*) \setminus \{ 0 \} \colon D G(x_*) h \in T_K\big( G(x_*) \big),
\end{equation}
i.e. if $h = 0$ is a unique globally optimal solution of the linearised problem \eqref{probl:LinearisedProblem}, then
the first order growth condition holds at $x_*$. Conversely, if the first order growth condition and RCQ hold at $x_*$,
then inequality \eqref{eq:SuffOptCond} is valid.
\end{theorem}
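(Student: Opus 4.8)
The plan is to prove the two implications separately, both by contradiction (for the converse) and a direct argument (for the forward direction), working with the Hadamard directional derivative formula \eqref{eq:DirectDerivOfMaxFunc} throughout.

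\medskip

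\textbf{Sufficiency implies first order growth.} Suppose \eqref{eq:SuffOptCond} holds but the first order growth condition fails at $x_*$. Then for every $n \in \mathbb{N}$ there exists $x_n \in \Omega$ with $x_n \to x_*$ and $F(x_n) < F(x_*) + \frac{1}{n}|x_n - x_*|$; in particular $x_n \neq x_*$. Set $\alpha_n = |x_n - x_*|$ and $h_n = (x_n - x_*)/\alpha_n$, so $|h_n| = 1$ and $\alpha_n \to 0$. By compactness of the unit sphere in $\mathbb{R}^d$ we may pass to a subsequence along which $h_n \to h$ for some $h$ with $|h| = 1$. Since $x_* + \alpha_n h_n = x_n \in \Omega$ for all $n$, we get $h \in T_\Omega(x_*)$, and by Lemma~\ref{lem:ContingConeToFeasibleSet} this means $h \in T_A(x_*)$ and $D G(x_*) h \in T_K(G(x_*))$. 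On the other hand, the Hadamard directional differentiability of $F$ at $x_*$ (formula \eqref{eq:DirectDerivOfMaxFunc}) gives
$$
  F'(x_*, h) = \lim_{n \to \infty} \frac{F(x_* + \alpha_n h_n) - F(x_*)}{\alpha_n}
  \le \lim_{n \to \infty} \frac{1}{n} = 0,
$$
so $\max_{\omega \in W(x_*)} \langle \nabla_x f(x_*, \omega), h \rangle \le 0$ for a nonzero $h \in T_A(x_*)$ with $D G(x_*) h \in T_K(G(x_*))$, contradicting \eqref{eq:SuffOptCond}. Hence the first order growth condition holds.

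\medskip

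\textbf{First order growth and RCQ imply \eqref{eq:SuffOptCond}.} Conversely, assume the first order growth condition (with constant $\rho > 0$ and neighbourhood $\mathcal{O}(x_*)$) and RCQ hold, and fix any nonzero $h \in T_A(x_*)$ with $D G(x_*) h \in T_K(G(x_*))$; by Lemma~\ref{lem:ContingConeToFeasibleSet}, $h \in T_\Omega(x_*)$. The plan is to produce a feasible curve emanating from $x_*$ in the direction $h$ along which the growth estimate forces $F'(x_*, h) \ge \rho |h| > 0$. By the definition of the contingent cone there are sequences $\{\alpha_n\} \subset (0, +\infty)$, $\{h_n\} \subset \mathbb{R}^d$ with $\alpha_n \to 0$, $h_n \to h$, and $x_* + \alpha_n h_n \in \Omega$ for all $n$. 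For $n$ large, $x_* + \alpha_n h_n \in \mathcal{O}(x_*) \cap \Omega$, so the growth condition yields $F(x_* + \alpha_n h_n) - F(x_*) \ge \rho \alpha_n |h_n|$. Dividing by $\alpha_n$ and letting $n \to \infty$, using Hadamard directional differentiability \eqref{eq:DirectDerivOfMaxFunc} on the left and $|h_n| \to |h|$ on the right, gives
$$
  \max_{\omega \in W(x_*)} \langle \nabla_x f(x_*, \omega), h \rangle = F'(x_*, h) \ge \rho |h| > 0,
$$
which is exactly \eqref{eq:SuffOptCond}.

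\medskip

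The main subtlety — and the reason RCQ is needed in the converse but not in the forward direction — is that the first order growth hypothesis only controls $F$ along genuinely feasible points, so to exploit it in a given tangent direction $h$ one must know that $h$ is actually realised by an approximating sequence inside $\Omega$; this is precisely the content of Lemma~\ref{lem:ContingConeToFeasibleSet}, whose equality $T_\Omega(x_*) = \{h \in T_A(x_*) : D G(x_*) h \in T_K(G(x_*))\}$ rests on RCQ. In the forward direction, by contrast, one starts from a sequence $x_n \in \Omega$ and only needs the trivial inclusion of $T_\Omega(x_*)$ into the linearised cone, which holds unconditionally. Beyond this, both arguments are routine: the only technical points are the compactness extraction of a limiting unit direction and the interchange of limit and maximum, both already licensed by \eqref{eq:DirectDerivOfMaxFunc}.
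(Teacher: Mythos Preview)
Your proof is correct and follows essentially the same route as the paper's own argument. One small inconsistency: in the forward direction you cite Lemma~\ref{lem:ContingConeToFeasibleSet}, whose hypothesis (RCQ) is not assumed there; as you yourself point out in the closing paragraph, only the unconditional inclusion $T_\Omega(x_*)\subseteq\{h\in T_A(x_*): DG(x_*)h\in T_K(G(x_*))\}$ is needed, and the paper establishes this directly from $x_n\in\Omega$ and the Fr\'echet differentiability of $G$ rather than invoking the lemma.
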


\begin{proof}
Let condition \eqref{eq:SuffOptCond} hold true. Arguing by reductio ad absurdum, suppose that the first order growth
condition does not hold true at $x_*$. Then for any $n \in \mathbb{N}$ one can find $x_n \in \Omega$ such that
$F(x_n) < F(x_*) + |x_n - x_*| / n$ and $x_n \to x_*$ as $n \to \infty$. 

Denote $h_n = (x_n - x_*) / |x_n - x_*|$. Without loss of generality one can suppose that the sequence $\{ h_n \}$
converges to a vector $h$ such that $|h| = 1$. From the fact that $x_n \in \Omega = \{ x \in A \mid G(x) \in K \}$ it
follows that $h \in T_A(x_*)$ and $G(x_n) = G(x_*) + |x_n - x_*| D G(x_*) h_n + o(|x_n - x_*|) \in K$ for any 
$n \in \mathbb{N}$, which obviously implies that $D G(x_*) h \in T_K(G(x_*))$. Furthermore, taking into account
\eqref{eq:DirectDerivOfMaxFunc} and the definition of $x_n$ one obtains that
$$
  \max_{\omega \in W(x_*)} \langle \nabla_x f(x_*, \omega), h \rangle = F'(x_*, h)
  = \lim_{n \to \infty} \frac{F(x_n) - F(x_*)}{|x_n - x_*|} \le 0,
$$
which contradicts optimality condition \eqref{eq:SuffOptCond}. Thus, the first order growth condition holds at $x_*$.

Suppose now that RCQ and the first order growth condition hold at $x_*$. Then there exist a neighbourhood
$\mathcal{O}(x_*)$ of $x_*$ and $\rho > 0$ such that $F(x) \ge F(x_*) + \rho |x - x_*|$ for any 
$x \in \mathcal{O}(x_*) \cap \Omega$. 

Fix an arbitrary $h \in T_A(x_*) \setminus \{ 0 \}$ such that $D G(x_*) h \in T_K( G(x_*) )$. 
By Lemma~\ref{lem:ContingConeToFeasibleSet} one has $h \in T_{\Omega}(x_*)$. Hence by definition there exist sequences
$\{ \alpha_n \} \subset (0, + \infty)$ and $\{ h_n \} \subset \mathbb{R}^d$ such that $\alpha_n \to 0$ and $h_n \to h$
as $n \to \infty$, and $x_* + \alpha_n h_n \in \Omega$ for all $n \in \mathbb{N}$. Clearly, 
$x_* + \alpha_n h_n \in \mathcal{O}(x_*)$ for any sufficiently large $n$. Therefore
$$
  F'(x_*, h) = \lim_{n \to \infty} \frac{F(x_* + \alpha_n h_n) - F(x_*)}{\alpha_n} \ge 
  \lim_{n \to \infty} \frac{\rho |\alpha_n h_n|}{\alpha_n} = \rho |h| > 0,
$$
i.e. \eqref{eq:SuffOptCond} holds true.
\end{proof}

\begin{remark}
From the proof of the theorem above it follows that if RCQ and the first order growth condition with constant 
$\rho > 0$ hold true at a feasible point $x_*$ of the problem $(\mathcal{P})$, then the first order growth condition
with the same constant holds true at the origin for the linearised problem \eqref{probl:LinearisedProblem}, which due
to the positive homogeneity of the problem implies that
\begin{equation} \label{eq:LinProbl_FirstOrderGrowth}
  \max_{\omega \in W(x_*)} \langle \nabla_x f(x_*, \omega), h \rangle \ge \rho |h|
  \quad \forall h \in T_A(x_*) \colon D G(x_*) h \in T_K\big( G(x_*) \big).
\end{equation}
Conversely, if this condition holds true, then arguing in almost the same way as in the proof of the first part of
Theorem~\ref{thrm:SuffOptCond} one can check that for any $\rho' \in (0, \rho)$ the first order growth condition with
constrant $\rho'$ holds true at $x_*$. Thus, there is a direct connection between the first order growth conditions for
the problem $(\mathcal{P})$ and the linearised problem \eqref{probl:LinearisedProblem}. Moreover, note that if
\eqref{eq:SuffOptCond} holds true, then there exists $\rho > 0$ such that \eqref{eq:LinProbl_FirstOrderGrowth} is
satisfied, and the least upper bound of all such $\rho$ is equal to 
$\rho_* = \min_h \max_{\omega \in W(x_*)} \langle \nabla_x f(x_*, \omega), h \rangle$, where the minimum is taken over
all those $h \in T_A(x_*)$ for which $D G(x_*) h \in T_K(G(x_*))$ and $|h| = 1$ (the set of all such $h$ is obviously
compact, which implies that the minimum in the definition of $\rho_*$ is attained and $\rho_* > 0$). \qed
\end{remark}

\begin{remark} \label{rmrk:SuffOptCond_Lagrangian}
Note that optimality condition \eqref{eq:SuffOptCond} is satisifed, provided there exists a Lagrange multiplier
$\lambda_*$ of $(\mathcal{P})$ at $x_*$ such that $[L(\cdot, \lambda_*)]'(x_*, h) > 0$ for all 
$h \in T_A(x_*) \setminus \{ 0 \}$. Indeed, fix any $h \in T_A(x_*) \setminus \{ 0 \}$ such that
$D G(x_*) h \in T_K(G(x_*))$. By the definition of Lagrange multiplier one has
$\lambda_* \in K^*$ and $\langle \lambda_*, G(x_*) \rangle = 0$, which implies that
$\langle \lambda_*, y - G(x_*) \rangle \le 0$ for all $y \in K$. Since $K$ is a closed convex set, one has
$T_K(G(x_*)) = \cl[\cup_{t \ge 0} t(K - G(x_*))]$ (see, e.g. \cite[Prp.~2.55]{BonnansShapiro}). Therefore
for any $y \in T_K(G(x_*))$ one has $\langle \lambda_*, y \rangle \le 0$. Consequently, one has
$\langle \lambda_*, D G(x_*) h \rangle \le 0$ and 
\begin{align*}
  \max_{\omega \in W(x_*)} \langle \nabla_x f(x_*, \omega), h \rangle
  &\ge \max_{\omega \in W(x_*)} \langle \nabla_x f(x_*, \omega), h \rangle + \langle \lambda_*, D G(x_*) h \rangle \\
  &= [L(\cdot, \lambda_*)]'(x_*, h) > 0
\end{align*}
for any $h \in T_A(x_*) \setminus \{ 0 \}$ such that $D G(x_*) h \in T_K(G(x_*))$, i.e. optimality condition
\eqref{eq:SuffOptCond} holds true. However, note that the converse statement does not hold true in the general case.
Indeed, for any smooth problem with $A = \mathbb{R}^d$ one has $\nabla_x L(x_*, \lambda_*) = 0$ by the definition of
Lagrange multiplier, and the inequality $[L(\cdot, \lambda_*)]'(x_*, h) > 0$ for all $h \ne 0$ cannot be satisfied, but
sufficient optimality condition \eqref{eq:SuffOptCond} might hold true. Consider, for example, the problem
$$
  \min\: f(x) = -x \quad \text{subject to} \quad g(x) = x \le 0.
$$
The point $x_* = 0$ is a globally optimal solution of this problem. Moreover, 
$\langle \nabla f(x_*), h \rangle = - h > 0$ for any $h \ne 0$ such that 
$\langle \nabla g(x_*), h \rangle = h \le 0$, i.e. optimality condition \eqref{eq:SuffOptCond} holds true. \qed
\end{remark}

Let us also note that in the convex case a necessary optimality condition becomes a sufficient condition for a
global minimum. Recall that the mapping $G$ is called \textit{convex} with respect to the cone $-K$ (or
$(-K)$-\textit{convex}), if $G(\alpha x_1 + (1 - \alpha) x_2) - \alpha G(x_1) - (1 - \alpha) G(x_2) \in K$
for any $x_1, x_2 \in \mathbb{R}^d$ and $\alpha \in [0, 1]$ (see \cite[Def.~2.103]{BonnansShapiro}).

\begin{theorem} \label{thrm:OptCond_ConvexCase}
Let for any $\omega \in W$ the function $f(\cdot, \omega)$ be convex, the mapping $G$ be $(-K)$-convex, and let $x_*$ be
a feasible point of the problem $(\mathcal{P})$. Then:
\begin{enumerate}
\item{$\lambda_* \in K^*$ is a Lagrange multiplier of $(\mathcal{P})$ at $x_*$ iff $(x_*, \lambda_*)$ is a global saddle
point of the Lagrangian $L(x, \lambda) = F(x) + \langle \lambda, G(x) \rangle$, that is,
\begin{equation} \label{eq:GlobalSaddlePoint}
  L(x, \lambda_*) \ge F(x_*) \ge L(x_*, \lambda) \quad \forall x \in A, \: \lambda \in K^*;
\end{equation}
\vspace{-7mm}\label{stat:GlobalSaddlePoint}}

\item{if a Lagrange multiplier of the problem $(\mathcal{P})$ at $x_*$ exists, then $x_*$ is a globally optimal solution
of $(\mathcal{P})$; conversely, if $x_*$ is a globally optimal solution of the problem $(\mathcal{P})$ and Slater's
condition $0 \in \interior\{ G(A) - K \}$ holds true, then there exists a Lagrange multiplier of $(\mathcal{P})$ 
at $x_*$.
\label{stat:LagrangeMult_ConvexCase}}
\end{enumerate}
\end{theorem}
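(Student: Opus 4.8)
The plan is to handle the two statements in turn, relying on the convexity hypotheses to pass from the directional-derivative characterisation of Lagrange multipliers to genuine global inequalities. For Part~\ref{stat:GlobalSaddlePoint}, I would first observe that under the stated assumptions the function $x \mapsto L(x, \lambda)$ is convex on $\mathbb{R}^d$ for every fixed $\lambda \in K^*$: indeed $F(x) = \max_{\omega \in W} f(x, \omega)$ is convex as a supremum of convex functions, and $x \mapsto \langle \lambda, G(x) \rangle$ is convex whenever $\lambda \in K^*$ because $G$ is $(-K)$-convex and $\lambda$ is nonpositive on $K$ (this is exactly the computation already used in Remark~\ref{rmrk:SuffOptCond_Lagrangian}). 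Then, for the forward implication, suppose $\lambda_*$ is a Lagrange multiplier at $x_*$. The right-hand inequality $F(x_*) \ge L(x_*, \lambda)$ for all $\lambda \in K^*$ is immediate since $G(x_*) \in K$ forces $\langle \lambda, G(x_*) \rangle \le 0$. For the left-hand inequality, fix $x \in A$ and set $h = x - x_*$; since $A$ is convex, $h \in T_A(x_*)$, so the defining property $[L(\cdot, \lambda_*)]'(x_*, h) \ge 0$ applies. Convexity of $L(\cdot, \lambda_*)$ gives $L(x, \lambda_*) - L(x_*, \lambda_*) \ge [L(\cdot, \lambda_*)]'(x_*, h) \ge 0$, and since $\langle \lambda_*, G(x_*) \rangle = 0$ we have $L(x_*, \lambda_*) = F(x_*)$; combining yields $L(x, \lambda_*) \ge F(x_*)$. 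Conversely, if $(x_*, \lambda_*)$ is a global saddle point, then the right-hand inequality applied with $\lambda$ ranging over $K^*$ (which is a cone) forces $\langle \lambda, G(x_*) \rangle \le 0$ for all $\lambda \in K^*$ and hence, scaling, $\langle \lambda_*, G(x_*) \rangle = 0$ together with $\lambda_* \in K^*$ (a standard argument: if $\langle \lambda_*, G(x_*) \rangle < 0$ then $\lambda = 2\lambda_*$ violates the inequality); the left-hand inequality $L(x, \lambda_*) \ge F(x_*) = L(x_*, \lambda_*)$ for all $x \in A$ says $x_*$ minimises the convex function $L(\cdot, \lambda_*)$ over $A$, which is equivalent to $[L(\cdot, \lambda_*)]'(x_*, h) \ge 0$ for all $h \in T_A(x_*)$, so $\lambda_*$ is a Lagrange multiplier.

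For Part~\ref{stat:LagrangeMult_ConvexCase}, the first half follows directly from Part~\ref{stat:GlobalSaddlePoint}: if a Lagrange multiplier $\lambda_*$ exists, then the saddle point inequality gives, for every feasible $x$ (so $G(x) \in K$, hence $\langle \lambda_*, G(x) \rangle \le 0$), $F(x) \ge F(x) + \langle \lambda_*, G(x) \rangle = L(x, \lambda_*) \ge F(x_*)$, so $x_*$ is globally optimal. For the converse, under Slater's condition $0 \in \interior\{ G(A) - K \}$ I would apply the standard convex duality / separation argument: since $x_*$ is globally optimal, the convex set $\{ (t, y) \in \mathbb{R} \times Y : \exists x \in A,\ t > F(x) - F(x_*),\ y \in G(x) - K \}$ does not contain points with $t \le 0$ and $y = 0$ in a suitable sense, so it can be separated from $\{0\} \times \{0\}$, producing a multiplier; alternatively, and more cleanly, one invokes \cite[Thrm.~3.6]{BonnansShapiro} applied to the convex problem $(\mathcal{P})$ itself (its hypotheses are exactly convexity plus the Slater-type regularity condition $0 \in \interior\{ G(A) - K \}$), whose conclusion is precisely the existence of a Lagrange multiplier at the optimal point $x_*$. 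Since $(\mathcal{P})$ is now a bona fide convex optimisation problem with convex objective $F$, the cited theorem applies verbatim.

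I expect the main obstacle to be the converse direction of Part~\ref{stat:LagrangeMult_ConvexCase}, namely producing the multiplier from global optimality. The subtlety is that $F$ is nonsmooth, so one cannot simply differentiate; one must either cite the convex version of the multiplier existence theorem (taking care that the cited result from \cite{BonnansShapiro} is stated in a form covering nonsmooth convex objectives, which it is, since the linearised problem in its Theorem~3.6 is already built from directional derivatives / subdifferentials), or else reduce $(\mathcal{P})$ to its epigraph reformulation $\min t$ subject to $f(x,\omega) \le t$ for all $\omega \in W$, $G(x) \in K$, $x \in A$, and apply the smooth-objective version. The epigraph route introduces a semi-infinite family of inequality constraints indexed by the compact set $W$, so one has to be slightly careful that the resulting Robinson/Slater condition still holds — but it does, because Slater for $(\mathcal{P})$ gives a strictly feasible point for the cone constraint and one can then decrease $t$ freely to make the $f(x,\omega) \le t$ constraints strictly satisfied. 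Everything else — the saddle point equivalence, the convexity of $L(\cdot,\lambda)$, and the sufficiency half — is routine once the convexity bookkeeping is set up.
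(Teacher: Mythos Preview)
Your approach is essentially correct and parallels the paper's proof closely, but two points deserve comment.

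First, your parenthetical argument for $\langle \lambda_*, G(x_*) \rangle = 0$ in the converse of Part~\ref{stat:GlobalSaddlePoint} is garbled: if $\langle \lambda_*, G(x_*) \rangle < 0$, then taking $\lambda = 2\lambda_*$ gives $\langle 2\lambda_*, G(x_*) \rangle < 0$, which does \emph{not} violate the right-hand inequality (that inequality only asserts $\langle \lambda, G(x_*) \rangle \le 0$ for $\lambda \in K^*$, which is already automatic since $G(x_*) \in K$). The correct one-line argument, which the paper uses, combines both sides of the saddle point at the pair $(x_*, \lambda_*)$: the left inequality at $x = x_*$ gives $L(x_*, \lambda_*) \ge F(x_*)$, i.e.\ $\langle \lambda_*, G(x_*) \rangle \ge 0$, while $\lambda_* \in K^*$ and $G(x_*) \in K$ give $\langle \lambda_*, G(x_*) \rangle \le 0$.

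Second, for the converse of Part~\ref{stat:LagrangeMult_ConvexCase} the paper takes a shorter route than you propose: since Slater's condition is equivalent to RCQ when $G$ is $(-K)$-convex (\cite[Prp.~2.104]{BonnansShapiro}), one simply invokes the already-proved Theorem~\ref{thrm:NessOptCond}, which furnishes a Lagrange multiplier at any local (hence global) optimum under RCQ. Your direct appeal to \cite[Thrm.~3.6]{BonnansShapiro} on the convex problem itself would also work, and the epigraph reformulation you worry about is unnecessary.
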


\begin{proof}
\textbf{Part~\ref{stat:GlobalSaddlePoint}.} Let $\lambda_*$ be a Lagrange multiplier of $(\mathcal{P})$ at $x_*$. Note
that $\langle \lambda, G(x_*) \rangle \le 0$ for any $\lambda \in K^*$, since $x_*$ is a feasible point 
(i.e. $G(x_*) \in K$), which implies that $L(x_*, \lambda) \le F(x_*)$ for all $\lambda \in K^*$. Thus, the second
inequality in \eqref{eq:GlobalSaddlePoint} holds true.

By the definition of Lagrange multiplier $\langle \lambda_*, G(x_*) \rangle = 0$, which yields 
$L(x_*, \lambda_*) = F(x_*)$. Thus, the first inequality in \eqref{eq:GlobalSaddlePoint} is satisfied iff $x_*$ is a
point of global minimum of the function $L(\cdot, \lambda_*)$ on the set $A$. Arguing by reductio ad absurdum, suppose
that this statement is false. Then there exists $x_0 \in A$ such that $L(x_0, \lambda_*) < L(x_*, \lambda_*)$.

Under our assumptions the function $F$ is convex as the maximum of a family of convex functions. Moreover, for any
$\lambda \in K^*$ the function $\langle \lambda, G(\cdot) \rangle$ is convex as well, since for any 
$x_1, x_2 \in \mathbb{R}^d$ and $\alpha \in [0, 1]$
$\langle \lambda, G(\alpha x_1 + (1 - \alpha) x_2) - \alpha G(x_1) - (1 - \alpha) G(x_2) \rangle \le 0$. Thus, the
Lagrangian $L(\cdot, \lambda_*)$ is convex. Therefore, for any $\alpha \in [0, 1]$ one has
$$
  L(\alpha x_0 + (1 - \alpha) x_*, \lambda_*) - L(x_*, \lambda_*) 
  \le \alpha \big( L(x_0, \lambda_*) - L(x_*, \lambda_*) \big).
$$
Dividing this inequality by $\alpha$ and passing to the limit as $\alpha \to +0$ one obtains that 
$[L(\cdot, \lambda_*)]'(x_*, x_0 - x_*) \le L(x_0, \lambda_*) - L(x_*, \lambda_*) < 0$, which contradicts the fact that
$\lambda_*$ is a Lagrange multiplier, since $x_0 - x_* \in T_A(x_*)$ by the fact that $A$ is a convex set. Thus, 
the first inequality in \eqref{eq:GlobalSaddlePoint} holds true and $(x_*, \lambda_*)$ is a global saddle point of the
Lagrangian.

Let us prove the converse statement. Suppose that $(x_*, \lambda_*)$ is a global saddle point of $L(x, \lambda)$. Then
$L(x, \lambda_*) \ge F(x_*) \ge L(x_*, \lambda_*)$ for any $x \in A$ (see~\eqref{eq:GlobalSaddlePoint}), which implies
that $x_*$ is a point of global minimum of the function $L(\cdot, \lambda_*)$ and 
$\langle \lambda_*, G(x_*) \rangle = 0$, since by the definition of global saddle point
$F(x_*) = L(x_*, \lambda_*) = F(x_*) + \langle \lambda_*, G(x_*) \rangle$. 

Recall that the function $F$ is Hadamard directionally differentiable by \cite[Thrm.~4.4.3]{IoffeTihomirov}.
Consequently, the function $L(\cdot, \lambda_*)$ is Hadamard directionally differentiable as well. Therefore, applying
the necessary optimality condition in terms of directional derivatives (see, e.g. \cite[Lemma~V.1.2]{DemyanovRubinov})
one obtains that $[L(\cdot, \lambda_*)]'(x_*, h) \ge 0$ for all $h \in T_A(x_*)$, i.e. $\lambda_*$ is a Lagrange
multiplier of the problem $(\mathcal{P})$ at $x_*$.

\textbf{Part~\ref{stat:LagrangeMult_ConvexCase}.} Let $\lambda_*$ be a Lagrange multiplier of $(\mathcal{P})$ at $x_*$.
Then by the first part of the theorem $L(x, \lambda_*) \ge F(x_*)$ for all $x \in A$. By the definition of Lagrange
multiplier $\lambda_* \in K^*$, which implies that $\langle \lambda_*, G(x) \rangle \le 0$ for any $x$ such that 
$G(x) \in K$. Thus, for any feasible point of the problem $(\mathcal{P})$ one has 
$F(x) \ge L(x, \lambda_*) \ge F(x_*)$, i.e. $x_*$ is a globally optimal solution of $(\mathcal{P})$.

It remains to note that the converse statement follows directly from Theorem~\ref{thrm:NessOptCond} and the fact that 
by \cite[Prp.~2.104]{BonnansShapiro} Slater's condition $0 \in \interior\{ G(A) - K \}$ is equivalent to RCQ, provided
$G$ is $(-K)$-convex.
\end{proof}

\subsection{Subdifferentials and exact penalty functions}
\label{subsect:Subdifferentials_ExactPenaltyFunc}

Note that both necessary and sufficient optimality conditions stated in Theorems~\ref{thrm:NessOptCond} and
\ref{thrm:SuffOptCond} are very difficult to verify directly. Let us show how one can reformulate them in a more
convenient way.

Denote by $N_A(x) = \{ z \in \mathbb{R}^d \mid \langle z, v \rangle \le 0 \: \forall v \in T_A(x) \}$ \textit{the normal
cone} to the convex set $A$ at a point $x \in A$. Note that $N_A(x)$ is the polar cone of $T_A(x)$ and
$N_A(x) = \{ z  \in \mathbb{R}^d \mid \langle z, v - x \rangle \: \forall v \in A \}$, since
$T_A(x) = \cl[\cup_{t \ge 0} t(A - x)]$ by virtue of the fact that the set $A$ is convex 
(see, e.g. \cite[Prp.~2.55]{BonnansShapiro}). For any subspace $Y_0 \subset Y$ denote by 
$Y_0^{\perp} = \{ y^* \in Y^* \mid \langle y^*, y \rangle = 0 \: \forall y \in Y_0 \}$ \textit{the annihilator} of
$Y_0$. For the sake of correctness, for any linear operator $T \colon \mathbb{R}^d \to Y$ denote by
$[T]^*$ the composition of the natural isomorphism $i$ between $(\mathbb{R}^d)^*$ and $\mathbb{R}^d$, and the adjoint
operator $T^* \colon Y^* \to (\mathbb{R}^d)^*$, i.e. $[T]^* = i \circ T^*$.

Introduce the cone 
$$
  \mathcal{N}(x) = [D G(x)]^* (K^* \cap \linhull(G(x))^{\perp}) 
  = \{ i(\lambda \circ D G(x)) \mid \lambda \in K^*, \: \langle \lambda, G(x) \rangle = 0 \}.
$$
Let us verify that the convex cone $\mathcal{N}(x) \subset \mathbb{R}^d$ is, in actuality, the normal cone to the set 
$\Xi = \{ z \in \mathbb{R}^d \mid G(z) \in K \}$ at the point $x$.

\begin{lemma} \label{lem:NormalCone_ConeConstr}
Let $x \in \mathbb{R}^d$ be such that $G(x) \in K$. Then
\begin{equation} \label{eq:NormalCone_ConeConstr}
  \mathcal{N}(x) \subseteq \Big( \big\{ h \in \mathbb{R}^d \bigm| D G(x) h \in T_K(G(x)) \big\} \Big)^*.
\end{equation}
Furthermore, if the weakened Robinson constraint qualification $0 \in \interior\{ G(x) + D G(x)(\mathbb{R}^n) - K \}$
is satisfied at $x$, then the opposite inclusion holds true and $\mathcal{N}(x) = (T_{\Xi}(x))^* = N_{\Xi}(x)$.
\end{lemma}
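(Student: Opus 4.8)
The plan is to establish the unconditional inclusion \eqref{eq:NormalCone_ConeConstr} by a direct computation, and then to derive the reverse inclusion under the weakened Robinson constraint qualification by reducing to the convex Lagrange multiplier theorem \cite[Thrm.~3.6]{BonnansShapiro}, in the same spirit as the proof of Theorem~\ref{thrm:NessOptCond}.

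First I would prove \eqref{eq:NormalCone_ConeConstr}. Take any $z \in \mathcal{N}(x)$, so that $z = i(\lambda \circ D G(x))$ for some $\lambda \in K^*$ with $\langle \lambda, G(x) \rangle = 0$. Since $K$ is closed and convex and $G(x) \in K$, one has $T_K(G(x)) = \cl[ \cup_{t \ge 0} t(K - G(x)) ]$ by \cite[Prp.~2.55]{BonnansShapiro}; moreover $\langle \lambda, y - G(x) \rangle = \langle \lambda, y \rangle \le 0$ for every $y \in K$, whence $\langle \lambda, w \rangle \le 0$ for all $w \in T_K(G(x))$. Thus, for any $h \in \mathbb{R}^d$ with $D G(x) h \in T_K(G(x))$ we obtain $\langle z, h \rangle = \langle \lambda, D G(x) h \rangle \le 0$, which is exactly \eqref{eq:NormalCone_ConeConstr}.

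Next I would treat the converse under the hypothesis $0 \in \interior\{ G(x) + D G(x)(\mathbb{R}^d) - K \}$. Arguing as in Lemma~\ref{lem:ContingConeToFeasibleSet} (applying \cite[Corollary~2.91]{BonnansShapiro} to $G$ and $K$), this hypothesis gives $T_{\Xi}(x) = \{ h \in \mathbb{R}^d \mid D G(x) h \in T_K(G(x)) \}$, so that $N_{\Xi}(x) = (T_{\Xi}(x))^*$ coincides with the right-hand side of \eqref{eq:NormalCone_ConeConstr}; it therefore suffices to show $(\{ h \mid D G(x) h \in T_K(G(x)) \})^* \subseteq \mathcal{N}(x)$. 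Fix $z$ in the left-hand side and consider the convex cone constrained problem $\min_{h \in \mathbb{R}^d} \langle - z, h \rangle$ subject to $D G(x) h \in T_K(G(x))$. Its feasible set is a cone on which $\langle -z, \cdot \rangle$ is nonnegative and vanishes at the origin, so $h = 0$ is a globally optimal solution. Since $K - G(x) \subseteq T_K(G(x))$, the set $D G(x)(\mathbb{R}^d) - T_K(G(x))$ contains $G(x) + D G(x)(\mathbb{R}^d) - K$, hence the regularity condition $0 \in \interior\{ D G(x)(\mathbb{R}^d) - T_K(G(x)) \}$ (cf.~\cite[Formula~(3.12)]{BonnansShapiro}) holds. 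By \cite[Thrm.~3.6]{BonnansShapiro} there exists $\lambda_* \in T_K(G(x))^*$ with $0 \in \argmin_{h \in \mathbb{R}^d}[ \langle -z, h \rangle + \langle \lambda_*, D G(x) h \rangle ]$; since this objective is the linear function $h \mapsto \langle - z + i(\lambda_* \circ D G(x)), h \rangle$, its minimum over $\mathbb{R}^d$ can be attained at the origin only if $z = i(\lambda_* \circ D G(x))$.

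It then remains to identify $T_K(G(x))^*$: from $T_K(G(x)) = \cl[ \cup_{t \ge 0} t(K - G(x)) ]$ one gets $T_K(G(x))^* = (K - G(x))^*$, and a short computation using $G(x) \in K$ and the fact that $K$ is a cone shows $(K - G(x))^* = K^* \cap \linhull(G(x))^{\perp}$. Hence $\lambda_* \in K^*$ and $\langle \lambda_*, G(x) \rangle = 0$, so $z = i(\lambda_* \circ D G(x)) \in \mathcal{N}(x)$, which gives the reverse inclusion; together with \eqref{eq:NormalCone_ConeConstr} this yields $\mathcal{N}(x) = (T_{\Xi}(x))^* = N_{\Xi}(x)$. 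The main obstacle is matching the regularity hypothesis of \cite[Thrm.~3.6]{BonnansShapiro} with the weakened RCQ (handled via the inclusion $K - G(x) \subseteq T_K(G(x))$) and carrying out the polar identity $T_K(G(x))^* = K^* \cap \linhull(G(x))^{\perp}$; the remaining steps are routine.
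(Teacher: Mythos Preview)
Your proof is correct and follows essentially the same route as the paper: both arguments use \cite[Prp.~2.55]{BonnansShapiro} for the formula $T_K(G(x)) = \cl[\cup_{t \ge 0} t(K - G(x))]$ to handle the forward inclusion, and for the reverse inclusion both set up the linear conic problem $\min \langle -z, h \rangle$ subject to $DG(x)h \in T_K(G(x))$, verify its Slater-type regularity from the weakened RCQ via $K - G(x) \subseteq T_K(G(x))$, apply \cite[Thrm.~3.6]{BonnansShapiro} to obtain a multiplier $\lambda_* \in T_K(G(x))^*$, and then unpack this membership into $\lambda_* \in K^* \cap \linhull(G(x))^{\perp}$ using $y = 0$, $y = 2G(x)$, and $y = z + G(x)$ (your polar identity $(K - G(x))^* = K^* \cap \linhull(G(x))^{\perp}$ is exactly this computation packaged as a set equality). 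The only cosmetic difference is that you invoke \cite[Corollary~2.91]{BonnansShapiro} for $T_{\Xi}(x)$ at the outset rather than at the end.
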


\begin{proof}
Choose any $v \in \mathcal{N}(x)$. Then $v = [D G(x)]^* \lambda$ for some $\lambda \in K^*$ such that 
$\langle \lambda, G(x) \rangle = 0$. By definition $\langle \lambda, y - G(x)  \rangle \le 0$ for any $y \in K$. Hence
with the use of the well-known equality $T_K(G(x)) = \cl[\cup_{t \ge 0} t(K - G(x))]$ (see, e.g.
\cite[Prp.~2.55]{BonnansShapiro}) one obtains that $\langle \lambda, y \rangle \le 0$ for any $y \in T_K(G(x))$.
Consequently, for any $h \in \mathbb{R}^d$ satisfying the condition $D G(x) h \in T_K(G(x))$ one has 
$\langle v, h \rangle = \langle \lambda, D G(x) h \rangle \le 0$, that is, $v$ belongs to the right-hand side of
\eqref{eq:NormalCone_ConeConstr}.

Suppose now that the weakened RCQ holds at $x_*$, and let $v$ belong to the right-hand side of
\eqref{eq:NormalCone_ConeConstr}, that is, $\langle v, h \rangle \le 0$ for any $h \in \mathbb{R}^d$ such that 
$D G(x) h \in T_K(G(x))$. In other words, $h = 0$ is a point of global minimum of the conic linear problem:
\begin{equation} \label{probl:ConicLinearProblem}
  \min \: \langle -v, h \rangle \quad \text{subject to} \quad D G(x) h \in T_K(G(x)).
\end{equation}
Note that the contingent cone $T_K(G(x))$ is convex, since the cone $K$ is convex. Furthermore, from the weakened RCQ
and the inclusion $(K - G(x)) \subset T_K(G(x))$ it follows that the regularity condition
$0 \in \interior\{ D G(x)(\mathbb{R}^d) - T_K(G(x)) \}$ holds true for problem \eqref{probl:ConicLinearProblem}.
Therefore by \cite[Thrm.~3.6]{BonnansShapiro} there exists a Lagrange multiplier $\lambda$ for problem
\eqref{probl:ConicLinearProblem}, i.e. $- v + [D G(x)]^* \lambda = 0$ and
$\lambda \in T_K(G(x))^*$. Bearing in mind the equality $T_K(G(x)) = \cl[\cup_{t \ge 0} t(K - G(x))]$ one obtains
that $\langle \lambda, y - G(x) \rangle \le 0$ for any $y \in K$. Putting $y = 2 G(x)$ and $y = 0$ one gets that
$\langle \lambda, G(x) \rangle = 0$, while putting $y = z + G(x) \in K$ for any $z \in K$ (recall that $K$ is a convex
cone) one gets that $\langle \lambda, z \rangle \le 0$ for any $z \in K$ or, equivalently, $\lambda \in K^*$. Thus, 
$v = [D G(x)]^* \lambda$ for some $\lambda \in K^*$ such that $\langle \lambda, G(x) \rangle = 0$, i.e.
$v \in \mathcal{N}(x)$ and the inclusion opposite to \eqref{eq:NormalCone_ConeConstr} is valid.

It remains to note that $T_{\Xi}(x) = \{ h \in \mathbb{R}^d \mid D G(x) h \in T_K(G(x)) \}$, since the weakened RCQ is
satisfied at $x_*$ (see, e.g. \cite[Corollary~2.91]{BonnansShapiro}). 
Thus, $\mathcal{N}(x) = T_{\Xi}(x)^* = N_{\Xi}(x)$ and the proof is complete.
\end{proof}

For any $x \in \mathbb{R}^d$ denote by $\partial F(x) = \co\{ \nabla_x f(x_*, \omega) \mid \omega \in W(x_*) \}$ 
\textit{the Hadamard subdifferential} of the function $F(x) = \max_{\omega \in W} f(x, \omega)$. Introduce a set-valued
mapping $\mathcal{D} \colon \Omega \rightrightarrows \mathbb{R}^d$ as follows:
$$
  \mathcal{D}(x) = \partial F(x) + \mathcal{N}(x) + N_A(x).
$$
The multifunction $\mathcal{D}$ is obviously convex-valued. Our first aim is to show that optimality conditions for the
problem $(\mathcal{P})$ can be rewritten in the form of the inclusion $0 \in \mathcal{D}(x)$.

\begin{theorem} \label{thrm:EquivOptCond_Subdiff}
Let $x_*$ be a feasible point of the problem $(\mathcal{P})$. Then: 
\begin{enumerate}
\item{a Lagrange multiplier of $(\mathcal{P})$ at $x_*$ exists iff $0 \in \mathcal{D}(x_*)$;
\label{stat:NessOpt_Subdiff}}

\item{sufficient optimality condition \eqref{eq:SuffOptCond} holds true at $x_*$ iff $0 \in \interior \mathcal{D}(x_*)$.
\label{stat:SuffOpt_Subdiff}}
\end{enumerate}
\end{theorem}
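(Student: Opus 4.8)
The plan is to reduce both statements to the structure of the linearised problem \eqref{probl:LinearisedProblem} via the Lagrangian duality already established, and then translate the dual objects (Lagrange multipliers, feasibility of dual-type conditions) into the geometric language of $\partial F(x_*)$, $\mathcal{N}(x_*)$, and $N_A(x_*)$. Throughout, write $C = \{ h \in T_A(x_*) \mid D G(x_*) h \in T_K(G(x_*)) \}$ for the feasible cone of the linearised problem; this is a closed convex cone, and by Lemma~\ref{lem:ContingConeToFeasibleSet} it equals $T_\Omega(x_*)$ when RCQ holds. The key algebraic identity I will use repeatedly is that the support function of $C$ at a direction is controlled by $\mathcal{N}(x_*) + N_A(x_*)$: indeed $C = T_A(x_*) \cap \{ h \mid D G(x_*) h \in T_K(G(x_*)) \}$, so its polar is the closure of $N_A(x_*) + \mathcal{N}(x_*)$ by Lemma~\ref{lem:NormalCone_ConeConstr} together with the standard sum-of-polars formula for an intersection of cones. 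I expect the main technical obstacle to be exactly this closedness/sum-of-polars issue: the intersection-polar identity $(C_1 \cap C_2)^* = \cl(C_1^* + C_2^*)$ requires a regularity hypothesis to drop the closure, and here RCQ (or its weakened form) is what supplies it — so I will need to be careful about where RCQ is invoked and where it is not, since the theorem statement does not assume RCQ.

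For Part~\ref{stat:NessOpt_Subdiff}, one direction is essentially a restatement of the argument in the proof of Theorem~\ref{thrm:NessOptCond}: if $\lambda_*$ is a Lagrange multiplier, then $[L(\cdot,\lambda_*)]'(x_*,h) = F'(x_*,h) + \langle \lambda_*, DG(x_*) h \rangle \ge 0$ for all $h \in T_A(x_*)$, which says that $0$ minimises the convex positively homogeneous function $h \mapsto F'(x_*,h) + \langle \lambda_*, DG(x_*)h\rangle$ over $T_A(x_*)$, i.e. $0 \in \partial_h[F'(x_*,\cdot) + \langle \lambda_*, DG(x_*)\cdot\rangle](0) + N_{T_A(x_*)}(0)$. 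Since $\partial F'(x_*,\cdot)(0) = \partial F(x_*)$, $\partial_h\langle\lambda_*, DG(x_*)\cdot\rangle = \{[DG(x_*)]^*\lambda_*\} \subseteq \mathcal{N}(x_*)$ (using $\lambda_* \in K^*$, $\langle\lambda_*,G(x_*)\rangle=0$), and $N_{T_A(x_*)}(0) = N_A(x_*)$, this gives $0 \in \mathcal{D}(x_*)$. Conversely, if $0 \in \mathcal{D}(x_*)$, unwind the three summands to produce $g \in \partial F(x_*)$, $\lambda_* \in K^*$ with $\langle\lambda_*,G(x_*)\rangle=0$, and $z \in N_A(x_*)$ with $g + [DG(x_*)]^*\lambda_* + z = 0$; then for any $h \in T_A(x_*)$ one has $\langle z,h\rangle\le 0$, hence $F'(x_*,h) \ge \langle g,h\rangle = -\langle\lambda_*,DG(x_*)h\rangle - \langle z,h\rangle \ge -\langle\lambda_*,DG(x_*)h\rangle$, which is exactly $[L(\cdot,\lambda_*)]'(x_*,h)\ge 0$, so $\lambda_*$ is a Lagrange multiplier. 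Note this direction needs no constraint qualification, which is why the theorem can omit RCQ.

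For Part~\ref{stat:SuffOpt_Subdiff}, I will relate $0 \in \interior\mathcal{D}(x_*)$ to the statement that $h=0$ is the \emph{unique} minimiser of \eqref{probl:LinearisedProblem}. The clean route is via support functions: condition \eqref{eq:SuffOptCond} says $\max_{\omega\in W(x_*)}\langle\nabla_x f(x_*,\omega),h\rangle > 0$ for all $h \in C\setminus\{0\}$, equivalently (using positive homogeneity and compactness of $C\cap\{|h|=1\}$, cf.\ the Remark following Theorem~\ref{thrm:SuffOptCond}) there is $\rho>0$ with $\sigma_{\partial F(x_*)}(h) \ge \rho|h|$ on $C$, where $\sigma_S$ denotes the support function. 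I will show this is equivalent to $\partial F(x_*) + \cl(\mathcal{N}(x_*) + N_A(x_*)) \supseteq \rho B$ for some $\rho>0$, i.e.\ $0 \in \interior(\partial F(x_*) + C^*)$; the bridge is the elementary fact that a closed convex set $S$ contains a ball around the origin iff its support function dominates $\rho|\cdot|$, applied with $S = \partial F(x_*) + C^*$ and with the identity $\sigma_{\partial F(x_*)+C^*}(h) = \sigma_{\partial F(x_*)}(h) + \sigma_{C^*}(h)$, where $\sigma_{C^*}(h) = 0$ for $h\in C$ and $+\infty$ otherwise (this last is just the definition of polar cone). The remaining gap is that $\mathcal{D}(x_*) = \partial F(x_*) + \mathcal{N}(x_*) + N_A(x_*)$ is written without a closure, so I must argue $\interior\mathcal{D}(x_*) = \interior(\partial F(x_*) + \cl(\mathcal{N}(x_*)+N_A(x_*)))$. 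Here I will use that $0\in\interior\mathcal{D}(x_*)$ forces $\mathcal{D}(x_*)$ to be a neighbourhood of the origin, that $\partial F(x_*)$ is a compact convex set, and that adding a compact convex set to a convex cone whose sum with that set is a neighbourhood of the origin has the same interior as adding the cone's closure (a standard argument: the interior of a convex set equals the interior of its closure, and $\cl\mathcal{D}(x_*) = \partial F(x_*) + \cl(\mathcal{N}(x_*)+N_A(x_*))$ because $\partial F(x_*)$ is compact). Assembling these pieces yields the equivalence; I anticipate the write-up's only delicate point is justifying the interchange of closure and Minkowski sum with the compact set $\partial F(x_*)$, and the fact that on the sufficient-condition side no separate constraint qualification is needed because the strict inequality \eqref{eq:SuffOptCond} already entails the requisite interior non-emptiness.
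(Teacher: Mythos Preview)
Your Part~\ref{stat:NessOpt_Subdiff} argument matches the paper's almost exactly: the paper also reduces to showing $(\partial F(x_*)+[DG(x_*)]^*\lambda_*)\cap(-N_A(x_*))\ne\emptyset$ (forward direction via a separation argument, which is the same content as your subdifferential-calculus step) and unwinds the converse just as you describe.

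For Part~\ref{stat:SuffOpt_Subdiff}, your route via support functions and the polar $C^*$ is genuinely different from the paper's. The paper argues both directions by direct contradiction-and-separation: for $(\Rightarrow)$ it first shows $0\in\relint\mathcal{D}(x_*)$ by separating, then separately shows $\interior\mathcal{D}(x_*)\ne\emptyset$ by a second contradiction; for $(\Leftarrow)$ it passes from $\sigma_{\mathcal{D}(x_*)}(h)\ge\rho|h|$ to $\sigma_{\partial F(x_*)}(h)\ge\rho|h|$ on $C$ by checking that the $\mathcal{N}(x_*)$ and $N_A(x_*)$ contributions are nonpositive. Your support-function packaging is more compact and conceptual; the paper's is more hands-on but avoids the polar-cone bookkeeping.

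There is, however, a gap in your justification of $C^*=\cl(\mathcal{N}(x_*)+N_A(x_*))$. You invoke Lemma~\ref{lem:NormalCone_ConeConstr}, but that lemma only yields the inclusion $\mathcal{N}(x_*)\subseteq D^*$ without a constraint qualification; the reverse inclusion there is stated only under the weakened RCQ, which the theorem does \emph{not} assume. What you actually need is $D^*=\cl\mathcal{N}(x_*)$, and this \emph{does} hold unconditionally, but for a reason you have not written down: since $T_K(G(x_*))=\cl\bigcup_{t\ge 0}t(K-G(x_*))$, one computes $(T_K(G(x_*)))^* = K^*\cap\linhull(G(x_*))^\perp$, and then the preimage-polar identity $\bigl((DG(x_*))^{-1}Q\bigr)^*=\cl\,[DG(x_*)]^*(Q^*)$ (valid for any closed convex cone $Q\subseteq Y$, via the bipolar theorem applied in $Y$ and then in $\mathbb{R}^d$) gives $D^*=\cl\mathcal{N}(x_*)$. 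This is precisely the content of the separation argument the paper carries out inline when it shows that \eqref{eq:FrDerInTangCone} forces $DG(x_*)h\in T_K(G(x_*))$, so you are not avoiding that step---only repackaging it. Once this is filled in, your closure manipulation $\interior\mathcal{D}(x_*)=\interior\cl\mathcal{D}(x_*)=\interior(\partial F(x_*)+C^*)$ via compactness of $\partial F(x_*)$ is correct and the proof goes through.
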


\begin{proof}
\textbf{Part~\ref{stat:NessOpt_Subdiff}.}~Let $\lambda_*$ be a Lagrange multiplier of $(\mathcal{P})$ at $x_*$ and
$Q(x_*) = \partial F(x_*) + [D G(x_*)]^* \lambda_*$. By the definition of Lagrange multiplier one has
\begin{equation} \label{eq:Lm_Ness_Cond}
  [L(\cdot, \lambda_*)]'(x_*, h) = \max_{v \in Q(x_*)} \langle v, h \rangle \ge 0 \quad \forall h \in T_A(x_*).
\end{equation}
Let us check that this inequality implies that $0 \in Q(x_*) + N_A(x_*)$. Indeed, arguing by reductio ad absurdum,
suppose that $Q(x_*) \cap (- N_A(x_*)) = \emptyset$. Observe that $Q(x_*)$ is a compact convex set, while $N_A(x_*)$ is
a closed convex cone. Consequently, applying the separation theorem one obtains that there exists $h \ne 0$ such that
\begin{equation} \label{eq:SeparationThrm}
  \langle v, h \rangle < \langle u, h \rangle \quad \forall v \in Q(x_*) \quad \forall u \in \big(- N_A(x_*) \big).
\end{equation}
Since $N_A(x_*)$ is a cone, the inequality above implies that $\langle u, h \rangle \le 0$ for all 
$u \in N_A(x_*)$, i.e. $h$ belongs to the polar cone of $N_A(x_*)$. Recall that $N_A(x_*)$ is a polar cone of
$T_A(x_*)$. Therefore, $h \in T_A(x_*)^{**} = T_A(x_*)$ (see, e.g. \cite[Prp.~2.40]{BonnansShapiro}).

Taking into account inequality \eqref{eq:SeparationThrm} and the facts that $0 \in N_A(x_*)$ and $Q(x_*)$ is a compact
set one obtains that $\max_{v \in Q(x_*)} \langle v, h \rangle < 0$, which contradicts \eqref{eq:Lm_Ness_Cond}. Thus, 
$0 \in Q(x_*) + N_A(x_*)$, which implies that $0 \in \mathcal{D}(x_*)$ due to the fact that by the definition of
Lagrange multiplier one has $\lambda_* \in K^*$ and $\langle \lambda_*, G(x_*) \rangle = 0$.

Let us prove the converse statement. Suppose that $0 \in \mathcal{D}(x_*)$. Then there exist $v_* \in \partial F(x_*)$
and $\lambda_* \in K^*$ such that $v_* + [D G(x_*)]^* \lambda_* \in - N_A(x_*)$ and 
$\langle \lambda_*, G(x_*) \rangle = 0$. By the definition of $N_A(x_*)$ one has
$$
  \max_{v \in \partial F(x_*)} \langle v, h \rangle + \langle \lambda_*, D G(x_*) h \rangle
  \ge \langle v_*, h \rangle + \langle \lambda_*, D G(x_*) h \rangle \ge 0 \quad \forall h \in T_A(x_*).
$$
In other words, $[L(\cdot, \lambda_*)]'(x_*, h) \ge 0$ for all $h \in T_A(x_*)$. Thus, $\lambda_*$ is a Lagrange
multiplier of $(\mathcal{P})$ at $x_*$.

\textbf{Part~\ref{stat:SuffOpt_Subdiff}.}~Let sufficient optimality condition \eqref{eq:SuffOptCond} be satisfied. Let
us show at first that zero belongs to the relative interior $\relint \mathcal{D}(x_*)$ of $\mathcal{D}(x_*)$. Indeed,
arguing by reductio ad absurdum, suppose that $0 \notin \relint \mathcal{D}(x_*)$. Then by the separation theorem
(see, e.g. \cite[Thrm.~2.17]{BonnansShapiro}) there exists $h \ne 0$ such that $\langle v, h \rangle \le 0$ for all 
$v \in \mathcal{D}(x_*)$. Hence taking into account the fact that both $\mathcal{N}(x_*)$ and $N_A(x_*)$ are convex
cones one obtains that
$$
  \max_{v \in \partial F(x_*)} \langle v, h \rangle \le 0, \quad
  \langle v, h \rangle \le 0 \quad \forall v \in \mathcal{N}(x_*), \quad
  \langle v, h \rangle \le 0 \quad \forall v \in N_A(x_*).
$$
Therefore $h \in N_A(x_*)^* = T_A(x_*)^{**} = T_A(x_*)$ and
\begin{equation} \label{eq:FrDerInTangCone}
  \langle \lambda, D G(x_*) h \rangle \le 0 \quad 
  \forall \lambda \in K^* \colon \langle \lambda, G(x_*) \rangle = 0.
\end{equation}
Let us verify that this inequality implies that $D G(x_*) h \in T_K(G(x_*))$. Then one obtains that we found 
$h \in T_A(x_*) \setminus \{ 0 \}$ such that $D G(x_*) h \in T_K\big( G(x_*) \big)$ and 
$\max_{\omega \in W(x_*)} \langle \nabla_x f(x_*, \omega), h \rangle \le 0$, which contradicts \eqref{eq:SuffOptCond}. 

Arguing by reductio ad absurdum, suppose that $D G(x_*) h \notin T_K(G(x_*))$. The cone $T_K(G(x_*))$ is closed and
convex, since $K$ is a convex cone. Therefore, by the separation theorem there exists 
$\lambda \in Y^* \setminus \{ 0 \}$ such that 
\begin{equation} \label{eq:InclCondSepTh}
  \langle \lambda, D G(x_*) h \rangle > 0, \quad \langle \lambda, y \rangle \le 0 \quad \forall y \in T_K(G(x_*)).
\end{equation}
Since $K$ is a cone and $G(x_*) \in K$, one has $G(x_*) + \alpha G(x_*) \in K$ for all $\alpha \in [-1, 1]$, which
implies that $G(x_*) \in T_K(G(x_*))$, $- G(x_*) \in T_K(G(x_*))$, and $\langle \lambda, G(x_*) \rangle = 0$.
Furthermore, as was noted above, $K \subseteq K - G(x_*) \subseteq T_K( G(x_*) )$ due to the facts that $G(x_*) \in K$
and $K$ is a convex cone. Hence with the use of \eqref{eq:InclCondSepTh} one obtains that $\lambda \in K^*$, 
$\langle \lambda, G(x_*) \rangle = 0$, and $\langle \lambda, D G(x_*) h \rangle > 0$, which contradicts
\eqref{eq:FrDerInTangCone}. Thus, $D G(x_*) h \in T_K(G(x_*))$ and $0 \in \relint \mathcal{D}(x_*)$.

Let us now show that $\interior \mathcal{D}(x_*) \ne \emptyset$. Then $0 \in \interior \mathcal{D}(x_*)$ and the proof
is complete. Arguing by reductio ad absurdum, suppose that $\interior \mathcal{D}(x_*) = \emptyset$. From the facts that
$0 \in \relint \mathcal{D}(x_*)$ and $\interior \mathcal{D}(x_*) = \emptyset$ it follows 
that $\linhull \mathcal{D}(x_*) \ne \mathbb{R}^d$. Therefore, there exists $h \ne 0$ such 
that $\langle v, h \rangle = 0$ for all $v \in \linhull \mathcal{D}(x_*)$. Consequently, with the use of the fact that
both $\mathcal{N}(x_*)$ and $N_A(x_*)$ are convex cones one obtains that
$$
  \max_{v \in \partial F(x_*)} \langle v, h \rangle = 0, \quad
  \langle v, h \rangle = 0 \quad \forall v \in \mathcal{N}(x_*), \quad
  \langle v, h \rangle = 0 \quad \forall v \in N_A(x_*).
$$
Hence $h \in N_A(x_*)^* = T_A(x_*)^{**} = T_A(x_*)$ and inequality \eqref{eq:FrDerInTangCone} holds true. As was
shown above, this inequality implies that $D G(x_*) h \in T_K(G(x_*))$. Thus, we found 
$h \in T_A(x_*) \setminus \{ 0 \}$ such that $\max_{\omega \in W(x_*)} \langle \nabla_x f(x_*, \omega), h \rangle = 0$
and $D G(x_*) h \in T_K(G(x_*))$, which contradicts \eqref{eq:SuffOptCond}. 
Therefore $0 \in \interior \mathcal{D}(x_*)$.

Let us prove the converse statement. Suppose that $0 \in \interior \mathcal{D}(x_*)$. Then there exists $\rho > 0$ such
that $\max_{v \in \mathcal{D}(x_*)} \langle v, h \rangle \ge \rho |h|$ for all $h \in \mathbb{R}^d$. 

Fix any $h \in T_A(x_*)$ such that $D G(x_*) h \in T_K(G(x_*))$. By definition any $v \in \mathcal{D}(x_*)$ has
the form $v = v_1 + v_2 + v_3$, where $v_1 \in \partial F(x_*)$, $v_2 = [D G(x_*)]^* \lambda_2$ for some 
$\lambda_2 \in K^* \cap \linhull(G(x_*))^{\perp}$, and $v_3 \in N_A(x_*)$. 

Firstly, note that $\langle v_3, h \rangle \le 0$, since $h \in T_A(x_*)$. Secondly, recall that $K$ is a convex cone,
which implies that $T_K(G(x_*)) = \cl[\cup_{t \ge 0} t(K - G(x_*))]$ (see, e.g. \cite[Prp.~2.55]{BonnansShapiro}).
Hence taking into account the facts that $\lambda_2 \in K^*$ and $\langle \lambda_2, G(x_*) \rangle = 0$ one gets that
$\langle \lambda_2, y \rangle \le 0$ for all $y \in T_K(G(x_*))$. Consequently, 
$\langle v_2, h \rangle = \langle \lambda_2, D G(x_*) h \rangle \le 0$, since $D G(x_*) h \in T_K(G(x_*))$.
Thus, for any $v \in \mathcal{D}(x_*)$ one has $\langle v, h \rangle \le \langle v_1, h \rangle$ for the corresponding
vector $v_1 \in \partial F(x_*)$, which implies that
$$
  \max_{v \in \partial F(x_*)} \langle v, h \rangle \ge \max_{v \in \mathcal{D}(x_*)} \langle v, h \rangle 
  \ge \rho |h| \quad \forall h \in T_A(x_*) \colon D G(x_*) h \in T_K(G(x_*)),
$$
i.e. sufficient optimality condition \eqref{eq:SuffOptCond} holds true.
\end{proof}

\begin{remark} \label{rmrk:LagrangeMultViaSubdiff}
From the proof of the first part of the theorem above it follows that $\lambda_*$ is a Lagrange multiplier of the
problem $(\mathcal{P})$ at $x_*$ iff $(\partial F(x_*) + [D G(x_*)]^* \lambda_*) \cap (- N_A(x_*)) \ne \emptyset$. 
In particular, if $A = \mathbb{R}^d$, then $\lambda_*$ is a Lagrange multiplier at $x_*$ iff 
$0 \in \partial F(x_*) + [D G(x_*)]^* \lambda_* = \partial_x L(x_*, \lambda_*)$, where $\partial_x L(x_*, \lambda_*)$
is the Hadamard subdifferential of the function $L(\cdot, \lambda_*)$ at $x_*$. \qed
\end{remark}

The theorem above contains a reformulation of necessary and sufficient optimality conditions for the problem
$(\mathcal{P})$ in terms of the set $\mathcal{D}(x_*) = \partial F(x_*) + \mathcal{N}(x_*) + N_A(x_*)$. Note that this
convex set need not be closed, since it is the sum of a compact convex set $\partial F(x_*)$ and two closed convex
cones. In the case of necessary conditions, one can rewrite inclusion $0 \in \mathcal{D}(x_*)$ as the condition 
$( \partial F(x_*) + \mathcal{N}(x_*) ) \cap (- N_A(x_*)) \ne \emptyset$ involving only closed sets; however, sufficient
optimality conditions cannot be directly rewritten in this way. 

Our next goal is to show that one can replace the set $\mathcal{D}(x)$ in Theorem~\ref{thrm:EquivOptCond_Subdiff} with a
smaller \textit{closed} convex set and to simultaneously show a close connection between sufficient optimality
conditions for the problem $(\mathcal{P})$ and exact penalty functions. To this end, denote by 
$\Phi_c(x) = F(x) + c \dist(G(x), K)$ a nonsmooth penalty function for the cone constraint of the problem
$(\mathcal{P})$. Here $c \ge 0$ is the penalty parameter and $\dist(y, K) = \inf\{ \| y - z \| \mid z \in K \}$ is the
distance between a point $y \in Y$ and the cone $K$. Note that the function $\Phi_c$ is nondecreasing in $c$.

Before we proceed to an analysis of optimality conditions, let us first compute a subdifferential of the penalty
function $\Phi_c$. Denote $\varphi(x) = \dist(G(x), K)$.

\begin{lemma} \label{lem:ConeConstrPenFunc_Subdiff}
Let $x$ be such that $G(x) \in K$. Then for any $c \ge 0$ the penalty function $\Phi_c$ is Hadamard subdifferentiable at
$x$ and its Hadamard subdifferential has the form $\partial \Phi_c(x) = \partial F(x) + c \partial \varphi(x)$, where
\begin{equation} \label{eq:ConeConstPenTerm_Subdiff}
  \partial \varphi(x) =
  \Big\{ [D G(x)]^* y^* \in \mathbb{R}^d \Bigm| y^* \in Y^*, \: \| y^* \| \le 1, \:
  \langle y^*, y - G(x) \rangle \le 0 \enspace \forall y \in K \Big\}
\end{equation}
i.e. $\Phi_c$ is Hadamard directionally differentiable at $x$, for any $h \in \mathbb{R}^d$ one has
$$
  \Phi'_c(x, h) = \lim_{[\alpha, h'] \to [+0, h]} \frac{\Phi_c(x + \alpha h') - \Phi_c(x)}{\alpha}
  = \max_{v \in \partial \Phi_c(x)} \langle v, h \rangle,
$$
and the set $\partial \Phi_c(x)$ is convex and compact.
\end{lemma}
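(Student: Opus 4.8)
The plan is to build $\Phi_c$ from its two summands, handling the penalty term $\varphi(x) = \dist(G(x),K)$ as a composition $\varphi = d_K \circ G$ with $d_K(y) = \dist(y,K)$, and then adding in $F$. First I would record the relevant facts about $d_K$: since $K$ is convex, $d_K$ is a convex function on $Y$ that is Lipschitz continuous with constant $1$, so at the point $G(x) \in K$ it is subdifferentiable in the sense of convex analysis, and
$$
  \partial d_K(G(x)) = \big\{ y^* \in Y^* \bigm| \| y^* \| \le 1, \ \langle y^*, y - G(x) \rangle \le 0 \ \, \forall y \in K \big\},
$$
the intersection of the closed unit ball of $Y^*$ with the normal cone to $K$ at $G(x)$. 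The inclusion ``$\subseteq$'' follows by testing the subgradient inequality $d_K(y) \ge \langle y^*, y - G(x)\rangle$ on $y \in K$ and on $y = G(x) + tv$ with $\|v\|\le 1$; the reverse inclusion follows from the $1$-Lipschitz estimate applied to near-projections of $y$ onto $K$. This set contains the origin (so it is nonempty), is convex, and is weak${}^*$ compact by Banach--Alaoglu together with the weak${}^*$ closedness of the normal cone; consequently $d_K'(G(x);w) = \max_{y^* \in \partial d_K(G(x))} \langle y^*, w\rangle$ for every $w \in Y$, with the maximum attained.

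Next I would prove the chain rule $\varphi'(x;h) = d_K'(G(x); D G(x)h)$ in the Hadamard sense. Writing $G(x+\alpha h') = G(x) + \alpha D G(x)h' + r(\alpha,h')$, the continuity of $D G$ together with a mean-value estimate gives $\|r(\alpha,h')\| = o(\alpha)$ as $\alpha\to+0$ uniformly for $h'$ in a neighbourhood of $h$; since $d_K$ is $1$-Lipschitz, this lets one replace $G(x+\alpha h')$ by its linearisation inside $d_K$ up to an $o(\alpha)$ error, and a second use of the Lipschitz property replaces $D G(x)h'$ by $D G(x)h$ up to an $o(1)$ error in the difference quotient; convexity of $d_K$ then forces the remaining quotient to converge to $d_K'(G(x); D G(x)h)$. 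Combining this with the support-function representation above yields
$$
  \varphi'(x;h) = \max_{y^* \in \partial d_K(G(x))} \langle y^*, D G(x)h\rangle = \max_{v \in \partial\varphi(x)} \langle v, h\rangle, \qquad \partial\varphi(x) := [D G(x)]^*\big(\partial d_K(G(x))\big),
$$
which is exactly formula \eqref{eq:ConeConstPenTerm_Subdiff}. Since $[D G(x)]^*$ is a linear map into the finite-dimensional space $\mathbb{R}^d$ that is continuous from the weak${}^*$ topology of $Y^*$ to the norm topology (each coordinate $y^* \mapsto \langle y^*, D G(x)e_j\rangle$ is weak${}^*$ continuous), the set $\partial\varphi(x)$ is the image of a convex weak${}^*$-compact set and hence convex and compact; thus $\varphi$ is Hadamard subdifferentiable at $x$ with subdifferential $\partial\varphi(x)$.

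Finally I would assemble $\Phi_c = F + c\varphi$. By \cite[Thrm.~4.4.3]{IoffeTihomirov} (cf. \eqref{eq:DirectDerivOfMaxFunc}) the function $F$ is Hadamard directionally differentiable at $x$ with $F'(x;h) = \max_{v \in \partial F(x)}\langle v, h\rangle$ and $\partial F(x)$ convex and compact. Because the Hadamard directional derivative is additive and $c \ge 0$, the function $\Phi_c$ is Hadamard directionally differentiable at $x$ with $\Phi_c'(x;h) = F'(x;h) + c\,\varphi'(x;h)$, and since the support function of a Minkowski sum is the sum of the support functions, this equals $\max_{v \in \partial F(x) + c\,\partial\varphi(x)} \langle v, h\rangle$; hence $\partial\Phi_c(x) = \partial F(x) + c\,\partial\varphi(x)$, which is convex and compact as a sum of two convex compact sets. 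The one genuinely delicate point is the uniform linearisation estimate in the second step — everything else is routine convex analysis — so I would be careful to spell out how the continuity of $D G$ delivers the required uniformity in $h'$.
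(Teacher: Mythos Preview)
Your proposal is correct and follows essentially the same route as the paper: split $\Phi_c = F + c\varphi$, identify the convex subdifferential of $d_K(\cdot)=\dist(\cdot,K)$ at the feasible point $G(x)$, push it through $G$ via a chain rule to obtain $\partial\varphi(x)=[DG(x)]^*\partial d_K(G(x))$, and then add the known Hadamard subdifferential of $F$. The only difference is packaging: the paper dispatches each step by citation (the formula for $\partial d_K$ from \cite[Example~2.130]{BonnansShapiro}, the identification of convex and Hadamard subdifferentials from \cite[Prp.~4.4.1]{IoffeTihomirov}, the chain rule from \cite[Thrm.~4.4.2]{IoffeTihomirov}, and the sum rule from \cite[Thrm.~4.4.1]{IoffeTihomirov}), whereas you work each of these out by hand, including the weak${}^*$-compactness argument for $\partial\varphi(x)$ and the uniform linearisation in the chain rule.
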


\begin{proof}
As was noted in the proof of Theorem~\ref{thrm:NessOptCond}, by \cite[Thrm.~4.4.3]{IoffeTihomirov} the function
$F(x)$ is Hadamard subdifferentiable. Since the sum of Hadamard subdifferentiable functions is obviously Hadamard
subdifferentiable and the Hadamard subdifferential of the sum is equal to the sum of Hadamard subdifferentials (see,
e.g. \cite[Thrm.~4.4.1]{IoffeTihomirov}), it is sufficient to prove that the penalty term $\varphi(x)$ is Hadamard
subdifferentiable and the set \eqref{eq:ConeConstPenTerm_Subdiff} is its Hadamard subdifferential.

Denote $d(y) = \dist(y, K)$. The function $d(\cdot)$ is convex due to the fact that $K$ is a convex set. By
\cite[Example~2.130]{BonnansShapiro} its subdifferential (in the sense of convex analysis) at any point $y \in K$ has
the form
$$
  \partial d(y) = \Big\{ y^* \in Y^* \Bigm| \| y^* \| \le 1, \: 
  \langle y^*, z - y \rangle \le 0 \enspace \forall z \in K \Big\}.
$$
In turn, by \cite[Prp.~4.4.1]{IoffeTihomirov} the function $d(\cdot)$ is Hadamard subdifferentiable at $y$ and its
Hadamard subdifferential coincides with its subdifferential in the sense of convex analysis. Finally, by 
\cite[Thrm.~4.4.2]{IoffeTihomirov} the function $\varphi(\cdot) = d(G(\cdot))$ is Hadamard subdifferentiable at $x$ as
well, and its Hadamard subdifferential at this point has the form $\partial \varphi(x) = [D G(x)]^* \partial d(G(x))$,
i.e. \eqref{eq:ConeConstPenTerm_Subdiff} holds true.
\end{proof}

\begin{remark} \label{remark:Subdiff_ConeConstrPenFunc}
From the equality $T_K(G(x_*)) = \cl[\cup_{t \ge 0} t(K - G(x_*))]$ (see, e.g. \cite[Prp.~2.55]{BonnansShapiro}) it
follows that 
$$
  \partial \varphi(x) = \Big\{ [D G(x)]^* y^* \in \mathbb{R}^d \Bigm| y^* \in (T_K(G(x)))^*, \: \| y^* \| \le 1 
  \Big\}.
$$
Moreover, since $\partial \varphi(x)$ is a convex set and $0 \in \partial \varphi(x)$, one has 
$c \partial \varphi(x) \subseteq r \partial \varphi(x)$ for any $r \ge c \ge 0$, which implies that
$\partial \Phi_c(x) \subseteq \partial \Phi_r(x)$ for any $r \ge c \ge 0$. In addition, the inclusion 
$0 \in \partial \varphi(x)$ implies that $\affine(c\partial \varphi(x)) = \linhull \partial \varphi(x)$ for any $c > 0$,
where ``$\affine$'' stands for the affine hull. As is well-known and easy to check, 
$\affine( S_1 + S_2 ) = \affine S_1 + \affine S_2$ for any subsets $S_1$ and $S_2$ of a real vector space, which implies
that
$$
  \affine \partial \Phi_c(x) = \affine \partial F(x) + \linhull \partial \varphi(x) = \affine \partial \Phi_r(x)
  \quad \forall c, r > 0,
$$
that is, the affine hull of the subdifferential $\partial \Phi_c(x)$ does not depend on $c > 0$ and for any 
$r \ge c > 0$ one has $\relint \partial \Phi_c(x) \subseteq \relint \partial \Phi_r(x)$. \qed
\end{remark}

Instead of the problem $(\mathcal{P})$ one can consider the following penalised problem:
\begin{equation} \label{probl:PenalisedProblem}
  \min \: \Phi_c(x) = \max_{w \in W} f(x, \omega) + c \dist( G(x), K )
  \quad \text{subject to} \quad x \in A.
\end{equation}
Recall that the penalty function $\Phi_c$ is called \textit{locally exact} at a locally optimal solution $x_*$ of the
problem $(\mathcal{P})$, if there exists $c_* \ge 0$ such that $x_*$ is a point of local minimum of the penalised
problem \eqref{probl:PenalisedProblem} for any $c \ge c_*$. We say that $\Phi_c$ satisfies \textit{the first order
growth condition} on the set $A$ at a point $x_* \in A$, if there exist a neighbourhood $\mathcal{O}(x_*)$ of $x_*$
and $\rho > 0$ such that $\Phi_c(x) \ge \Phi_c(x_*) + \rho |x - x_*|$ for all $x \in \mathcal{O}(x_*) \cap A$. 

From the fact that $\Phi_c(x) = F(x)$ for any $x$ such that $G(x) \in K$ it follows that if the first order growth
condition holds true for $\Phi_c$ on $A$ at a feasible point $x_*$ of the problem $(\mathcal{P})$, then $x_*$ is a
locally optimal solution of this problem, the first order growth condition for the problem $(\mathcal{P})$ holds at
$x_*$, and $\Phi_c$ is locally exact at $x_*$.

The following theorem describes interrelations between optimality conditions for the problem $(\mathcal{P})$,
optimality conditions for the penalised problem \eqref{probl:PenalisedProblem}, the local exactness of the penalty
function $\Phi_c$, and the first order growth conditions.

\begin{theorem} \label{thrm:EquivOptCond_PenaltyFunc}
Let $x_*$ be a feasible point of the problem $(\mathcal{P})$. Then: 
\begin{enumerate}
\item{a Lagrange multiplier of the problem $(\mathcal{P})$ at $x_*$ exists iff there exists $c \ge 0$ such that
$0 \in \partial \Phi_c(x_*) + N_A(x_*)$;
\label{stat:NessOpt_PenaltyFunc}}

\item{sufficient optimality condition \eqref{eq:SuffOptCond} is satisfied at $x_*$ if and only iff there exists 
$c \ge 0$ such that $0 \in \interior(\partial \Phi_c(x_*) + N_A(x_*))$ iff there exists $c \ge 0$ such that $\Phi_c$
satisfies the first order growth condition on $A$ at $x_*$;
\label{stat:SuffOpt_PenaltyFunc}}

\item{if RCQ holds at $x_*$, then the penalty function $\Phi_c$ is locally exact at $x_*$; furthermore, in this case
the first order growth condition for the problem $(\mathcal{P})$ holds at $x_*$ iff $\Phi_c$ satisfies the first order
growth condition on $A$ at $x_*$.
\label{stat:Exactness_GrowthConditions}}
\end{enumerate}
\end{theorem}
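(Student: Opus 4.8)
The plan is to deduce the whole statement from the characterisations already obtained in Theorem~\ref{thrm:EquivOptCond_Subdiff}, by first establishing the algebraic identity
\[
  \mathcal{D}(x_*) = \bigcup_{c\ge0}\bigl(\partial\Phi_c(x_*) + N_A(x_*)\bigr).
\]
To get this I would combine $\partial\Phi_c(x_*)=\partial F(x_*)+c\,\partial\varphi(x_*)$ from Lemma~\ref{lem:ConeConstrPenFunc_Subdiff} with the fact that $\mathcal{N}(x_*)=\bigcup_{c\ge0}c\,\partial\varphi(x_*)$. The latter follows because, by Remark~\ref{remark:Subdiff_ConeConstrPenFunc}, $\partial\varphi(x_*)=\{[DG(x_*)]^*y^*\mid y^*\in (T_K(G(x_*)))^*,\ \|y^*\|\le1\}$, whereas $\mathcal{N}(x_*)=[DG(x_*)]^*\bigl((T_K(G(x_*)))^*\bigr)$ (using the identification $\{\lambda\in K^*\colon\langle\lambda,G(x_*)\rangle=0\}=(T_K(G(x_*)))^*$ carried out inside the proof of Lemma~\ref{lem:NormalCone_ConeConstr}), and the conic hull of the intersection of a closed convex cone with the unit ball is the cone itself. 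Since $\partial\varphi(x_*)$ is convex and contains the origin, the sets $\partial\Phi_c(x_*)+N_A(x_*)$ are nondecreasing in $c$; being sums of a compact convex set and a closed convex cone they are, moreover, closed.

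Given this identity, part~\ref{stat:NessOpt_PenaltyFunc} is immediate from the first part of Theorem~\ref{thrm:EquivOptCond_Subdiff}: $0\in\mathcal{D}(x_*)$ iff $0$ lies in one of the sets of the union. For the first equivalence in part~\ref{stat:SuffOpt_PenaltyFunc} I would use the second part of Theorem~\ref{thrm:EquivOptCond_Subdiff}. One implication is trivial, since each $\partial\Phi_c(x_*)+N_A(x_*)\subseteq\mathcal{D}(x_*)$. For the converse, if $0\in\interior\mathcal{D}(x_*)$ pick a $d$-dimensional simplex $\Delta$ with $0\in\interior\Delta$ and $\Delta\subset\mathcal{D}(x_*)$, locate each of its $d+1$ vertices in some $\partial\Phi_{c_i}(x_*)+N_A(x_*)$, and set $c=\max_i c_i$; by monotonicity and convexity $\Delta\subseteq\partial\Phi_c(x_*)+N_A(x_*)$, hence $0\in\interior(\partial\Phi_c(x_*)+N_A(x_*))$.

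For the remaining equivalence in part~\ref{stat:SuffOpt_PenaltyFunc} I would argue via support functions. Since $N_A(x_*)$ is the polar cone of the closed convex cone $T_A(x_*)$, its support function is the indicator of $T_A(x_*)$; hence, by Lemma~\ref{lem:ConeConstrPenFunc_Subdiff},
\[
  \sup_{v\in\partial\Phi_c(x_*)+N_A(x_*)}\langle v,h\rangle=
  \begin{cases}\Phi'_c(x_*,h),& h\in T_A(x_*),\\ +\infty,& h\notin T_A(x_*).\end{cases}
\]
Because $\partial\Phi_c(x_*)+N_A(x_*)$ is closed, $0$ lies in its interior iff there is $\rho>0$ with $\Phi'_c(x_*,h)\ge\rho|h|$ for all $h\in T_A(x_*)$; and this directional estimate is equivalent to the first order growth condition for $\Phi_c$ on $A$ at $x_*$ by a \emph{reductio ad absurdum} argument that runs parallel to the proof of Theorem~\ref{thrm:SuffOptCond}, the only ingredient being the Hadamard directional differentiability of $\Phi_c$ supplied by Lemma~\ref{lem:ConeConstrPenFunc_Subdiff}.

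Finally, in part~\ref{stat:Exactness_GrowthConditions} assume RCQ holds at $x_*$. The equivalence of the two growth conditions is the easy half: ``$\Phi_c$ satisfies the first order growth condition on $A$ at $x_*$'' implies the first order growth condition for $(\mathcal{P})$ at $x_*$ already because $\Phi_c\equiv F$ on $\Omega$ (no RCQ needed); conversely, the first order growth condition for $(\mathcal{P})$ at $x_*$ yields \eqref{eq:SuffOptCond} through Theorem~\ref{thrm:SuffOptCond} (this is where RCQ is used), and then part~\ref{stat:SuffOpt_PenaltyFunc} produces a $c$ for which $\Phi_c$ satisfies the first order growth condition on $A$ at $x_*$. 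The step I expect to be the main obstacle is local exactness itself, because the first order conditions alone only give $\Phi'_c(x_*,\cdot)\ge0$ on $T_A(x_*)$, which is not enough for a local minimum. Here I would invoke Robinson's stability theorem: RCQ is Robinson's constraint qualification for the feasibility system of $(\mathcal{P})$, so it implies a local error bound $\dist(x,\Omega)\le\kappa\,\dist(G(x),K)$ for $x\in A$ near $x_*$; combining this with the local Lipschitz continuity of $F$ at $x_*$ (a uniform consequence of the joint continuity, hence local boundedness, of $\nabla_x f$), one obtains, for a locally optimal $x_*$ and every sufficiently large $c$, that $\Phi_c(x)\ge F(x_*)=\Phi_c(x_*)$ for all $x\in A$ in a neighbourhood of $x_*$, i.e. local exactness.
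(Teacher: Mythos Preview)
Your proposal is correct; the route in Parts~\ref{stat:NessOpt_PenaltyFunc} and~\ref{stat:SuffOpt_PenaltyFunc} is genuinely different from the paper's and, in fact, more economical. The paper proves both parts essentially from scratch: in Part~\ref{stat:NessOpt_PenaltyFunc} it passes back and forth between a Lagrange multiplier $\lambda_*$ and a point of $\partial\Phi_c(x_*)+N_A(x_*)$ directly (choosing $c\ge\|\lambda_*\|$), and in Part~\ref{stat:SuffOpt_PenaltyFunc} it shows the implication ``\eqref{eq:SuffOptCond} $\Rightarrow$ $0\in\interior(\partial\Phi_c(x_*)+N_A(x_*))$'' by a separation/limit argument (for each $n$ separate $0$ from $\partial\Phi_n(x_*)+N_A(x_*)$, extract a convergent sequence of unit normals, and contradict \eqref{eq:SuffOptCond}), then closes the cycle ``$\Rightarrow$ growth $\Rightarrow$ \eqref{eq:SuffOptCond}'' by direct estimates. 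You instead factor everything through Theorem~\ref{thrm:EquivOptCond_Subdiff} via the single identity $\mathcal{D}(x_*)=\bigcup_{c\ge0}(\partial\Phi_c(x_*)+N_A(x_*))$; the simplex-extraction trick (locate the $d+1$ vertices in members of the union, take $c=\max c_i$) is a clean replacement for the paper's limiting separation argument, and your support-function computation for the equivalence with the growth condition is equivalent to the paper's direct inequality \eqref{eq:ZeroInPenFuncSubdiffIneq}. The trade-off is that the paper's proof is self-contained (it does not rely on Theorem~\ref{thrm:EquivOptCond_Subdiff}), while yours highlights the structural relation between $\mathcal{D}(x_*)$ and the penalty subdifferentials. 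Part~\ref{stat:Exactness_GrowthConditions} is handled the same way in both: Robinson's error bound (the paper cites \cite[Corollary~2.2]{Cominetti}) plus local Lipschitz continuity of $F$ for exactness, and Theorem~\ref{thrm:SuffOptCond} together with Part~\ref{stat:SuffOpt_PenaltyFunc} for the growth equivalence.
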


\begin{proof}
\textbf{Part~\ref{stat:NessOpt_PenaltyFunc}.} Let $\lambda_*$ be a Lagrange multiplier of the problem $(\mathcal{P})$ 
at $x_*$. By definition $\lambda_* \in K^*$ and $\langle \lambda_*, G(x_*) \rangle = 0$, which implies that
$\langle \lambda_*, y - G(x_*) \rangle \le 0$ for all $y \in K$ and for any $c \ge \| \lambda_* \|$ one has
$[D G(x_*)]^* \lambda_* \in c \partial \varphi(x_*)$  (see \eqref{eq:ConeConstPenTerm_Subdiff}). Hence by the definition
of Lagrange multiplier and equality \eqref{eq:DirectDerivOfMaxFunc} for any $c \ge \| \lambda_* \|$ and $h \in T_A(x_*)$
one has
$$
  \max_{v \in \partial \Phi_c(x_*)} \langle v, h \rangle 
  \ge \max_{v \in \partial F(x_*) + [D G(x_*)]^* \lambda_*} \langle v, h \rangle 
  = [L(\cdot, \lambda_*)]'(x_*, h) \ge 0.
$$
Then applying the separation theorem one can easily check that this inequality implies that 
$0 \in \partial \Phi_c(x_*) + N_A(x_*)$ for any $c \ge \| \lambda_* \|$.

Let us now prove the converse statement. Suppose that $0 \in \partial \Phi_c(x_*) + N_A(x_*)$ for some $c \ge 0$. Recall
that by Lemma~\ref{lem:ConeConstrPenFunc_Subdiff} one has 
$\partial \Phi_c(x_*) = \partial F(x_*) + c \partial \varphi(x_*)$. Therefore, there exist 
$v_0 \in \partial F(x_*)$ and $y^* \in Y^*$ such that $\langle y^*, y - G(x_*) \rangle \le 0$ for any $y \in K$,
$\| y^* \| \le 1$, and $(v_0 + c [D G(x_*)]^* y^*) \in - N_A(x_*)$. Denote $\lambda_* = c y^*$. Then by the definition
of normal cone and equality \eqref{eq:DirectDerivOfMaxFunc} one has
$$
  [L(\cdot, \lambda_*)]'(x_*, h) 
  = \max_{v \in \partial F(x_*)} \langle v, h \rangle + \langle \lambda_*, D G(x_*) h \rangle
  \ge \langle v_0 + c [D G(x_*)]^* y^*, h \rangle \ge 0
$$
for all $h \in T_A(x_*)$. Furthermore, from the facts that $\langle \lambda_*, y - G(x_*) \rangle \le 0$ for any 
$y \in K$, $K$ is a convex cone, and $G(x_*) \in K$ it follows that $\lambda_* \in K^*$ and 
$\langle \lambda_*, G(x_*) \rangle$. Therefore $\lambda_*$ is a Lagrange multiplier of $(\mathcal{P})$ at $x_*$.

\textbf{Part~\ref{stat:SuffOpt_PenaltyFunc}.} Let sufficient optimality condition \eqref{eq:SuffOptCond} be satisfied 
at $x_*$. Firstly, we show that $0 \in \relint( \partial \Phi_c(x_*) + N_A(x_*) )$ for some $c > 0$. Arguing by
reductio ad absurdum, suppose that $0 \notin \relint( \partial \Phi_c(x_*) + N_A(x_*) )$ for any $c > 0$. Then by
the separation theorem (see, e.g. \cite[Thrm.~2.17]{BonnansShapiro}) for any $n \in \mathbb{N}$ there exists $h_n \ne 0$
such that $\langle v, h_n \rangle \le 0$ for all $v \in \partial \Phi_n(x_*) + N_A(x_*)$. Replacing, if necessary,
$h_n$ by $h_n / |h_n|$ one can suppose that $|h_n| = 1$. Consequently, there exists a subsequence $\{ h_{n_k} \}$
converging to some $h_*$ with $|h_*| = 1$.

Fix any $c > 0$. As was noted in Remark~\ref{remark:Subdiff_ConeConstrPenFunc}, 
$\partial \Phi_c(x_*) \subseteq \partial \Phi_{n_k}(x_*)$ for any $n_k \ge c$. Therefore, for any $n_k \ge c$ and for
all $v \in \partial \Phi_c(x_*) + N_A(x_*)$ one has $\langle v, h_{n_k} \rangle \le 0$. Passing to the limit as 
$k \to \infty$ one obtains that $\langle v, h_* \rangle \le 0$ for any $v \in \partial \Phi_c(x_*) + N_A(x_*)$ and 
$c > 0$ or, equivalently,
\begin{equation} \label{eq:SepThrm_SubdiffConeConstrPenFunc}
  \langle v_1 + v_2 + v_3, h_* \rangle \le 0 \quad
  \forall v_1 \in \partial F(x_*), \: v_2 \in \bigcup_{c > 0} c \partial \varphi(x_*), \: v_3 \in N_A(x_*).
\end{equation}
Since both $\cup_{c > 0} c \partial \varphi(x_*)$ and $N_A(x_*)$ are cones (recall that $0 \in \partial \varphi(x_*)$),
one has $\langle v_2, h_* \rangle \le 0$ for all $v_2 \in \cup_{c > 0} c \partial \varphi(x_*)$, and
$\langle v_3, h_* \rangle \le 0$ for all $v_3 \in N_A(x_*)$. Consequently, by definition
$h_* \in N_A(x_*)^* = T_A(x_*)^{**} = T_A(x_*)$. Moreover, by Remark~\ref{remark:Subdiff_ConeConstrPenFunc} one
has 
$$
  \bigcup_{c > 0} c \partial \varphi(x_*) 
  = \Big\{ [D G(x)]^* y^* \in \mathbb{R}^d \Bigm| y^* \in (T_K(G(x)))^* \Big\}
$$
which implies that $\langle y^*, D G(x_*) h_* \rangle \le 0$ for all $y^* \in T_K(G(x_*))^*$, i.e. by the definition of
polar cone $D G(x_*) h_* \in [T_K(G(x_*))]^{**} = T_K(G(x_*))$. Thus, taking into account
\eqref{eq:SepThrm_SubdiffConeConstrPenFunc} one obtains that we found $h_* \in T_A(x_*) \setminus \{ 0 \}$ such that
$D G(x_*) h_* \in T_K(G(x_*))$ and $\max_{v \in \partial F(x_*)} \langle v, h_* \rangle \le 0$, which contradicts our
assumption that sufficient optimality condition \eqref{eq:SuffOptCond} holds true at $x_*$. 
Therefore, $0 \in \relint( \partial \Phi_c(x_*) + N_A(x_*) )$ for some $c > 0$.

Let us verify that the topological interior of the set $\partial \Phi_c(x_*) + N_A(x_*)$ is not empty. Then one can
conclude that $0 \in \interior( \partial \Phi_c(x_*) + N_A(x_*) )$. Arguing by reductio ad absurdum, suppose that the
interior of the set $\partial \Phi_c(x_*) + N_A(x_*)$ is empty. Then taking into account the fact that
$0 \in \relint( \partial \Phi_c(x_*) + N_A(x_*) )$ one can conclude that 
$$
  \mathcal{E} 
  = \affine( \partial \Phi_c(x_*) + N_A(x_*) ) = \linhull(\partial \Phi_c(x_*) + N_A(x_*)) \ne \mathbb{R}^d.
$$
Therefore, there exists $h_* \ne 0$ such that $\langle v, h_* \rangle = 0$ for all $v \in \mathcal{E}$. Bearing in mind
the equality $\affine( \partial \Phi_c(x_*) + N_A(x_*) ) = \affine \partial \Phi_c(x_*) + \affine N_A(x_*)$
and the fact that the affine hull of $\partial \Phi_c(x_*)$ does not depend on $c > 0$ by
Remark~\ref{remark:Subdiff_ConeConstrPenFunc} one obtains that $\langle v, h_* \rangle = 0$
for all $v \in \partial \Phi_r(x_*) + N_A(x_*)$ and $r > 0$. Consequently, inequality
\eqref{eq:SepThrm_SubdiffConeConstrPenFunc} is valid, which, as was shown above, contradicts \eqref{eq:SuffOptCond}.
Thus, $0 \in \interior( \partial \Phi_c(x_*) + N_A(x_*) )$ for some $c > 0$.

Suppose now that $0 \in \interior( \partial \Phi_c(x_*) + N_A(x_*) )$ for some $c \ge 0$. Then there exists $\rho > 0$
such that 
$$
  \max_{v \in \partial \Phi_c(x_*) + N_A(x_*)} \langle v, h \rangle \ge \rho |h| \quad \forall h \in \mathbb{R}^d.
$$
Note that by definition for any $h \in T_A(x_*)$ one has $\langle v, h \rangle \le 0$ for all $v \in N_A(x_*)$.
Therefore 
\begin{equation} \label{eq:ZeroInPenFuncSubdiffIneq}
  \max_{v \in \partial \Phi_c(x_*)} \langle v, h \rangle \ge \rho |h| \quad \forall h \in T_A(x_*).
\end{equation}
Fix any $\rho' \in (0, \rho)$. Let us check that $\Phi_c(x) \ge \Phi_c(x_*) + \rho' |x - x_*|$ for any $x \in A$ lying
sufficiently close to $x_*$, i.e. $\Phi_c$ satisfies the first order growth condition on $A$ at $x_*$.

Arguing by reductio ad absurdum, suppose that there exists a sequence $\{ x_n \} \subset A$ converging to $x_*$ such
that $\Phi_c(x_n) < \Phi_c(x_*) + \rho'|x_n - x_*|$. Put $h_n = (x_n - x_*) / |x_n - x_*|$ and 
$\alpha_n = |x_n - x_*|$. Without loss of generality one can suppose that the sequence $\{ h_n \}$ converges to some
vector $h_*$ with $|h_*| = 1$, which obviously belongs to $T_A(x_*)$, since $x_* + \alpha_n x_n = x_n \in A$ by
definition. Hence with the use of Lemma~\ref{lem:ConeConstrPenFunc_Subdiff} one obtains that
$$
  \rho' \ge \lim_{n \to \infty} \frac{\Phi_c(x_n) - \Phi_c(x_*)}{|x_n - x_*|} 
  = \lim_{n \to \infty} \frac{\Phi_c(x_* + \alpha_n h_n) - \Phi_c(x_*)}{\alpha_n} 
  = \max_{v \in \partial \Phi_c(x_*)} \langle v, h_* \rangle,
$$
which contradicts \eqref{eq:ZeroInPenFuncSubdiffIneq}. 

Suppose finally that $\Phi_c$ satisfies the first order growth condition on $A$ at $x_*$. Let us check that sufficient
optimality condition \eqref{eq:SuffOptCond} holds true at $x_*$. Indeed, by our assumption there exist $c \ge 0$, 
$\rho > 0$, and a neighbourhood $\mathcal{O}(x_*)$ of the point $x_*$ such that 
$\Phi_c(x) \ge \Phi_c(x_*) + \rho |x - x_*|$ for all $x \in \mathcal{O}(x_*) \cap A$.

Fix any $h \in T_A(x_*) \setminus \{ 0 \}$ such that $D G(x_*) h \in T_K(G(x_*))$. By the definition of contingent cone
there exist sequences $\{ \alpha_n \} \subset (0, + \infty)$ and $\{ h_n \} \subset \mathbb{R}^d$ such that 
$\alpha_n \to 0$ and  $h_n \to h$ as $n \to \infty$, and $x_* + \alpha_n h_n \in A$ for all $n \in \mathbb{N}$. Hence
for any sufficiently large $n$ one has $\Phi_c(x_* + \alpha_n h_n) - \Phi_c(x_*) \ge \rho \alpha_n |h_n|$, which
obviously implies that $\Phi_c'(x_*, h) \ge \rho |h|$. 

By Remark~\ref{remark:Subdiff_ConeConstrPenFunc} for any $v \in \partial \varphi(x_*)$ one can find a vector 
$y^*(v) \in (T_K(G(x_*)))^*$ such that $v = [D G(x_*)]^* y^*(v)$. Therefore for any $v \in \partial \varphi(x_*)$ one
has $\langle v, h \rangle = \langle y^*(v), D G(x_*) h \rangle \le 0$, since $D G(x_*) h \in T_K(G(x_*))$ by our
assumption. Consequently, by Lemma~\ref{lem:ConeConstrPenFunc_Subdiff} one has
$$
  \max_{v \in \partial F(x_*)} \langle v, h \rangle \ge \max_{v \in \partial \Phi_c(x_*)} \langle v, h \rangle
  = \Phi_c'(x_*, h) \ge \rho |h| > 0,
$$
i.e. sufficient optimality condition \eqref{eq:SuffOptCond} is satisfied at $x_*$.

\textbf{Part~\ref{stat:Exactness_GrowthConditions}.} If RCQ holds true at $x_*$, then by
\cite[Corollary~2.2]{Cominetti} there exist $a > 0$ and a neighbourhood $\mathcal{O}(x_*)$ of $x_*$ such that 
$$
  \varphi(x) = \dist(G(x), K) \ge a \dist(x, A \cap G^{-1}(K))  = a \dist(x, \Omega)
  \quad \forall x \in \mathcal{O}(x_*) \cap A,
$$
where, as above, $\Omega$ is the feasible region of the problem $(\mathcal{P})$. Let us check that the objective
function $F$ is Lipschitz continuous near $x_*$. Then by \cite[Corollary~2.9 and Prp.~2.7]{Dolgopolik_ExPen} one can
conclude that the penalty function $\Phi_c$ is locally exact at $x_*$.

Fix any $r > 0$ and denote $B(x_*, r) = \{ x \in \mathbb{R}^d \mid |x - x_*| \le r \}$. By a nonsmooth version of the
mean value theorem (see, e.g. \cite[Prp.~2]{Dolgopolik_MCD}) for any $x_1, x_2 \in B(x_*, r)$ there exist a point 
$z \in \co\{ x_1, x_2 \} \subset B(x_*, r)$ and 
$v \in \partial F(z)$ such that $F(x_1) - F(x_2) = \langle v, x_1 - x_2 \rangle$. 
Define $L = \max\{ |\nabla_x f(x, \omega)| \mid x \in B(x_*, r), \omega \in W \} < + \infty$. By definition
$v$ belongs to the convex hull $\co\{ \nabla_x f(z, \omega) \mid \omega \in W(z) \}$, which yields $|v| \le L$. Thus, 
$|F(x_1) - F(x_2)| \le L |x_1 - x_2|$ for all $x_1, x_2 \in B(x_*, r)$, i.e. $F$ is Lipschitz continuous near $x_*$.

It remains to note that if RCQ and the first order growth condition for the problem $(\mathcal{P})$ hold at $x_*$,
then by Theorem~\ref{thrm:SuffOptCond} sufficient optimality condition \eqref{eq:SuffOptCond} holds true at $x_*$,
which by the second part of this theorem implies that $\Phi_c$ satisfies the first growth condition on $A$ at $x_*$.
The converse statement, as was noted before this theorem, holds true regardless of RCQ.
\end{proof}

\begin{remark}
{(i)~From the proof of the previous theorem it follows that $\lambda_*$ is a Lagrange multiplier of $(\mathcal{P})$ at
$x_*$ iff $0 \in \partial \Phi_c(x_*) + N_A(x_*)$ for any $c \ge \| \lambda_* \|$.
}

\noindent{(ii)~Observe that if $0 \in  \partial \Phi_c(x_*) + N_A(x_*)$ for some $c \ge 0$, then 
$0 \in \partial \Phi_r(x_*) + N_A(x_*)$ for any $r \ge c$, since $\partial \Phi_c(x_*) \subseteq \partial \Phi_r(x_*)$
by Remark~\ref{remark:Subdiff_ConeConstrPenFunc}. Furthermore, from this inclusion it follows that 
if $0 \in \interior( \partial \Phi_c(x_*) + N_A(x_*) )$ for some $c \ge 0$, then
$0 \in \interior( \partial \Phi_r(x_*) + N_A(x_*) )$ for any $r \ge c$ as well.
}

\noindent{(iii)~Unlike the set $\mathcal{D}(x_*)$ from Theorem~\ref{thrm:EquivOptCond_Subdiff},
the set $\partial \Phi_c(x_*) + N_A(x_*)$ is always closed as the sum of a compact and a closed sets. Moreover, 
the inclusion $\partial \Phi_c(x_*) + N_A(x_*) \subset \mathcal{D}(x_*)$ holds true for any $c \ge 0$. Indeed, by
Lemma~\ref{lem:ConeConstrPenFunc_Subdiff} one has $\partial \Phi_c(x_*) = \partial F(x_*) + c \partial \varphi(x_*)$.
Therefore, it is sufficient to check that $\partial \varphi(x_*) \subset \mathcal{N}(x_*)$, since $\mathcal{N}(x_*)$ is
a cone. Choose any $z^* \in \partial \varphi(x_*)$. By Lemma~\ref{lem:ConeConstrPenFunc_Subdiff} 
one has $z^* = [D G(x_*)]^* y^*$ for some $y^* \in Y^*$ such that $\| y^* \| \le 1$ and 
$\langle y^*, y - G(x_*) \rangle \le 0$ for all $y \in K$. Observe that $0 \in K$ and $2 G(x_*) \in K$, since $K$ is a
cone and $G(x_*) \in K$, which yields $\langle y^*, G(x_*) \rangle = 0$. Furthermore, from the fact that $K$ is a convex
cone it follows that $K + G(x_*) \subseteq K$, which implies that $\langle y^*, y \rangle \le 0$ for all $y \in K$, i.e.
$y^* \in K^*$. Thus, one can conclude that $z^* \in [D G(x_*)](K^* \cap \linhull(G(x_*))^{\perp}) = \mathcal{N}(x_*)$,
i.e. $\partial \varphi(x_*) \subset \mathcal{N}(x_*)$.
}

\noindent{(iv)~From the proof of the second part of the theorem above it follows that the inclusion
$0 \in \interior(\partial \Phi_c(x_*) + N_A(x_*))$ is a sufficient optimality condition for the penalised problem
\eqref{probl:PenalisedProblem}. Moreover, both this condition and optimality condition \eqref{eq:SuffOptCond} are
sufficient conditions for the local exactness of $\Phi_c$. Finally, note that arguing in the same way as in the proof of
the first part of Theorem~\ref{thrm:EquivOptCond_Subdiff} one can easily check that the inclusion
$0 \in \partial \Phi_c(x_*) + N_A(x_*)$ is a necessary optimality condition for problem \eqref{probl:PenalisedProblem}.
\qed
}
\end{remark}

\subsection{Alternance optimality conditions and cadres}
\label{subsect:Alternance_Cadre}

Note that the optimality condition $0 \in \mathcal{D}(x_*)$ from the previous section means that zero can be represented
as the sum of some vectors from the sets $\partial F(x_*)$, $\mathcal{N}(x_*)$, and $N_A(x_*)$. Our aim is to show that
these vectors can be chosen in such a way that they have some useful additional properties, which, in particular, allow
one to check whether the sufficient optimality condition $0 \in \interior \mathcal{D}(x_*)$ is satisfied.

Let $Z \subset \mathbb{R}^d$ be a set consisting of $d$ linearly independent vectors. Let also 
$\eta(x_*) \subseteq \mathcal{N}(x_*)$ and $n_A(x_*) \subseteq N_A(x_*)$ be such that 
$\mathcal{N}(x_*) = \cone \eta(x_*)$ and $N_A(x_*) = \cone n_A(x_*)$, where
$$
  \cone D = \Big\{ \sum_{i = 1}^n \alpha_i x_i \Bigm| 
  x_i \in D, \enspace \alpha_i \ge 0, \enspace i \in \{ 1, \ldots, n \}, \enspace n \in \mathbb{N} \Big\}
$$
is the \textit{convex conic hull} of a set $D \subset \mathbb{R}^d$ (i.e. the smallest convex cone containing the set
$D$). Usually, one chooses $\eta(x_*)$ and $n_A(x_*)$ as the sets of those vectors that correspond to extreme rays
of the cones $\mathcal{N}(x_*)$ and $N_A(x_*)$ respectively.

\begin{definition} \label{def:AlternanceOptCond}
Let $p \in \{ 1, \ldots, d + 1 \}$ be fixed and $x_*$ be a feasible point of the problem $(\mathcal{P})$. One says that
\textit{a $p$-point alternance} exists at $x_*$, if there exist $k_0 \in \{ 1, \ldots, p \}$, 
$i_0 \in \{ k_0 + 1, \ldots, p \}$, vectors
\begin{gather} \label{eq:AlternanceDef}
  V_1, \ldots, V_{k_0} \in \Big\{ \nabla_x f(x_*, \omega) \Bigm| \omega \in W(x_*) \Big\}, \\
  V_{k_0 + 1}, \ldots, V_{i_0} \in \eta(x_*), \quad
  V_{i_0 + 1}, \ldots, V_p \in n_A(x_*), \label{eq:AlternanceDef2}
\end{gather}
and vectors $V_{p + 1}, \ldots, V_{d + 1} \in Z$ such that the d-th order determinants $\Delta_s$ of the matrices
composed of the columns $V_1, \ldots, V_{s - 1}, V_{s + 1}, \ldots V_{d + 1}$ satisfy the following conditions:
\begin{gather} \label{eq:DeterminantsProp}
  \Delta_s \ne 0, \quad s \in \{ 1, \ldots, p \}, \quad
  \sign \Delta_s = - \sign \Delta_{s + 1}, \quad s \in \{ 1, \ldots, p - 1 \}, \\
  \Delta_s = 0, \quad s \in \{ p + 1, \ldots d + 1 \}. \label{eq:DeterminantsProp2}
\end{gather}
Such collection of vectors $\{ V_1, \ldots, V_p \}$ is called a $p$-point alternance at $x_*$. Any $(d + 1)$-point
alternance is called \textit{complete}.
\end{definition}

\begin{remark}
{(i)~Note that in the case of complete alternance one has
$$
  \Delta_s \ne 0 \quad s \in \{ 1, \ldots, d + 1 \}, \quad
  \sign \Delta_s = - \sign \Delta_{s + 1} \quad s \in \{ 1, \ldots, d \},
$$
i.e. the determinants $\Delta_s$, $s \in \{1, \ldots, d + 1 \}$ are not equal to zero and have \textit{alternating}
signs, which explains the term \textit{alternance}.
}

\noindent{(ii)~It should be mentioned that the sets $\eta(x_*)$ and $n_A(x_*)$ are introduced in order to
simplify verification of alternance optimality conditions. It is often difficult to deal with the entire cones
$\mathcal{N}(x_*)$ and $N_A(x_*)$. In turn, the introduction of the sets $\eta(x_*)$ and $n_A(x_*)$ allows
one to use only extreme rays of $\mathcal{N}(x_*)$ and $N_A(x_*)$ respectively. \qed
}
\end{remark}

Before we proceed to an analysis of optimality conditions, let us first show that the definition of $p$-point
alternance with $p \le d$ is invariant with respect to the choice of the set $Z$ and is directly connected to the notion
of \textit{cadre} (meaning \textit{frame}) of a minimax problem (see, e.g. \cite{Descloux,ConnLi92}).

\begin{proposition} \label{prp:AlternanceVsCadre}
Let $x_*$ be a feasible point of the problem $(\mathcal{P})$. Then a $p$-point alternance with 
$p \in \{ 1, \ldots, d + 1 \}$ exists at $x_*$ if and only if there exist $k_0 \in \{ 1, \ldots, p \}$, 
$i_0 \in \{ k_0 + 1, \ldots, p \}$, and vectors
\begin{gather} \label{eq:CadreDef}
  V_1, \ldots, V_{k_0} \in \Big\{ \nabla_x f(x_*, \omega) \Bigm| \omega \in W(x_*) \Big\}, \\
  V_{k_0 + 1}, \ldots, V_{i_0} \in \eta(x_*), \quad
  V_{i_0 + 1}, \ldots, V_p \in n_A(x_*).	\label{eq:CadreDef2}
\end{gather}
such that $\rank([V_1, \ldots, V_p]) = p - 1$ and 
\begin{equation} \label{eq:CadreMultDef}
  \sum_{i = 1}^p \beta_i V_i = 0
\end{equation}
for some $\beta_i > 0$, $i \in \{ 1, \ldots, p \}$. Furthermore, a collection of vectors $\{ V_1, \ldots, V_p \}$
satisfying \eqref{eq:CadreDef} and \eqref{eq:CadreDef2} is a $p$-point alternance at $x_*$ iff 
$\rank([V_1, \ldots, V_p]) = p - 1$ and \eqref{eq:CadreMultDef} holds true.
\end{proposition}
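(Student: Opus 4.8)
The plan is to show that the two conditions in the statement --- existence of a $p$-point alternance, and existence of vectors as in \eqref{eq:CadreDef}--\eqref{eq:CadreDef2} with $\rank([V_1, \ldots, V_p]) = p - 1$ together with a positive combination \eqref{eq:CadreMultDef} --- are equivalent for a \emph{fixed} collection $\{V_1, \ldots, V_p\}$. This immediately yields both assertions of the proposition, since the ``furthermore'' part is the fixed-collection equivalence and the first part follows by existential quantification over collections satisfying \eqref{eq:CadreDef}--\eqref{eq:CadreDef2}. The whole argument is purely linear-algebraic: it concerns $d$ vectors obtained by appending $V_{p+1}, \ldots, V_{d+1} \in Z$ to $\{V_1, \ldots, V_p\}$, and the relationship between the vanishing/sign pattern of the cofactor-type determinants $\Delta_s$ and the rank and null space of the matrix $[V_1, \ldots, V_p]$.

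First I would set up notation: let $M = [V_1, \ldots, V_{d+1}]$ be the $d \times (d+1)$ matrix and recall that $\Delta_s$ is the determinant of $M$ with column $s$ deleted. The key classical fact is that the vector $\Delta = (\Delta_1, -\Delta_2, \Delta_3, \ldots, (-1)^{d}\Delta_{d+1})^{\top}$ (signed cofactors) spans the null space of $M$ whenever $\rank(M) = d$; more precisely, $M\Delta^{\pm} = 0$ always holds (this is the Laplace/cofactor identity, valid even when $M$ is rank-deficient), and $\Delta^{\pm} \neq 0$ iff $\rank(M) = d$. I would then argue as follows. Since $V_{p+1}, \ldots, V_{d+1} \in Z$ are linearly independent, any linear dependence among $V_1, \ldots, V_{d+1}$ with \emph{some} nonzero coefficient among the first $p$ forces $\rank(M) \le d$; conversely, the conditions $\Delta_s = 0$ for $s > p$ say exactly that deleting any one of the last $d+1-p$ columns still leaves a rank-deficient (hence rank $< d$, i.e.\ rank $\le d-1$) matrix, while some $\Delta_s \neq 0$ for $s \le p$ forces $\rank(M) = d - 1$ is \emph{not} quite it --- rather $\rank(M) \ge d - 1$; combined with rank-deficiency this gives $\rank(M) = d - 1$.

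The heart of the matter is translating between the sign-alternation condition in \eqref{eq:DeterminantsProp} and positivity of the coefficients $\beta_i$. Given an alternance, set $\beta_i = |\Delta_i|$ for $i \in \{1, \ldots, p\}$; from $M\Delta^{\pm} = 0$ and $\Delta_s = 0$ for $s > p$ one gets $\sum_{i=1}^{p} (-1)^{i-1}\Delta_i V_i = 0$, and the sign-alternation $\sign\Delta_s = -\sign\Delta_{s+1}$ makes all the terms $(-1)^{i-1}\Delta_i$ have a common sign, so after normalising one obtains \eqref{eq:CadreMultDef} with $\beta_i > 0$; also $\rank(M) = d-1$ together with the structure (the last $d-p+1$ columns lie in the $(d-p+1)$-dimensional span of $Z$-vectors among a rank $d-1$ matrix) gives $\rank([V_1, \ldots, V_p]) = p - 1$. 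Conversely, given $\rank([V_1, \ldots, V_p]) = p - 1$ and a positive combination \eqref{eq:CadreMultDef}, the null space of $[V_1, \ldots, V_p]$ is one-dimensional and spanned by $\beta = (\beta_1, \ldots, \beta_p)^{\top}$ with all $\beta_i > 0$; extending by the independent $Z$-vectors shows $\rank(M) = (p-1) + (d+1-p) = d-1$ again, so $\Delta^{\pm}$ spans $\ker M$ which is one-dimensional and its restriction to the first $p$ coordinates must be proportional to $\beta$ --- whence $\Delta_s = 0$ for $s > p$, $\Delta_s \neq 0$ for $s \le p$, and $(-1)^{s-1}\Delta_s = \pm\lambda\beta_s$ with $\lambda \neq 0$ of fixed sign, which is exactly the sign-alternation \eqref{eq:DeterminantsProp}. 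The main obstacle I expect is the careful bookkeeping in the rank count --- specifically verifying that $\rank([V_1, \ldots, V_p]) = p-1$ together with $V_{p+1}, \ldots, V_{d+1}$ being $d+1-p$ linearly independent vectors does force $\rank(M) = d - 1$ (one must check the added $Z$-vectors are independent \emph{from} the span of $V_1, \ldots, V_p$, which need not be automatic, but holds because $Z$ consists of $d$ linearly independent vectors spanning $\mathbb{R}^d$ and one chooses $V_{p+1}, \ldots, V_{d+1}$ appropriately --- indeed the statement only asserts existence of \emph{some} such $Z$-vectors, and the relevant direction uses that $\Delta^\pm \ne 0$ already encodes the needed independence), and in handling the boundary case $p = d+1$ where there are no $Z$-vectors at all and the argument reduces to the standard cofactor characterisation of null vectors of a square-minus-one matrix.
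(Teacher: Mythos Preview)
Your approach via the cofactor identity $M\Delta^{\pm}=0$ is essentially the paper's Cramer's-rule argument in different clothing, and the overall strategy is correct. However, your rank bookkeeping is systematically off by one: the matrix $M=[V_1,\ldots,V_{d+1}]$ has rank $d$, not $d-1$, whenever the alternance conditions hold (indeed $\Delta_1\ne 0$ already says the $d\times d$ submatrix $[V_2,\ldots,V_{d+1}]$ has full rank $d$), and your own computation $(p-1)+(d+1-p)$ equals $d$, not $d-1$. This matters because your argument later relies on $\ker M$ being one-dimensional and spanned by $\Delta^{\pm}$, which requires precisely $\rank M=d$; with $\rank M=d-1$ every $\Delta_s$ would vanish.

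On the converse direction, you flag but do not resolve the issue of choosing the $Z$-vectors. The fix the paper uses is clean and you should make it explicit: from $\rank([V_1,\ldots,V_p])=p-1$ and $\sum\beta_iV_i=0$ with $\beta_i>0$, rewrite $V_1=-\sum_{i\ge 2}(\beta_i/\beta_1)V_i$ to see that $V_2,\ldots,V_p$ are linearly independent; then, because $Z$ is a basis of $\mathbb{R}^d$, one can always pick $V_{p+1},\ldots,V_{d+1}\in Z$ so that $V_2,\ldots,V_{d+1}$ are linearly independent, which forces $\Delta_1\ne 0$ and hence $\rank M=d$. After that your null-space proportionality argument goes through. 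With these two corrections your proof is complete and matches the paper's.
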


\begin{proof}
Let a $p$-point alternance exist at $x_*$ and let vectors $V_i \in \mathbb{R}^d$ and indices
$k_0 \in \{ 1, \ldots, p \}$, $i_0 \in \{ k_0 + 1, \ldots, p \}$ be from the definition of $p$-point alternance.
Consider the system of linear equations $\sum_{i = 2}^{d + 1} \beta_i V_i = - V_1$ with respect to $\beta_i$. Solving
this system with the use of Cramer's rule one obtains that $\beta_i = (-1)^{i-1} \Delta_i / \Delta_1$ for all 
$i \in \{ 2, \ldots, d + 1 \}$, where $\Delta_i$ are from the definition of $p$-point alternance. Taking into account
\eqref{eq:DeterminantsProp} and \eqref{eq:DeterminantsProp2} one obtains that $\beta_i > 0$ for any 
$i \in \{ 2, \ldots, p \}$ and $\beta_i = 0$ for all $i \in \{ p + 1, \ldots, d + 1 \}$. Note that zero coefficients
$\beta_i$ correspond exactly to those $V_i$ that belong to $Z$. 

Thus, one has $V_1 + \sum_{i = 2}^p \beta_i V_i = 0$ and $\beta_i > 0$ for all $i \in \{ 2, \ldots, p \}$.
Furthermore, from the fact that that by the definition of $p$-point alternance one has
$\Delta_1 = \determ([V_2, \ldots, V_{d + 1}]) \ne 0$ it follows that the vectors $V_2, \ldots, V_p$ are linearly
independent, which implies that $\rank([V_1, \ldots, V_p]) = p - 1$. Hence taking into account
\eqref{eq:AlternanceDef} and \eqref{eq:AlternanceDef2} one obtains that the proof of the ``only if'' part of the
proposition is complete.

Let us prove the converse statement. Suppose at first that $p = 1$. Then $V_1 = 0$ due to \eqref{eq:CadreMultDef}. Take
as $V_2, \ldots, V_{d + 1}$ all vectors from the set $Z$ in an arbitrary order. Since these vectors are linearly
independent, one has $\Delta_1 = \determ([V_2, \ldots, V_{d + 1}]) \ne 0$, and the system 
$\sum_{i = 2}^{d + 1} \gamma_i V_i = - V_1$ has the unique solution $\gamma_i = 0$ for all $i$. Solving this system
with the use of Cramer's rule one obtains that $0 = \gamma_i = (-1)^{i - 1} \Delta_i / \Delta_1$ for all 
$i \in \{ 2, \ldots, d + 1 \}$, where $\Delta_i = \determ([V_1, \ldots, V_{i - 1}, V_{i + 1}, \ldots V_{d + 1}])$.
Thus, $\Delta_i = 0$ for all $i \ge 2$ and the collection $\{ V_1, \ldots, V_{d + 1} \}$ satisfies the definition of
$1$-point alternance.

Suppose now that $p \ge 2$. Rewrite \eqref{eq:CadreMultDef} as follows: 
$\sum_{i = 2}^p (\beta_i / \beta_1) V_i = - V_1$.
Taking into account this equality and the fact that $\rank([V_1, \ldots, V_p]) = p - 1$ one can conclude that 
the vectors $V_2, \ldots, V_p$ are linearly independent. Therefore one can choose
$V_{p + 1}, \ldots, V_{d + 1} \in Z$ such that the vectors $V_2, \ldots, V_{d + 1}$ are linearly independent as well.
Consequently, $\Delta_1 = \determ([V_2, \ldots, V_{d + 1}]) \ne 0$, and the system of linear equations
$\sum_{i = 2}^{d + 1} \gamma_i V_i = - V_1$ with respect to $\gamma_i$ has the unique solution:
$\gamma_i = \beta_i / \beta_1 > 0$ for any $i \in \{ 2, \ldots, p \}$, and $\gamma_i = 0$ for all $i \ge p + 1$. On the
other hand, by Cramer's rule one has $\gamma_i = (-1)^{i - 1} \Delta_i / \Delta_1$ for all $i$, where
$\Delta_i = \determ([V_1, \ldots, V_{i - 1}, V_{i + 1}, \ldots V_{d + 1}])$. Hence conditions
\eqref{eq:DeterminantsProp} and \eqref{eq:DeterminantsProp2} hold true and the collection 
$\{ V_1, \ldots, V_{d + 1} \}$ satisfies the definition of $p$-point alternance.
\end{proof}

\begin{remark}
{(i)~Any collection of vectors $V_1, \ldots, V_p$ with $p \in \{ 1, \ldots, d + 1 \}$ satisfying \eqref{eq:CadreDef},
\eqref{eq:CadreDef2} and such that 
$\rank([V_1, \ldots, V_p]) = \rank([V_1, \ldots, V_{i - 1}, V_{i + 1}, \ldots, V_p]) = p - 1$ for any 
$i \in \{ 1, \ldots, p \}$ is called a $p$-point \textit{cadre} for the problem $(\mathcal{P})$ at $x_*$. One can easily
verify that a collection $V_1, \ldots, V_p$ satisfying \eqref{eq:CadreDef}, \eqref{eq:CadreDef2} is a $p$-point cadre at
$x_*$ iff $\rank([V_1, \ldots, V_p]) = p - 1$ and $\sum_{i = 1}^p \beta_i V_i = 0$ for some $\beta_i \ne 0$, 
$i \in \{ 1, \ldots, p \}$. Any such $\beta_i$ are called \textit{cadre multipliers}. Thus, the proposition above
can be reformulated as follows: a $p$-point alternance exists at $x_*$ iff a $p$-point cadre with positive cadre
multipliers exists at this point. Furthermore, a collection $\{ V_1, \ldots, V_p \}$ with $p \in \{ 1, \ldots, d + 1 \}$
is a $p$-point alternance at $x_*$ iff it is a $p$-point cadre with positive cadre multipliers, which implies that the
definition of $p$-point alternance is invariant with respect to the set $Z$. Note finally that optimality conditions in
terms of such cadres were utilised in \cite{ConnLi92} to design an efficient method for solving unconstrained minimax
problems, while the definition of \textit{cadre} was first given by Descloux in \cite{Descloux}.
}

\noindent{(ii)~It is worth mentioning that from the previous proposition it follows that if any $d$ vectors from the
set $\{ \nabla_x f(x_*, \omega) \mid \omega \in W(x_*) \} \cup \eta(x_*) \cup n_A(x_*)$ are linearly
independent, then only a complete alternance can exist at $x_*$. \qed
}
\end{remark}

Our next goal is demonstrate that both necessary and sufficient optimality conditions for the problem $(\mathcal{P})$
can be written in an \textit{alternance} form. To this end, we will need the following simple geometric result
illustrated by Figure~\ref{fig:ell1_interior}. This result allows one to easily prove that the origin belongs to 
the interior or the relative interior of certain polytopes.

\begin{figure}[t]
\centering
\includegraphics[width=0.5\linewidth]{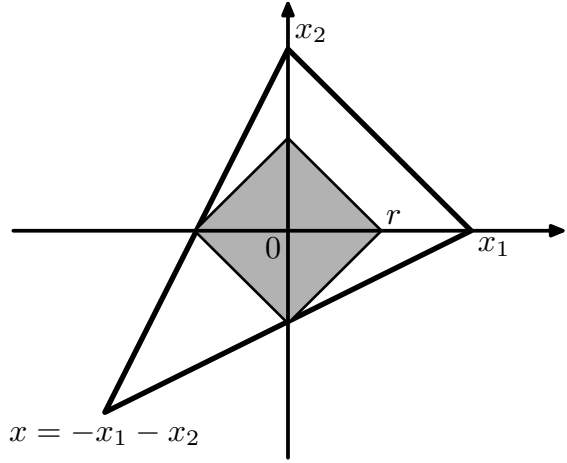}
\caption{The polytope $S = \co\{ x_1, x_2, -x_1 - x_2 \}$ with $x_1 = (1, 0)^T$ and $x_2 = (0, 1)^T$ contains the open
$\ell_1$ ball centered at zero with sufficiently small radius $r > 0$ that can be described as
$\{ z = \alpha_1 x_1 + \alpha_2 x_2 \in \mathbb{R}^2 \mid |\alpha_1| + |\alpha_2| < r \}$.}
\label{fig:ell1_interior}
\end{figure}

\begin{lemma} \label{lem:SimplexZeroInterior}
Let $x_1, \ldots, x_k \in \mathbb{R}^d$ be given vectors, $x = \sum_{i = 1}^k \beta_i x_i$ for some $\beta_i > 0$, and
$S = \co\{ x_1, \ldots, x_k, - x \}$. Then there exists $r > 0$ such that
\begin{equation} \label{eq:ell1_SimplexRelInt}
  \Big\{ z = \sum_{i = 1}^k \alpha_i x_i \Bigm| \sum_{i = 1}^k |\alpha_i| < r \Big\} \subset S.
\end{equation}
\end{lemma}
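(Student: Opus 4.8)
The plan is to reduce the claim to a statement about a single simplex. First I would observe that the set $S=\co\{x_1,\dots,x_k,-x\}$ is a convex polytope and that $0=\frac{1}{1+\sum_i\beta_i}\big((-x)+\sum_i\beta_i x_i\big)$ is a convex combination of its vertices with all coefficients strictly positive; hence $0\in\relint S$ (in the full-dimensional case, $0\in\interior S$). This already tells us that some ball around $0$ lies in $S$, but not one described in the $\ell_1$-coordinate form \eqref{eq:ell1_SimplexRelInt}, so a little more work is needed.

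The key step is to write any $z=\sum_{i=1}^k\alpha_i x_i$ with $\sum_i|\alpha_i|$ small as an explicit convex combination of $x_1,\dots,x_k,-x$. The natural trick is to split each $\alpha_i$ into its positive and negative parts. For each $i$ put $\alpha_i^+=\max\{\alpha_i,0\}$ and $\alpha_i^-=\max\{-\alpha_i,0\}$, so $\alpha_i=\alpha_i^+-\alpha_i^-$ and $|\alpha_i|=\alpha_i^++\alpha_i^-$. Using $-x=-\sum_j\beta_j x_j$, I would write
\begin{equation*}
  z=\sum_{i=1}^k\alpha_i^+ x_i+\sum_{i=1}^k\frac{\alpha_i^-}{\beta_i}\,\beta_i x_i
   =\sum_{i=1}^k\alpha_i^+ x_i-\Big(\max_{1\le j\le k}\frac{\alpha_j^-}{\beta_j}\Big)(-x)+\sum_{i=1}^k\Big(\max_j\frac{\alpha_j^-}{\beta_j}-\frac{\alpha_i^-}{\beta_i}\Big)\beta_i x_i,
\end{equation*}
and then check that all the coefficients of $x_1,\dots,x_k,-x$ in this expression are nonnegative (the coefficient of $(-x)$ is $t:=\max_j\alpha_j^-/\beta_j\ge0$; the coefficient of $x_i$ is $\alpha_i^+ +\beta_i(t-\alpha_i^-/\beta_i)\ge0$ since at most one of $\alpha_i^+,\alpha_i^-$ is nonzero). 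Finally I would normalize: the sum of all these coefficients is $\sum_i\alpha_i^+ + t + \sum_i(\beta_i t-\alpha_i^-)=\sum_i\alpha_i^+ - \sum_i\alpha_i^- + t(1+\sum_i\beta_i)=\big(\sum_i\alpha_i\big)+t(1+\sum_i\beta_i)$, which is positive and tends to $1$ as $\sum_i|\alpha_i|\to0$. Choosing $r>0$ small enough that this total is positive whenever $\sum_i|\alpha_i|<r$ (e.g. any $r$ with $r<1/(1+2\sum_i\beta_i)$ works, since $t\le r/\min_i\beta_i$ and $|\sum_i\alpha_i|\le r$), one divides through and exhibits $z$ as a genuine convex combination, hence $z\in S$.

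The main obstacle is essentially bookkeeping: verifying that after the split-and-substitute the coefficient of every vertex is nonnegative and that the normalizing total stays bounded away from $0$ on a neighbourhood of the origin. There is also a minor degenerate case to dispose of — if some $x_i=0$ the corresponding term is harmless, and if $x=0$ then $-x=0$ too and one simply drops it — but once the convex-combination identity above is in hand these are immediate. No separation theorem or interior argument is actually required; the proof is constructive.
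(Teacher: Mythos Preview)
Your decomposition is sound and the nonnegativity of the individual coefficients is checked correctly, but the final ``normalize'' step contains a genuine error. The total of your coefficients is $s=\sum_i\alpha_i+t\big(1+\sum_j\beta_j\big)$, and this tends to $0$, not to $1$, as $\sum_i|\alpha_i|\to0$. Dividing through by $s$ exhibits $z/s\in S$, not $z\in S$, so the argument as written does not close. The repair is easy once you see that the relevant inequality is $s\le1$ rather than $s>0$: since $0\in S$ (you already noted this), you can pad the combination with the extra term $(1-s)\cdot0$ and obtain a genuine convex combination for $z$ itself. From $|\sum_i\alpha_i|\le r$ and $t\le r/\min_j\beta_j$ one gets $s\le r\big(1+(1+\sum_j\beta_j)/\min_j\beta_j\big)$, so any $r\le\min_j\beta_j/\big(1+\min_j\beta_j+\sum_j\beta_j\big)$ suffices; your stated bound $r<1/(1+2\sum_i\beta_i)$ is aimed at the wrong inequality. (There are also two sign slips in your displayed identity: the first equality should read $-\sum_i(\alpha_i^-/\beta_i)\beta_ix_i$, and the coefficient of $-x$ should be $+t$, not $-t$; your subsequent verbal analysis already matches the corrected signs.)

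For comparison, the paper takes a slightly different route. After noting $0\in S$, it shows, for each $i$, that $-(\beta_i/\gamma_i)x_i\in S$ with $\gamma_i=1+\sum_{j\ne i}\beta_j$, as a convex combination of $-x$ and the $x_j$ with $j\ne i$; taking convex hulls with $0$ then gives $\pm r x_i\in S$ for $r=\min\{1,\beta_1/\gamma_1,\dots,\beta_k/\gamma_k\}$. Any $z$ with $\theta(z):=\sum_i|\alpha_i|<r$ is finally written as the convex combination $z=\sum_i(|\alpha_i|/\theta(z))\big(\sign(\alpha_i)\,\theta(z)x_i\big)$ of the $2k$ points $\pm\theta(z)x_i\in S$. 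Your (corrected) argument is a bit more direct, producing $z$ as a combination of the $k+2$ points $x_1,\dots,x_k,-x,0$; the paper's detour through the crosspolytope $\co\{\pm rx_i\}$ makes the $\ell_1$-ball picture more transparent and yields a clean closed-form $r$.
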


\begin{proof}
Observe that $0 \in S$, since
$$
  0 = \frac{1}{1 + \beta_1 + \ldots + \beta_k} x 
  + \frac{1}{1 + \beta_1 + \ldots + \beta_k} \sum_{i = 1}^k \beta_i x_i \in S.
$$
Hence, in particular, $\co\{ 0, z \} \subset S$ for all $z \in S$. 

Denote $\gamma_i = 1 + \sum_{j \ne i} \beta_j$. Then
$$
  - \frac{\beta_i}{\gamma_i} x_i = \frac{1}{\gamma_i} x
  + \sum_{j \ne i} \frac{\beta_j}{\gamma_i} x_j \in S \quad \forall i \in \{ 1, \ldots, k \}. 
$$
Define $r = \min\{ 1, \beta_1 / \gamma_1, \ldots, \beta_k / \gamma_k \}$. Then taking into account the fact
that $\co\{ 0, z \} \subset S$ for all $z \in S$ one obtains that $\pm r x_i \in S$ for all $i \in \{ 1, \ldots, k \}$.

Fix any $z = \sum_{i = 1}^k \alpha_i x_i$ with $\theta(z) = \sum_{i = 1}^k |\alpha_i| < r$. If $\theta(z) = 0$, then 
$z = 0$ and $z \in S$. Therefore, suppose that $\theta(z) \ne 0$. 
Then $\pm \theta(z) x_i \in \co \{ \pm r x_i \} \subset S$, which implies that
$$
  z = \sum_{i = 1}^k \frac{|\alpha_i|}{\theta(z)} \Big( \sign(\alpha_i) \theta(z) x_i \Big) \in S
$$
(here $\sign(0) = 0$). Thus, \eqref{eq:ell1_SimplexRelInt} holds true.
\end{proof}

\begin{theorem} \label{thrm:AlternanceCond}
Let $x_*$ be a feasible point of the problem $(\mathcal{P})$. Then: 
\begin{enumerate}
\item{$0 \in \mathcal{D}(x_*)$ iff for some $p \in \{1, \ldots, d + 1 \}$ a $p$-point alternance exists at~$x_*$;
\label{stat:NessOpt_Alternance}}

\item{if a complete alternance exists at $x_*$, then $0 \in \interior \mathcal{D}(x_*)$ and $\partial F(x_*) \ne \{ 0
\}$.
\label{stat:CompleteAlternance_SuffOpt}}
\end{enumerate}
\end{theorem}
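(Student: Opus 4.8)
The plan is to prove the two statements separately, in both cases exploiting the characterisation of $p$-point alternance as a $p$-point cadre with positive cadre multipliers provided by Proposition~\ref{prp:AlternanceVsCadre}, together with Theorem~\ref{thrm:EquivOptCond_Subdiff} which identifies $0 \in \mathcal{D}(x_*)$ with the existence of a Lagrange multiplier and $0 \in \interior \mathcal{D}(x_*)$ with sufficient optimality condition \eqref{eq:SuffOptCond}. For part~\ref{stat:NessOpt_Alternance}, the ``if'' direction is immediate: if a $p$-point alternance exists, then by Proposition~\ref{prp:AlternanceVsCadre} there are vectors $V_1, \ldots, V_p$ from $\{\nabla_x f(x_*,\omega) \mid \omega \in W(x_*)\}$, $\eta(x_*)$, and $n_A(x_*)$ with $\sum_{i=1}^p \beta_i V_i = 0$ for some $\beta_i > 0$. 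Normalising by $\sum \beta_i$ and splitting the sum according to which of the three sets each $V_i$ lies in, the portion coming from the gradients is a convex combination lying in $\partial F(x_*) = \co\{\nabla_x f(x_*,\omega) \mid \omega \in W(x_*)\}$, while the portions coming from $\eta(x_*) \subseteq \mathcal{N}(x_*)$ and $n_A(x_*) \subseteq N_A(x_*)$ are nonnegative combinations lying in the cones $\mathcal{N}(x_*)$ and $N_A(x_*)$ (both of which contain $0$, so a sum with possibly zero terms is harmless); hence $0 \in \partial F(x_*) + \mathcal{N}(x_*) + N_A(x_*) = \mathcal{D}(x_*)$.

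For the ``only if'' direction of part~\ref{stat:NessOpt_Alternance}, suppose $0 \in \mathcal{D}(x_*)$. Then we may write $0 = u_0 + u_1 + u_2$ with $u_0 \in \partial F(x_*)$, $u_1 \in \mathcal{N}(x_*) = \cone \eta(x_*)$, $u_2 \in N_A(x_*) = \cone n_A(x_*)$. Expanding $u_0$ as a convex combination of gradients $\nabla_x f(x_*,\omega_j)$ and $u_1, u_2$ as nonnegative combinations of elements of $\eta(x_*)$, $n_A(x_*)$, we obtain a representation $\sum_{i=1}^m \beta_i V_i = 0$ with all $\beta_i > 0$ (discard any zero coefficients) and the $V_i$ drawn in order from the three sets. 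Now I would apply a Carathéodory-type reduction: among all such representations choose one with the smallest number $p$ of terms. Minimality forces the vectors $V_2, \ldots, V_p$ (say) to be linearly independent — otherwise a nontrivial linear dependence could be added to the $\beta$'s scaled so as to zero out one coefficient while keeping the others positive, contradicting minimality — so $\rank([V_1,\ldots,V_p]) = p-1$, and $p \le d+1$ since at most $d$ of the $V_i$ can be independent. Finally one must check the ordering convention in \eqref{eq:CadreDef}--\eqref{eq:CadreDef2} can be respected, which is just relabelling. By Proposition~\ref{prp:AlternanceVsCadre} this is exactly a $p$-point alternance. The main obstacle here is the careful bookkeeping in the minimal-representation argument — ensuring the coefficient-adjustment step preserves positivity of the surviving multipliers while strictly reducing the count, and that the independence conclusion is tight.

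For part~\ref{stat:CompleteAlternance_SuffOpt}, suppose a complete $(d+1)$-point alternance exists. By Proposition~\ref{prp:AlternanceVsCadre} there are $V_1, \ldots, V_{d+1}$ with $\sum_{i=1}^{d+1} \beta_i V_i = 0$, $\beta_i > 0$, and $\rank([V_1,\ldots,V_{d+1}]) = d$, i.e. the $V_i$ span $\mathbb{R}^d$. I would invoke Lemma~\ref{lem:SimplexZeroInterior}: taking the $x_i$ of that lemma to be $V_1, \ldots, V_{d+1}$ after absorbing the $\beta_i$ (or directly, taking $x_i = \beta_i V_i$ so that $\sum x_i = 0$ and $-\sum_{i<d+1} x_i = x_{d+1}$), the lemma produces $r > 0$ with an $\ell_1$-type neighbourhood of $0$ — expressed in the spanning system $\{V_i\}$ — contained in $\co\{V_1,\ldots,V_{d+1}\} \subseteq \partial F(x_*) + \mathcal{N}(x_*) + N_A(x_*) = \mathcal{D}(x_*)$ (using again that each $V_i$ lies in one of the three building-block sets and that the convex hull of points drawn from $\partial F(x_*)$, $\mathcal{N}(x_*)$, $N_A(x_*)$ lies in their Minkowski sum since the latter two are cones containing $0$). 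Because the $V_i$ span $\mathbb{R}^d$, this $\ell_1$-neighbourhood is a genuine neighbourhood of $0$ in $\mathbb{R}^d$, so $0 \in \interior \mathcal{D}(x_*)$. For the claim $\partial F(x_*) \ne \{0\}$: if $\partial F(x_*) = \{0\}$ then every $V_i$ of gradient type is $0$; but $\rank([V_1,\ldots,V_{d+1}]) = d \ge 1$ forbids all $V_i$ from being zero, and moreover a cadre by definition has $\rank([V_1,\ldots,V_{i-1},V_{i+1},\ldots,V_p]) = p-1$ for each $i$ — so no $V_i$ is zero, hence at least one gradient-type $V_i$ is nonzero (if there are any). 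The one remaining point is to argue there must in fact be a gradient-type vector among the $V_i$: if $k_0 = 0$ were allowed then $0 = \sum \beta_i V_i$ with all $V_i \in \mathcal{N}(x_*) \cup N_A(x_*)$, but the definition of alternance fixes $k_0 \in \{1,\ldots,p\}$, forcing $k_0 \ge 1$, so indeed some $V_i$ is a gradient and, being nonzero, witnesses $\partial F(x_*) \ne \{0\}$.
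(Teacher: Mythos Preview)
Your overall plan matches the paper's approach closely --- Carath\'eodory reduction for part~\ref{stat:NessOpt_Alternance}, Lemma~\ref{lem:SimplexZeroInterior} plus the spanning property for part~\ref{stat:CompleteAlternance_SuffOpt} --- but there is a genuine gap in part~\ref{stat:CompleteAlternance_SuffOpt}, and two smaller slips in part~\ref{stat:NessOpt_Alternance}.

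\textbf{The main gap.} In part~\ref{stat:CompleteAlternance_SuffOpt} you assert that $\co\{V_1,\ldots,V_{d+1}\} \subseteq \mathcal{D}(x_*)$ because ``the convex hull of points drawn from $\partial F(x_*)$, $\mathcal{N}(x_*)$, $N_A(x_*)$ lies in their Minkowski sum since the latter two are cones containing $0$.'' This is false: take $d=2$, $\partial F(x_*)=\{(1,0)\}$, $\mathcal{N}(x_*)=\{0\}\times[0,\infty)$, $N_A(x_*)=\{0\}$. Then $\mathcal{D}(x_*)=\{1\}\times[0,\infty)$, yet $\tfrac12(1,0)+\tfrac12(0,1)=(\tfrac12,\tfrac12)\notin\mathcal{D}(x_*)$. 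The problem is that $\partial F(x_*)$ is not a cone, so a convex combination that puts total weight $<1$ on the gradient-type vectors need not land in $\mathcal{D}(x_*)$. The paper's fix is to show each individual $V_i \in \mathcal{D}(x_*)$ (then convexity of $\mathcal{D}(x_*)$ finishes). For $i\le k_0$ this is immediate. For $i>k_0$ the paper uses the very relation $\sum_{j=1}^{d+1}\alpha_j V_j=0$ with $\sum_{j\le k_0}\alpha_j=1$ to write
\[
  V_i \;=\; 0 + V_i \;=\; \underbrace{\sum_{j\le k_0}\alpha_j V_j}_{\in\,\partial F(x_*)} \;+\; \underbrace{\Big(\sum_{k_0<j\le i_0}\alpha_j V_j + [\![\,i\le i_0\,]\!]\,V_i\Big)}_{\in\,\mathcal{N}(x_*)} \;+\; \underbrace{\Big(\sum_{j>i_0}\alpha_j V_j + [\![\,i>i_0\,]\!]\,V_i\Big)}_{\in\,N_A(x_*)},
\]
which is exactly where the alternance identity is needed.

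\textbf{Minor issues in part~\ref{stat:NessOpt_Alternance}.} In the ``if'' direction, normalising by $\sum_i\beta_i$ (all $i$) makes the gradient portion a \emph{sub}-convex combination, which need not lie in $\partial F(x_*)$; you must normalise by $\sum_{i\le k_0}\beta_i$ instead (the cone parts survive any positive rescaling). In the ``only if'' direction, the minimal-representation reduction can eliminate \emph{all} gradient-type vectors (e.g.\ $d=1$, $V_1=1$ a gradient, $V_2=-1,V_3=1\in\eta(x_*)$, $\beta=(1,2,1)$: one valid reduction yields $2V_2+2V_3=0$), violating $k_0\ge 1$. The paper avoids this by treating the case $0\in\partial F(x_*)$ separately, and otherwise fixing one gradient vector $V_1=v_1$ and applying the conic Carath\'eodory theorem only to express $-v_1$ in terms of the remaining vectors.
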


\begin{proof}
\textbf{Part~\ref{stat:NessOpt_Alternance}.} ``$\implies$'' Let $0 \in \mathcal{D}(x_*)$. If 
$0 \in \partial F(x_*) = \co\{ \nabla_x f(x_*, \omega) \mid \omega \in W(x_*) \}$, then by Carath\'{e}odory's theorem
(see, e.g. \cite[Corollary~17.1.1]{Rockafellar}) zero can be expressed as a convex combination
of $d + 1$ or fewer affinely independent vectors from $\{ \nabla_x f(x_*, \omega) \mid \omega \in W(x_*) \}$.
Thus, there exist $p \in \{ 1, \ldots, d + 1 \}$, $V_i \in \{ \nabla_x f(x_*, \omega) \mid \omega \in W(x_*) \}$, and
$\alpha_i > 0$, $i \in \{ 1, \ldots, p \}$, such that the vectors $V_i$ are affinely independent and
\begin{equation} \label{eq:ZeroElementOfMaxFuncSubdiff}
  0 = \sum_{i = 1}^p \alpha_i V_i, \quad \sum_{i = 1}^p \alpha_i = 1.
\end{equation}
If $p = 1$, then denote by $V_2, \ldots, V_{d + 1}$ all vectors from the set $Z$. Then $\Delta_1
\ne 0$, and  $\Delta_s = 0$ for all $s \in \{ 2, \ldots, d + 1 \}$, since $V_1 = 0$, that is, a $1$-point alternance
exists at $x_*$. Otherwise, note that by the definition of affine independence the vectors 
$V_2 - V_1, \ldots, V_p - V_1$ are linearly independent. Hence taking into account
\eqref{eq:ZeroElementOfMaxFuncSubdiff} and the fact that 
$\linhull( V_2 - V_1, \ldots, V_p - V_1) \subseteq \linhull(V_1, \ldots, V_p)$ one obtains that
$\dimens \linhull(V_1, \ldots, V_p) = p - 1$. Consequently, the collection $\{ V_1, \ldots, V_p \}$ contains exactly 
$p - 1$ linearly independent vectors. Renumbering $V_i$, if necessary, one can suppose that the vectors 
$V_2, \ldots, V_p$ are linearly independent. Since the set $Z$ contains $d$ linearly independent vectors, one can
choose vectors $V_{p + 1}, \ldots, V_{d + 1} \in Z$ in such a way that the vectors $V_2, \ldots, V_{d + 1}$ are linearly
independent, which yields $\Delta_1 \ne 0$. 

Now, consider the system of linear equations $- V_1 = \sum_{i = 2}^{d + 1} \beta_i V_i$ with respect to $\beta_i$.
Solving this system with the use of Cramer's rule and bearing in mind equalities \eqref{eq:ZeroElementOfMaxFuncSubdiff}
one obtains that $\beta_i = (-1)^{i - 1} \Delta_i / \Delta_1 = \alpha_i / \alpha_1 > 0$ for any 
$i \in \{ 2, \ldots, p \}$, and $\beta_i = (-1)^{i - 1} \Delta_i / \Delta_1 = 0$ for any $i \ge p + 1$. Thus, conditions
\eqref{eq:DeterminantsProp} and \eqref{eq:DeterminantsProp2} hold true, i.e. a $p$-point alternance exists at $x_*$.
Therefore, one can suppose that $0 \notin \partial F(x_*)$.

Since $0 \in \mathcal{D}(x_*)$ and $0 \notin \partial F(x_*)$, there exist $k, r, \ell \in \mathbb{N}$, 
$\omega_i \in W(x_*)$, $\alpha_i \in (0, 1]$ , $u_j \in \eta(x_*)$, $\beta_j \ge 0$, $z_s \in n_A(x_*)$, and 
$\gamma_s \ge 0$ (here $i \in \{ 1, \ldots, k \}$, $j \in \{ 1, \ldots, r \}$, and $s \in \{ 1, \ldots, \ell \}$) such
that
$$
  0 = \sum_{i = 1}^k \alpha_i v_i + \sum_{j = 1}^r \beta_j u_j + \sum_{s = 1}^{\ell} \gamma_s z_s, 
  \quad \sum_{i = 1}^k \alpha_i = 1,
$$
where $v_i = \nabla_x f(x_*, \omega_i)$ for all $i \in \{ 1, \ldots, k \}$. Hence
$$
  \sum_{i = 2}^k \frac{\alpha_i}{\alpha_1} v_i + \sum_{j = 1}^r \frac{\beta_j}{\alpha_1} u_j + 
  \sum_{s = 1}^{\ell} \frac{\gamma_s}{\alpha_1} z_s = - v_1,
$$
i.e. $- v_1$ belongs to $\cone(\mathcal{E})$ with 
$\mathcal{E} = \{ v_2, \ldots, v_k, u_1, \ldots, u_r, z_1, \ldots, z_{\ell} \}$. Applying a simple modification of the
Carath\'eodory's theorem to the case of convex conic combinations (see, e.g. \cite[Corollary~17.1.2]{Rockafellar}) one
obtains that there exist $p \in \{ 2, \ldots, d + 1 \}$ and linearly independent vectors 
$V_2, \ldots V_p \in \mathcal{E}$ such that $- v_1 = \sum_{i = 2}^p \lambda_i V_i$ for some $\lambda_i > 0$. 
Clearly, one can suppose that there exist $k_0 \in \{ 1, \ldots p \}$ and $i_0 \in \{ k_0 + 1, \ldots, p \}$ such that
\eqref{eq:AlternanceDef} and \eqref{eq:AlternanceDef2} hold true.

Put $V_1 = v_1$, and choose vectors $V_{p + 1}, \ldots V_{d + 1}$ from the set $Z$ in such a way that the vectors 
$V_2, \ldots, V_{d + 1}$ are linearly independent. Then one obtains that the systems
\begin{equation} \label{eq:AlternanceLinSystm}
  \sum_{i = 2}^{d + 1} \beta_i V_i = - V_1.
\end{equation}
has the unique solution $\beta_i = \lambda_i$, if $2 \le i \le p$, and $\beta_i = 0$, if $p + 1 \le i \le d + 1$.
Applying Cramer's rule to system \eqref{eq:AlternanceLinSystm} one gets that 
$\beta_i = (-1)^{i - 1} \Delta_i / \Delta_1$ for all $i \in \{ 2, \ldots, d + 1 \}$, where $\Delta_i$ are from
Def.~\ref{def:AlternanceOptCond}, which implies that \eqref{eq:DeterminantsProp} and \eqref{eq:DeterminantsProp2} hold
true. Thus, a $p$-point alternance exists at the point $x_*$.

\textbf{Part~\ref{stat:NessOpt_Alternance}.} ``$\impliedby$''. Let vectors $V_1, \ldots, V_{d + 1}$ be from the
definition of $p$-point alternance. Applying Cramer's rule to system \eqref{eq:AlternanceLinSystm} one obtains
that
$$
  - V_1 = \sum_{i = 2}^p \beta_i V_i, \quad 
  \beta_i = (- 1)^{i - 1} \frac{\Delta _i}{\Delta_1} > 0 \quad \forall i \in \{ 2, \ldots, p \}.
$$
Denote $\beta_0 = 1 + \beta_2 + \ldots + \beta_{k_0} > 0$, and define $\alpha_1 = 1 / \beta_0 > 0$ and
$\alpha_i = \beta_i / \beta_0 \ge 0$ for all $i \in \{ 2, \ldots, d + 1 \}$. Then one has
\begin{equation} \label{eq:ConvConicCombOfAlterPoints}
  \sum_{i = 1}^p \alpha_i V_i = 0, \quad \sum_{i = 1}^{k_0} \alpha_i = 1,
\end{equation}
i.e. $v_1 + v_2 + v_3 = 0$, where
$$
  v_1 = \sum_{i = 1}^{k_0} \alpha_i V_i, \quad v_2 = \sum_{i = k_0 + 1}^{i_0} \alpha_i V_i, \quad
  v_3 = \sum_{i = i_0 + 1}^{p} \alpha_i V_i
$$
(here, $v_2 = v_3 = 0$, if $k_0 = p$, and $v_3 = 0$, if $i_0 = p$). From the definition of alternance and the second
equality in \eqref{eq:ConvConicCombOfAlterPoints} it follows that $v_1 \in \partial F(x_*)$, $v_2 \in \mathcal{N}(x_*)$,
and $V_3 \in N_A(x_*)$. Thus, $0 \in \mathcal{D}(x_*)$.

\textbf{Part~\ref{stat:CompleteAlternance_SuffOpt}}. Suppose that a complete alternance $V_1, \ldots, V_{d + 1}$ exists
at $x_*$. Note that $V_1 \ne 0$, since all $\Delta_i$ are nonzero, which implies that $\partial F(x_*) \ne \{ 0 \}$.

Applying Cramer's rule to system \eqref{eq:AlternanceLinSystm} one gets that
\begin{equation} \label{eq:ReDefOfComplAlternance}
  - V_1 = \sum_{i = 2}^{d + 1} \beta_i V_i, \quad
  \beta_i = (- 1)^{i - 1} \frac{\Delta _i}{\Delta_1} > 0 \quad \forall i \in \{ 2, \ldots, d + 1 \}.
\end{equation}
Denote $\beta_0 = 1 + \beta_2 + \ldots + \beta_{k_0} > 0$, and define $\alpha_1 = 1 / \beta_0 > 0$ and
$\alpha_i = \beta_i / \beta_0 > 0$ for all $i \in \{ 2, \ldots, d + 1 \}$. Then \eqref{eq:ConvConicCombOfAlterPoints}
with $p = d + 1$ holds true.

Recall that by the definition of alternance $V_1, \ldots, V_{k_0} \in \partial F(x_*)$. Therefore, 
$V_1, \ldots, V_{k_0} \in \mathcal{D}(x_*) = \partial F(x_*) + \mathcal{N}(x_*) + N_A(x_*)$, since 
$0 \in \mathcal{N}(x_*)$ and $0 \in N_A(x_*)$. Moreover, from \eqref{eq:ConvConicCombOfAlterPoints} and the fact that
both $\mathcal{N}(x_*)$ and $N_A(x_*)$ are convex cones it follows that
\begin{align*}
  V_i = 0 + V_i &= \sum_{j = 1}^{k_0} \alpha_j V_j + \sum_{j = k_0 + 1}^{i_0} \alpha_j V_j + V_i
  + \sum_{j = i_0 + 1}^{d + 1} \alpha_j V_j \\
  &\in \partial F(x_*) + \mathcal{N}(x_*) + N_A(x_*) = \mathcal{D}(x_*)
\end{align*}
for any $i \in \{ k_0 + 1, \ldots, d + 1 \}$. 
Therefore $S(x_*) = \co\{ V_1, \ldots, V_{d + 1} \} \subset \mathcal{D}(x)$ by virtue of the fact that
$\mathcal{D}(x_*)$ is a convex set.

Let $e_1, \ldots, e_d$ be the canonical basis of $\mathbb{R}^d$ and $\overline{e} = (- \beta_1, \ldots, - \beta_d)^T$,
where $\beta_i$ are from \eqref{eq:ReDefOfComplAlternance}. Denote $S = \co \{ e_1, \ldots, e_d, \overline{e} \}$ and
define a linear mapping $T \colon \mathbb{R}^d \to \mathbb{R}^d$ by setting $T e_i = V_{i + 1}$ for 
all $i \in \{ 1, \ldots, d \}$. Then $T \overline{e} = V_1$ due to \eqref{eq:ReDefOfComplAlternance} and $T S = S(x_*)$.
Bearing in mind the fact that by the definition of complete alternance 
$\Delta_1 = \determ([V_2, \ldots, V_{d + 1}]) \ne 0$, i.e. the vectors $V_2, \ldots, V_{d + 1}$ are linearly
independent, one obtains that $T$ is a linear bijection, which, in particular, implies that $T$ is an open mapping. Let
us show that $0 \in \interior S$. Then taking into account the facts that $T(\interior S)$ is an open set and 
by definitions $0 \in T(\interior S) \subset S(x_*) \subset \mathcal{D}(x_*)$ one arrives at the required result.

For any $x = (x^{(1)}, \ldots, x^{(d)})^T \in \mathbb{R}^d$ denote $\| x \|_1 = |x^{(1)}| + \ldots + |x^{(d)}|$.
Applying Lemma~\ref{lem:SimplexZeroInterior} with $k = d$, $x_i = e_i$ for all $i \in \{ 1, \ldots, d \}$, and 
$x = - \overline{e}$ one obtains that there exists $r > 0$ such that
$\{ x \in \mathbb{R}^d \mid \| x \|_1 < r \} \subset S$, that is, $0 \in \interior S$, and the proof is
complete.
\end{proof}

Thus, the existence of a $p$-point alternance (or, equivalently, the existence of a $p$-point cadre with positive cadre
multipliers) at a feasible point $x_*$ for some $p \in \{ 1, \ldots, d + 1 \}$  is a necessary optimality condition for
the problem $(\mathcal{P})$, while the existence of a complete alternance is a sufficient optimality condition, which
by Theorems~\ref{thrm:SuffOptCond} and \ref{thrm:EquivOptCond_Subdiff} implies that the first order growth condition
holds at $x_*$. As the following example shows, the converse statement is not true, that is, the sufficient optimality
condition $0 \in \interior \mathcal{D}(x_*)$ does not necessarily imply that a complete alternance exists at $x_*$.

\begin{example} \label{exmpl:ComplAtern_CounterExampl}
Consider the unconstrained problem
\begin{equation} \label{probl:ComplAltern_CounterExmpl}
  \min_{x \in \mathbb{R}^d} \: F(x) = \| x \|_{\infty} = \max\big\{ \pm x^{(1)}, \ldots, \pm x^{(d)} \big\}.
\end{equation}
Clearly, $x_* = 0$ is a point of global minimum of this problem and the first order growth condition holds at $x_*$,
since, as is easy to see, $F(x) \ge |x| / \sqrt{n}$ for all $x \in \mathbb{R}^d$. Observe that by definition
$\partial F(0) = \co\big\{ \pm e_1, \ldots, \pm e_d \big\}$. Thus, in accordance with Thrms.~\ref{thrm:SuffOptCond}
and \ref{thrm:EquivOptCond_Subdiff} the sufficient optimality condition $0 \in \interior \partial F(0)$ is satisfied.
However, a complete alternance does not exists at $x_* = 0$.

Indeed, suppose that a $p$-point alternance for some $p \in \{ 1, \ldots, d + 1 \}$ exists at $x_*$. Then by 
Proposition~\ref{prp:AlternanceVsCadre} there exist $V_1, \ldots, V_p \in \{ \pm e_1, \ldots, \pm e_d \}$ such that
$\rank([V_1, \ldots, V_p]) = p - 1$ and $\sum_{i = 1}^p \beta_i V_i = 0$ for some $\beta_i > 0$. Renumbering vectors
$V_i$, if necessary, one can suppose that the vectors $V_1, \ldots, V_{p - 1}$ are linearly independent. Hence taking
into account the fact that each $V_i$ is equal to either $e_{k_i}$ or $-e_{k_i}$ for some $k_i \in \{ 1, \ldots, d \}$
and $\sum_{i = 1}^p \beta_i V_i = 0$ for some $\beta_i > 0$ one obtains that $p = 2$. Thus, for any $d \in \mathbb{N}$
only a $2$-point alternance exists at $x_* = 0$ (note that for any $i \in \{ 1, \ldots, d \}$ the collection 
$\{ e_i, -e_i \}$ satisfies the assumptions of Proposition~\ref{prp:AlternanceVsCadre}, i.e.
a $2$-point alternance does exist at $x_*$).

Note, however, that if one modifies the definition of alternance by allowing the vectors $V_1, \ldots, V_{k_0}$ to
belong to the entire subdifferential $\partial F(x_*)$ (see Definition~\ref{def:AlternanceOptCond}), then 
a complete alternance exists at $x_* = 0$ in the problem under consideration. Indeed, defined $V_i = e_i$ for any 
$i \in \{ 1, \ldots, d \}$ and put $V_{d + 1} = (-1/d, \ldots, -1/d)^T \in \partial F(x_*)$. Then, as is easily seen,
$\Delta_i = \determ([V_1, \ldots, V_{i - 1}, V_{i + 1}, \ldots, V_{d + 1}]) = (-1)^{d - i} (- 1/d)$ for any 
$i \in \{ 1, \ldots, d \}$ and $\Delta_{d + 1} = 1$, i.e. conditions \eqref{eq:DeterminantsProp} and
\eqref{eq:DeterminantsProp2} are satisfied. \qed
\end{example}

The example above motivates us to introduce a weakened definition of alternance. 

\begin{definition}
One says that \textit{a generalised $p$-point alternance} exists at $x_*$, if there exist $k_0 \in \{ 1, \ldots, p \}$, 
$i_0 \in \{ k_0 + 1, \ldots, p \}$, vectors
\begin{equation} \label{eq:GenAlternanceDef}
  V_1, \ldots, V_{k_0} \in \partial F(x_*), \quad
  V_{k_0 + 1}, \ldots, V_{i_0} \in \mathcal{N}(x_*), \quad
  V_{i_0 + 1}, \ldots, V_p \in N_A(x_*),
\end{equation}
and vectors $V_{p + 1}, \ldots, V_{d + 1} \in Z$ such that conditions \eqref{eq:DeterminantsProp} and
\eqref{eq:DeterminantsProp2} hold true. Such collection of vectors $\{ V_1, \ldots, V_p \}$ is called a 
\textit{a generalised $p$-point alternance} at $x_*$. Any generalised $(d + 1)$-point alternance is called
\textit{complete}.
\end{definition}

\begin{remark} \label{rmrk:GenAlternanceVsCadre}
Almost literally repeating the proof of Proposition~\ref{prp:AlternanceVsCadre} one obtains that a generalised $p$-point
alternance with  $p \in \{ 1, \ldots, d + 1 \}$ exists at $x_*$ iff there exist $k_0 \in \{ 1, \ldots, p \}$, 
$i_0 \in \{ k_0 + 1, \ldots, p \}$, and vectors $V_1, \ldots, V_p$ satisfying \eqref{eq:GenAlternanceDef} such that
$\rank([V_1, \ldots, V_p]) = p - 1$ and $\sum_{i = 1}^p \beta_i V_i = 0$ for some $\beta_i > 0$, 
$i \in \{ 1, \ldots, p \}$. \qed
\end{remark}

Clearly, any $p$-point alternance is a generalised $p$-point alternance as well. Therefore by
Theorem~\ref{thrm:AlternanceCond} the existence of a generalised $p$-point alternance is a necessary optimality
condition for the problem $(\mathcal{P})$ that is equivalent to the existence of a Lagrange multiplier (the fact that
the existence of a generalised $p$-point alternance implies the inclusion $0 \in \mathcal{D}(x_*)$ is proved in exactly
the same
way as the analogous statement for non-generalised $p$-point alternance). 

In the general case the existence of a generalised complete alternance is not equivalent to the sufficient optimality
condition $0 \in \interior \mathcal{D}(x_*)$ (see Example~\ref{exmpl:ConstrComplAltern_CounterEx} in the following
section); however, under some additional assumptions one can prove that these conditions are indeed equivalent. To prove
this result we will need the following characterisation of relative interior points of a convex cone, which can be
viewed as an extension of a similar result for polytopes \cite[Lemma~2.9]{Zeigler} to the case of cones. Recall that
\textit{the dimension} of a convex cone $\mathcal{K} \subset \mathbb{R}^d$, denoted $\dimens \mathcal{K}$, is the
dimension of its affine hull, which obviously coincides with the linear span of $\mathcal{K}$.

\begin{lemma} \label{lem:RelIntConvexCone}
Let $\mathcal{K} \subset \mathbb{R}^d$ be a convex cone such that $k = \dimens \mathcal{K} \ge 1$. Then a point 
$x \ne 0$ belongs to the relative interior $\relint \mathcal{K}$ of the cone $\mathcal{K}$ iff $x$ can be expressed as 
$x = \sum_{i = 1}^k \beta_i x_i$ for some $\beta_i > 0$ and linearly independent vectors 
$x_1, \ldots, x_k \in \mathcal{K}$.
\end{lemma}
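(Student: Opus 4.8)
The plan is to prove both implications directly, reducing everything to a finite-dimensional calculation inside the linear span $L := \linhull \mathcal{K}$, which has dimension $k$, so without loss of generality I may assume $\mathbb{R}^d = L$, i.e. $k = d$ and $\interior \mathcal{K} = \relint \mathcal{K}$.

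For the ``if'' direction, suppose $x = \sum_{i=1}^k \beta_i x_i$ with $\beta_i > 0$ and $x_1, \dots, x_k \in \mathcal{K}$ linearly independent. Since $\mathcal{K}$ spans $L$ and contains the linearly independent set $\{x_1, \dots, x_k\}$, there must exist a vector $w \in \mathcal{K}$ that makes $\{x_1, \dots, x_k, w\}$ positively spanning; concretely, the cone $\cone\{x_1, \dots, x_k\}$ is a proper subcone of $\mathcal{K}$ unless $\mathcal{K}$ is itself pointed in a way that already forces $\mathcal{K} = L$ --- but in general I can argue as follows. Because $L = \linhull\mathcal{K} = \linhull(\mathcal{K} - \mathcal{K})$ and $\mathcal{K}$ is a convex cone, every vector of $L$ is a difference of two elements of $\mathcal{K}$; in particular $-x_1 - \dots - x_k = u - v$ with $u, v \in \mathcal{K}$, so $-(x_1 + \dots + x_k) + v = u \in \mathcal{K}$, and writing $\overline{x} := \sum_{i=1}^k x_i$ I get that some nonnegative combination of the $x_i$'s plus an element of $\mathcal{K}$ equals $-\overline{x}$ up to a $\mathcal{K}$-element. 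The cleanest route is to apply Lemma~\ref{lem:SimplexZeroInterior}: I claim there is an element $z \in \mathcal{K}$ with $z = \sum_{i=1}^k \gamma_i x_i$, $\gamma_i > 0$, such that the simplex $S = \co\{x_1, \dots, x_k, -z\}$ has $0 \in \interior S$ and $S \subseteq \mathcal{K}$ (the latter because $\mathcal{K}$ is a convex cone containing the $x_i$ and $-z = \sum(-\gamma_i)x_i$... no, $-z \notin \mathcal{K}$ in general). This is the delicate point, so let me reorganize.

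The honest approach for ``if'': use a linear change of coordinates sending $x_i \mapsto e_i$, so $x = \sum \beta_i e_i$ with all $\beta_i > 0$ lies in the open positive orthant; it then suffices to show that a convex cone $\mathcal{K}$ with $\linhull\mathcal{K} = \mathbb{R}^d$ that contains $e_1, \dots, e_d$ contains a neighbourhood of $x$. Since $\mathcal{K}$ spans $\mathbb{R}^d$, for each $j$ there is $w^j \in \mathcal{K}$ with negative $j$-th... actually it is enough that $\mathcal{K} + \mathcal{K} - \mathcal{K} = \mathbb{R}^d$ is not quite what I need. Instead: $\mathcal{K} - \mathcal{K} = \mathbb{R}^d$, so $-(e_1 + \dots + e_d) = u - v$ with $u,v \in \mathcal{K}$; then $u + e_1 + \dots + e_d = v \in \mathcal{K}$ and the vector $\overline{e} := -\sum e_i$ satisfies $\overline{e} + v/1 = u$... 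I will instead simply note $\co\{e_1,\dots,e_d, u\} \subseteq \mathcal{K}$ and $\co\{e_1,\dots,e_d, -v'\}$ where $v' = v = \sum e_i + u$ has all-positive... Rather than chase this, the robust statement is: since $\mathcal{K} - \mathcal{K} = \mathbb{R}^d$ and $e_1,\dots,e_d \in \mathcal{K}$, pick $g \in \mathcal{K}$ with $g = -\sum c_i e_i + (\text{element of }\cone\{e_i\})$ for suitable... Concretely, $-\overline{e} = \sum e_i \in \mathcal{K}$ is false in general. The clean fact I will actually invoke: a convex cone of full dimension $d$ contains $d+1$ vectors $x_1, \dots, x_{d+1}$ that positively span $\mathbb{R}^d$ (equivalently $0 \in \interior \co\{x_1,\dots,x_{d+1}\}$) --- take $x_1, \dots, x_d$ to be the given independent vectors and $x_{d+1} \in \mathcal{K}$ any vector not in the closed cone $\cone\{x_1,\dots,x_d\}$, which exists because $\mathcal{K}$ is $d$-dimensional and $\cone\{x_1, \dots, x_d\}$ is a proper (measure-zero-boundary, non-full) subcone if and only if... actually if $\mathcal{K} = \cone\{x_1,\dots,x_d\}$ then $\mathcal{K}$ is pointed and the independent generators are forced, but $\relint$ of that cone is exactly $\{\sum \beta_i x_i : \beta_i > 0\}$, which already gives $x \in \relint\mathcal{K}$, done. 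So either $\mathcal{K} = \cone\{x_1,\dots,x_d\}$ (and $x \in \relint\mathcal{K}$ trivially) or there is $x_{d+1} \in \mathcal{K} \setminus \cone\{x_1,\dots,x_d\}$; in the latter case writing $x_{d+1} = \sum a_i x_i$, at least one $a_j < 0$, and I scale and relabel so that after a mild bookkeeping Lemma~\ref{lem:SimplexZeroInterior} applies to exhibit $0 \in \interior\co\{\pm\text{scaled }x_i\} \subseteq \mathcal{K}$ --- hence $\interior\mathcal{K} \ne \emptyset$ and $\mathcal{K}$ is a full-dimensional convex cone, so its (topological = relative) interior is nonempty; then since $x = \sum\beta_i x_i$ with $\beta_i > 0$ is a strictly positive combination of elements of $\mathcal{K}$ and $\interior\mathcal{K}\neq\emptyset$, a standard convexity fact ($\interior C + C \subseteq \interior C$ for convex $C$, applied with small perturbations) gives $x \in \interior\mathcal{K} = \relint\mathcal{K}$.

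For the ``only if'' direction, let $x \in \relint\mathcal{K}$, $x \ne 0$. Working in $L = \linhull\mathcal{K} \cong \mathbb{R}^k$ where $\relint = \interior$, there is $\varepsilon > 0$ with $B(x, \varepsilon) \cap L \subseteq \mathcal{K}$. Choose any basis $y_1, \dots, y_k$ of $L$ consisting of elements of $\mathcal{K}$ (possible since $\mathcal{K}$ spans $L$). For $t > 0$ small, the vectors $x_i := x - t y_i$ lie in $B(x,\varepsilon)\cap L \subseteq \mathcal{K}$ for $i = 1, \dots, k$, and moreover $x - t y_i$ for $t$ small are linearly independent because they are a small perturbation of... no: $x_1, \dots, x_k$ are $x - ty_1, \dots, x - ty_k$, whose differences are $t(y_j - y_i)$; the matrix $[x_1 \mid \dots \mid x_k] = x\mathbf{1}^T - t[y_1 \mid \dots \mid y_k]$ has determinant a polynomial in $t$ of degree $k$ with leading coefficient $(-1)^k \det[y_1|\dots|y_k] \ne 0$, hence nonzero for all sufficiently small $t > 0$; so $x_1, \dots, x_k \in \mathcal{K}$ are linearly independent. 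Finally $\sum_{i=1}^k \frac{1}{k} x_i = x - \frac{t}{k}\sum y_i$, which is not quite $x$; to fix this I instead take $x_i = x - t y_i$ for $i = 1, \dots, k-1$ and $x_k = x + t(y_1 + \dots + y_{k-1})$ (also in $\mathcal{K}$ for small $t$), so that $\sum_{i=1}^k x_i = kx$ and hence $x = \sum_{i=1}^k \frac{1}{k} x_i$ with coefficients $\frac1k > 0$; and the same determinant-polynomial argument shows $x_1,\dots,x_k$ are linearly independent for small $t>0$. This completes the proof.

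\medskip
\noindent\textbf{Main obstacle.} The genuinely delicate part is the ``if'' direction: showing that a strictly-positive combination of $k$ linearly independent elements of a $k$-dimensional convex cone lands in the relative interior. The case split on whether $\mathcal{K}$ coincides with the simplicial cone generated by those $k$ vectors, together with the invocation of Lemma~\ref{lem:SimplexZeroInterior} to produce an interior point of $\mathcal{K}$ in the non-simplicial case, is what makes the argument go through; everything else is routine perturbation and the elementary fact $\interior C + (C - x_0) \subseteq \interior C$ for a convex set $C$ with $x_0 \in C$.
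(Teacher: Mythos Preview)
Your ``only if'' direction is essentially sound, though the determinant claim for the modified vectors $x_i = x - t y_i$ ($i<k$), $x_k = x + t\sum_{j<k} y_j$ is not quite right: in the $y$-basis the resulting matrix has determinant $(-t)^{k-1}\cdot k c_k$ (column operations reduce it to an upper-triangular form with last diagonal entry $kc_k$), so if the $k$-th coordinate $c_k$ of $x$ vanishes the determinant is identically zero in $t$. This is easily repaired (relabel so that $c_k\ne 0$, possible since $x\ne 0$), but the blanket ``same determinant-polynomial argument'' does not go through as stated.

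The ``if'' direction, however, has a genuine gap. In your Case~B you assert that Lemma~\ref{lem:SimplexZeroInterior} yields $0\in\interior\co\{\pm\text{scaled }x_i\}\subseteq\mathcal{K}$; but $-x_i$ need not lie in $\mathcal{K}$, and having a single extra vector $x_{d+1}=\sum a_i x_i$ with one negative $a_j$ does not force $0$ into the interior of $\co\{x_1,\dots,x_{d+1}\}$ (take $d=2$, $x_1=e_1$, $x_2=e_2$, $x_3=(-1,1)$: the triangle sits in the upper half-plane). More importantly, even granting $\interior\mathcal{K}\ne\emptyset$ (which is automatic once $\dim\mathcal{K}=d$), the invocation of ``$\interior C + C\subseteq\interior C$'' does not give $x\in\interior\mathcal{K}$: that fact needs at least one summand already in the interior, and you only know $x_i\in\mathcal{K}$, not $x_i\in\interior\mathcal{K}$. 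So Case~B is not established.

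The paper's argument for this implication avoids the case split entirely and is worth internalising: with $x=\sum_{i=1}^k\beta_i x_i$, $\beta_i>0$, Lemma~\ref{lem:SimplexZeroInterior} (applied with $-x$ playing the role of the extra vertex) shows that $S:=\co\{x_1,\dots,x_k,-x\}$ contains an $\ell_1$-ball around $0$ in the coordinates determined by $x_1,\dots,x_k$; then
\[
x + S \;=\; \co\{x_1+x,\dots,x_k+x,\,0\}\;\subset\;\mathcal{K},
\]
since each $x_i+x$ is a sum of two elements of the convex cone $\mathcal{K}$ and $0\in\mathcal{K}$. Hence a relative neighbourhood of $x$ lies in $\mathcal{K}$, i.e.\ $x\in\relint\mathcal{K}$. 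This translation trick is the missing idea in your Case~B.
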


\begin{proof}
Let $x \in \relint \mathcal{K}$ and $x \ne 0$. If $k = 1$, then put $x_1 = x$ and $\beta_1 = 1$. Otherwise, denote 
$X_0 = \linhull \mathcal{K}$, and let $E_0 = \{ z \in X_0 \mid \langle z, x \rangle = 0 \}$ be the orthogonal complement
of $\linhull\{ x \}$ in $X_0$. As is well known, $\dimens E_0 = k - 1 \ge 1$. Let $z_1, \ldots, z_{k - 1} \in E_0$ be
any basis of $E_0$, and define $z_k = - \sum_{i = 1}^{k - 1} z_i$. 

By the definition of relative interior $B(x, r) \cap X_0 \subset \mathcal{K}$ for some $r > 0$, where, as above, 
$B(x, r) = \{ z \in \mathbb{R}^d \mid |z - x| \le r \}$. Let 
$\delta = \max\{ |z_1|, \ldots, |z_k| \}$ and $\gamma = r / \delta$. Then for all $i \in \{ 1, \ldots, k \}$ one has
$x_i = \gamma z_i + x \in B(x, r) \cap X_0 \subset \mathcal{K}$. Furthermore,
observe that $x = \sum_{i = 1}^k (1/k) x_i$. Therefore, it remains to show that the vectors $x_1, \ldots, x_k$ are
linearly independent.

Indeed, suppose that $\sum_{i = 1}^k \alpha_i x_i = 0$ for some $\alpha_i \in \mathbb{R}$. Then by definition
$$
  \sum_{i = 1}^k \alpha_i \gamma z_i = - \Big( \sum_{i = 1}^k \alpha_i \Big) x.
$$
Recall that $z_i$ belong to the orthogonal complement of $x$, i.e. $\langle z_i, x \rangle = 0$. Therefore
$\sum_{i = 1}^k \alpha_i = 0$. Hence taking into account the fact that $z_k = - \sum_{i = 1}^{k - 1} z_i$ one obtains
that $\sum_{i = 1}^{k - 1} (\alpha_i - \alpha_k) z_i = 0$, which implies that $\alpha_i = \alpha_k$ for all
$i \in \{ 1, \ldots, k - 1 \}$, since the vectors $z_1, \ldots, z_{k - 1}$ form a basis of $E_0$. Thus,
$\sum_{i = 1}^k \alpha_i = k \alpha_k = 0$, i.e. $\alpha_i = 0$ for all $i$, and one can conclude that the vectors
$x_1, \ldots, x_k$ are linearly independent.

Let us prove the converse statement. Suppose that $x = \sum_{i = 1}^k \beta_i x_i$ for some $\beta_i > 0$ and linearly
independent vectors $x_1, \ldots, x_k \in \mathcal{K}$. Denote $S(x) = \co\{ x_1, \ldots, x_k, - x \}$. Let us show that
there exists $r > 0$ such that $B(0, r) \cap X_0 \subset S(x)$, where, as above, $X_0 = \linhull \mathcal{K}$. Then
taking into account the fact that $\mathcal{K}$ is a convex cone one obtains that
\begin{equation} \label{eq:BallWithinConeSubSimples}
  \big( B(x, r) \cap X_0 \big) \subset x + S(x) = \co\{ x_1 + x, \ldots, x_k + x, 0 \} \subset \mathcal{K},
\end{equation}
and the proof is complete.

Since $k = \dimens \mathcal{K}$, the collection $x_1, \ldots, x_k \in \mathcal{K}$ is a basis of the subspace 
$X_0 = \linhull \mathcal{K}$. Therefore, for any $z \in X_0$ there exist unique $\alpha_i$ such that 
$z = \sum_{i = 1}^k \alpha_i x_i$. For any $z \in X_0$ denote $\| z \|_{X_0} = \sum_{i = 1}^k |\alpha_i|$. One can
readily check that $\| \cdot \|_{X_0}$ is a norm on $X_0$. 

With the use of Lemma~\ref{lem:SimplexZeroInterior} one obtains that 
$\{ z \in X_0 \mid \| z \|_{X_0} < r \} \subset S(x)$ for some $r > 0$. Taking into account the fact that all norms on a
finite dimensional space are equivalent one gets that there exists $C > 0$ such that $\| z \|_{X_0} \le C |z|$ for all 
$z \in X_0$. Therefore the inclusions 
$(B(0, r / 2C) \cap X_0) \subset \{ z \in X_0 \mid \| z \|_{X_0} < r \} \subset S(x)$ hold true,
and the proof is complete.
\end{proof}

Recall that a convex cone $\mathcal{K} \subset \mathbb{R}^d$ is called \textit{pointed}, if
$\mathcal{K} \cap (-\mathcal{K}) = \{ 0 \}$.

\begin{theorem}	\label{thrm:GenAlternanceCond}
Let $x_*$ be a feasible point of $(\mathcal{P})$. Then the existence of a generalised complete alternance at
$x_*$ implies that $0 \in \interior \mathcal{D}(x_*)$ and $\partial F(x_*) \ne \{ 0 \}$. Conversely, if 
$0 \in \interior \mathcal{D}(x_*)$, $\partial F(x_*) \ne \{ 0 \}$, and one of the following assumptions is valid:
\begin{enumerate}
\item{$\interior \partial F(x_*) \ne \emptyset$,
}

\item{$\mathcal{N}(x_*) + N_A(x_*) \ne \mathbb{R}^d$ and either $\interior \mathcal{N}(x_*) \ne \emptyset$ 
or $\interior N_A(x_*) \ne \emptyset$,
}

\item{$N_A(x_*) = \{ 0 \}$ and there exists $w \in \relint \mathcal{N}(x_*) \setminus \{ 0 \}$ such that 
$0 \in \partial F(x_*) + w$ (in particular, it is sufficient to suppose that $0 \notin \partial F(x_*)$ or the cone
$\mathcal{N}(x_*)$ is pointed),
}

\item{$\mathcal{N}(x_*) = \{ 0 \}$ and there exists $w \in \relint N_A(x_*) \setminus \{ 0 \}$ such that 
$0 \in \partial F(x_*) + w$,
}
\end{enumerate}
then a generalised complete alternance exists at $x_*$.
\end{theorem}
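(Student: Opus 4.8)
The plan is to prove the two implications separately. For the forward implication — a generalised complete alternance at $x_*$ forces $0\in\interior\mathcal D(x_*)$ and $\partial F(x_*)\neq\{0\}$ — I would rerun the proof of Part~\ref{stat:CompleteAlternance_SuffOpt} of Theorem~\ref{thrm:AlternanceCond} essentially verbatim: the only place that argument used $V_1,\dots,V_{k_0}\in\{\nabla_xf(x_*,\omega):\omega\in W(x_*)\}$ and $V_{k_0+1},\dots,V_p\in\eta(x_*)\cup n_A(x_*)$ rather than $V_1,\dots,V_{k_0}\in\partial F(x_*)$ and $V_{k_0+1},\dots,V_p\in\mathcal N(x_*)\cup N_A(x_*)$ was to conclude $V_i\in\mathcal D(x_*)$, which still holds. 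Hence $V_1\neq0$ (all $\Delta_s\neq0$), so $\partial F(x_*)\neq\{0\}$; Cramer's rule yields $\sum_{i=1}^{d+1}\beta_iV_i=0$ with $\beta_i>0$; $\co\{V_1,\dots,V_{d+1}\}\subseteq\mathcal D(x_*)$; and the linear bijection $Te_i=V_{i+1}$ together with Lemma~\ref{lem:SimplexZeroInterior} gives $0$ in the interior, hence $0\in\interior\mathcal D(x_*)$.

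For the converse, by Remark~\ref{rmrk:GenAlternanceVsCadre} it suffices to build vectors $V_1,\dots,V_{d+1}\in\mathbb R^d$ of rank $d$ with $\sum_{i=1}^{d+1}\beta_iV_i=0$ for some $\beta_i>0$, ordered so that a nonempty initial block lies in $\partial F(x_*)$, the next block in $\mathcal N(x_*)$, and the last in $N_A(x_*)$ (the latter two blocks allowed to be empty). I would begin with a ``seed'': from $0\in\mathcal D(x_*)$ pick $v_0\in\partial F(x_*)$, $n_0\in\mathcal N(x_*)$, $a_0\in N_A(x_*)$ with $v_0+n_0+a_0=0$; applying the conic version of Carath\'eodory's theorem to $n_0+a_0\in\mathcal N(x_*)+N_A(x_*)$ and separating the resulting generators by origin, one may assume $n_0=\sum_j\gamma_ju_j$ and $a_0=\sum_l\delta_lz_l$ with $u_j\in\mathcal N(x_*)$, $z_l\in N_A(x_*)$, all coefficients positive and $\{u_j\}\cup\{z_l\}$ linearly independent. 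This produces a strictly positive dependence $v_0+\sum\gamma_ju_j+\sum\delta_lz_l=0$ among $1+m$ vectors spanning an $m$-dimensional subspace ($m\le d$, since $v_0$ lies in the span of the $u$'s and $z$'s).

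The remaining task is to enlarge this system to exactly $d+1$ vectors of rank $d$, keeping the dependence strictly positive and the block order intact; this is where $0\in\interior\mathcal D(x_*)$ — equivalently $\linhull(\partial F(x_*)\cup\mathcal N(x_*)\cup N_A(x_*))=\mathbb R^d$ — and the four hypotheses enter, each supplying the $d-m$ missing dimensions from a different ingredient. Under (1) the missing dimensions come from $\partial F(x_*)$: one replaces the term $v_0$ by a convex combination $\sum_{i\le k_0}\alpha_iV_i$, $V_i\in\partial F(x_*)$, chosen so that $\{V_i\}\cup\{u_j,z_l\}$ spans $\mathbb R^d$, the choice being possible because $\interior\partial F(x_*)\neq\emptyset$ and the positivity of the $\alpha_i$ being secured by Lemma~\ref{lem:SimplexZeroInterior} applied inside $\partial F(x_*)$. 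Under (2) the cones alone cannot span ($\mathcal N(x_*)+N_A(x_*)\neq\mathbb R^d$), so a $\partial F$-vector necessarily survives; the missing dimensions are taken from whichever of $\mathcal N(x_*)$, $N_A(x_*)$ is full-dimensional, enlarging that cone block by writing an interior point of the cone as a positive combination of linearly independent generators via Lemma~\ref{lem:RelIntConvexCone}. Under (3) (resp.\ (4)) one uses the given $w$: Lemma~\ref{lem:RelIntConvexCone} gives $w=\sum_{i=1}^{\dimens\mathcal N(x_*)}\gamma_iu_i$ (resp.\ inside $N_A(x_*)$) with linearly independent $u_i$, so $-w+\sum\gamma_iu_i=0$ is a strictly positive dependence of rank $\dimens\mathcal N(x_*)$ (resp.\ $\dimens N_A(x_*)$) with $-w\in\partial F(x_*)$, and one completes to rank $d$ by folding extra points of $\partial F(x_*)$ (case~3, since $N_A(x_*)=\{0\}$) or of $N_A(x_*)$ (case~4, since $\mathcal N(x_*)=\{0\}$) into the system. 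In all cases one then reorders the $d+1$ vectors and checks admissibility: a rank-$d$ system of $d+1$ vectors with a strictly positive dependence has no zero entry, so, together with $\partial F(x_*)\neq\{0\}$, the $\partial F$-block may be taken nonempty, and Remark~\ref{rmrk:GenAlternanceVsCadre} converts the resulting cadre into a generalised complete alternance.

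I expect the enlargement step to be the main obstacle: folding extra vectors into a strictly positive linear dependence while reaching full rank is delicate precisely because $\partial F(x_*)$ is not a cone and boundary points of $\partial F(x_*)$ cannot be perturbed freely within it; the four hypotheses are exactly the structural situations in which it can be carried out, and the accompanying counterexample (Example~\ref{exmpl:ConstrComplAltern_CounterEx}) shows that some such hypothesis is genuinely needed.
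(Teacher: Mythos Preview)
Your forward implication is correct and matches the paper line for line.

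For the converse, your seed--then--enlarge strategy is in the right spirit for Cases~I and~II, and is close to what the paper does (the paper also locates a point of $-\interior\partial F(x_*)$, respectively $-\interior\mathcal N(x_*)$, inside the opposite ingredient and then builds the missing directions from an orthogonal-complement basis). What your sketch does not say is that the seed $v_0$ must itself be chosen in $\interior\partial F(x_*)$ (respectively, the cone point in the interior of the cone); an arbitrary $v_0\in\partial F(x_*)$ with $-v_0\in\mathcal N(x_*)+N_A(x_*)$ may sit on the boundary, and then your ``replace $v_0$ by a convex combination spanning the missing directions'' step fails. The paper handles this by first proving, via separation from $0\in\interior\mathcal D(x_*)$, that such an \emph{interior} seed exists, and then splitting into sub-cases according to whether the cone contribution comes from one cone or genuinely requires both.

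The real gap is Case~III. Your one line --- ``completes to rank $d$ by folding extra points of $\partial F(x_*)$'' --- hides essentially all of the work, and the paper's argument is quite different from what you suggest. The paper does \emph{not} fix $V_1=-w$ and try to split it; instead it (i) writes $w=\sum_{i=1}^k\beta_iw_i$ with $k=\dimens\mathcal N(x_*)$ via Lemma~\ref{lem:RelIntConvexCone}, sets $\mathcal C_k=\cone\{w_1,\dots,w_k\}$, and proves the nontrivial fact that $0\in\interior(\partial F(x_*)+\mathcal C_k)$; (ii) uses $\linhull(\partial F(x_*)\cup\mathcal C_k)=\mathbb R^d$ to pick $V_2,\dots,V_{k_0}\in\partial F(x_*)$ so that $V_2,\dots,V_{d+1}$ are linearly independent (with $V_{k_0+i}=w_i$); (iii) sets $Q(x_*)=\cone\{V_2,\dots,V_{d+1}\}$, shows $(-\interior Q(x_*))\setminus Q(x_*)\neq\emptyset$, picks $\xi$ there, scales so that $\alpha\xi\in\partial F(x_*)+\mathcal C_k$, and then takes $V_1=\alpha\xi-u\in\partial F(x_*)$ for the appropriate $u\in\mathcal C_k\subset Q(x_*)$, which forces $V_1\in-\interior Q(x_*)$ and hence the strictly positive dependence. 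The point is that $V_1$ is found \emph{after} the basis $V_2,\dots,V_{d+1}$ is fixed, not before; your approach freezes $V_1=-w$ too early, and there is no reason $-w$ admits the decomposition you need when $\interior\partial F(x_*)=\emptyset$.

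Finally, your Case~IV contains a slip: since $w$ already uses $\dimens N_A(x_*)$ independent generators of $N_A(x_*)$, folding more $N_A(x_*)$--points cannot raise the rank; the extra directions must again come from $\partial F(x_*)$, exactly as in Case~III (and indeed the paper proves Case~IV by repeating Case~III with $\mathcal N(x_*)$ replaced by $N_A(x_*)$).
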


\begin{proof}
If a generalised complete alternance exists at $x_*$, then literally repeating the proof of the second part of
Theorem~\ref{thrm:AlternanceCond} one obtains that $0 \in \interior \mathcal{D}(x_*)$ and $\partial F(x_*) \ne \{ 0 \}$.
Let us prove the converse statement. Consider four cases corresponding to four assumptions of the theorem.

\textbf{Case I.} Suppose that $\interior \partial F(x_*) \ne \emptyset$. If $0 \in \interior \partial F(x_*)$, then
one can find $r > 0$ such that $r e_1, \ldots, r e_d \in \partial F(x_*)$ and 
$\overline{e} = (-r, \ldots, - r)^T \in \partial F(x_*)$. Note that $\rank([r e_1, \ldots, r e_d, \overline{e}]) = d$
and $\sum_{i = 1}^d r e_i + \overline{e} = 0$. Hence by Remark~\ref{rmrk:GenAlternanceVsCadre} a generalised
complete alternance exists at $x_*$.

Thus, one can suppose that $0 \notin \interior \partial F(x_*)$. Let there exists $w \in \mathcal{N}(x_*) \cup N_A(x_*)$
such that $0 \in \interior \partial F(x_*) + w$. Clearly, $w \ne 0$ and $-w \in \interior \partial F(x_*)$. If $d = 1$,
then define $V_1 = -w$, $V_2 = w$. Then $\rank([V_1, V_2]) = 1$ and $V_1 + V_2 = 0$, which due to
Remark~\ref{rmrk:GenAlternanceVsCadre} implies that a generalised complete alternance exists at $x_*$. If $d \ge 2$,
then denote by $X_0$ the orthogonal complement of the subspace $\linhull\{ w \}$. Obviously, $\dimens X_0 = d - 1$. 
Let $z_1, \ldots, z_{d - 1}$ be a basis of $X_0$, and $z_d = - \sum_{i = 1}^{d - 1} z_i$. 

Since $-w \in \interior \partial F(x_*)$, there exists $r > 0$ such that $V_i = -w + r z_i \in \partial F(x_*)$ for all
$i \in \{ 1, \ldots, d \}$. Denote $V_{d + 1} = w$. Observe that $\sum_{i = 1}^{d} (1/d) V_i + V_{d + 1} = 0$.
Furthermore, the vectors $V_1, \ldots, V_{d - 1}, V_{d + 1}$ are linearly independent. Indeed, suppose that
$\sum_{i = 1}^{d - 1} \alpha_i V_i + \alpha_{d + 1} V_{d + 1} = 0$ for some $\alpha_i \in \mathbb{R}$. Then
$$
  r \sum_{i = 1}^{d - 1} \alpha_i z_i = \Big( \sum_{i = 1}^{d - 1} \alpha_i - \alpha_{d + 1} \Big) w.
$$
Bearing in mind the fact that $z_1, \ldots, z_{d - 1}$ is a basis of the orthogonal complement of $\linhull\{ w \}$ one
obtains that $\alpha_{d + 1} = \sum_{i = 1}^{d - 1} \alpha_i$ and $\alpha_i = 0$ for all 
$i \in \{ 1, \ldots, d - 1 \}$, which implies that the vectors $V_1, \ldots, V_{d - 1}, V_{d + 1}$ are linearly
independent. Consequently, $\rank([V_1, \ldots, V_{d + 1}]) = d$ and by Remark~\ref{rmrk:GenAlternanceVsCadre} 
a generalised complete alternance exists at $x_*$.

Thus, one can suppose that 
\begin{equation} \label{eq:SubdiffShiftViaCones}
  0 \notin \interior \partial F(x_*) + w	\quad	\forall w \in \mathcal{N}(x_*) \cup N_A(x_*).
\end{equation}
Note that $0 \in \interior \partial F(x_*) + w$ for some $w \in \mathcal{N}(x_*) + N_A(x_*)$. Indeed, arguing by
reductio ad absurdum, suppose that $(- \interior \partial F(x_*)) \cap (\mathcal{N}(x_*) + N_A(x_*)) = \emptyset$. Then
by the separation theorem (see, e.g. \cite[Thrm.~2.13]{BonnansShapiro}) there exists 
$h \ne 0$ such that $\langle h, v \rangle \le \langle h, w \rangle$ for all $v \in - \partial F(x_*)$ and 
$w \in \mathcal{N}(x_*) + N_A(x_*)$. Hence $\langle h, v \rangle \ge 0$ for all 
$v \in \partial F(x_*) + \mathcal{N}(x_*) + N_A(x_*) = \mathcal{D}(x_*)$, which contradicts the assumption that 
that $0 \in \interior \mathcal{D}(x_*)$.

By definition $w = w_1 + w_2$ for some $w_1 \in \mathcal{N}(x_*)$ and $w_2 \in N_A(x_*)$. The vectors $w_1$ and $w_2$
are linearly independent. Indeed, if $w_1 = \alpha w_2$ for some $\alpha \ge 0$, then 
$w = (1 + \alpha) w_2 \in N_A(x_*)$, since $N_A(x_*)$ is a cone, which contradicts
\eqref{eq:SubdiffShiftViaCones}. Similarly, if $w_1 = - \alpha w_2$ for some $\alpha > 0$, then 
$w = (1 - \alpha) w_2 \in N_A(x_*)$ in the case $\alpha \in (0, 1]$, and $w = (1 - 1/\alpha) w_1 \in \mathcal{N}(x_*)$
in the case $\alpha > 1$, which once again contradicts \eqref{eq:SubdiffShiftViaCones}. Thus, $w_1$ and $w_2$ are
linearly independent and $d \ge 2$.

If $d = 2$, denote $V_1 = -w \in \partial F(x_*)$, $V_2 = w_1$, and $V_3 = w_2$. Then $V_1 + V_2 + V_3 = 0$ and
$\rank([V_1, V_2, V_3]) = 2$, which implies that a generalised complete alternance exists at $x_*$ due to
Remark~\ref{rmrk:GenAlternanceVsCadre}. If $d \ge 3$, then denote by $X_0$ the orthogonal complement of 
$\linhull\{ w_1, w_2 \}$. Clearly, $\dimens X_0 = d - 2$. Let $z_1, \ldots, z_{d - 2}$ be a basis of $X_0$ and 
$z_{d - 1} = - \sum_{i = 1}^{d - 2} z_i$.

Since $- w \in \interior \partial F(x_*)$, there exists $r > 0$ such that $V_i = -w + r z_i \in \partial F(x_*)$ for all
$i \in \{ 1, \ldots, d - 1 \}$. Denote $V_d = w_1$ and 
$V_{d + 1} = w_2$. Then $\sum_{i = 1}^{d - 1} (1/(d - 1)) V_i + V_d + V_{d + 1} = 0$. Moreover, the vectors
$V_1, \ldots, V_{d- 2}, V_d, V_{d + 1}$ are linearly independent. Indeed, if for some $\alpha_i \in \mathbb{R}$ one has
$\sum_{i = 1}^{d - 2} \alpha_i V_i + \alpha_d V_d + \alpha_{d + 1} V_{d + 1} = 0$, then
$$
  r \sum_{i = 1}^{d - 2} \alpha_i z_i = \Big( \sum_{i = 1}^{d - 2} \alpha_i - \alpha_d \Big) w_1
  + \Big( \sum_{i = 1}^{d - 2} \alpha_i - \alpha_{d + 1} \Big) w_2.
$$
Taking into account the facts that $z_1, \ldots, z_{d - 2}$ is a basis of the orthogonal complement of 
$\linhull\{ w_1, w_2 \}$ and the vectors $w_1$ and $w_2$ are linearly independent one can easily check that
$\alpha_i = 0$ for any $i \in \{ 1, \ldots, d - 2, d, d + 1 \}$. Thus, the vectors 
$V_1, \ldots, V_{d- 2}, V_d, V_{d + 1}$ are linearly independent, which by Remark~\ref{rmrk:GenAlternanceVsCadre}
implies that a generalised complete alternance exists at $x_*$.

\textbf{Case II.} Let $\mathcal{N}(x_*) + N_A(x_*) \ne \mathbb{R}^d$ and $\interior \mathcal{N}(x_*) \ne \emptyset$ (the
case when $\interior N_A(x_*) \ne \emptyset$ is proved in the same way). Suppose that there exists 
$w \in \partial F(x_*)$ such that $-w \in \interior \mathcal{N}(x_*)$. Let us show that one can assume that $w \ne 0$.
Indeed, if $w = 0$, then $0 \in \interior \mathcal{N}(x_*)$. Recall that by our assumption 
$\partial F(x_*) \ne \{ 0 \}$. Choose any $v \in \partial F(x_*) \setminus \{ 0 \}$. 
Since $0 \in \interior \mathcal{N}(x_*)$, there exists $\alpha \in (0, 1]$ such that
$\alpha v \in \interior \mathcal{N}(x_*)$ and $\alpha v \in \co\{ 0, v \} \subseteq \partial F(x_*)$. Thus, there
exists $w \in \partial F(x_*) \setminus \{ 0 \}$ such that $-w \in \interior \mathcal{N}(x_*)$.

Denote $V_1 = w$. Since $\interior \mathcal{N}(x_*) \ne \emptyset$, one has $\dimens \mathcal{N}(x_*) = d$. Therefore by
Lemma~\ref{lem:RelIntConvexCone} there exist linearly independent vectors $V_2, \ldots, V_{d + 1} \in \mathcal{N}(x_*)$
such that $V_1 + \sum_{i = 2}^{d + 1} \beta_i V_i = 0$ for some $\beta_i > 0$, $i \in \{2, \ldots, d + 1 \}$. Thus, 
$\rank([V_1, \ldots, V_{d + 1}]) = d$, which by Remark~\ref{rmrk:GenAlternanceVsCadre} implies that a generalised
complete alternance exists at $x_*$.

Suppose now that 
\begin{equation} \label{eq:SolidCone_Subdiff}
  (- \partial F(x_*)) \cap \interior \mathcal{N}(x_*) = \emptyset.
\end{equation}
Then there exist $v \in \partial F(x_*)$ and $w \in N_A(x_*)$ such that $- v - w \in \interior \mathcal{N}(x_*)$.
Indeed, otherwise the sets $- (\partial F(x_*) + N_A(x_*))$ and $\interior \mathcal{N}(x_*)$ do not intersect, which by
the separation theorem implies that there exists $h \in \mathbb{R}^d \setminus \{ 0 \}$ such 
that $\langle h, v \rangle \le 0$ for all  $v \in - (\partial F(x_*) + N_A(x_*))$ and $\langle h, w \rangle \ge 0$ for
all $w \in \mathcal{N}(x_*)$. Hence $\langle h, v \rangle \ge 0$ for 
all $v \in \partial F(x_*) + \mathcal{N}(x_*) + N_A(x_*) = \mathcal{D}(x_*)$, which contradicts the
assumption that $0 \in \interior \mathcal{D}(x_*)$.

Thus, there exist $v \in \partial F(x_*)$ and $w \in N_A(x_*)$ such that $- v - w \in \interior \mathcal{N}(x_*)$. Note
that $w \ne 0$ due to \eqref{eq:SolidCone_Subdiff}. Furthermore, one can suppose that the vectors $v$ and $w$ are
linearly independent. Indeed, if $v = \alpha w$ for some $\alpha < - 1$, then one obtains that 
$- \beta v \in \interior \mathcal{N}(x_*)$, where $\beta = 1 + 1 / \alpha \in (0, 1)$. Therefore there exists
$\varepsilon > 0$ such that $- \beta v + B(0, \varepsilon) \subset \mathcal{N}(x_*)$, which implies that 
$- v + B(0, \varepsilon / \beta) \subset \mathcal{N}(x_*)$ due to the fact that $\mathcal{N}(x_*)$ is a cone. 
Thus, $- v \in \interior \mathcal{N}(x_*)$, which contradicts \eqref{eq:SolidCone_Subdiff}.

On the other hand, if $v = \alpha w$ for some $\alpha \ge -1$, then for $z = (1 + \alpha) w \in N_A(x_*)$ one has
$- z \in \interior \mathcal{N}(x_*)$. By definition there exists $\varepsilon > 0$ such that
$-z + B(0, \varepsilon) \subset \mathcal{N}(x_*)$. Consequently, one has 
$B(0, \varepsilon) = -z + B(0, \varepsilon) + z \subset \mathcal{N}(x_*) + N_A(x_*)$. Hence with the use of the fact
that the sets $\mathcal{N}(x_*)$ and $N_A(x_*)$ are cones one obtains that $\mathcal{N}(x_*) + N_A(x_*) = \mathbb{R}^d$,
which contradicts our assumption. Thus, the vectors $v$ and $w$ are linearly independent, which implies that $d \ge 2$.

If $d = 2$, define $V_1 = v \in \partial F(x_*)$, $V_2 = - v - w \in \mathcal{N}(x_*)$, and $V_3 = w \in N_A(x_*)$. Then
$\rank([V_1, V_2, V_3]) = 2$ and $V_1 + V_2 + V_3 = 0$. Therefore by Remark~\ref{rmrk:GenAlternanceVsCadre} a
generalised complete alternance exists at $x_*$. If $d \ge 3$, denote by $X_0$ the orthogonal complement of 
$\linhull\{ v, w \}$. Since $v$ and $w$ are linearly independent, one has $\dimens X_0 = d - 2$. 
Let $z_1, \ldots, z_{d - 2}$ be a basis of $X_0$ and $z_{d - 1} = - \sum_{i = 1}^{d - 2} z_i$.

Since $- v - w \in \interior \mathcal{N}(x_*)$, there exists $r > 0$ such that $-v - w + r z_i \subset \mathcal{N}(x_*)$
for all $i \in \{ 1, \ldots, d - 1 \}$. Denote $V_1 = v$, $V_i = r z_{i - 1} - v - w \in \mathcal{N}(x_*)$ for all 
$i \in \{ 2, \ldots, d \}$, and $V_{d + 1} = w \in N_A(x_*)$. 
Then $V_1 + \sum_{i = 2}^d (1/(d - 1)) V_i + V_{d + 1} = 0$. Let us check that the vectors 
$V_1, \ldots, V_{d - 1}, V_{d + 1}$ are linearly independent. Then $\rank([V_1, \ldots, V_{d + 1}]) = d$ and by
Remark~\ref{rmrk:GenAlternanceVsCadre} one concludes that a generalised complete alternance exists at $x_*$.

Let $\sum_{i = 1}^{d - 1} \alpha_i V_i + \alpha_{d + 1} V_{d + 1} = 0$ for some $\alpha_i \in \mathbb{R}$. Then
$$
  r \sum_{i = 1}^{d - 2} \alpha_{i + 1} z_i = \Big( \sum_{i = 2}^{d - 1} \alpha_i - \alpha_1 \Big) v 
  + \Big( \sum_{i = 2}^{d - 1} \alpha_i - \alpha_{d + 1} \Big) w.
$$
Hence bearing in mind the fact that $z_1, \ldots, z_{d - 2}$ is a basis of the orthogonal complement of 
$\linhull\{ v, w \}$ one obtains that $\alpha_i = 0$ for all $i \in \{ 2, \ldots, d - 1 \}$,
$\alpha_1 = \sum_{i = 2}^{d - 1} \alpha_i = 0$, and $\alpha_{d + 1} = \sum_{i = 2}^{d - 1} \alpha_i = 0$. Thus, the
vectors $V_1, \ldots, V_{d - 1}, V_{d + 1}$ are linearly independent and the proof of Case II is complete.

\textbf{Case III.} Let $N_A(x_*) = \{ 0 \}$ and there exists $w \in \relint \mathcal{N}(x_*) \setminus \{ 0 \}$ such
that $0 \in \partial F(x_*) + w$. Let us check at first that it is sufficient to assume that $N_A(x_*) = \{ 0 \}$ and 
either $0 \notin \partial F(x_*)$ or the cone $\mathcal{N}(x_*)$ is pointed.

Indeed, let $0 \notin \partial F(x_*)$. Let us verify that 
$(-\partial F(x_*)) \cap \relint \mathcal{N}(x_*) \ne \emptyset$. Then taking into account the fact that 
$0 \notin \partial F(x_*)$ one obtains that there exists $w \in \relint \mathcal{N}(x_*) \setminus \{ 0 \}$ such that 
$0 \in \partial F(x_*) + w$. 

Arguing by reductio ad absurdum, suppose that $(-\partial F(x_*)) \cap \relint \mathcal{N}(x_*) = \emptyset$. Then by
the separation theorem (see, e.g. \cite[Thrm.~11.3]{Rockafellar}) there exists $h \ne 0$ such that
$\langle v, h \rangle \le \langle w, h \rangle$ for all $v \in - \partial F(x_*)$ and $w \in \mathcal{N}(x_*)$. Hence
$\langle h, v \rangle \ge 0$ for all $v \in \partial F(x_*) + \mathcal{N}(x_*) = \mathcal{D}(x_*)$ 
(recall that $N_A(x_*) = \{ 0 \}$), which is impossible, since $0 \in \interior \mathcal{D}(x_*)$.

Let now the cone $\mathcal{N}(x_*)$ be pointed. If $\interior F(x_*) \ne \emptyset$, then a generalised
complete alternance exists at $x_*$ by Case~I. Therefore, we can suppose that $\interior F(x_*) = \emptyset$. 

Arguing by reductio ad absurdum, suppose that $0 \notin \partial F(x_*) + w$ for any 
$w \in \relint \mathcal{N}(x_*) \setminus \{ 0 \}$. As was shown above, 
$(-\partial F(x_*)) \cap \relint \mathcal{N}(x_*) \ne \emptyset$, that is, there exists $w \in \relint \mathcal{N}(x_*)$
such that $0 \in \partial F(x_*) + w$. Consequently, by our assumption $0 \in \relint \mathcal{N}(x_*)$. Hence either
$\mathcal{N}(x_*) = \{ 0 \}$ or $\dimens \mathcal{N}(x_*) \ge 1$. In the former case one has 
$\mathcal{D}(x_*) = \partial F(x_*)$. Therefore $0 \in \interior \partial F(x_*)$, which contradicts our assumption. In
the latter case there exists $z \in \mathcal{N}(x_*) \setminus \{ 0 \}$ and by the definition of relative interior there
exists $r > 0$ such that $\linhull \mathcal{N}(x_*) \cap B(0, r) \subset \mathcal{N}(x_*)$. Consequently, 
$r z / |z| \in \mathcal{N}(x_*)$ and $-r z / |z| \in \mathcal{N}(x_*)$, which contradicts the assumption that the cone
$\mathcal{N}(x_*)$ is pointed.

Let us now turn to the proof of the main statement. Let $w_* \in \relint \mathcal{N}(x_*)$, $w_* \ne 0$, be any
vector such that $0 \in \partial F(x_*) + w_*$. By Lemma~\ref{lem:RelIntConvexCone} there exists 
$k = \dimens \mathcal{N}(x_*)$ linearly independent vectors $w_1, \ldots, w_k \in \mathcal{N}(x_*)$ such that 
$w_* = \sum_{i = 1}^k \beta_i w_i$ for some $\beta_i > 0$. 
Note that $\linhull\{ w_1, \ldots, w_k \} = \linhull \mathcal{N}(x_*)$.

\begin{figure}[t]
\centering
\includegraphics[width=0.6\linewidth]{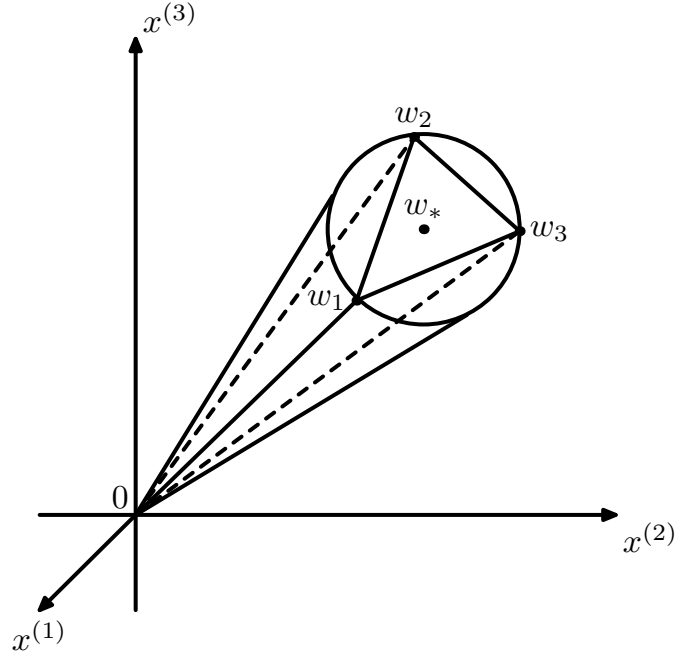}
\caption{In Case III we assume that $0 \in \interior (\partial F(x_*) + \mathcal{N}(x_*))$ and there exists
$w_* \in \relint \mathcal{N}(x_*) \setminus \{ 0 \}$ such that $0 \in \partial F(x_*) + w_*$. The first step of the
proof consists in showing that one can replace the cone $\mathcal{N}(x_*)$ in the condition
$0 \in \interior (\partial F(x_*) + \mathcal{N}(x_*))$ by a polyhedral cone 
$\mathcal{C}_k = \cone\{ w_1, \ldots, w_k \}$ such that $w_* \in \relint \mathcal{C}_k$, where the vectors 
$w_i \in \mathcal{N}(x_*)$ are linearly independent and $k = \dimens \mathcal{N}(x_*)$.}
\label{fig:reduction}
\end{figure}

Denote $\mathcal{C}_k = \cone\{ w_1, \ldots, w_k \}$. Our first goal is to check the validity of the inclusion 
$0 \in \interior(\partial F(x_*) + \mathcal{C}_k)$ (see Fig.~\ref{fig:reduction} below). Indeed, let 
$X_k = \linhull \mathcal{N}(x_*)$. As was shown in the proof of the ``only if'' part of Lemma~\ref{lem:RelIntConvexCone}
(see \eqref{eq:BallWithinConeSubSimples}), 
$X_k \cap B(w_*, r) \subset \co\{ w_1 + w_*, \ldots, w_k + w_*, 0 \} \subset \mathcal{C}_k$ for some $r > 0$, where the
last inclusion follows from the definition of $\mathcal{C}_k$ and the fact that $w_* = \sum_{i = 1}^k \beta_i w_i$.

By our assumptions $0 \in \interior \mathcal{D}(x_*)$ and $\mathcal{D}(x_*) = \partial F(x_*) + \mathcal{N}(x_*)$.
Therefore there exists  $\gamma > 0$ such that for any $i \in \{ 1, \ldots, d + 1 \}$ one can find 
$v_i \in \partial F(x_*)$ and $u_i \in \mathcal{N}(x_*) \subset X_k$ for which $v_i + u_i = \gamma e_i$, 
where $e_1, \ldots, e_d$ is the canonical basis of $\mathbb{R}^d$ and $e_{d + 1} = - \sum_{i = 1}^d e_i$. Clearly, there
exists $\alpha \in (0, 1)$ such that $(1 - \alpha) w_* + \alpha u_i \in X_k \cap B(w_*, r)$ 
for any $i \in \{ 1, \ldots, d + 1 \}$.

Let $v_* \in \partial F(x_*)$ be such that $v_* + w_* = 0$. Then for any $i \in \{ 1, \ldots, d + 1 \}$ one has
\begin{multline*}
  \alpha \gamma e_i = (1 - \alpha) (v_* + w_*) + \alpha (v_i + u_i) 
  = \big( (1 - \alpha) v_* + \alpha v_i \big) + \big( (1 - \alpha) w_* + \alpha u_i \big) \\
  \in \partial F(x_*) + \big( X_k \cap B(w_*, r) \big) \subset \partial F(x_*) + \mathcal{C}_k.
\end{multline*}
Hence taking into account the fact that the set $\partial F(x_*) + \mathcal{C}_k$ is obviously convex one gets that
$\co\{ \alpha \gamma e_1, \ldots, \alpha \gamma e_d, - \alpha \gamma \sum_{i = 1}^d e_i \} 
\subset \partial F(x_*) + \mathcal{C}_k$. Consequently, with the use of Lemma~\ref{lem:SimplexZeroInterior} one obtains
that there exists $r > 0$ such that
\begin{align*}
  B\left( 0, \frac{\alpha \gamma r}{2\sqrt{d}} \right) 
  &\subset \Big\{ x = (x^{(1)}, \ldots, x^{(d)})^T \in \mathbb{R}^d \Bigm| 
  \sum_{i = 1}^d |x^{(i)}| < \alpha \gamma r \Big\} \\
  &\subset \co\Big\{ \alpha \gamma e_1, \ldots, \alpha \gamma e_d, - \alpha \gamma \sum_{i = 1}^d e_i \Big\} 
  \subset \partial F(x_*) + \mathcal{C}_k,
\end{align*}
that is, $0 \in \interior(\partial F(x_*) + \mathcal{C}_k)$.

Now we turn to the proof of the existence of generalised complete alternance. Denote $k_0 = d + 1 - k \ge 1$ and
$V_{k_0 + i} = w_i$ for any $i \in \{ 1, \ldots, k \}$. Observe that 
$$
  \mathbb{R}^d = \linhull\Big( \partial F(x_*) + \mathcal{C}_k \Big) 
  \subseteq \linhull\Big\{ \partial F(x_*), \mathcal{C}_k \Big\} \subseteq \mathbb{R}^d,
$$
where the first equality follows from the fact that and $0 \in \interior (\partial F(x_*) + \mathcal{C}_k)$. Therefore,
there exists vectors $V_2, \ldots, V_{k_0} \in \partial F(x_*)$ such that the vectors $V_2, \ldots, V_{d + 1}$ are
linearly independent. 

Denote $Q(x_*) = \cone\{ V_2, \ldots, V_{d + 1} \}$ (see Fig.~\ref{fig:negative_cone}). Observe that by definition the
affine hull of $Q(x_*)$ coincides with $\mathbb{R}^d$, since $Q(x_*)$ contains $d + 1$ affinely independent vectors: 
$0, V_2, \ldots, V_{d + 1}$. Therefore the relative interior of $Q(x_*)$ coincides with its topological interior, which
implies that $\interior Q(x_*) \ne \emptyset$ due to the fact that the relative interior of a convex subset of a finite
dimensional space is always nonempty. 

\begin{figure}[t]
\centering
\includegraphics[width=0.6\linewidth]{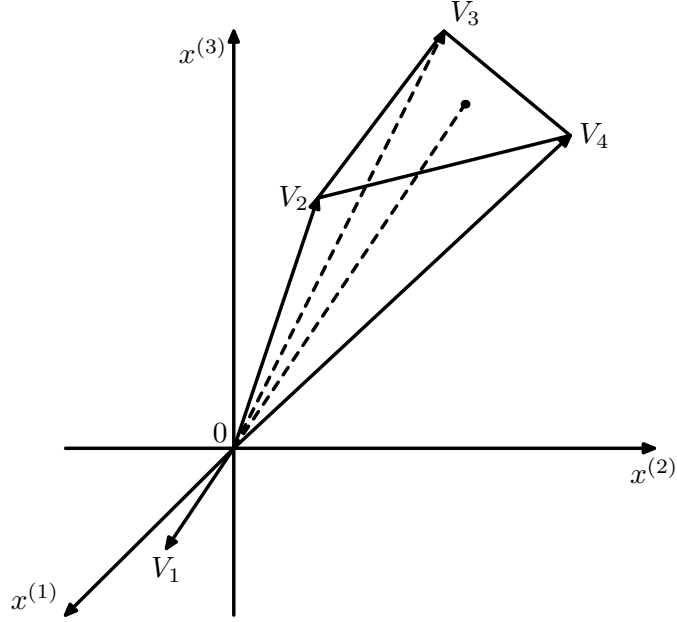}
\caption{As soon as the condition $0 \in \interior (\partial F(x_*) + \mathcal{C}_k)$ has been checked, one can easily
find linearly independent vectors $V_2, \ldots, V_{d + 1} \in \partial F(x_*) \cup \mathcal{C}_k$. The next step is to
prove that there exists $V_1 \in \partial F(x_*)$ such that $V_1 \in - \interior \cone\{ V_2, \ldots, V_{d + 1} \}$.
Then $V_1, \ldots, V_{d + 1}$ is the desired generalised complete alternance. However, to prove the existence of
such $V_1$ one needs to properly choose the cone $\mathcal{C}_k$.}
\label{fig:negative_cone}
\end{figure}

Let us verify that $(- \interior Q(x_*)) \setminus Q(x_*) \ne \emptyset$. Indeed, arguing by reductio ad absurdum
suppose that $- \interior Q(x_*) \subset Q(x_*)$. Choose any $z \in \interior Q(x_*)$. Then 
$z + B(0, \varepsilon) \subset \interior Q(x_*) \subset Q(x_*)$ for some $\varepsilon > 0$. Consequently, one has 
$- z - B(0, \varepsilon) \subset - \interior Q(x_*) \subset Q(x_*)$. Hence taking into account the fact that $Q(x_*)$ is
a convex cone (which implies that $Q(x_*)$ is closed under addition) one obtains that 
$$
  B(0, \varepsilon) \subset \big( z + B(0, \varepsilon) \big) + \big( -z - B(0, \varepsilon) \big) \subset Q(x_*).
$$
Choose any $u \in B(0, \varepsilon)$, $u \ne 0$. Then $u \in Q(x_*)$ and $- u \in Q(x_*)$. By the definition of $Q(x_*)$
one has $u = \sum_{i = 2}^{d + 1} \alpha_i V_i$ for some $\alpha_i \ge 0$ and $- u = \sum_{i = 2}^{d + 1} \beta_i V_i$
for some $\beta_i \ge 0$. Summing up these equalities one obtains $\sum_{i = 2}^{d + 1} (\alpha_i + \beta_i) V_i = 0$,
which implies that $\alpha_i = \beta_i = 0$ for all $i \in \{ 2, \ldots, d + 1 \}$, since the vectors 
$V_2, \ldots, V_{d + 1}$ are linearly independent. Consequently, $u = 0$, which contradicts our assumption that 
$u \ne 0$.

Thus, there exists a nonzero vector $\xi \in (- \interior Q(x_*)) \setminus Q(x_*)$. By definition one can find
$\varepsilon > 0$ such that $- \xi + B(0, \varepsilon) \subset Q(x_*)$. Since $Q(x_*)$ is a cone,  
$- \alpha \xi + B(0, \alpha \varepsilon) \subset Q(x_*)$ for any $\alpha > 0$, that is, 
$\alpha \xi \in - \interior Q(x_*)$. Furthermore, $\alpha \xi \notin Q(x_*)$, since otherwise $\xi \in Q(x_*)$.

Since $0 \in \interior(\partial F(x_*) + \mathcal{C}_k)$, by choosing a sufficiently small $\alpha > 0$ we can
suppose that $\alpha \xi \in \partial F(x_*) + \mathcal{C}_k$. Therefore there exists $V_1 \in \partial F(x_*)$ and 
$u \in \mathcal{C}_k \subset Q(x_*)$ such that $\alpha \xi = V_1 + u$ (the inclusion $\mathcal{C}_k \subset Q(x_*)$
follows from the fact that $\mathcal{C}_k = \cone\{ V_{k_0 + 1}, \ldots, V_{d + 1} \} \subset Q(x_*)$ by definition).
Observe that $V_1 = \alpha \xi - u \in ( - \interior Q(x_*)) - Q(x_*) = - \interior Q(x_*)$, where the last equality
follows from the fact that if $z_1 \in \interior Q(x_*)$ and $z_2 \in Q(x_*)$, then for some $\varepsilon > 0$ one has
$z_1 + B(0, \varepsilon) \subset Q(x_*)$, which implies that $z_1 + B(0, \varepsilon) + z_2 \subset Q(x_*)$, i.e.
$z_1 + z_2 \in \interior Q(x_*)$.

Note that if a vector $v \in Q(x_*)$ can be represented as a linear combination with positive coefficients of $d - 1$
or fewer vectors from the set $V_2, \ldots, V_{d + 1}$, then $v \notin \interior Q(x_*)$. Indeed, let 
$v \in Q(x_*) = \cone\{ V_2, \ldots, V_{d + 1} \}$ have the form
$$
  v = \beta_2 V_2 + \ldots + \beta_{i - 1} V_{i - 1} + \beta_{i + 1} V_{i + 1} + \ldots + 
  \beta_{d + 1} V_{d + 1},
$$
for some $\beta_j \ge 0$ and $i \in \{ 2, \ldots, d + 1 \}$. For any $\varepsilon > 0$ define 
$v_{\varepsilon} = v - \varepsilon V_i$. Observe that $v_{\varepsilon} \notin Q(x_*)$, since otherwise by the
definition of $Q(x_*)$ one could find $\gamma_j \ge 0$, $j \in \{ 2, \ldots, d + 1 \}$, such that
$$
  \sum_{j = 2}^{i - 1} \beta_j V_j + \sum_{j = i + 1}^{d + 1} \beta_j V_j 
  - \varepsilon V_i = \sum_{j = 2}^{d + 1} \gamma_j V_j,
$$
which contradicts the fact that the vectors $V_2, \ldots, V_{d + 1}$ are linearly independent. On the other hand, note
that choosing $\varepsilon > 0$ sufficiently small one can ensure that $v_{\varepsilon}$ belongs to an arbitrarily
small neighbourhood of $v$, which implies that $v \notin \interior Q(x_*)$. Thus, the vector $-V_1 \in \interior Q(x_*)$
can only be represented in the form $- V_1 = \sum_{i = 2}^{d + 1} \beta_i V_i$ for some $\beta_i > 0$,
$i \in \{ 2, \ldots, d + 1 \}$. Looking at this representation as a system of linear equations with respect to
$\beta_i$ and applying Cramer's rule one obtains that $\Delta_1 \ne 0$ and 
$\beta_i = (-1)^{i - 1} \Delta_i / \Delta_1 > 0$ for any $i \in \{ 2, \ldots, d + 1 \}$, where, as in the definition of
alternance, $\Delta_i = \determ([V_1, \ldots, V_{i - 1}, V_{i + 1}, \ldots, V_{d + 1}])$.
Therefore, all determinants $\Delta_s$ are nonzero, and $\sign \Delta_s = - \sign \Delta_{s + 1}$
for all $s \in \{ 1, \ldots, d \}$, that is, a generalised complete alternance exists at $x_*$.

\textbf{Case IV.} The proof of this case repeats the proof of the previous one with $\mathcal{N}(x_*)$ replaced by
$N_A(x_*)$.
\end{proof}

\begin{remark} \label{rmrk:IsolatePoint_WeakAlternance}
{(i)~Note that the condition $\mathcal{N}(x_*) + N_A(x_*) \ne \mathbb{R}^d$ in the second assumption of the theorem
above simply means that $x_*$ is not an isolated point of the feasible region $\Omega$ of the problem $(\mathcal{P})$.
Indeed, fix any $v_1 \in \mathcal{N}(x_*)$ and $v_2 \in N_A(x_*)$. One can easily verify that, regardless of whether RCQ
holds true or not, one has $T_{\Omega}(x_*) \subseteq \{ h \in T_A(x_*) \mid D G(x_*) h \in T_K(G(x)) \}$, which by
Lemma~\ref{lem:NormalCone_ConeConstr} implies that $\langle v_1, h \rangle \le 0$ and $\langle v_2, h \rangle \le 0$ for
any $h \in T_{\Omega}(x_*)$. Therefore $\mathcal{N}(x_*) + N_A(x_*) \subset (T_{\Omega}(x_*))^* = N_{\Omega}(x_*)$.
Thus, if $\mathcal{N}(x_*) + N_A(x_*) = \mathbb{R}^d$, then $N_{\Omega}(x_*) = \mathbb{R}^d$, which with the use of
\cite[Prp.~2.40]{BonnansShapiro} implies that 
$\cl \cone(T_{\Omega}(x_*)) = T_{\Omega}(x_*)^{**} = N_{\Omega}(x_*)^* = \{ 0 \}$.
On the other hand, if $x_*$ is a non-isolated point of $\Omega$, then there exists a sequence 
$x_n \subset \Omega \setminus \{ x_* \}$ converging to $x_*$. Replacing $\{ x_n \}$, if necessary, with its subsequence
one can suppose that the sequence $\{ (x_n - x_*) / |x_n - x_*| \}$ converges to some $v \ne 0$, which obviously
belongs to $T_{\Omega}(x_*)$. Thus, one can conclude that the condition $\mathcal{N}(x_*) + N_A(x_*) = \mathbb{R}^d$
implies that $x_*$ is an isolated point of $\Omega$.
}

\noindent{(ii)~Let us note that by further weakening the definition of generalised alternance one can obtain sufficient
optimality conditions for the problem $(\mathcal{P})$ in an alternance form that are equivalent to the condition 
$0 \in \interior \mathcal{D}(x_*)$ under less restrictive assumptions. Namely, one says that \textit{a weak $p$-point
alternance} exists at $x_*$, if there exist $k_0 \in \{ 1, \ldots, p \}$, vectors 
$V_1, \ldots, V_{k_0} \in \partial F(x_*)$, $V_{k_0 + 1}, \ldots, V_p \in \mathcal{N}(x_*) + N_A(x_*)$, and 
$V_{p + 1}, \ldots, V_{d + 1} \in Z$ such that conditions \eqref{eq:DeterminantsProp} and \eqref{eq:DeterminantsProp2}
hold true. Almost literally repeating the proof of the third case of the previous theorem with $\mathcal{N}(x_*)$
replaced by $\mathcal{N}(x_*) + N_A(x_*)$ one can prove that $0 \in \interior \mathcal{D}(x_*)$ and 
$\partial F(x_*) \ne \{ 0 \}$, provided a weak complete alternance exists at $x_*$ and 
$0 \in \partial F(x_*) + w$ for some $w \in \relint(\mathcal{N}(x_*) + N_A(x_*)) \setminus \{ 0 \}$ (in particular,
it is sufficient to assume that the necessary condition for an unconstrained local minimum $0 \in \partial F(x_*)$
is not satisfied at $x_*$). However, to obtain alternance conditions that are equivalent to the conditions 
$0 \in \interior \mathcal{D}(x_*)$ and $\partial F(x_*) \ne \{ 0 \}$, in the general case one must assume that 
$V_1, \ldots, V_p \in \mathcal{D}(x_*)$. Indeed, let $d = 2$ and consider the following minimax problem:
$$
  \min \: F(x) = \max\{ \pm x^{(1)} \} \quad
  \text{s.t.} \quad x \in A = \{ x = (x^{(1)}, x^{(2)})^T \in \mathbb{R}^2 \mid x^{(2)} = 0 \}.
$$
The point $x_* = 0$ is a globally optimal solution of this problem. Note that 
$\partial F(x_*) = \co\{ (\pm 1, 0)^T \}$ and $N_A(x_*) = \{ x \in \mathbb{R}^2 \mid x^{(1)} = 0 \}$, which implies that
$\mathcal{D}(x_*) = \{ x \in \mathbb{R}^2 \mid |x^{(1)}| \le 1 \}$ and $0 \in \interior \mathcal{D}(x_*)$. However, as
is easily seen, a weak complete alternance does not exist at $x_*$ (only a $2$-point alternance exists at this point).
Note that in this example $(- \partial F(x_*)) \cap \relint N_A(x_*) = \{ 0 \}$. \qed
}
\end{remark}

Let us comment on the number $p$ in the definition of alternance (or cadre). Suppose for the sake of simplicity that
there are no constraints. From the proofs of Proposition~\ref{prp:AlternanceVsCadre} and
Theorem~\ref{thrm:AlternanceCond} it follows that a $p$-point alternance exists at $x_*$ for some 
$p \in \{ 1, \ldots, d + 1 \}$ iff zero can be represented as a convex combination with nonzero coefficients of $p$
affinely independent points from the set $\{ \nabla_x f(x_*, \omega) \mid \omega \in W(x_*) \}$. Hence, in particular,
for a $p$-point alternance to exist at $x_*$ it is necessary that the cardinality of $W(x_*)$ is at least $p$ (i.e. the
maximum in the definition of $F(x_*) = \max_{\omega \in W} f(x_*, \omega)$ must be attained in at least $p$ points
$\omega$) and the set $\{ \nabla_x f(x_*, \omega) \mid \omega \in W(x_*) \}$ contains $p$ affinely independent vectors.
Thus, roughly speaking, the number $p$ in the definition of alternance (or cadre) reflects the size of the
subdifferential $\partial F(x_*)$ at a given point $x_*$ and usually corresponds to its affine dimension plus one. In
particular, in the smooth case (i.e. when $F$ is differentiable at $x_*$) only a $1$-point alternance can exist at
$x_*$. If $\partial F(x_*)$ is a line segment, then only $1$-point or $2$-point alternance can exists at $x_*$, etc. In
the constrained case, the number $p$, roughly speaking, reflects the dimension of the subdifferential $\partial F(x_*)$
and the number of active constraints at $x_*$. However, one must underline that, as
Example~\ref{exmpl:ComplAtern_CounterExampl} demonstrates, in some cases $p$ can be much smaller that the dimension of
the subdifferential.

\begin{remark}
It should be noted that in the proofs of Theorems~\ref{thrm:AlternanceCond} and \ref{thrm:GenAlternanceCond} we do not
use any particular structure of the sets $\partial F(x_*)$, $\mathcal{N}(x_*)$, and $N_A(x_*)$. Therefore, these
theorems can be restated in an abstract form. Namely, suppose that a compact convex set $P \subset \mathbb{R}^d$ and
closed convex cones $K_1, K_2 \subset \mathbb{R}^d$ are given, and let $P = \co P^0$, $K_1 = \cone K_1^0$, and 
$K_2 = \cone K_2^0$ for some sets $P^0 \subseteq P$, $K_1^0 \subseteq K_1$, and $K_2^0 \subseteq K_2$. Then, for
instance, the first part of Theorem~\ref{thrm:AlternanceCond} can be reformulated as follows: $0 \in P + K_1 + K_2$ iff
there exists $p \in \{ 1, \ldots, d + 1 \}$, $k_0 \in \{ 1, \ldots, p \}$, $i_0 \in \{ k_0 + 1, \ldots, p \}$, and
vectors
$$
  V_1, \ldots, V_{k_0} \in P^0, \quad 
  V_{k_0 + 1}, \ldots, V_{i_0} \in K_1^0, \quad
  V_{i_0 + 1}, \ldots, V_p \in K_2^0
$$
such that $\rank([V_1, \ldots, V_p]) = p - 1$ and $\sum_{i = 1}^p \beta_i V_i = 0$ for some $\beta_i > 0$. Such approach
to an analysis of the condition $0 \in P$, where $P$ is a polytope, was studied in detailed by Demyanov and
Malozemov \cite{DemyanovMalozemov_Alternance,DemyanovMalozemov_Collect}. These papers, in particular, describe a
different (but equivalent) approach to the definition of alternance optimality conditions, in which instead of adding
vectors $V_{p + 1}, \ldots, V_{d + 1} \in Z$ one considers submatrices of order $p$ of the matrix $[V_1, \ldots, V_p]$.
\qed
\end{remark}

\subsection{Examples}
\label{subsect:Examples}

In this section we apply the general theory of first order optimality conditions for cone constrained minimax problems
developed in the previous sections to four particular types of such problems: problems with equality and inequality
constraints, problems with second order cone constraints, as well as problems with semidefinite and semi-infinite
constraints. We demonstrate how general conditions can be reformulated in a more convenient way for these problems and
present several examples illustrating theoretical results.

\subsubsection{Constrained minimax problems}

Let the problem $(\mathcal{P})$ be a constrained minimax problem of the form:
\begin{equation} \label{probl:ConstrainedMinimax}
  \min_x \max_{\omega \in W} f(x, \omega) \quad 
  \text{s.t.} \quad g_i(x) \le 0, \quad i \in I, \quad g_j(x) = 0, \quad j \in J, \quad x \in A,
\end{equation}
where $g_i \colon \mathbb{R}^d \to \mathbb{R}$, $i \in I \cup J$, $I = \{ 1, \ldots, l \}$, and 
$J = \{ l + 1, \ldots, l + s \}$. In this case, $Y = \mathbb{R}^{l + s}$, 
$G(\cdot) = (g_1(\cdot), \ldots, g_{l + s}(\cdot))$, and $K =  (- \mathbb{R}_+)^l \times 0_s$, where 
$\mathbb{R}_+ = [0, + \infty)$ and $0_s$ is the zero vector from $\mathbb{R}^s$. Then one has
$K^* = \mathbb{R}_+^l \times \mathbb{R}^s$ and $L(x, \lambda) = F(x) + \sum_{i = 1}^{l + s} \lambda_i g_i(x)$.
Furthermore, as is easily seen, in the case $A = \mathbb{R}^d$, RCQ for problem \eqref{probl:ConstrainedMinimax}
coincides with the well-known Mangasarian-Fromovitz constraint qualification.

If we equip the space $Y$ with the $\ell_1$-norm, then the penalty function for problem \eqref{probl:ConstrainedMinimax}
takes the form
$$
  \Phi_c(x) = \max_{\omega \in W} f(x, \omega) + c \sum_{i = 1}^l \max\{ 0, g_i(x) \}
  + c \sum_{j = l + 1}^{l + s} |g_j(x)|.
$$
Denote $I(x) = \{ i \in I \mid g_i(x) = 0 \}$. As is easy to see, one has 
$$
  \mathcal{N}(x) =
  \Big\{ \sum_{i = 1}^{m + l} \lambda_i \nabla g_i(x) \Bigm| \lambda_i \ge 0, \: \lambda_i g_i(x) = 0 \enspace 
  \forall i \in I, \enspace \lambda_j \in \mathbb{R} \enspace \forall j \in J \Big\}
$$
Therefore, it is natural to choose 
$$
  \eta(x) = \big\{ \nabla g_i(x) \bigm| i \in I(x) \big\} \cup
  \big\{ \nabla g_j(x), - \nabla g_j(x) \bigm| j \in J \big\},
$$
since this is the smallest set whose conic hull coincides with $\mathcal{N}(x)$.

Let us give several particular examples in which we demonstrate how one can verify the validity of optimality
conditions derived in the previous sections in the case of minimax problems with equality and inequality constraints. We
pay special attention to alternance optimality conditions, since these conditions along with optimality conditions in
terms of cadres are the most convenient for analytical computations and can be used to develop efficient numerical
methods (cf. \cite{ConnLi92}). To get the flavour of alternance conditions, we start with a simple nonlinear programming
problem.

\begin{example}[\cite{Bazaraa}, Exercise~4.5]
Consider the following problem:
\begin{equation} \label{probl:MathProg2}
  \begin{split}
  {}&\min \: 
  f(x) = (x^{(1)})^4 + (x^{(2)})^4 + 12 (x^{(1)})^2 + 6 (x^{(2)})^2 - x^{(1)} x^{(2)} - x^{(1)} -x^{(2)}  \\
  {}&\text{s.t.} \enspace x^{(1)} + x^{(2)} \ge 6, \quad 2 x^{(1)} - x^{(2)} \ge 3, 
  \quad x^{(1)} \ge 0, \quad x^{(2)} \ge 0.
  \end{split}
\end{equation}
Define $d = 2$, $l = 2$, $J = \emptyset$, and $A = \{ x \in \mathbb{R}^2 \mid x^{(1)} \ge 0, \: x^{(2)} \ge 0 \}$. Put
also $g_1(x) = -x^{(1)} - x^{(2)} + 6$ and $g_2(x) = - 2 x^{(1)} + x^{(2)} + 3$.

Let us check that a complete alternance exists at the point $x_* = (3, 3)^T$ given in \cite[Exercise~4.5]{Bazaraa}.
Indeed, observe that $I(x_*) = I = \{ 1, 2 \}$ and $N_A(x_*) = - A$. Denote $V_1 = \nabla f(x_*) = (176, 140)^T$, 
$V_2 = \nabla g_1(x_*) = (-1, -1)^T \in \eta(x_*)$, and 
$V_3 = \nabla g_2(x_*) = (-2, 1)^T \in \eta(x_*)$. Then one has 
$$
  \Delta_1 = \begin{vmatrix} -1 & -2 \\ -1 & 1 \end{vmatrix} = -3, \quad
  \Delta_2 = \begin{vmatrix} 176 & -2 \\ 140 & 1 \end{vmatrix} = 456, \quad
  \Delta_3 = \begin{vmatrix} 176 & -1 \\ 140 & -1 \end{vmatrix} = -36,
$$
i.e. a complete alternance exists at $x_*$. Therefore applying
Theorems~\ref{thrm:SuffOptCond}, \ref{thrm:EquivOptCond_Subdiff}, and \ref{thrm:AlternanceCond} one obtains that $x_*$
is a strict local minimiser of problem \eqref{probl:MathProg2} at which the \textit{first} order growth condition
holds true. Note that the classical KKT optimality conditions do not allow one to verify whether the \textit{first}
order growth condition is satisfied at $x_*$. \qed
\end{example}

Let us now give a counterexample to the existence of generalised complete alternance in the general case, promised in
the previous section. In this counterexample, a generalised complete alternance does not exist at a non-isolated point
$x_*$ satisfying the sufficient optimality condition $0 \in \interior \mathcal{D}(x_*)$ and such that 
$0 \notin \partial F(x_*)$.

\begin{example} \label{exmpl:ConstrComplAltern_CounterEx}
Consider the following problem:
\begin{equation} \label{probl:MathProg3_CounterEx}
  \begin{split}
    {}&\min \: f(x) = x^{(1)} + (x^{(2)})^2 + x^{(3)} \\
    {}&\text{s.t.} \enspace x^{(2)} - |x^{(3)}| x^{(3)} \le 0, \enspace - x^{(2)} - |x^{(3)}| x^{(3)} \le 0, 
    \enspace x^{(1)} = 0, \enspace x^{(3)} \ge 0.
  \end{split}
\end{equation}
The feasible region of this problem is depicted in Figure~\ref{fig:feasible_region}. Put $d = 3$, $l = 2$, 
$J = \emptyset$, and $A = \{ x \in \mathbb{R}^d \mid x^{(1)} = 0, \: x^{(3)} \ge 0 \}$. Define 
also $g_1(x) = x^{(2)} - |x^{(3)}| x^{(3)}$ and $g_2(x) = -x^{(2)} - |x^{(3)}| x^{(3)}$.

Let us check optimality conditions at the point $x_* = 0$. Firstly, note that $x_*$ is a not an isolated point of
problem \eqref{probl:MathProg3_CounterEx}, since for any $t \ge 0$ the point $x(t) = (0, 0, t)^T$ is feasible. One has
$I(x_*) = \{ 1, 2 \}$, $\nabla g_1(x_*) = (0, 1, 0)^T$, and $\nabla g_2(x_*) = (0, -1, 0)^T$, which implies that
$\mathcal{N}(x_*) = \cone\{ \nabla g_1(x_*), \nabla g_2(x_*) \} = \{ x \in \mathbb{R}^3 \mid x^{(1)} = x^{(3)} = 0 \}$.
Moreover, $N_A(x_*) = \{ x \in \mathbb{R}^3 \mid x^{(2)} = 0, \: x^{(3)} \le 0 \}$. Hence taking into account
the fact that $\nabla f(x_*) = (1, 0, 1)$ one obtains that
$$
  \mathcal{D}(x_*) = \nabla f(x_*) + \mathcal{N}(x_*) + N_A(x_*) = \nabla f(x_*) 
  + \{ x \in \mathbb{R}^3 \mid x^{(3)} \le 0 \}
  = \{ x \in \mathbb{R}^3 \mid x^{(3)} \le 1 \}.
$$
Thus, $0 \in \interior \mathcal{D}(x_*)$ and by Theorems~\ref{thrm:SuffOptCond} and \ref{thrm:EquivOptCond_Subdiff} the
point $x_*$ is a local minimiser of problem \eqref{probl:MathProg3_CounterEx} at which the first order growth condition
holds true. Let us check that a generalised complete alternance does \textit{not} exist at~$x_*$.

\begin{figure}[t]
\centering
\includegraphics[width=0.4\linewidth]{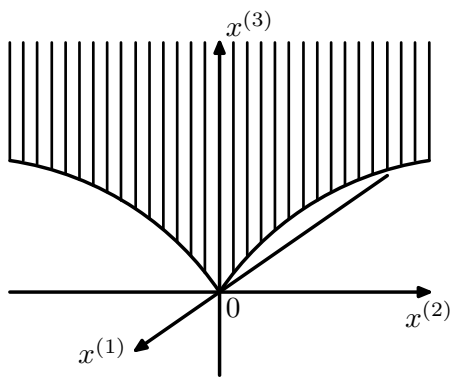}
\caption{The feasible region of problem \eqref{probl:MathProg3_CounterEx} (the shaded area).}
\label{fig:feasible_region}
\end{figure}

Note that $\interior \mathcal{N}(x_*) = \emptyset$, $\interior N_A(x_*) = \emptyset$, 
$- \nabla f(x_*) \notin \mathcal{N}(x_*)$, and $- \nabla f(x_*) \in \relint N_A(x_*)$, but 
$\mathcal{N}(x_*) \ne \{ 0 \}$. Thus, Theorem~\ref{thrm:GenAlternanceCond} is inapplicable. Arguing by reductio ad
absurdum, suppose that a generalised complete alternance $\{ V_1, \ldots, V_4 \}$ exists at $x_*$. Clearly, 
$V_1 = \nabla f(x_*)$ and the vectors $V_2$, $V_3$, and $V_4$ are linearly independent, since 
$\Delta_1 = \determ([V_2, V_3, V_4]) \ne 0$. Hence taking into account the facts that $\mathcal{N}(x_*)$ is one
dimensional and $N_A(x_*)$ is two dimensional one obtains that $V_2 \in \mathcal{N}(x_*) \setminus \{ 0 \}$ and 
$V_3, V_4 \in N_A(x_*) \setminus \{ 0 \}$. However, by Remark~\ref{rmrk:GenAlternanceVsCadre} one has 
$\sum_{i = 1}^4 \beta_i V_i = 0$ for some $\beta_i > 0$, which is impossible due to the fact that $V_2$ is the only
vector whose second coordinate is non-zero. Thus, a generalised complete alternance does not exist at $x_*$.
Nevertheless, observe that putting $V_1 = \nabla f(x_*)$, $V_2 = (-1, 0, 0)^T = N_A(x_*)$, and 
$V_3 = (0, 0, -1)^T \in N_A(x_*)$ one has $V_1 + V_2 + V_3 = 0$ and $\rank([V_1, V_2, V_3]) = 2$, i.e.
a $3$-point alternance exists at $x_*$, which in the case $d = 3$ is not complete. 

Moreover, observe that for $V_1 = \nabla f(x_*)$, $V_2 = (0, 0, -1)^T \in \mathcal{N}(x_*) + N_A(x_*)$,
$V_3 = (-0.5, 1, 0)^T \in \mathcal{N}(x_*) + N_A(x_*)$, and $V_4 = (-0.5, -1, 0)^T \in \mathcal{N}(x_*) + N_A(x_*)$ one
has $V_1 + V_2 + V_3 + V_4 = 0$ and $\rank([V_1, V_2, V_3, V_3]) = 3$. Thus, in accordance with
Remark~\ref{rmrk:IsolatePoint_WeakAlternance} a weak complete alternance exists at $x_*$.

It should be pointed out that RCQ is not satisfied at $x_*$. Therefore we pose an open problem to prove whether the
sufficient optimality condition $0 \in \interior \mathcal{D}(x_*)$ along with RCQ and the assumption that 
$\partial F(x_*) \ne \{ 0 \}$ guarantee the existence of a generalised complete alternance. \qed
\end{example}

Now we give two simple examples of minimax problems.

\begin{example}[\cite{MakelaNeittaanmaki}, Problem~DEM] \label{exmpl:ProblemDEM}
Consider the following problem:
$$
  \min\: F(x) = \max\{ f_1(x), f_2(x), f_3(x) \},
$$
where $f_1(x) = 5 x^{(1)} + x^{(2)}$, $f_2(x) = - 5 x^{(1)} + x^{(2)}$, and 
$f_3(x) = (x^{(1)})^2 + (x^{(2)})^2 + 4 x^{(2)}$. Put $d = 2$ and $W = \{ 1, 2, 3 \}$.

Let us check optimality conditions at the point $x_* = (0, - 3)^T$. One has $W(x_*) = W$ and
$$
  \partial F(x_*) = \co\{ \nabla f_1(x_*), \nabla f_2(x_*), \nabla f_3(x_*) \}
  = \co\left\{ \begin{pmatrix} 5 \\ 1 \end{pmatrix}, \begin{pmatrix} -5 \\ 1 \end{pmatrix},
  \begin{pmatrix} 0 \\ -2 \end{pmatrix} \right\}.
$$

\begin{figure}[t]
\centering
\includegraphics[width=0.5\linewidth]{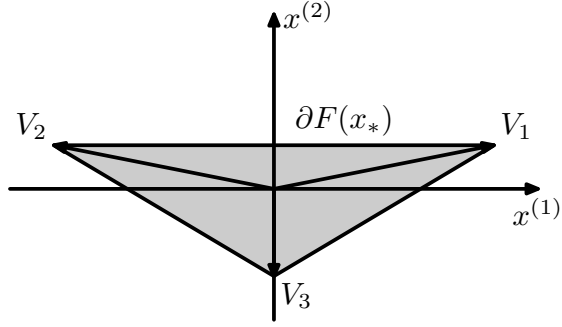}
\caption{The subdifferential $\partial F(x_*)$ (the shaded area) and the vectors $V_1, V_2, V_3 \in \partial F(x_*)$
comprising a complete alternance in Example~\ref{exmpl:ProblemDEM}.}
\label{fig:example_DEM}
\end{figure}

\noindent{}Define $V_1 = \nabla f_1(x_*)$, $V_2 = \nabla f_2(x_*)$, and $V_3 = \nabla f_3(x_*)$. Then
$$
  \Delta_1 = \begin{vmatrix} -5 & 0 \\ 1 & -2 \end{vmatrix} = 10, \quad 
  \Delta_2 = \begin{vmatrix} 5 & 0 \\ 1 & -2 \end{vmatrix} = -10, \quad 
  \Delta_3 = \begin{vmatrix} 5 & -5 \\ 1 & 1 \end{vmatrix} = 10,
$$
that is, a complete alternance exists at $x_*$ (see Fig.~\ref{fig:example_DEM}). Consequently, $x_*$ is a point of
strict local minimum of the function $F$ at which the first order growth condition holds true by
Theorems~\ref{thrm:SuffOptCond}, \ref{thrm:EquivOptCond_Subdiff}, and \ref{thrm:AlternanceCond}. \qed
\end{example}

\begin{example}[\cite{MadsenSchjaer}, modified Example~4] \label{exmpl:ProblemMadsen}
Let $d = 2$ and consider the following constrained minimax problem:
\begin{equation} \label{probl:ConstrMinMaxEx}
  \min\: F(x) = \max\{  f_1(x), f_2(x), f_3(x) \}
  \quad \text{subject to} \quad x^{(1)} \ge 0, \quad x^{(2)} \ge 1,
\end{equation}
where $f_1(x) = (x^{(1)})^2 + (x^{(2)})^2 + x^{(1)} x^{(2)} - 1$, $f_2(x) = \sin x^{(1)}$, $f_3(x) = - \cos x^{(2)}$.
Define $W = \{ 1, 2, 3 \}$ and $A = \{ x \in \mathbb{R}^2 \mid x^{(1)} \ge 0, \: x^{(2)} \ge 1 \}$.

Let us check optimality conditions at the point $x_* = (0, 1)^T$. One has $W(x_*) = \{ 1, 2 \}$, 
$N_A(x_*) = \{ x \in \mathbb{R}^2 \mid x^{(1)} \le 0, \: x^{(2)} \le 0 \}$, and
$$
  \partial F(x_*) = \co\{ \nabla f_1(x_*), \nabla f_2(x_*) \}
  = \co\left\{ \begin{pmatrix} 1 \\ 2 \end{pmatrix}, \begin{pmatrix} 1 \\ 0 \end{pmatrix} \right\}.
$$

\begin{figure}[t]
\centering
\includegraphics[width=0.4\linewidth]{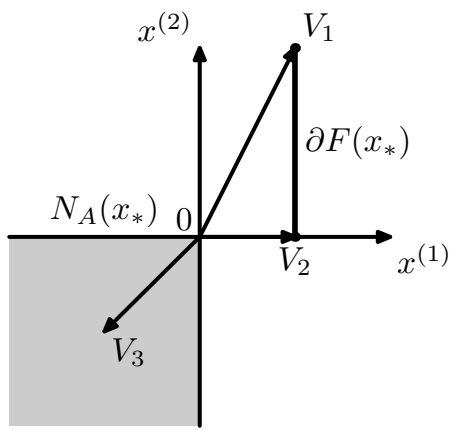}
\caption{The subdifferential $\partial F(x_*)$ (the vertical line segment), the normal cone $N_A(x_*)$ (the shaded
area), and the vectors $V_1, V_2 \in \partial F(x_*)$ and $V_3 \in N_A(x_*)$ comprising a generalised complete
alternance in Example~\ref{exmpl:ProblemMadsen}.}
\label{fig:Example_Madsen}
\end{figure}

\noindent{}Put $V_1 = \nabla f_1(x_*)$, $V_2 = \nabla f_2(x_*)$, and $V_3 = (-1, -1)^T \in N_A(x_*)$. Then
$$
  \Delta_1 = \begin{vmatrix} 1 & -1 \\ 0 & -1 \end{vmatrix} = -1, \quad 
  \Delta_2 = \begin{vmatrix} 1 & -1 \\ 2 & -1 \end{vmatrix} = 1, \quad 
  \Delta_3 = \begin{vmatrix} 1 & 1 \\ 2 & 0 \end{vmatrix} = -2,
$$
that is, a generalised complete alternance exists at $x_*$ (see~Fig.~\ref{fig:Example_Madsen}). Consequently, by
Theorems~\ref{thrm:SuffOptCond}, \ref{thrm:EquivOptCond_Subdiff}, and \ref{thrm:GenAlternanceCond} the point $x_*$ is a
locally optimal solution of problem \eqref{probl:ConstrMinMaxEx} at which the first order growth condition holds true.

Note that it is natural to put $n_A(x_*) = \{ (-1, 0)^T, (0, - 1)^T \}$, since $N_A(x_*) = \cone n_A(x_*)$, and
analyse optimality condition in terms of non-generalised alternance. Similarly, one can consider inequality constraints
$g_1(x) = - x^{(1)} \le 0$ and $g_2(x) - x^{(2)} + 1 \le 0$, and define $A = \mathbb{R}^2$ and
$\eta(x_*) = \{ \nabla g_1(x_*), \nabla g_2(x_*) \}$. However, one can check that in both cases only a $2$-point
alternance exists at $x_*$, which in the case $d = 2$ is not complete. \qed
\end{example}

\subsubsection{Nonlinear second order cone minimax problems}

Let $(\mathcal{P})$ be a nonlinear second order cone minimax problem of the form:
\begin{equation} \label{probl:SecondOrderConeMinimax}
  \min_x \max_{\omega \in W} f(x, \omega) \quad 
  \text{s.t.} \quad g_i(x) \in K_{l_i + 1}, \quad i \in I, 
  \quad b(x) = 0, \quad x \in A,
\end{equation}
where $g_i \colon \mathbb{R}^d \to \mathbb{R}^{l_i + 1}$, $I = \{ 1, \ldots, r \}$ and
$b \colon \mathbb{R}^d \to \mathbb{R}^s$ are continuously differentiable functions, and
$$
  K_{l_i + 1} = \big\{ y = (y^0, \overline{y}) \in \mathbb{R} \times \mathbb{R}^{l_i} \bigm|
  y^0 \ge |\overline{y}| \big\}
$$

\begin{figure}[t]
\centering
\includegraphics[width=0.4\linewidth]{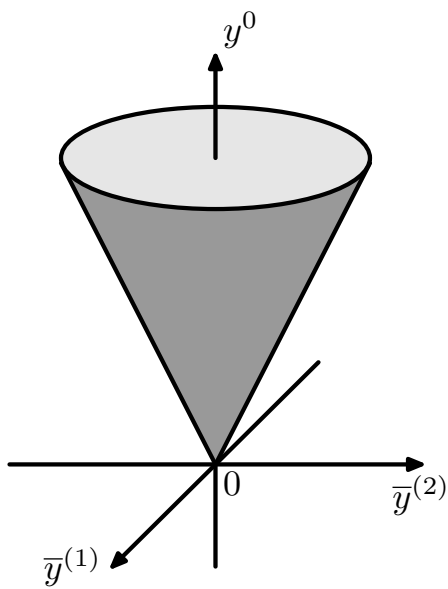}
\caption{The second order (Lorentz, ice-cream) cone of dimension $3$.}
\label{fig:Lorentz_cone}
\end{figure}

\noindent{}is the second order (Lorentz, ice-cream) cone of dimension $l_i + 1$ (see Fig.~\ref{fig:Lorentz_cone}). In
this case
$$
  Y = \mathbb{R}^{l_1 + 1} \times \ldots \times \mathbb{R}^{l_r + 1} \times \mathbb{R}^s, \quad
  K = K_{l_1 + 1} \times \ldots \times K_{l_r + 1} \times \{ 0_s \},
$$
and $G(\cdot) = (g_1(\cdot), \ldots, g_r(\cdot), b(\cdot))$. Furthermore, 
for any $\lambda = (\lambda_1, \ldots, \lambda_r, \nu) \in Y$ one has
$$
  L(x, \lambda) = f(x) + \sum_{i = 1}^r \langle \lambda_i, g_i(x) \rangle + \langle \nu, g(x) \rangle,
  \quad K^* = (- K_{l_1 + 1}) \times \ldots \times (- K_{l_r + 1}) \times \mathbb{R}^s.
$$
Finally, one can easily verify (cf.~\cite[Lemma~2.99]{BonnansShapiro}) that in the case $A = \mathbb{R}^d$ RCQ for 
problem \eqref{probl:SecondOrderConeMinimax} is satisfied at a feasible point $x$ iff the Jacobian matrix $\nabla b(x)$
has full row rank and there exists $h \in \mathbb{R}^d$ such that $\nabla b(x) h = 0$ and 
$g_i(x) + \nabla g_i(x) h \in \interior K_{l_i + 1}$ for all
$i \in I(x) = \{ i \in I \mid g_i^0(x) = |\overline{g}_i(x)| \}$, 
where $g_i(x) = (g_i^0(x), \overline{g}_i(x)) \in \mathbb{R} \times \mathbb{R}^{l_i}$ 
(here we used the obvious equality 
$\interior K_{l_i + 1} = \{ y = (y^0, \overline{y}) \in \mathbb{R} \times \mathbb{R}^{l_i} \bigm| y^0 >
|\overline{y}|$).

If we equip the space $Y$ with the norm $\| y \| = \sum_{i = 1}^r |y_i| + |z|$ for 
any $y = (y_1, \ldots, y_r, z) \in Y$, then the penalty function for problem \eqref{probl:ConstrainedMinimax}
takes the form
$$
  \Phi_c(x) = \max_{\omega \in W} f(x, \omega) + c \sum_{i = 1}^r \big| g_i(x) - P_{K_{l_i + 1}}(g_i(x)) \big|
  + c |b(x)|
$$
where
$$
  P_{K_{l_i + 1}}(y) = \begin{cases}
    \frac{\max\{ y^0 + |\overline{y}|, 0\}}{2}\left(1, \frac{\overline{y}}{|\overline{y}|}\right) &
    \text{if } y^0 \le |\overline{y}|, \\
    y, & \text{if } y^0 > |\overline{y}|
  \end{cases}
$$
is the Euclidean projection of $y = (y^0, \overline{y}) \in \mathbb{R} \times \mathbb{R}^{l_i}$ onto the second order
cone $K_{l_i + 1}$ (see \cite[Thrm.~3.3.6]{Bauschke}; an alternative expression for the projection can be found in
\cite[Prp.~3.3]{FukushimaLuoTseng}). Note also that for any feasible point $x$ one has
\begin{multline*}
  \mathcal{N}(x) = \Big\{ \sum_{i = 1}^r \nabla g_i(x)^T \lambda_i + \nabla b(x)^T \nu \Bigm| 
  \lambda_i \in - K_{l_i + 1}, \: \langle \lambda_i, g_i(x) \rangle = 0 \enspace \forall i \in I,
  \nu \in \mathbb{R}^s \Big\} \\
  = \Big\{ \sum_{i \in I_+(x)} t_i \nabla g_i(x)^T 
  \left( \begin{smallmatrix} - g_i^0(x) \\ \overline{g}_i(x) \end{smallmatrix} \right) 
  + \sum_{i \in I_0(x)} \nabla g_i(x)^T \lambda_i  
  + \nabla b(x)^T \nu \Bigm| \\
  t_i \ge 0 \: \forall i \in I_+(x), \: 
  \lambda_i \in - K_{l_i + 1} \: \forall i \in I_0(x), \: \nu \in \mathbb{R}^s \Big\},
\end{multline*}
where $I_0(x) = \{ i \in I(x) \mid g_i(x) = 0 \}$ and $I_+(x) = I(x) \setminus I_0(x)$. Here we used the following
simple auxiliary result.

\begin{lemma}
Let $y = (y^0, \overline{y}) \in K_{l + 1} \setminus \{ 0 \}$ and 
$\lambda = (\lambda^0, \overline{\lambda}) \in - K_{l + 1}$ with $l \in \mathbb{N}$ be such that 
$\langle \lambda, y \rangle = 0$. Then $\lambda = 0$, if $y^0 > \overline{y}$, and $\lambda = t (- y^0, \overline{y})$
for some $t \ge 0$, if $y^0 = |\overline{y}|$.
\end{lemma}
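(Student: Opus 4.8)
The plan is to exploit the three hypotheses together with a single application of the Cauchy--Schwarz inequality, and then to read the conclusion off from its equality case. Write $y=(y^0,\overline{y})$ and $\lambda=(\lambda^0,\overline{\lambda})$ with $y^0,\lambda^0\in\mathbb{R}$ and $\overline{y},\overline{\lambda}\in\mathbb{R}^l$. First I would record the elementary consequences of cone membership: $y\in K_{l+1}$ and $y\neq 0$ give $y^0\ge|\overline{y}|\ge 0$ with $y^0>0$ (since $y^0=0$ would force $\overline{y}=0$, contradicting $y\neq 0$), while $\lambda\in -K_{l+1}$ gives $\lambda^0\le-|\overline{\lambda}|\le 0$.

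Next I would expand the orthogonality relation as $0=\langle\lambda,y\rangle=\lambda^0y^0+\langle\overline{\lambda},\overline{y}\rangle$ and bound the right-hand side from below term by term: using $y^0>0$ and $\lambda^0\le-|\overline{\lambda}|$ one has $\lambda^0y^0\le-|\overline{\lambda}|\,y^0$; using $|\overline{\lambda}|\ge 0$ and $y^0\ge|\overline{y}|$ one has $-|\overline{\lambda}|\,y^0\le-|\overline{\lambda}|\,|\overline{y}|$; and Cauchy--Schwarz gives $\langle\overline{\lambda},\overline{y}\rangle\le|\overline{\lambda}|\,|\overline{y}|$. Chaining these shows $0=\lambda^0y^0+\langle\overline{\lambda},\overline{y}\rangle\le -|\overline{\lambda}|\,|\overline{y}|+|\overline{\lambda}|\,|\overline{y}|=0$, so each of the three inequalities must be an equality. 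This yields $\lambda^0=-|\overline{\lambda}|$, the identity $|\overline{\lambda}|\,(y^0-|\overline{y}|)=0$, and the Cauchy--Schwarz equality $\langle\overline{\lambda},\overline{y}\rangle=|\overline{\lambda}|\,|\overline{y}|$.

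The two cases then follow at once. If $y^0>|\overline{y}|$, the middle identity forces $|\overline{\lambda}|=0$, hence $\overline{\lambda}=0$ and $\lambda^0=-|\overline{\lambda}|=0$, i.e. $\lambda=0$. If $y^0=|\overline{y}|$, then $\overline{y}\neq 0$ (otherwise $y=0$), so the equality case of Cauchy--Schwarz gives $\overline{\lambda}=t\,\overline{y}$ for some $t\ge 0$; combined with $\lambda^0=-|\overline{\lambda}|=-t|\overline{y}|=-ty^0$ this gives $\lambda=(-ty^0,t\overline{y})=t(-y^0,\overline{y})$, as claimed. The whole argument is routine; the only mild obstacle is the bookkeeping of the Cauchy--Schwarz equality case---in particular verifying $\overline{y}\neq 0$ in the boundary case so that the coefficient $t$ is unambiguously defined and nonnegative---together with ensuring that the division by $y^0$ is legitimate, which is why it is worth isolating $y^0>0$ at the very start. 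Note that nothing in the argument requires $l\ge 2$, so it also covers the lowest-dimensional case $l=1$.
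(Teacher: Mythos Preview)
Your proof is correct and follows essentially the same approach as the paper: both arguments combine the cone membership constraints $y^0 \ge |\overline{y}|$, $\lambda^0 \le -|\overline{\lambda}|$ with the Cauchy--Schwarz inequality applied to $\langle\overline{\lambda},\overline{y}\rangle$, and then read off the conclusion from the resulting equalities. Your organization (chaining all three inequalities at once and forcing equality throughout) is slightly tidier than the paper's, which first isolates $\lambda^0 = -\tfrac{1}{y^0}\langle\overline{\lambda},\overline{y}\rangle$ and then splits into cases, but the mathematical content is identical.
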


\begin{proof}
Indeed, by definition 
$\langle \lambda, y \rangle = \lambda^0 y^0 + \langle \overline{\lambda}, \overline{y} \rangle = 0$. Hence taking into
account the fact that $y^0 > 0$, since $y \in K_{l + 1} \setminus \{ 0 \}$, one obtains that
\begin{equation} \label{eq:LorentzCone_ZeroProd}
  \lambda^0 = - \frac{1}{y^0} \langle \overline{\lambda}, \overline{y} \rangle \ge
  - \frac{1}{y^0} |\overline{\lambda}| \cdot |\overline{y}|.
\end{equation}
Therefore, if $y^0 > |\overline{y}|$, then either (1) $\lambda = 0$ or (2) $\overline{\lambda} = 0$ and $\lambda^0 > 0$
or (3) $\lambda^0 > - |\overline{\lambda}|$. Note, however, that only the first case is possible, since 
$\lambda \in - K_{l + 1}$. Thus, $\lambda = 0$, if $y^0 > \overline{y}$. 

On the other hand, if $y^0 = |\overline{y}|$, then taking into account \eqref{eq:LorentzCone_ZeroProd} and the fact that
$\lambda \in - K_{l + 1}$, i.e. $\lambda^0 \le - |\overline{\lambda}|$, one obtains that 
$\lambda^0 = - |\overline{\lambda}|$ and 
$\langle \overline{\lambda}, \overline{y} \rangle = |\overline{\lambda}| \cdot |\overline{y}|$, that is,
$\overline{\lambda} = t \overline{y}$ for some $t \ge 0$. Thus, $\lambda = t (- y^0, \overline{y})$ for some 
$t \ge 0$, if $y^0 = |\overline{y}|$.
\end{proof}

Thus, it is natural to define
\begin{align*}
  \eta(x) 
  &= \Big\{ \nabla g_i(x)^T \left( \begin{smallmatrix} - g_i^0(x) \\ \overline{g}_i(x) \end{smallmatrix} \right)
  \Bigm| i \in I_+(x) \Big\}  \\
  &\cup \Big\{ \nabla g_i(x)^T \left( \begin{smallmatrix} - 1 \\ |v| \end{smallmatrix} \right) \Bigm|
  i \in I_0(x), \: v \in \mathbb{R}^{l_i}, \: |v| = 1 \Big\} 
  \cup \big\{ \nabla b_1(x), \ldots \nabla b_s(x) \big\}
\end{align*}
(here $b(\cdot) = (b_1(\cdot), \ldots, b_s(\cdot))$), since in the general case this is the smallest set such that
$\mathcal{N}(x) = \cone \eta(x)$.

Let us give an example demonstrating how one can verify alternance optimality conditions in the case of nonlinear
second order cone minimax problems.

\begin{example}
Consider the following second order cone minimax problem:
\begin{equation} \label{probl:2OrderConeMinimaxEx}
\begin{split}
  &\min\: F(x) = \max\{ (x^{(1)})^2 + (x^{(2)})^2 + 4 x^{(1)} - x^{(2)}, \sin x^{(1)} - x^{(2)}, \cos x^{(2)} - 1 \} \\
  &\text{s.t.} \quad g_1(x) = (- x^{(1)} + \sin x^{(2)} + 1, \sin x^{(1)} - 2 x^{(2)} - 1) \in K_2, \\
  &g_2(x) = (2 (x^{(1)})^2 + 2 (x^{(2)})^2, x^{(1)} + x^{(2)}, 2 x^{(2)}) \in K_3.
\end{split}
\end{equation}
Define $d = 2$, $f_1(x) = (x^{(1)})^2 + (x^{(2)})^2 + 4 x^{(1)} - x^{(2)}$, $f_2(x) = \sin x^{(1)} - x^{(2)}$, $f_3(x) =
\cos x^{(2)} - 1$, 
$W = \{ 1, 2, 3 \}$, $I = \{ 1, 2 \}$, and $A = \mathbb{R}^d$.

Let us check optimality conditions at the point $x_* = 0$. Observe that $W(x_*) = \{ 1, 2, 3 \}$ and
$$
  \partial F(x_*) = \co\{ \nabla f_1(x_*), \nabla f_2(x_*), \nabla f_3(x_*) \nabla \}
  = \co\left\{ \begin{pmatrix} 4 \\ -1 \end{pmatrix}, \begin{pmatrix} 1 \\ -1 \end{pmatrix},
  \begin{pmatrix} 0 \\ 0 \end{pmatrix} \right\}.
$$
Note also that $g_1(x_*) = (1, - 1) \in K_2$, 
$\nabla g_1(x_*)^T = \left( \begin{smallmatrix} -1 & 1 \\ 1 & -2 \end{smallmatrix} \right)$, 
$g_2(x_*) = 0 \in K_3$, and 
$\nabla g_2(x_*)^T = \left( \begin{smallmatrix} 0 & 1 & 0 \\ 0 & 1 & 2 \end{smallmatrix} \right)$. Therefore 
$I_+(x_*) = \{ 1 \}$, $I_0(x_*) = \{ 2 \}$, and
\begin{align*}
  \eta(x_*) &= \Big\{ 
  \nabla g_1(x_*)^T \left( \begin{smallmatrix} - g_1^0(x_*) \\ \overline{g}_i(x_*) \end{smallmatrix} \right) \Big\}
  \cup \Big\{ \nabla g_2(x_*)^T \left( \begin{smallmatrix} - 1 \\ |v| \end{smallmatrix} \right)
  \Bigm| v \in \mathbb{R}^2 \colon |v| = 1 \Big\} \\
  &= \left\{ \begin{pmatrix} 0 \\ 1 \end{pmatrix} \right\} \cup 
  \left\{ \begin{pmatrix} v^{(1)} \\ v^{(1)}  + 2 v^{(2)} \end{pmatrix} \Bigm| 
  v \in \mathbb{R}^2 \colon |v| = 1 \right\}.
\end{align*}
Let $V_1 = \nabla f_1(x_*)$, $V_2 = (0, 1)^T \in \eta(x_*)$, and 
$V_3 = (v^{(1)}, v^{(1)} + 2 v^{(2)})^T \in \eta(x_*)$ with $v = (- 1 / \sqrt{2}, - 1 / \sqrt{2})^T$. Then
$$
  \Delta_1 = \begin{vmatrix} 0 & -\frac{1}{\sqrt{2}} \\ 1 & -\frac{3}{\sqrt{2}} \end{vmatrix} = 
  \frac{1}{\sqrt{2}}, \quad 
  \Delta_2 = \begin{vmatrix} 4 & -\frac{1}{\sqrt{2}} \\ -1 & -\frac{3}{\sqrt{2}} \end{vmatrix} = 
  -\frac{13}{\sqrt{2}}, \quad 
  \Delta_3 = \begin{vmatrix} 4 & 0 \\ -1 & 1 \end{vmatrix} = 4,
$$
that is, a complete alternance exists at $x_*$. Therefore, by Theorems~\ref{thrm:SuffOptCond},
\ref{thrm:EquivOptCond_Subdiff}, and \ref{thrm:AlternanceCond} the point $x_*$ is a locally optimal solution of problem
\eqref{probl:2OrderConeMinimaxEx} at which the first order growth condition holds true. \qed
\end{example}

\subsubsection{Nonlinear semidefinite minimax problems}

Let now $(\mathcal{P})$ be a nonlinear semidefinite minimax problem of the form:
\begin{equation} \label{probl:NonlinearSemiDefProg}
  \min_x \max_{\omega \in W} f(x, \omega) \quad 
  \text{subject to} \quad G_0(x) \preceq 0, \quad b(x) = 0, \quad x \in A,
\end{equation}
where $G_0 \colon \mathbb{R}^d \to \mathbb{S}^l$ and $b \colon \mathbb{R}^d \to \mathbb{R}^s$ are continuously
differentiable functions, $\mathbb{S}^l$ denotes the set of all $l \times l$ real symmetric matrices, and  the relation
$G_0(x) \preceq 0$ means that the matrix $G_0(x)$ is negative semidefinite. In this case, 
$Y = \mathbb{S}^l \times \mathbb{R}^s$, $G(\cdot) = (G_0(\cdot), b(\cdot))$ and
$K = \mathbb{S}^l_- \times 0_s$, where $\mathbb{S}^l_-$ is the cone of $l \times l$ negative semidefinite
matrices.

We equip $Y$ with the inner product 
$\langle (B_1, z_1), (B_2, z_2) \rangle = \trace(B_1 B_2) + \langle z_1, z_2 \rangle$
for any $(B_1, z_1), (B_2, z_2) \in Y$, where $\trace(\cdot)$ is the trace of a matrix, and the corresponding norm
$\| (B, z) \|^2 = \| B \|_F^2 + |z|^2$, where $\| B \|_F = \sqrt{Tr(B^2)}$ is the Frobenius norm. Then one has
$L(x, \lambda) = F(x) + \trace(\lambda_0 \cdot G_0(x)) + \langle \nu, h(x) \rangle$ for any 
$(\lambda_0, \nu) \in \mathbb{S}^l \times \mathbb{R}^s$ and $K^* = \mathbb{S}^l_+ \times \mathbb{R}^s$, where
$\mathbb{S}^l_+ = - \mathbb{S}^l_{-}$ is the cone of positive semidefinite matrices. Note also that in the case 
$A = \mathbb{R}^d$ RCQ for problem \eqref{probl:NonlinearSemiDefProg} holds true at a feasible point $x$ iff the
Jacobian matrix $\nabla b(x)$ has full row rank and there exists $h \in \mathbb{R}^d$ such that $\nabla b(x) h = 0$ and
the matrix $G_0(x) + D G_0(x)h$ is negative definite (cf. \cite[Lemma~2.99]{BonnansShapiro}).

The penalty function for problem \eqref{probl:NonlinearSemiDefProg} has the form
$$
  \Phi_c(x) = f(x) + c \sqrt{\| G_0(x) - P_{\mathbb{S}^l_{-}}(G_0(x)) \|_F^2 + |b(x)|^2},
$$
where $P_{\mathbb{S}^l_{-}}(G_0(x))$ is the projection of $G_0(x)$ onto the cone $\mathbb{S}^l_{-}$ of negative
semidefinite matrices. One can verify that
\begin{align*}
  P_{\mathbb{S}^l_{-}}(G_0(x)) &= 0.5 (G_0(x) - \sqrt{G_0(x)^2}) \\
  &= Q \diag\Big( \min\{ 0, \sigma_1(G_0(x)) \}, \ldots, \min\{ 0, \sigma_l(G_0(x)) \} \Big) Q^T,
\end{align*}
where $G_0(x) = Q \diag(\sigma_1(G_0(x)), \ldots, \sigma_l(G_0(x))) Q^T$ is a spectral decomposition of $G_0(x)$, and
$\sigma_1(G_0(x)), \ldots, \sigma_l(G_0(x))$ are the eigenvalues of $G_0(x)$ listed in the decreasing order (see, e.g.
\cite{Higham,Malick}). Consequently, one has 
$$
  \| G_0(x) - P_{\mathbb{S}^l_{-}}(G_0(x)) \|_F = \frac{1}{2} \| G_0(x) + \sqrt{G_0(x)^2} \|_F
  = \sqrt{\sum_{i = 1}^l \max\big\{ 0, \sigma_i(G_0(x)) \big\}^2}.
$$
Observe also that for any feasible point $x$ such that $r = \rank G_0(x) < l$ one has
\begin{multline*}
  \mathcal{N}(x) 
  = \Big\{ \Big( \langle \lambda_0, D_{x_1} G_0(x) \rangle, \ldots, \langle \lambda_0, D_{x_d} G_0(x) \rangle \Big)^T
  + \nabla b(x)^T \nu \Bigm| \\
  (\lambda_0, \nu) \in \mathbb{S}^l_+ \times \mathbb{R}^s, \: \langle \lambda_0, G_0(x) \rangle = 0 \Big\}
\end{multline*}
or, equivalently,
$$
  \mathcal{N}(x) = \Big\{ 
  \Big(\langle Q_0 \Gamma Q_0^T, D_{x_1} G_0(x)  \rangle, \ldots, \langle Q_0 \Gamma Q_0^T, D_{x_d} G_0(x) \rangle
  \Big)^T + \nabla b(x)^T \nu \Bigm| (\Gamma, \nu) \in \mathbb{S}^{l - r}_+ \times \mathbb{R}^s \Big\},
$$
where $D_{x_i} = \partial / \partial x_i$, $r = \rank G_0(x)$, and $Q_0$ is an $l \times (l - r)$ matrix whose columns
are an orthonormal basis $q_1, \ldots, q_{l - r}$ of the null space of the matrix $G_0(x)$. 
In the case $r = \rank G_0(x) = l$ one has $\mathcal{N}(x_*) = \{ \nabla b(x)^T \nu \mid \nu \in \mathbb{R}^s \}$.
Here we used the following simple auxiliary result.

\begin{lemma}
Let $\lambda_0 \in \mathbb{S}^l_+$ be a given matrix. Then the following statements are equivalent:
\begin{enumerate}
\item{$\langle \lambda_0, G_0(x) \rangle = \trace(\lambda_0 G_0(x)) = 0$;
\label{stat:ZeroFrobeniusProd}
}

\item{$\lambda_0 = Q_0 \Gamma Q_0^T$ for some $\Gamma \in \mathbb{S}^{l - r}_+$ in the case $r < l$ and 
$\lambda_0 = 0$ otherwise;
\label{stat:SemidefFormNullVectors}}

\item{$\lambda_0 \in \cone\{ q q^T \mid q \in \mathbb{R}^l \colon G_0(x) q = 0 \}$.
\label{stat:ExtremeMatricesViaNullSpace}}
\end{enumerate}
\end{lemma}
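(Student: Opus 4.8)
The plan is to show that all three conditions are simply different descriptions of the single condition $\lambda_0 G_0(x) = 0$, equivalently $\lambda_0$ annihilates the column space of $G_0(x)$, equivalently (both matrices being symmetric) the range of $\lambda_0$ lies in $\ker G_0(x)$. The nontrivial direction is that statement~\ref{stat:ZeroFrobeniusProd} alone already forces this; here the feasibility of $x$, i.e. $G_0(x) \preceq 0$, is essential (it is exactly this assumption that prevents the usual counterexamples with indefinite $G_0(x)$).

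First I would prove statement~\ref{stat:ZeroFrobeniusProd} $\iff \lambda_0 G_0(x) = 0$. Since $G_0(x) \preceq 0$, the matrix $-G_0(x)$ is positive semidefinite, so it admits a spectral decomposition $-G_0(x) = \sum_j \nu_j w_j w_j^{\top}$ with $\nu_j > 0$ and orthonormal eigenvectors $w_j$ spanning the column space of $G_0(x)$. Then $0 = \langle \lambda_0, G_0(x)\rangle = \sum_j \nu_j\, w_j^{\top}\lambda_0 w_j$ is a sum of nonnegative terms, because $\lambda_0 \succeq 0$, hence $w_j^{\top}\lambda_0 w_j = 0$ for every $j$; writing $\lambda_0 = M^{\top}M$ this gives $|M w_j|^2 = 0$, i.e. $\lambda_0 w_j = 0$. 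Thus the column space of $G_0(x)$ is contained in $\ker \lambda_0$, and since for a symmetric matrix $A$ one has $(\text{range } A) = (\ker A)^{\perp}$, this is equivalent to the range of $\lambda_0$ being contained in $\ker G_0(x)$, i.e. $\lambda_0 G_0(x) = 0$. The converse implication is immediate: $\lambda_0 G_0(x) = 0 \Rightarrow \trace(\lambda_0 G_0(x)) = 0$.

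It remains to recognize conditions~\ref{stat:SemidefFormNullVectors} and~\ref{stat:ExtremeMatricesViaNullSpace} as reformulations of ``$\mathrm{range}\,\lambda_0 \subseteq \ker G_0(x)$''. For~\ref{stat:SemidefFormNullVectors}: the columns of $Q_0$ are an orthonormal basis of $\ker G_0(x)$, so a symmetric $\lambda_0$ has its range inside $\ker G_0(x)$ and acts as zero on the orthogonal complement iff $\lambda_0 = Q_0(Q_0^{\top}\lambda_0 Q_0)Q_0^{\top}$, with $\Gamma := Q_0^{\top}\lambda_0 Q_0 \in \mathbb{S}^{l-r}_+$ when $\lambda_0 \succeq 0$; when $r = l$, $\ker G_0(x) = \{0\}$ forces $\lambda_0 = 0$; the converse (any such $Q_0\Gamma Q_0^{\top}$ is PSD with range in $\ker G_0(x)$) is clear. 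For~\ref{stat:ExtremeMatricesViaNullSpace}: if $\mathrm{range}\,\lambda_0 \subseteq \ker G_0(x)$, a spectral decomposition $\lambda_0 = \sum_i \mu_i v_i v_i^{\top}$ has $\mu_i \ge 0$ and $v_i \in \mathrm{range}\,\lambda_0 \subseteq \ker G_0(x)$, whence $\lambda_0 = \sum_i (\sqrt{\mu_i}\,v_i)(\sqrt{\mu_i}\,v_i)^{\top} \in \cone\{ q q^{\top} \mid G_0(x) q = 0 \}$; conversely, for $\lambda_0 = \sum_i \alpha_i q_i q_i^{\top}$ with $\alpha_i \ge 0$ and $G_0(x) q_i = 0$ one has $\langle \lambda_0, G_0(x)\rangle = \sum_i \alpha_i\, q_i^{\top} G_0(x) q_i = 0$, which is~\ref{stat:ZeroFrobeniusProd}. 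Chaining these equivalences closes the cycle. I do not foresee a real obstacle; the only points that need care are the step $w_j^{\top}\lambda_0 w_j = 0 \Rightarrow \lambda_0 w_j = 0$ for a PSD matrix, and the symmetry argument allowing one to swap ``$\mathrm{range}\,G_0(x) \subseteq \ker \lambda_0$'' for ``$\mathrm{range}\,\lambda_0 \subseteq \ker G_0(x)$''.
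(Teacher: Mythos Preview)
Your proof is correct and follows essentially the same route as the paper: both arguments use the spectral decomposition of $G_0(x)$ to obtain $w_j^{\top}\lambda_0 w_j = 0$ for the eigenvectors with nonzero eigenvalue, then a square-root/rank-one factorization of $\lambda_0$ to upgrade this to $\lambda_0 w_j = 0$, after which statements~\ref{stat:SemidefFormNullVectors} and~\ref{stat:ExtremeMatricesViaNullSpace} are read off. The only difference is organizational---you pivot on the single condition $\mathrm{range}\,\lambda_0 \subseteq \ker G_0(x)$, whereas the paper proves the cycle $\ref{stat:ZeroFrobeniusProd}\Rightarrow\ref{stat:ExtremeMatricesViaNullSpace}\Rightarrow\ref{stat:SemidefFormNullVectors}\Rightarrow\ref{stat:ZeroFrobeniusProd}$; note also a harmless sign slip in your line $0 = \langle \lambda_0, G_0(x)\rangle = \sum_j \nu_j\, w_j^{\top}\lambda_0 w_j$, which should carry a minus sign but does not affect the conclusion.
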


\begin{proof}
Let, as above, $\sigma_1(G_0(x)), \ldots, \sigma_l(G_0(x))$ be the eigenvalues of $G_0(x)$ listed in the decreasing
order. Recall that $x$ is feasible point of problem \eqref{probl:NonlinearSemiDefProg}, i.e. $G_0(x) \preceq 0$.
Therefore
\begin{equation} \label{eq:EigenvaluesNegativeSemiDef}
  \sigma_i(G_0(x)) = 0 \quad \forall i \in \{ 1, \ldots, l - r \}, \quad 
  \sigma_i(G_0(x)) < 0 \quad \forall i \in \{ l - r + 1, \ldots, l \}. 
\end{equation}
Let also $G_0(x) = Q \diag(\sigma_1(G_0(x)), \ldots, \sigma_l(G_0(x))) Q^T$ be a spectral decomposition of $G_0(x)$ such
that the first $l - r$ columns of $Q$ coincide with $Q_0$. 

\ref{stat:ZeroFrobeniusProd} ${}\implies{}$ \ref{stat:ExtremeMatricesViaNullSpace}. 
Suppose that $\langle \lambda_0, G_0(x) \rangle = 0$. Bearing in mind the fact that the trace operator is invariant
under cyclic permutations one obtains that
\begin{equation} \label{eq:FrobeniusNorm_Eigenvalues}
\begin{split}
  0 = \trace\big( \lambda_0 G_0(x) \big) 
  &= \trace\Big( Q^T \lambda_0 Q \diag(\sigma_1(G_0(x)), \ldots, \sigma_l(G_0(x))) \Big) \\
  &= \sum_{i = 1}^l \sigma_i(G_0(x)) q_i^T \lambda_0 q_i,
\end{split}
\end{equation}
where $q_i$ are the columns of the matrix $Q$. Hence with the use of \eqref{eq:EigenvaluesNegativeSemiDef} and the fact
that $\lambda_0 \in \mathbb{S}^l_+$ one obtains that $q_i^T \lambda_0 q_i = 0$ for any 
$i \in \{ l - r + 1, \ldots, l \}$.

Since the matrix $\lambda_0$ is positive semidefinite, there exists orthogonal vectors 
$z_1, \ldots, z_k \in \mathbb{R}^l$ such that $\lambda_0 = z_1 z_1^T + \ldots + z_k z_k^T$
(see, e.g. \cite[Thrm.~7.5.2]{HornJohnson}). Consequently, one has
$$
  0 = q_i^T \lambda_0 q_i = \sum_{j = 1}^k q_i^T z_j z_j^T q_i = \sum_{j = 1}^k |z_j q_i|^2
  \quad \forall i \in \{ l - r + 1, \ldots, l \}.
$$
Therefore, the vectors $z_1, \ldots, z_k$ belong to the orthogonal complement of the linear span of eigenvectors $q_i$
of $G_0(x)$ corresponding to nonzero eigenvalues, which coincides with the null space of $G_0(x)$. Thus, 
$G_0(x) z_i = 0$ for all $i \in \{ 1, \ldots, k \}$, that is, 
$\lambda_0 \in \cone\{ q q^T \mid q \in \mathbb{R}^l \colon G_0(x) q = 0 \}$.

\ref{stat:ExtremeMatricesViaNullSpace} ${}\implies{}$ \ref{stat:SemidefFormNullVectors}. If $r = \rank G_0(x) = l$,
then $G_0(x) q = 0$ iff $q = 0$, which implies that $\lambda_0 = 0$. Thus, one can suppose that $r < l$. Then
$\lambda_0 = \sum_{i = 1}^k \alpha_i z_i z_i^T$ for some $\alpha_i \ge 0$ and $z_i \in \mathbb{R}^l$ such that
$G_0(x) z_i = 0$. Since $z_i$ belongs to the null space of $G_0(x)$ and the columns of the matrix $Q_0$ are an
orthonormal basis of this space, there exists vectors $u_i \in \mathbb{R}^{l - r}$ such that $z_i = Q_0 u_i$ for all
$i$. Therefore
$$
  \lambda_0 = \sum_{i = 1}^k \alpha_i z_i z_i^T = \sum_{i = 1}^k \alpha_i Q_0 u_i u_i^T Q_0^T
  = Q_0 \Big( \sum_{i = 1}^k \alpha_i u_i u_i^T \Big) Q_0^T.
$$
Define $\Gamma = \sum_{i = 1}^k \alpha_i u_i u_i^T$. Then $\lambda_0 = Q_0 \Gamma Q_0^T$ and, as is easily seen,
$\Gamma \in \mathbb{S}^{l - r}_+$. 

\ref{stat:SemidefFormNullVectors} ${}\implies{}$ \ref{stat:ZeroFrobeniusProd}. Suppose now that 
$\lambda_0 = Q_0 \Gamma Q_0^T$ for some $\Gamma \in \mathbb{S}^{l - r}_+$ in the case $r < l$ and $\lambda_0 = 0$
otherwise. If $\lambda_0 = 0$, then obviously $\langle \lambda_0, G_0(x) \rangle = 0$. Thus, one can suppose that 
$r < l$. Observe that
$$
  Q^T \lambda_0 Q = Q^T Q_0 \Gamma Q_0^T Q
  = Q^T Q \begin{pmatrix} \Gamma & 0 \\ 0 & 0 \end{pmatrix} Q^T Q 
  = \begin{pmatrix} \Gamma & 0 \\ 0 & 0 \end{pmatrix},
$$
where $0$ are zero matrices of corresponding dimensions. Hence taking into account the fact that 
$[Q^T \lambda_0 Q]_{ij} = q_i^T \lambda_0 q_j$ one obtains that $q_i^T \lambda_0 q_i = 0$ for 
any $i \in \{ l - r + 1, \ldots, l \}$, which with the use of the last two equalities in 
\eqref{eq:FrobeniusNorm_Eigenvalues} implies that $\langle \lambda_0, G_0(x) \rangle = 0$.
\end{proof}

Taking into account the equality $\trace(q q^T D_{x_i} G_0(x)) = q^T D_{x_i} G_0(x) q$ and the previous lemma one can
define
\begin{multline*}
  \eta(x) = \big\{ \nabla b_1(x), \ldots \nabla b_s(x) \big\} \\
  \cup  \Big\{ \Big( q^T D_{x_1} G_0(x) q, \ldots, q^T D_{x_d} G_0(x) q \rangle \Big)^T \in \mathbb{R}^d \Bigm|
  q \in \mathbb{R}^l \colon |q| = 1, \: G_0(x) q = 0 \Big\}
\end{multline*}
in the case $\rank G_0(x) < l$, and $\eta(x) = \{ \nabla b_1(x), \ldots \nabla b_s(x) \}$, if $\rank G_0(x) = l$. Let us
give a simple example illustrating alternance optimality conditions in the case of nonlinear semidefinite minimax
problems.

\begin{example}
Let $d = 3$, $W = \{ 1, 2, 3 \}$, $l = 3$, and $A = \mathbb{R}^d$. Consider the following nonlinear semidefinite minimax
problem: 
\begin{equation} \label{probl:SemiDefMinimaxEx}
  \min \: F(x) = \max\big\{ f_1(x), f_2(x), f_3(x) \big\} \quad \text{subject to} \quad 
  G_0(x) \preceq 0,
\end{equation}
where $f_1(x) = - 3 x^{(1)} - 3 x^{(2)} - 2 \sin x^{(3)}$, 
$f_2(x) = - x^{(1)} + (x^{(2)})^2 + (x^{(3)})^2 - 1$, and $f_3(x) = (x^{(1)} - 1)^2 + 2 x^{(3)}$, and 
$$
  G_0(x) = 
  \begin{pmatrix} x^{(1)} - (x^{(2)})^2 & \sin x^{(3)} & x^{(1)} + x^{(2)} + x^{(3)} \\ 
  \sin x^{(3)} & x^{(2)} & x^{(1)} x^{(2)} + (x^{(3)} + 1)^2 \\
  x^{(1)} + x^{(2)} + x^{(3)} & x^{(1)} x^{(2)} + (x^{(3)} + 1)^2 & (x^{(1)})^2 + (x^{(2)})^2 - x^{(3)} - 2
  \end{pmatrix}.
$$
Let us check optimality conditions at the point $x_* = (1, -1, 0)^T$. One has $W(x_*) = \{ 1, 3 \}$ and
$$
  \partial F(x_*) = \co\{ \nabla f_1(x_*), \nabla f_3(x_*) \}
  = \co\left\{ \left( \begin{smallmatrix} -3 \\ -3 \\ -2 \end{smallmatrix} \right), 
  \left( \begin{smallmatrix} 0 \\ 0 \\ 2 \end{smallmatrix} \right) \right\}, \quad
  G_0(x_*) = \left( \begin{smallmatrix} 0 & 0 & 0 \\ 0 & -1 & 0 \\ 0 & 0 & 0 \end{smallmatrix} \right).
$$
Consequently, $G_0(x_*) \preceq 0$ and $\rank G_0(x_*) = 1$, which implies that $x_*$ is a feasible point of problem
\eqref{probl:SemiDefMinimaxEx} and by definition one has
$$
  \eta(x) 
  = \Big\{ \Big( q^T D_{x_1} G_0(x) q, q^T D_{x_2} G_0(x) q, q^T D_{x_3} G_0(x) q \Big)^T 
  \in \mathbb{R}^d \Bigm|
  q \in \mathbb{R}^3 \colon |q| = 1, \: G_0(x) q = 0 \Big\}. 
$$
Let $V_1 = \nabla f_1(x_*)$ and $V_2 = \nabla f_3(x_*)$. For $q_1 = (1, 0, 0)^T$ and
$q_2 = (0, 0, 1)^T$ one has $G_0(x_*) q_1 = 0$, $G_0(x_*) q_2 = 0$, and
$$
  V_3 = \begin{pmatrix} q_1^T D_{x_1} G_0(x_*) q_1 \\ q_1^T D_{x_2} G_0(x_*) q_1 \\
  q_1^T D_{x_3} G_0(x_*) q_1 \end{pmatrix} = \begin{pmatrix} 1 \\ 2 \\ 0 \end{pmatrix}, \quad
  V_4 = \begin{pmatrix} q_2^T D_{x_1} G_0(x_*) q_2 \\ q_2^T D_{x_2} G_0(x_*) q_2 \\
  q_2^T D_{x_3} G_0(x_*) q_2 \end{pmatrix} = \begin{pmatrix} 2 \\ -2 \\ -1 \end{pmatrix}.
$$
By definition $V_3, V_4 \in \eta(x_*)$. For the chosen vectors $V_1, V_2, V_3$, and $V_4$ one has
\begin{align*}
  \Delta_1 &= \left|\begin{smallmatrix} 0 & 1 & 2  \\ 0 & 2 & -2  \\ 2 & 0 & -1 \end{smallmatrix}\right| = -12, \quad 
  \Delta_2 = \left|\begin{smallmatrix} -3 & 1 & 2  \\ -3 & 2 & -2  \\ -2 & 0 & -1 \end{smallmatrix}\right| = 15, \\
  \Delta_3 &= \left|\begin{smallmatrix} -3 & 0 & 2  \\ -3 & 0 & -2  \\ -2 & 2 & -1 \end{smallmatrix}\right| = -24, \quad
  \Delta_4 = \left|\begin{smallmatrix} -3 & 0 & 1  \\ -3 & 0 & 2  \\ -2 & 2 & 0 \end{smallmatrix}\right| = 6.
\end{align*}
Thus, a complete alternance exists at $x_*$, which by Theorems~\ref{thrm:SuffOptCond},
\ref{thrm:EquivOptCond_Subdiff}, and \ref{thrm:AlternanceCond} implies that the point $x_*$ is a locally optimal
solution of problem \eqref{probl:SemiDefMinimaxEx} at which the first order growth condition holds true. \qed
\end{example}

\subsubsection{Semi-infinite minimax problems}

Let finally $(\mathcal{P})$ be a nonlinear semi-infinite minimax problem of the form:
\begin{equation} \label{probl:SemiInfProg}
  \min_x \max_{\omega \in W} f(x, \omega) \quad 
  \text{s.t.} \quad g_i(x, t) \le 0, \quad t \in T, \quad i \in I, \quad b(x) = 0, \quad x \in A,
\end{equation}
where the mapping $b \colon \mathbb{R}^d \to \mathbb{R}^s$ is continuously differentiable, $T$ is a compact metric
space, and the functions $g_i \colon \mathbb{R}^d \times T \to \mathbb{R}$, $g_i = g_i(x, t)$, are continuous jointly in
$x$ and $t$, differentiable in $x$ for any $t \in T$, and the functions $\nabla_x g_i$ are continuous, 
$i \in I = \{ 1, \ldots, l \}$.

Let $C(T)$ be the space of all real-valued continuous functions defined on $T$ equipped with the uniform norm 
$\| \cdot \|_{\infty}$, and $C_{-}(T)$ be the closed convex cone consisting of all nonpositive functions from $C(T)$. As
is well-known (see, e.g. \cite[Thrm.~IV.6.3]{DunfordSchwartz}), the topological dual space of $C(T)$ is isometrically
isomorphic to the space of signed (i.e. real-valued) regular Borel measures on $T$, denoted by $rca(T)$, while the set
of regular Borel measures (which constitute a closed convex cone in $rca(T)$) is denoted by $rca_+(T)$. Define 
$Y = (C(T))^l \times \mathbb{R}$, $K = (C_{-}(T))^l \times \{ 0_s \}$, and introduce the mapping 
$G \colon \mathbb{R}^d \to Y$ by setting $G(x) = (g_1(x, \cdot), \ldots, g_l(x, \cdot), b(x))$. Then problem
\eqref{probl:SemiInfProg} is equivalent to problem $(\mathcal{P})$. We endow the space $Y$ with the norm
$\| y \| = \sum_{i = 1}^n \| y_i \|_{\infty} + |z|$ for all $y = (y_1, \ldots, y_l, z) \in Y$.

Observe that the dual space $Y^*$ is isometrically isomorphic (and thus can be identified with) 
$rca(T)^l \times \mathbb{R}^s$, while the polar cone $K^*$ can be identified with the cone 
$(rca_+(T))^l \times \mathbb{R}^s$. Then for any $\lambda = (\mu_1, \ldots, \mu_l, \nu) \in Y^*$ one has
$L(x, \lambda) = F(x) + \sum_{i = 1}^l \int_T g(x, t) d \mu_i(t) + \langle \nu, b(x) \rangle$. Note also that in the
case $A = \mathbb{R}^d$ RCQ for problem \eqref{probl:SemiInfProg} is satisfied at a feasible point $x$ iff the
Jacobian matrix $\nabla b(x)$ has full row rank and there exists $h \in \mathbb{R}^d$ such that $\nabla b(x) h = 0$ and
$\langle \nabla_x g_i(x, t), h \rangle < 0$ for all $t \in T$ and $i \in I$ such that $g_i(x, t) = 0$.

The penalty function for problem \eqref{probl:NonlinearSemiDefProg} has the form
$$
  \Phi_c(x) = f(x) + c \Big( \sum_{i = 1}^l \max_{t \in T} \{ g_i(x, t), 0 \} + |h(x)| \Big).
$$
For any feasible point $x$ one has
\begin{multline*}
  \mathcal{N}(x) = \Big\{ \sum_{i = 1}^l \int_T \nabla_x g_i(x, t) d \mu_i(t) + \nabla b(x)^T \nu \Bigm| 
  \mu_i \in rca_+(T), \\ 
  \support(\mu_i) \subseteq \{ t \in T \mid g_i(x, t) = 0 \} \enspace \forall i \in I,
  \nu \in \mathbb{R}^s \Big\},
\end{multline*}
where $\support(\mu)$ is the support of a measure $\mu$. We define $\eta(x) = \mathcal{N}(x)$, since it does
not seem possible to somehow reduce the set $\mathcal{N}(x)$ due to the infinite dimensional nature of the problem.

When it comes to numerical methods, it is very difficult to deal with measures $\mu_i \in rca_+(T)$ directly (especially
in the case when the sets $\{ t \in T \mid g_i(x, t) = 0 \}$ have infinite cardinality). Apparently, the general theory
of optimality conditions for cone constrained minimax problems developed in the previous sections cannot overcome this
obstacle for semi-infinite minimax problems. That is why such problems require a special treatment. Our aim is to show
that necessary optimality conditions for semi-infinite minimax problems, including such conditions in terms of cadre and
alternance, can be completely rewritten in terms of discrete measures whose supports consist of at most $d + 1$ points,
which allows one to avoid the use of Radon measures.

To this end, suppose that $x_*$ is a feasible point of problem \eqref{probl:SemiInfProg}, and let there exist
a Lagrange multiplier $\lambda = (\mu_1, \ldots, \mu_l, \nu) \in K^*$ of problem \eqref{probl:SemiInfProg} at $x_*$. We
say that $\lambda$ is \textit{a discrete Lagrange multiplier}, if for any $i \in I$ the measure $\mu_i$ is discrete and
its support consists of at most $d + 1$ points, i.e. $\mu_i = \sum_{j = 1}^{m_i} \lambda_{ij} \delta(t_{ij})$ for some
$t_{ij} \in T$, $\lambda_{ij} \ge 0$, and $m_i \le d + 1$. Here $\delta(t)$ is the Dirac measure of mass one at the
point $t \in T$. If $\lambda$ is a discrete Lagrange multiplier, then one has
$L(x, \lambda) = F(x) + \sum_{i = 1}^l \sum_{j = 1}^{m_i} \lambda_{ij} g_i(x, t_{ij}) 
+ \langle \nu, b(x) \rangle$ for all $x \in \mathbb{R}^d$.

Let us check that necessary optimality conditions for problem \eqref{probl:SemiInfProg} can be expressed in terms of
discrete Lagrange multipliers. Denote $I(x) = \{ i \in I \mid \max_{t \in T} g_i(x, t) = 0 \}$, and let
$T_i(x) = \{ t \in T \mid g_i(x, t) = 0 \}$.

\begin{theorem} \label{thrm:DiscreteLagrangeMultipliers}
Let $x_*$ be a feasible point of problem \eqref{probl:SemiInfProg}. Then the following statements hold true:
\begin{enumerate}
\item{if $x_*$ is a locally optimal solution of problem \eqref{probl:SemiInfProg} at which RCQ holds true, then
there exists a discrete Lagrange multiplier of this problem at $x_*$;
\label{stat:ExistenceDiscreteLagrMult}}

\item{a discrete Lagrange multiplier exists at $x_*$ if and only if for any $i \in I(x_*)$ one can find
$m_i \in \{ 1, \ldots, d + 1 \}$ and $t_{ij} \in T_i(x_*)$, $j \in \{ 1, \ldots, m_i \}$, such that
there exists at $x_*$ a Lagrange multiplier of the discretised problem
\begin{equation} \label{probl:DiscretisedSemiInfProb}
\begin{split}
  &\min_x \: \max_{\omega \in W} f(x, \omega) \\
  &\text{s.t.} \enspace g_i(x, t_{ij}) \le 0, \enspace j \in \{ 1, \ldots, m_i \}, \enspace i \in I(x_*), 
  \enspace b(x) = 0, \enspace x \in A;
\end{split}
\end{equation}
\label{stat:LagrMultViaDiscreteProb}}

\vspace{-5mm}

\item{if $b(\cdot) \equiv 0$, the function $f(\cdot, \omega)$ is convex for any $\omega \in W$, the functions 
$g_i(\cdot, t)$ are convex for any $t \in T$ and $i \in I$, and there exists $x_0 \in A$ such that $g_i(x_0, t) < 0$ for
all $t \in T$ and $i \in I$, then a discrete Lagrange multiplier exists at $x_*$ iff $x_*$ is a globally optimal
solution of problem \eqref{probl:SemiInfProg} iff for any $i \in I(x_*)$ there exist $m_i \in \{ 1, \ldots, d + 1 \}$
and $t_{ij} \in T_i(x_*)$, $j \in \{ 1, \ldots, m_i \}$, such that $x_*$ is a globally optimal solution of problem
\eqref{probl:DiscretisedSemiInfProb}.
\label{stat:DiscreteLagrMult_ConvexCase}}
\end{enumerate}
\end{theorem}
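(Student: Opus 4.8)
The plan is to obtain all three statements from the general theory already developed, the explicit form of $\mathcal{N}(x_*)$ for semi-infinite problems displayed above, and a Carath\'eodory-type reduction of Radon measures to discrete ones.

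\textbf{Part~\ref{stat:ExistenceDiscreteLagrMult}.} First I would note that, by Theorems~\ref{thrm:NessOptCond} and~\ref{thrm:EquivOptCond_Subdiff}, local optimality of $x_*$ together with RCQ gives $0 \in \mathcal{D}(x_*)$, i.e.\ there are $v \in \partial F(x_*)$, $w \in \mathcal{N}(x_*)$, $z \in N_A(x_*)$ with $v + w + z = 0$. By the description of $\mathcal{N}(x_*)$ above, $w = \sum_{i=1}^l \int_T \nabla_x g_i(x_*,t)\, d\mu_i(t) + \nabla b(x_*)^T\nu$ for positive measures $\mu_i$ with $\support(\mu_i) \subseteq T_i(x_*)$ and some $\nu \in \mathbb{R}^s$; for $i \notin I(x_*)$ one has $T_i(x_*) = \emptyset$, hence $\mu_i = 0$. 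For each $i \in I(x_*)$ the vector $w_i := \int_T \nabla_x g_i(x_*,t)\,d\mu_i(t)$ equals $\|\mu_i\|$ times a barycenter of the compact set $C_i := \{\nabla_x g_i(x_*,t) : t \in T_i(x_*)\}$, hence lies in $\|\mu_i\| \cdot \co C_i$, so by Carath\'eodory's theorem $w_i = \sum_{j=1}^{m_i} \lambda_{ij}\nabla_x g_i(x_*,t_{ij})$ with $m_i \le d+1$, $\lambda_{ij} \ge 0$, $t_{ij} \in T_i(x_*)$ (here $T_i(x_*) \ne \emptyset$ because the continuous function $g_i(x_*,\cdot)$ attains its maximum $0$ on the compact $T$). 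Taking $\widetilde\mu_i = \sum_j \lambda_{ij}\delta(t_{ij})$ and $\widetilde\lambda = (\widetilde\mu_1,\dots,\widetilde\mu_l,\nu)$, one has $[DG(x_*)]^*\widetilde\lambda = w$, $\widetilde\lambda \in K^*$, $\langle\widetilde\lambda, G(x_*)\rangle = 0$, and $\max_{q\in\partial F(x_*)}\langle q,h\rangle + \langle [DG(x_*)]^*\widetilde\lambda, h\rangle \ge \langle v+w, h\rangle = -\langle z,h\rangle \ge 0$ for all $h \in T_A(x_*)$; thus $\widetilde\lambda$ is a discrete Lagrange multiplier.

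\textbf{Part~\ref{stat:LagrMultViaDiscreteProb}.} The key observation is that if $\lambda = (\mu_1,\dots,\mu_l,\nu)$ with $\mu_i = \sum_{j=1}^{m_i}\lambda_{ij}\delta(t_{ij})$ is a discrete Lagrange multiplier, then (since $\support\mu_i \subseteq T_i(x_*)$, and $\mu_i = 0$ for $i\notin I(x_*)$) the Lagrangian $L(\cdot,\lambda)$ of~\eqref{probl:SemiInfProg} coincides, as a function on $\mathbb{R}^d$, with the Lagrangian of the discretised problem~\eqref{probl:DiscretisedSemiInfProb} built from the nodes $\{t_{ij}\}_{i\in I(x_*)}$, so the conditions $\lambda_{ij} \ge 0$, $\lambda_{ij} g_i(x_*,t_{ij}) = 0$, $b(x_*) = 0$ and $[L(\cdot,\lambda)]'(x_*,h) \ge 0$ $(h \in T_A(x_*))$ say precisely that $((\lambda_{ij}),\nu)$ is a Lagrange multiplier of~\eqref{probl:DiscretisedSemiInfProb} at $x_*$ (if some $\mu_i=0$, adjoin a dummy node with zero coefficient to keep $m_i\ge1$). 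Conversely, given nodes $t_{ij} \in T_i(x_*)$ and a Lagrange multiplier $((\lambda_{ij}),\nu)$ of~\eqref{probl:DiscretisedSemiInfProb}, putting $\mu_i = \sum_j \lambda_{ij}\delta(t_{ij})$ for $i \in I(x_*)$ and $\mu_i = 0$ otherwise yields a discrete Lagrange multiplier of~\eqref{probl:SemiInfProg} by the same identification of Lagrangians; each $\mu_i$ has support of cardinality at most $m_i \le d+1$.

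\textbf{Part~\ref{stat:DiscreteLagrMult_ConvexCase}.} Under the convexity hypotheses $G$ is $(-K)$-convex, and the assumption that $g_i(x_0,t) < 0$ for all $t \in T$, $i \in I$ (together with $b \equiv 0$) gives $G(x_0) \in \interior K$ and hence $0 \in \interior\{G(A)-K\}$, i.e.\ Slater's condition holds; by~\cite[Prp.~2.104]{BonnansShapiro} this is equivalent to RCQ. Now: a discrete Lagrange multiplier is a Lagrange multiplier, so by the second statement of Theorem~\ref{thrm:OptCond_ConvexCase} its existence implies global optimality of $x_*$; conversely if $x_*$ is globally (hence locally) optimal, RCQ holds, so Part~\ref{stat:ExistenceDiscreteLagrMult} produces a discrete Lagrange multiplier. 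For the third equivalence: if $x_*$ is globally optimal, Parts~\ref{stat:ExistenceDiscreteLagrMult} and~\ref{stat:LagrMultViaDiscreteProb} give nodes $t_{ij}\in T_i(x_*)$ and a Lagrange multiplier of the convex problem~\eqref{probl:DiscretisedSemiInfProb}, so $x_*$ is globally optimal for~\eqref{probl:DiscretisedSemiInfProb} by the second statement of Theorem~\ref{thrm:OptCond_ConvexCase}; and if $x_*$ is globally optimal for some such discretised problem, then, since the feasible set of~\eqref{probl:SemiInfProg} is contained in that of~\eqref{probl:DiscretisedSemiInfProb} (dropping constraints enlarges the feasible set) and $x_*$ is feasible for~\eqref{probl:SemiInfProg}, $x_*$ is globally optimal for~\eqref{probl:SemiInfProg}. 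The main obstacle is the measure-reduction step in Part~\ref{stat:ExistenceDiscreteLagrMult} — verifying that $w_i$ genuinely lies in the \emph{closed} convex hull of the compact set $C_i$ so that Carath\'eodory applies, and handling the degenerate cases $\mu_i = 0$ and $T_i(x_*) = \emptyset$ consistently with the indexing ($m_i\in\{1,\dots,d+1\}$, $i\in I(x_*)$) in the statement; everything else is bookkeeping built on the already-established equivalences.
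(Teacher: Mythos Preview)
Your proof is correct. Parts~\ref{stat:LagrMultViaDiscreteProb} and~\ref{stat:DiscreteLagrMult_ConvexCase} match the paper's argument essentially verbatim. For Part~\ref{stat:ExistenceDiscreteLagrMult}, however, you take a genuinely different and more direct route.

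The paper proceeds by introducing the auxiliary function $z(x) = \max\{F(x)-F(x_*), \max_t g_1(x,t),\dots,\max_t g_l(x,t)\}$, observes that $x_*$ locally minimises $z$ subject to $b(x)=0$, $x\in A$, applies the general theory to this \emph{equality-only} problem, and then uses Carath\'eodory on the combined set $\co\{\nabla_x g_i(x_*,t) : t\in T_i(x_*),\ i\in I(x_*)\}$; a separate argument (relying on RCQ) is then needed to rule out the degenerate case where the objective component vanishes. You instead start from the already-established inclusion $0\in\mathcal{D}(x_*)$, take any Lagrange multiplier $\lambda=(\mu_1,\dots,\mu_l,\nu)$, and apply Carath\'eodory to each barycenter $w_i = \int_T \nabla_x g_i(x_*,t)\,d\mu_i(t)$ individually to replace $\mu_i$ by a discrete measure without changing $[DG(x_*)]^*\lambda$. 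Your reduction from ``some Lagrange multiplier'' to ``discrete Lagrange multiplier'' is purely algebraic and uses RCQ only once at the outset, whereas the paper invokes RCQ a second time to exclude the degenerate case. The price you pay is a weaker bound: you get at most $d+1$ support points \emph{per index} $i\in I(x_*)$, which is exactly what the theorem claims, but the paper's single Carath\'eodory step over the union gives at most $d+1$ (indeed $d$) support points in total across all $i$---a refinement recorded in the Remark following the theorem that your approach does not recover.
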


\begin{proof}
\textbf{Part~\ref{stat:ExistenceDiscreteLagrMult}.} Introduce the function
$$
  z(x) = \max\big\{ F(x) - F(x_*), \max_{t \in T} g_1(x, t), \ldots, \max_{t \in T} g_l(x, t) \}.
$$
Observe that $z(x_*) = 0$, and if $z(x) < 0$ for some $x \in A$ such that $b(x) = 0$, then $x$ is a feasible point of
problem \eqref{probl:SemiInfProg} for which $F(x) < F(x_*)$. Hence taking into account the fact that $x_*$ is a locally
optimal solution of problem \eqref{probl:SemiInfProg} one obtains that $x_*$ is a locally optimal solution of 
the problem
\begin{equation} \label{probl:ReducedSemiInfProblem}
  \min\: z(x) \quad \text{subject to} \quad b(x) = 0, \quad x \in A
\end{equation}
as well. Note that this is a constrained minimax problem, since the function $z$ can be written as 
$z(x) = \max_{\omega \in \widetilde{W}} \widetilde{f}(x, \omega)$, where 
$\widetilde{W} = W \cup (T \times \{ 1 \}) \cup \ldots \cup (T \times \{ l \})$,
$\widetilde{f}(x, \omega) = f(x, \omega) - F(x_*)$, if $\omega \in W$, and $\widetilde{f}(x, \omega) = g_i(x, t)$, if
$\omega = (t, i) \in T \times \{ i \}$ for some $i \in I$. 

Recall that by our assumption Robinson's constraint qualification for problem \eqref{probl:SemiInfProg} holds true at
$x_*$, i.e. $0 \in \interior\{ G(x_*) + D G(x_*)( A - x_* ) - K \}$ or, equivalently,
\begin{equation} \label{eq:RCQ_SemiInfProblems}
  0 \in \interior\left\{ \begin{pmatrix} g(x_*, \cdot) + \nabla_x g(x_*, \cdot )h \\ \nabla b(x_*) h \end{pmatrix}
  + \begin{pmatrix} (C_+(T))^l \\ 0_s \end{pmatrix} \Biggm| h \in A - x_* \right\}
\end{equation}
where $g = (g_1, \ldots, g_l)^T$ and $C_+(T) = - C_{-}(T)$ is the cone of nonnegative continuous functions defined 
on $T$. Hence, in particular, one gets that $0 \in \interior\{ \nabla b(x_*) (A - x_*) \}$, that is, RCQ for problem
\eqref{probl:ReducedSemiInfProblem} is satisfied at $x_*$. Consequently, by Theorem~\ref{thrm:NessOptCond} there exists
a Lagrange multiplier of problem \eqref{probl:ReducedSemiInfProblem} at $x_*$, which by
Remark~\ref{rmrk:LagrangeMultViaSubdiff} implies that 
$(\partial z(x_*) + \nabla b(x_*)^T \nu) \cap (- N_A(x_*)) \ne \emptyset$ for some $\nu \in \mathbb{R}^s$, where
$$
  \partial z(x_*) = \co\Big\{\nabla_x f(x_*, \omega),  \nabla_x g_i(x_*, t) \Bigm| 
  \omega \in W(x_*), t \in T_i(x_*), i \in I(x_*) \Big\}.
$$
Hence there exist $v_1 \in \partial F(x_*)$, 
$v_2 \in \co\{ \nabla_x g_i(x_*, t) \mid t \in T_i(x_*), i \in I(x_*) \}$, and $\alpha \in [0, 1]$ such that
$\alpha v_1 + (1 - \alpha) v_2 + \nabla b(x_*)^T \nu \in - N_A(x_*)$. By Carath\'{e}odory's theorem for any 
$i \in I(x_*)$ there exist $m_i \le d + 1$, $t_{ij} \in T_i(x_*)$, and $\alpha_{ij} \ge 0$, 
$j \in \{ 1, \ldots, m_i \}$, such that
$$
  v_2 = \sum_{i \in I(x_*)} \sum_{j = 1}^{m_i} \alpha_{ij} \nabla_x g_i(x_*, t_{ij}), \quad
  \sum_{i \in I(x_*)} \sum_{j = 1}^m \alpha_{ij} = 1.
$$
Let us check that $\alpha \ne 0$. Then putting 
$\mu_i = \sum_{j = 1}^{m_i} (1 - \alpha) (\alpha_{ij} / \alpha) \delta(t_{ij})$ for all $i \in I(x_*)$, $\mu_i = 0$
for $i \in I \setminus I(x_*)$, and $\lambda = (\mu_1, \ldots, \mu_l, \nu / \alpha) \in K^*$ one obtains that
$\langle \lambda, G(x_*) \rangle = 0$,
$$
  \frac{1 - \alpha}{\alpha} v_2  + \frac{1}{\alpha} \nabla b(x_*)^T \nu = 
  \sum_{i = 1}^l \int_T \nabla_x g_i(x, t) d \mu_i(t) + \frac{1}{\alpha} \nabla b(x_*)^T \nu = [D G(x_*)]^* \lambda,
$$
and $(\partial F(x_*) + [D G(x_*)]^* \lambda) \cap (- N_A(x_*)) \ne \emptyset$, which by
Remark~\ref{rmrk:LagrangeMultViaSubdiff} implies that $\lambda$ is a discrete Lagrange multiplier at $x_*$.

Thus, it remains to check that $\alpha \ne 0$. Arguing by reductio ad absrudum suppose that $\alpha = 0$. Then
$v_2 + \nabla b(x_*)^T \nu \in - N_A(x_*)$. Note that from \eqref{eq:RCQ_SemiInfProblems} it follows that 
there exists $h \in A - x_* \subset T_A(x_*)$ such that $\nabla b(x_*) h = 0$ and 
$\langle \nabla_x g_i(x_*, t), h \rangle < 0$ for all $t \in T_i(x_*)$ and $i \in I(x_*)$. Hence by the definition of
$v_2$ one has $\langle v_2 + \nabla b(x_*)^T \nu, h \rangle < 0$, which is impossible, since by our assumption 
$v_2 + \nabla b(x_*)^T \nu \in - N_A(x_*)$. Thus, $\alpha \ne 0$ and the proof of the first part of the theorem is
complete.

\textbf{Part~\ref{stat:LagrMultViaDiscreteProb}.} The validity of this statement follows directly from the definitions
of a discrete Lagrange multiplier and a Lagrange multiplier for problem \eqref{probl:DiscretisedSemiInfProb}.

\textbf{Part~\ref{stat:DiscreteLagrMult_ConvexCase}.} Observe that the assumptions on the functions $b(\cdot)$ and
$g_i(\cdot, t)$ imply that the mapping $G(\cdot)$ is $(-K)$-convex, while the existence of $x_0 \in A$ such that
$g_i(x_0, t) < 0$ for all $t \in T$ and $i \in I$ is equivalent to Slater's condition 
$0 \in \interior\{ G(A) - K \}$ and implies the validity of Slater's conditions for the discritised problem
\eqref{probl:DiscretisedSemiInfProb}.

Suppose that there exists a discrete Lagrange multiplier at $x_*$. Then by the second part of the theorem for any 
$i \in I(x_*)$ one can find $m_i \in \{ 1, \ldots, d + 1 \}$ and $t_{ij} \in T_i(x_*)$, $j \in \{ 1, \ldots, m_i \}$,
such that there exists a Lagrange multiplier of the discretised problem \eqref{probl:DiscretisedSemiInfProb}. Hence
by Theorem~\ref{thrm:OptCond_ConvexCase} the point $x_*$ is a globally optimal solution of problem
\eqref{probl:DiscretisedSemiInfProb}. 

If $x_*$ is a globally optimal solution of the discretised problem \eqref{probl:DiscretisedSemiInfProb}, then $x_*$ is
obviously a globally optimal solution of problem \eqref{probl:SemiInfProg} as well, since the feasible region of
problem \eqref{probl:SemiInfProg} is contained in the feasible region of problem \eqref{probl:DiscretisedSemiInfProb}.

Finally, if $x_*$ is a globally optimal solution of problem \eqref{probl:SemiInfProg}, then taking into account the
fact that in the convex case by \cite[Prp.~2.104]{BonnansShapiro} Slater's condition $0 \in \interior\{ G(A) - K \}$ is
equivalent to RCQ and applying the first part of the theorem one obtains that there exists a discrete Lagrange
multiplier at $x_*$.
\end{proof}

\begin{remark}
Note that from the proof of the theorem above it follows that in the definition of discrete Lagrange multiplier one can
suppose that \textit{the union} of the supports of all measures $\mu_i$ consists of at most $d + 1$ points. Furthermore,
dividing the inclusion $\alpha v_1 + (1 - \alpha) v_2 + \nabla b(x_*)^T \nu \in - N_A(x_*)$ by $\alpha$ one obtains
that 
$$
  \Big( v_1 + \cone\big\{ \nabla_x g_i(x_*, t) \bigm| t \in T_i(x_*), i \in I(x_*) \big\} 
  + \frac{1}{\alpha} \nabla b(x_*)^T \nu \Big) \cap (- N_A(x_*)) \ne \emptyset.
$$
Hence taking into account the fact that any point from the convex conic hull can be expressed as a non-negative linear
combination of $d$ or fewer linearly independent vectors (see, e.g. \cite[Corollary~17.1.2]{Rockafellar}) one can check
that in the definition of discrete Lagrange multiplier it is sufficient to suppose that the union of the supports of 
the measures $\mu_i$ consists of at most $d$ points. \qed
\end{remark}

With the use of the theorem above one can easily obtain convenient necessary optimality conditions for problem
\eqref{probl:SemiInfProg} in terms of cadre and alternance. Let $Z \subset \mathbb{R}^d$ be a set consisting of $d$
linearly independent vectors and let $n_A(x)$ be a nonempty set such that $N_A(x) = \cone n_A(x)$ for any 
$x \in \mathbb{R}^d$.

\begin{definition}
Let $x_*$ be a feasible point of problem \eqref{probl:SemiInfProg} and $p \in \{ 1, \ldots, d + 1 \}$ be fixed. One
says that \textit{a discrete $p$-point alternance} exists at $x_*$, if there exist $k_0 \in \{ 1, \ldots, p \}$, 
$i_0 \in \{ k_0 + 1, \ldots, p \}$, vectors
\begin{gather} \label{eq:DiscreteAlternanceDef}
  V_1, \ldots, V_{k_0} \in \Big\{ \nabla_x f(x_*, \omega) \Bigm| \omega \in W(x_*) \Big\}, \\
  V_{k_0 + 1}, \ldots, V_{i_0} \in \Big\{ \nabla_x g_i(x_*, t) \Bigm| i \in I(x_*), t \in T_i(x_*) \Big\}, \:
  V_{i_0 + 1}, \ldots, V_p \in n_A(x_*), \label{eq:DiscreteAlternanceDef2}
\end{gather}
and vectors $V_{p + 1}, \ldots, V_{d + 1} \in Z$ such that the $d$th-order determinants $\Delta_s$ of the matrices
composed of the columns $V_1, \ldots, V_{s - 1}, V_{s + 1}, \ldots V_{d + 1}$ satisfy the following conditions:
\begin{gather*}
  \Delta_s \ne 0, \quad s \in \{ 1, \ldots, p \}, \quad
  \sign \Delta_s = - \sign \Delta_{s + 1}, \quad s \in \{ 1, \ldots, p - 1 \}, \\
  \Delta_s = 0, \quad s \in \{ p + 1, \ldots d + 1 \}.
\end{gather*}
Any such collection of vectors $\{ V_1, \ldots, V_p \}$ is called \textit{a discrete $p$-point alternance} at $x_*$. Any
discrete $(d + 1)$-point alternance is called \textit{complete}
\end{definition}

Bearing in mind Theorem~\ref{thrm:DiscreteLagrangeMultipliers} and applying Proposition~\ref{prp:AlternanceVsCadre} and
Theorem~\ref{thrm:AlternanceCond} to the discretised problem \eqref{probl:DiscretisedSemiInfProb} one obtains that the
following result holds true. 

\begin{corollary}
Let $x_*$ be a feasible point of problem \eqref{probl:SemiInfProg}. Then the following statements are equivalent:
\begin{enumerate}
\item{a discrete Lagrange multiplier exists at $x_*$;}

\item{a discrete $p$-point alternance exists at $x_*$ for some $p \in \{ 1, \ldots, d + 1 \}$;}

\item{a discrete $p$-point cadre with positive cadre multipliers exists at $x_*$ for some 
$p \in \{ 1, \ldots, d + 1 \}$, that is, there exist $k_0 \in \{ 1, \ldots, p \}$, $i_0 \in \{ k_0 + 1, \ldots, p \}$,
and vectors satisfying \eqref{eq:DiscreteAlternanceDef} and \eqref{eq:DiscreteAlternanceDef2}
such that $\rank([V_1, \ldots, V_p]) = p - 1$ and $\sum_{i = 1}^p \beta_i V_i = 0$ for some $\beta_i > 0$.
}
\end{enumerate}
Furhtermore, if a complete discrete alternance exists at $x_*$, then $x_*$ is a local minimiser of problem
\eqref{probl:SemiInfProg} at which the first order growth condition holds true.
\end{corollary}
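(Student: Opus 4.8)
The plan is to reduce the entire statement to the finitely-constrained discretised problem \eqref{probl:DiscretisedSemiInfProb}, which is a bona fide instance of $(\mathcal{P})$ (same set $A$, finitely many smooth inequality constraints $g_i(\cdot,t_{ij})$, and the equality constraints $b$), and then to invoke the machinery of Sections~\ref{subsect:LagrangeMultipliers}--\ref{subsect:Alternance_Cadre}. The pivot is Theorem~\ref{thrm:DiscreteLagrangeMultipliers}(\ref{stat:LagrMultViaDiscreteProb}): a discrete Lagrange multiplier exists at $x_*$ if and only if there is a finite family $t_{ij}\in T_i(x_*)$ ($i\in I(x_*)$, $j\in\{1,\dots,m_i\}$, $m_i\le d+1$) such that \eqref{probl:DiscretisedSemiInfProb} possesses a Lagrange multiplier at $x_*$. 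For that discretised problem one has, exactly as in the constrained minimax case, $\mathcal{N}(x_*)=\cone\big(\{\nabla_x g_i(x_*,t_{ij})\}\cup\{\pm\nabla b_k(x_*)\}\big)$ and $N_A(x_*)=\cone n_A(x_*)$ with the same $n_A$ used in the definition of discrete alternance; hence the generating sets of $\mathcal{D}(x_*)$ for \eqref{probl:DiscretisedSemiInfProb} are precisely the sets from which the vectors of a discrete alternance are drawn.

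First I would establish (1)$\Leftrightarrow$(2). For the forward implication, fix a discretisation $\{t_{ij}\}$ as above; by Theorem~\ref{thrm:EquivOptCond_Subdiff}(\ref{stat:NessOpt_Subdiff}) the existence of a Lagrange multiplier of \eqref{probl:DiscretisedSemiInfProb} at $x_*$ is equivalent to $0\in\mathcal{D}(x_*)$, and Theorem~\ref{thrm:AlternanceCond}(\ref{stat:NessOpt_Alternance}) then yields, for some $p\in\{1,\dots,d+1\}$, a $p$-point alternance for \eqref{probl:DiscretisedSemiInfProb} at $x_*$; by construction its $\mathcal{N}$-block consists of vectors $\nabla_x g_i(x_*,t_{ij})$ with $t_{ij}\in T_i(x_*)$ (and possibly $\pm\nabla b_k(x_*)$), so the collection satisfies \eqref{eq:DiscreteAlternanceDef}--\eqref{eq:DiscreteAlternanceDef2}, i.e. it is a discrete $p$-point alternance for \eqref{probl:SemiInfProg}. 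Conversely, a discrete $p$-point alternance at $x_*$ involves only finitely many points $t_{ij}\in T_i(x_*)$; declaring exactly these to be the discretisation turns the alternance into a $p$-point alternance for \eqref{probl:DiscretisedSemiInfProb}, whence $0\in\mathcal{D}(x_*)$ by Theorem~\ref{thrm:AlternanceCond}(\ref{stat:NessOpt_Alternance}), a Lagrange multiplier of \eqref{probl:DiscretisedSemiInfProb} at $x_*$ exists by Theorem~\ref{thrm:EquivOptCond_Subdiff}(\ref{stat:NessOpt_Subdiff}), and a discrete Lagrange multiplier exists by Theorem~\ref{thrm:DiscreteLagrangeMultipliers}(\ref{stat:LagrMultViaDiscreteProb}).

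The equivalence (2)$\Leftrightarrow$(3) is then nothing but Proposition~\ref{prp:AlternanceVsCadre} applied to \eqref{probl:DiscretisedSemiInfProb} for the associated discretisation. Finally, for the last assertion, if a complete ($p=d+1$) discrete alternance exists at $x_*$, then, with respect to the discretisation it singles out, it is a complete alternance for \eqref{probl:DiscretisedSemiInfProb}; Theorem~\ref{thrm:AlternanceCond}(\ref{stat:CompleteAlternance_SuffOpt}) gives $0\in\interior\mathcal{D}(x_*)$, and Theorem~\ref{thrm:EquivOptCond_Subdiff}(\ref{stat:SuffOpt_Subdiff}) together with Theorem~\ref{thrm:SuffOptCond} shows that $x_*$ is a locally optimal solution of \eqref{probl:DiscretisedSemiInfProb} at which the first order growth condition holds with some $\rho>0$. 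Since \eqref{probl:SemiInfProg} and \eqref{probl:DiscretisedSemiInfProb} share the same objective and the feasible region of \eqref{probl:SemiInfProg} is contained in that of \eqref{probl:DiscretisedSemiInfProb}, the inequality $F(x)\ge F(x_*)+\rho|x-x_*|$ survives on a neighbourhood of $x_*$ intersected with the feasible region of \eqref{probl:SemiInfProg}, so $x_*$ is a local minimiser of \eqref{probl:SemiInfProg} with the first order growth condition. The only point requiring real care is the bookkeeping identifying the generating sets $\eta(x_*)$, $n_A(x_*)$, and the equality-constraint gradients of the discretised problem with the data entering the definition of discrete $p$-point alternance (this is where, if $b\not\equiv 0$, the $\mathcal{N}$-block of the alternance must be read as also allowing the vectors $\pm\nabla b_k(x_*)$); once this is set straight, the proof is a transcription of results already established.
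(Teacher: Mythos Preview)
Your proposal is correct and follows essentially the same route as the paper: reduce everything to the discretised problem \eqref{probl:DiscretisedSemiInfProb} via Theorem~\ref{thrm:DiscreteLagrangeMultipliers}(\ref{stat:LagrMultViaDiscreteProb}), and then invoke Theorem~\ref{thrm:EquivOptCond_Subdiff}, Theorem~\ref{thrm:AlternanceCond}, and Proposition~\ref{prp:AlternanceVsCadre}. Your closing observation about the equality-constraint gradients $\pm\nabla b_k(x_*)$ is a genuine bookkeeping point the paper's one-line proof does not address (the definition of discrete alternance in \eqref{eq:DiscreteAlternanceDef2} omits them), and you are right that the argument goes through cleanly only once this is understood.
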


\begin{remark}
It should be noted that it is unclear whether first order sufficient optimality condition for problem
\eqref{probl:SemiInfProg} can be rewritten in an equivalent form involving discrete Lagrange multipliers. One can
consider sufficient optimality conditions for the discretised problem \eqref{probl:DiscretisedSemiInfProb}. These
conditions are obviously sufficient optimality conditions for problem \eqref{probl:SemiInfProg}, since the feasible
region of this problem is contained in the feasible region of problem \eqref{probl:DiscretisedSemiInfProb}. However, it
seems that ``abstract'' sufficient optimality conditions for problem $(\mathcal{P})$ rewritten in terms of the
semi-infinite minimax problem are not equivalent to such conditions for the discretised problem. \qed
\end{remark}

\section{Second order optimality conditions for cone constrained minimax problems}
\label{sect:SecondOrderOptCond}

First order information is often insufficient to identify whether a given point is a locally optimal solution of a
minimax problem. For instance, in the case of unconstrained problems first order sufficient optimality conditions
cannot be satisfied, if the set $W(x_*) = \{ \omega \in W \mid F(x_*) = f(x_*, \omega) \}$ consists of less than $d + 1$
points. In such cases one obviously has to use \textit{second} order optimality conditions, whose analysis is the main
goal of this section. To simplify this analysis, we will mainly utilise a standard reformulation of cone constrained
minimax problems as equivalent smooth cone constrained problems and apply well-known second order optimality conditions
for such problems from \cite{Kawasaki,Cominetti,BonComShap98,BonComShap99,BonnansShapiro} to obtain optimality
conditions for minimax problems.

Let us introduce some auxiliary definitions first. Let $(x_*, \lambda_*)$ be a KKT-pair of the problem $(\mathcal{P})$,
that is, $x_*$ is a feasible point of this problem and $\lambda_*$ is a Lagrange multiplier at $x_*$. Then
$(\partial F(x_*) + [D G(x_*)]^* \lambda_*) \cap (- N_A(x_*)) \ne \emptyset$ by
Remark~\ref{rmrk:LagrangeMultViaSubdiff}, which implies that there exists $v \in \partial F(x_*)$ such that 
$\langle v, h \rangle + \langle \lambda_*, D G(x_*) h \rangle \ge 0$ for all $h \in T_A(x_*)$. By definition there exist
$k \in \mathbb{N}$, $\omega_i \in W(x_*)$, and $\alpha_i \ge 0$, $i \in \{ 1, \ldots, k \}$, such 
that $v = \sum_{i = 1}^k \alpha_i \nabla_x f(x, \omega_i)$ and $\sum_{i = 1}^k \alpha_i = 1$. 
Let $\alpha = \sum_{i = 1}^k \alpha_i \delta(\omega_i)$ be the discrete Radon measure on $W$ corresponding to $\alpha_i$
and $\omega_i$. Then
$$
  \left\langle \int_W \nabla_x f(x, \omega) d \alpha(\omega), h \right\rangle + 
  \langle \lambda_*, D G(x_*) h \rangle \ge 0 \quad \forall h \in T_A(x_*),
  \quad \alpha(W) = 1.
$$
Denote by $\alpha(x_*, \lambda_*)$ the set of all Radon measures $\alpha \in rca_+(W)$ satisfying the conditions above
and such that $\support(\alpha) \subset W(x_*)$. It is easily seen that this set is convex, bounded and weak${}^*$
closed, i.e. $\alpha(x_*, \lambda_*)$ is a weak${}^*$ compact set. Any measure $\alpha \in \alpha(x_*, \lambda_*)$ is
called \textit{a Danskin-Demyanov multiplier} corresponding to the KKT-pair $(x_*, \lambda_*)$ (see, e.g.
\cite[Sect.~2.1.1]{Polak}). Note that in the case of discrete minimax problems, i.e. when $W = \{ 1, \ldots, m \}$ (or
in the case when the set $W(x_*)$ consists of a finite number of points), the set of Danskin-Demyanov multipliers
$\alpha(x_*, \lambda_*)$ is simply a closed convex subset of the standard (probability) simplex in $\mathbb{R}^m$.

Denote by $\mathcal{L}(x, \lambda, \alpha) = \int_W f(x, \omega) d \alpha(\omega) + \langle \lambda, G(x) \rangle$ 
\textit{the integral Lagrangian} for the problem $(\mathcal{P})$, where $x \in \mathbb{R}^d$, $\lambda \in Y^*$, and 
$\alpha \in rca_+(W)$. Note that $\alpha_*$ is a Danskin-Demyanov multiplier corresponding to $(x_*, \lambda_*)$ iff
$\langle \nabla_x \mathcal{L}(x_*, \lambda_*, \alpha_*), h \rangle \ge 0$ for all $h \in T_A(x_*)$, 
$\support(\alpha) \subset W(x_*)$, and $\alpha(W) = 1$.

Let $S \subset Y$ be a given set, and $y \in S$ and $h \in Y$ be fixed. Recall that \textit{the outer second order
tangent set} to the set $S$ at the point $y$ in the direction $h$, denoted by $T_S^2(x, h)$, consists of all those
vectors $w \in Y$ for which one can find a sequence $\{ t_n \} \subset (0, + \infty)$ such that $\lim t_n = 0$ and
$\dist(x + t_n h + 0.5 t_n^2 w, S) = o(t_n^2)$. See \cite[Sect.~3.2.1]{BonnansShapiro} for a detailed treatment of
second-order tangent sets. Here we only note that the second order tangent set $T_S^2(x, h)$ might be nonconvex even in
the case when the set $S$ is convex.

For any $\lambda \in Y^*$ denote by $\sigma(\lambda, S) = \sup_{y \in S} \langle \lambda, y \rangle$ the support
function of the set $S$. Also, for any feasible point $x_*$ of the problem $(\mathcal{P})$ denote by
$\Lambda(x_*)$ the set of all Lagrange multipliers of $(\mathcal{P})$ at $x_*$. Finally, for any feasible point 
$x_*$ of the problem $(\mathcal{P})$ denote by
$$
  C(x_*) = \Big\{ h \in T_A(x_*) \Bigm| D G(x_*) h \in T_K(G(x_*)), \enspace F'(x_*, h) \le 0 \Big\}
$$
\textit{the critical cone} at the point $x_*$. Observe that if $\Lambda(x_*) \ne \emptyset$, then by definition for any
$\lambda \in \Lambda(x_*)$ one has 
$[L(\cdot, \lambda_*)]'(x_*, h) = F'(x_*, h) + \langle \lambda_*, D G(x_*) h \rangle \ge 0$ for any 
$h \in T_A(x_*)$, which implies that
$$
  C(x_*) = \Big\{ h \in T_A(x_*) \Bigm| D G(x_*) h \in T_K(G(x_*)), \enspace F'(x_*, h) = 0 \Big\},
$$
since $\langle\lambda_*, D G(x_*) h \rangle \le 0$ for any $h$ such that $D G(x_*) h \in T_K(G(x_*))$ 
(see Remark~\ref{rmrk:SuffOptCond_Lagrangian}). Moreover, one also has
\begin{equation} \label{eq:CriticalConeViaLagrangian}
\begin{split}
  C(x_*) = \Big\{ h \in T_A(x_*) \Bigm| &D G(x_*) h \in T_K(G(x_*)), \\
  &\langle \lambda_*, D G(x_*) h \rangle = 0, \enspace [L(\cdot, \lambda_*)]'(x_*, h) = 0 \Big\}
\end{split}
\end{equation}
for any $\lambda_* \in \Lambda(x_*)$.

For the sake of simplicity, we derive second order necessary optimality conditions only in the case when
$x_* \in \interior A$ and the set $W(x_*)$ is discrete. Arguing in the same way one can derive second order conditions
in the general case. However, it should be noted that in the general case these conditions are very cumbersome, since
they involve complicated expressions depending on the second order tangent sets to $A$ and $C_{-}(W)$.

In this section we suppose that the mapping $G$ is twice continuously Fr\'{e}chet differentiable in a neighbourhood of
a given point $x_*$, the function $f(x, \omega)$ is twice differentiable in $x$ in a neighbourhood $\mathcal{O}(x_*)$ of
$x_*$ for any $\omega \in W$, and the function $\nabla^2_{xx} f(\cdot)$ is continuous on $\mathcal{O}(x_*) \times W$.

\begin{theorem} \label{thrm:2Order_NessOptCond}
Let $W = \{ 1, \ldots, m \}$, $f(x, i) = f_i(x)$ for any $i \in W$, and $x_* \in \interior A$ be a locally optimal
solution of the problem $(\mathcal{P})$ such that RCQ holds true  at $x_*$. Then for any $h \in C(x_*)$ and for any
convex set $\mathcal{T}(h) \subseteq T_K^2(G(x_*), D G(x_*) h)$ one has
$$
  \sup_{\lambda \in \Lambda(x_*)} \Big\{ 
  \sup_{\alpha \in \alpha(x_*, \lambda)} \big\langle h, \nabla^2_{xx} \mathcal{L}(x_*, \lambda, \alpha) h \big\rangle
  - \sigma(\lambda, \mathcal{T}(h)) \Big\} \ge 0.
$$
\end{theorem}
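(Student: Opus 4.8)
The plan is to reduce the assertion to the classical second order necessary optimality conditions for smooth cone constrained problems (see \cite{Kawasaki,Cominetti,BonnansShapiro}) applied to the standard reformulation of the discrete minimax problem. Writing $\widetilde{x} = (x, t) \in \mathbb{R}^{d + 1}$, consider the smooth problem
\[
  \min_{\widetilde{x}} \ \widetilde{F}(\widetilde{x}) = t \quad \text{s.t.} \quad
  \widetilde{G}(\widetilde{x}) = \big( (f_i(x) - t)_{i = 1}^m, G(x) \big) \in \widetilde{K} = (- \mathbb{R}_+)^m \times K, \quad \widetilde{x} \in A \times \mathbb{R},
\]
for which $\widetilde{x}_* = (x_*, F(x_*))$ is a locally optimal solution whenever $x_*$ is one for $(\mathcal{P})$. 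First I would verify that RCQ for $(\mathcal{P})$ at $x_*$ together with $x_* \in \interior A$ implies RCQ for the reformulated problem at $\widetilde{x}_*$: the $Y$-component of the set appearing in \eqref{eq:RCQ} for the reformulated problem is precisely the set from RCQ for $(\mathcal{P})$, while the $\mathbb{R}^m$-component covers a neighbourhood of the origin because letting the direction in the variable $t$ tend to $+\infty$ and using the slack afforded by the cone $(-\mathbb{R}_+)^m$ one reaches every vector in a fixed neighbourhood of $0$ in $\mathbb{R}^m$.

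Next I would build the dictionary between the two problems at $\widetilde{x}_*$. Since $x_* \in \interior A$, one has $N_{A \times \mathbb{R}}(\widetilde{x}_*) = \{0\}$ and $T_{A \times \mathbb{R}}(\widetilde{x}_*) = \mathbb{R}^{d + 1}$. Computing the tangent cone to $\widetilde{K}$ at $\widetilde{G}(\widetilde{x}_*)$ (whose $i$-th factor is $\mathbb{R}$ for $i \notin W(x_*)$ and $-\mathbb{R}_+$ for $i \in W(x_*)$) and using \eqref{eq:DirectDerivOfMaxFunc}, one finds that the critical cone of the reformulated problem equals $\{(h, \tau) \mid D G(x_*) h \in T_K(G(x_*)), \ F'(x_*, h) \le \tau \le 0 \}$; in particular $(h, 0)$ lies in it for every $h \in C(x_*)$ (recall $F'(x_*, h) = 0$ on $C(x_*)$ because $\Lambda(x_*) \ne \emptyset$ by Theorem~\ref{thrm:NessOptCond}). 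The KKT conditions for the reformulated problem read $\beta \in \mathbb{R}_+^m$, $\sum_i \beta_i = 1$, $\beta_i = 0$ for $i \notin W(x_*)$, $\lambda \in K^*$, $\langle \lambda, G(x_*) \rangle = 0$, and $\sum_i \beta_i \nabla f_i(x_*) + [D G(x_*)]^* \lambda = 0$; comparing this with the definitions of Lagrange and Danskin--Demyanov multipliers shows that $\widetilde{\lambda} = (\beta, \lambda) \mapsto (\lambda, \alpha)$ with $\alpha = \sum_i \beta_i \delta(i)$ is a bijection between the multiplier set of the reformulated problem and $\{(\lambda, \alpha) \mid \lambda \in \Lambda(x_*), \ \alpha \in \alpha(x_*, \lambda)\}$ (nonemptiness of $\alpha(x_*, \lambda)$ for $\lambda \in \Lambda(x_*)$ follows from Remark~\ref{rmrk:LagrangeMultViaSubdiff}). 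Finally, since $\widetilde{L}(\widetilde{x}, \widetilde{\lambda}) = t + \sum_i \beta_i (f_i(x) - t) + \langle \lambda, G(x) \rangle$ depends affinely on $t$, its Hessian has only the $x$-block, which is $\nabla^2_{xx} \mathcal{L}(x_*, \lambda, \alpha)$, so $\langle (h, 0), \nabla^2_{\widetilde{x}\widetilde{x}} \widetilde{L}(\widetilde{x}_*, \widetilde{\lambda}) (h, 0) \rangle = \langle h, \nabla^2_{xx} \mathcal{L}(x_*, \lambda, \alpha) h \rangle$.

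It then remains to handle the second order tangent set term. For $\widetilde{h} = (h, 0)$ one has $D \widetilde{G}(\widetilde{x}_*) \widetilde{h} = \big( (\langle \nabla f_i(x_*), h \rangle)_i, D G(x_*) h \big)$, so I would take the convex set $\widetilde{\mathcal{T}}(\widetilde{h}) = \prod_{i} \mathcal{T}_i \times \mathcal{T}(h)$, where $\mathcal{T}_i = \mathbb{R}$ if $i \notin W(x_*)$ or $\langle \nabla f_i(x_*), h \rangle < 0$, and $\mathcal{T}_i = - \mathbb{R}_+$ otherwise. Because the cone $(-\mathbb{R}_+)^m$ is polyhedral, a curve witnessing membership of a point in its outer second order tangent set actually stays in the cone for all small $t$, so it can be combined with any single sequence witnessing membership in $T_K^2(G(x_*), D G(x_*) h)$; this gives the inclusion $\widetilde{\mathcal{T}}(\widetilde{h}) \subseteq T_{\widetilde{K}}^2(\widetilde{G}(\widetilde{x}_*), D \widetilde{G}(\widetilde{x}_*) \widetilde{h})$. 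The support function splits, $\sigma(\widetilde{\lambda}, \widetilde{\mathcal{T}}(\widetilde{h})) = \sum_i \sigma(\beta_i, \mathcal{T}_i) + \sigma(\lambda, \mathcal{T}(h))$, and the key observation is that for a critical $h$ and a Danskin--Demyanov multiplier $\alpha = \sum_i \beta_i \delta(i)$ one has $\beta_i \langle \nabla f_i(x_*), h \rangle = 0$ for every $i$ — this follows from $\langle \lambda, D G(x_*) h \rangle = 0$ on $C(x_*)$ (see \eqref{eq:CriticalConeViaLagrangian}), which forces $\sum_i \beta_i \langle \nabla f_i(x_*), h \rangle = 0$, a sum of nonpositive terms. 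Hence $\beta_i = 0$ whenever $\mathcal{T}_i = \mathbb{R}$, and $\sigma(\beta_i, \mathcal{T}_i) = 0$ in all remaining cases (as $\beta_i \ge 0$), so $\sigma(\widetilde{\lambda}, \widetilde{\mathcal{T}}(\widetilde{h})) = \sigma(\lambda, \mathcal{T}(h))$.

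Substituting $\widetilde{h} = (h, 0)$ and this $\widetilde{\mathcal{T}}(\widetilde{h})$ into the classical second order necessary condition for the reformulated problem and translating through the dictionary gives exactly the claimed inequality; the $\sup$ over $\alpha$ can be pulled inside the braces precisely because $\sigma(\lambda, \mathcal{T}(h))$ does not depend on $\alpha$. The step I expect to be the main obstacle is the careful bookkeeping around the second order tangent set: establishing the product inclusion from polyhedrality of $(-\mathbb{R}_+)^m$ and checking that the extra $\mathbb{R}^m$-contribution to the support function vanishes, which hinges on the complementarity between the Danskin--Demyanov multipliers and the critical direction $h$.
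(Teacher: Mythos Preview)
Your proposal is correct and follows essentially the same route as the paper: reformulate $(\mathcal{P})$ as a smooth cone constrained problem in the extended variable $(x,t)$, check RCQ, identify multipliers with pairs $(\lambda,\alpha)$, identify critical directions, and apply \cite[Thrm.~3.45]{BonnansShapiro}. The one place where you work harder than necessary is the second order tangent set on the polyhedral factor: the paper simply takes $\{0_m\}$ there, i.e.\ uses the convex subset $\mathcal{T}(h)\times\{0_m\}\subseteq T^2_{\widetilde{K}}(\widetilde{G}(\widetilde{x}_*),D\widetilde{G}(\widetilde{x}_*)(h,0))$, so that the $\mathbb{R}^m$-contribution to the support function is automatically $\langle\alpha,0\rangle=0$ and no complementarity argument is needed; the inclusion holds because $\dist(f(x_*)-F(x_*)\mathbf{1}_m+t_n\nabla f(x_*)h,\mathbb{R}^m_-)=0$ for small $t_n$ whenever $h\in C(x_*)$. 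Your choice of $\prod_i\mathcal{T}_i\times\mathcal{T}(h)$ with the full polyhedral second order tangent sets also works, and your complementarity computation ($\beta_i\langle\nabla f_i(x_*),h\rangle=0$ for all $i$) is correct, but it is more bookkeeping than the problem requires.
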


\begin{proof}
From the facts that $x_*$ is a locally optimal solution of $(\mathcal{P})$ and $x_* \in \interior A$ it
follows that $(x_*, F(x_*))$ is a locally optimal solution of the problem
$$
  \min_{(x, z)} z \quad \text{subject to} \quad
  f(x, \omega) - z \le 0 \quad \omega \in W, \quad G(x) \in K.
$$
This problem can be rewritten as the cone constrained problem
\begin{equation} \label{prob:MinMaxViaConeConstr}
  \min \: \widehat{f}(x, z) \quad \text{subject to} \quad \widehat{G}(x, z) \in \widehat{K},
\end{equation}
where $\widehat{f}(x, z) = z$, $\widehat{Y} = K \times \mathbb{R}^m$, 
$\widehat{G}(x, z) = (G(x), f_1(x) - z, \ldots, f_m(x) - z)$, and $\widehat{K} = K \times \mathbb{R}_{-}^m$, where
$\mathbb{R}_{-} = (- \infty, 0]$. Our aim is to prove the theorem by reformulating second order optimality conditions
for problem \eqref{prob:MinMaxViaConeConstr} in terms of the problem $(\mathcal{P})$.

For any $x \in \mathbb{R}^d$, $z \in \mathbb{R}$, $\lambda \in Y^*$ and $\alpha \in \mathbb{R}^m$ denote by
$$
  \mathcal{L}_0(x, z, \lambda, \alpha) = \widehat{f}(x, z) + \langle (\lambda, \alpha), \widehat{G}(x, z) \rangle
  = z + \sum_{i = 1}^m \alpha^{(i)} \big( f_i(x) - z \big) + \langle \lambda, G(x) \rangle
$$
the Lagrangian for cone constrained problem \eqref{prob:MinMaxViaConeConstr}. Observe that 
$\mathcal{L}_0(x, z, \lambda, \alpha) = \mathcal{L}(x, \lambda, \alpha) + (1 - \sum_{i = 1}^m \alpha^{(i)}) z$. One can
easily see that $(\lambda_*, \alpha_*)$ is a Lagrange multiplier of problem \eqref{prob:MinMaxViaConeConstr} at $(x_*,
F(x_*))$ iff $\lambda_*$ is a Lagrange multiplier of the problem $(\mathcal{P})$ at $x_*$ and $\alpha_*$ is a
Danskin-Demyanov multiplier corresponding to $(x_*, \lambda_*)$. 

Let $z_* = F(x_*)$. Observe that
\begin{multline*}
  \widehat{G}(x_*, z_*) + D \widehat{G}(x_*, z_*)\big( \mathbb{R}^d \times \mathbb{R} \big) - \widehat{K} \\
  = \left\{ \begin{pmatrix} G(x_*) \\ f(x_*) - z_* \mathbf{1}_m \end{pmatrix}
  + \begin{pmatrix} D G(x_*) h_x \\ \nabla f(x_*) h_x  - h_z \mathbf{1}_m \end{pmatrix}
  - \begin{pmatrix} K \\ \mathbb{R}_{-}^m \end{pmatrix} \biggm| (h_x, h_z) \in \mathbb{R}^d \times \mathbb{R}
  \right\}.
\end{multline*}
where $f(\cdot) = (f_1(\cdot), \ldots, f_m(\cdot))^T \in \mathbb{R}^m$ and 
$\mathbf{1}_m = (1, \ldots, 1)^T \in \mathbb{R}^m$. Taking into account the fact that RCQ for the problem
$(\mathcal{P})$ is satisfied at $x_*$ one can easily check that RCQ for problem \eqref{prob:MinMaxViaConeConstr} is
satisfied at $(x_*, z_*)$. Therefore, by \cite[Thrm.~3.45]{BonnansShapiro} the second order necessary optimality
conditions for problem \eqref{prob:MinMaxViaConeConstr} are satisfied at $(x_*, 0)$, that is, 
for every $\widehat{h} = (h_x, h_z) \in C(x_*, z_*)$, where
\begin{multline*}
  C(x_*, z_*) = \Big\{ (h_x, h_z) \in \mathbb{R}^d \times \mathbb{R} \Bigm| 
  D G(x_*) h_x \in T_K(G(x_*)), \\
  \nabla f(x_*) h_x  - h_z \mathbf{1}_m \in T_{\mathbb{R}^m_{-}}(f(x_*) - z_* \mathbf{1}_m), 
  \enspace h_z = 0 \Big\}
\end{multline*}
and any convex set 
$\mathcal{T}(\widehat{h}) \subseteq T^2_{\widehat{K}}(\widehat{G}(x_*, z_*), D \widehat{G}(x_*, z_*) \widehat{h})$ one
has
$$
  \sup\Big\{ \langle h_x, \nabla^2_{xx} \mathcal{L}(x_*, \lambda, \alpha) h_x \rangle
  - \sigma\big( (\lambda, \alpha), \mathcal{T}(\widehat{h}) \big) \Big\}
$$
where the supremum is taken over all Lagrange multipliers $(\lambda, \alpha)$ of problem
\eqref{prob:MinMaxViaConeConstr} at $(x_*, z_*)$. As was noted above, for any such $(\lambda, \alpha)$ one has
$\lambda \in \Lambda(x_*)$ and $\alpha \in \alpha(x_*, \lambda)$. Furthermore, note that 
\begin{align*}
  C(x_*, z_*) &= \Big\{ (h, 0) \in \mathbb{R}^d \times \mathbb{R} \Bigm| D G(x_*) h \in T_K(G(x_*)), \enspace
  \langle \nabla f_i(x_*), h \rangle \le 0 \quad \forall i \in W(x_*) \Big\} \\
  &= C(x_*) \times \{ 0 \}.
\end{align*}
Therefore for every $h \in C(x_*)$ and for any convex subset $\mathcal{T}(h, 0)$ of the second order tangent set
$T^2_{\widehat{K}}(\widehat{G}(x_*, z_*), D \widehat{G}(x_*, z_*)(h, 0))$ one has
$$
  \sup_{\lambda \in \Lambda(x_*)} \Big\{ 
  \sup_{\alpha \in \alpha(x_*, \lambda)} \big\langle h, \nabla^2_{xx} \mathcal{L}(x_*, \lambda, \alpha) h \big\rangle
  - \sigma\big( (\lambda, \alpha), \mathcal{T}(h, 0) \big) \Big\} \ge 0.
$$
It remains to note that for every $h \in C(x_*)$ and for any convex set 
$\mathcal{T}(h) \subseteq T_K^2(G(x_*), D G(x_*) h)$ one has
$\mathcal{T}(h) \times \{ 0 \} \subseteq T^2_{\widehat{K}}(\widehat{G}(x_*, z_*), D \widehat{G}(x_*, z_*)(h, 0))$, since
for all $w \in \mathcal{T}(h)$ and for any sequence $\{ t_n \} \subset (0, + \infty)$ such that $\lim t_n = 0$ and 
$\dist(G(x_*) + t_n D G(x_*) h + 0.5 t_n^2 w, K) = o(t_n^2)$ (note that at least one such sequence exists due to the
fact that $\mathcal{T}(h) \subseteq T_K^2(G(x_*), D G(x_*) h)$) one has
\begin{multline*}
  \dist\Big( \widehat{G}(x_*, z_*) + t_n D \widehat{G}(x_*)(h, 0) + \frac{1}{2} t_n^2 (w, 0), \widehat{K} \Big) \\
  \le \dist\Big( G(x_*) + t_n D G(x_*) h + \frac{1}{2} t_n^2 w, K \Big)
  + \dist\big( f(x_*) - z_* \mathbf{1}_m + t_n \nabla f(x_*) h, \mathbb{R}^m_{-} \big) = o(t_n^2).
\end{multline*}
Here we used the fact that $\dist\big( f(x_*) - z_* \mathbf{1}_m + t_n \nabla f(x_*) h, \mathbb{R}^m_{-} \big) = 0$ for
any sufficiently large $n$, since $h \in C(x_*)$ and by the definition of critical cone one has 
$\langle \nabla f_i(x_*), h \rangle \le 0$ for any $i \in W(x_*)$.
\end{proof}

Almost literally repeating the proof of \cite[Prp.~3.46]{BonnansShapiro} one can prove the following useful corollary to
the theorem above. For the sake of completeness, we outline its proof.

\begin{corollary}
Let all assumptions of the previous theorem be valid and suppose that there exists a unique Lagrange
multiplier at $x_*$, i.e. $\Lambda(x_*) = \{ \lambda_* \}$ for some $\lambda_* \in K^*$. Then for any 
$h \in C(x_*)$ one has
$$
  \sup_{\alpha \in \alpha(x_*, \lambda_*)} \big\langle h, \nabla^2_{xx} \mathcal{L}(x_*, \lambda_*, \alpha) \big\rangle
  - \sigma\big( \lambda_*, T^2_K(G(x_*), D G(x_*) h) \big) \ge 0
$$
\end{corollary}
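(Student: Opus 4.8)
The plan is to deduce the corollary from Theorem~\ref{thrm:2Order_NessOptCond} simply by specialising it to singleton second order tangent sets, exactly as in the proof of \cite[Prp.~3.46]{BonnansShapiro}. First I would fix an arbitrary $h \in C(x_*)$ and, for each vector $w \in T^2_K(G(x_*), D G(x_*) h)$, apply Theorem~\ref{thrm:2Order_NessOptCond} with the convex set $\mathcal{T}(h) = \{ w \}$ (a singleton is trivially convex and is contained in $T^2_K(G(x_*), D G(x_*) h)$). Since by hypothesis $\Lambda(x_*) = \{ \lambda_* \}$, the outer supremum over $\Lambda(x_*)$ degenerates, and because $\sigma(\lambda_*, \{ w \}) = \langle \lambda_*, w \rangle$ this would give
$$
  \sup_{\alpha \in \alpha(x_*, \lambda_*)} \big\langle h, \nabla^2_{xx} \mathcal{L}(x_*, \lambda_*, \alpha) h \big\rangle \ge \langle \lambda_*, w \rangle.
$$
Before this step it is worth recording that the left-hand side is a genuine real number: since $\lambda_* \in \Lambda(x_*)$, a Danskin-Demyanov multiplier corresponding to $(x_*, \lambda_*)$ exists (this is the construction carried out just before Theorem~\ref{thrm:2Order_NessOptCond}), so the set $\alpha(x_*, \lambda_*)$ is nonempty and weak${}^*$ compact and the supremum is attained.

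The second step is to let $w$ range over the whole set $T^2_K(G(x_*), D G(x_*) h)$ in the displayed inequality and pass to the supremum of its right-hand side, which yields
$$
  \sup_{\alpha \in \alpha(x_*, \lambda_*)} \big\langle h, \nabla^2_{xx} \mathcal{L}(x_*, \lambda_*, \alpha) h \big\rangle \ge \sup_{w \in T^2_K(G(x_*), D G(x_*) h)} \langle \lambda_*, w \rangle = \sigma\big( \lambda_*, T^2_K(G(x_*), D G(x_*) h) \big),
$$
i.e. exactly the asserted inequality. The degenerate case $T^2_K(G(x_*), D G(x_*) h) = \emptyset$ is handled separately and trivially: there the only admissible $\mathcal{T}(h)$ in Theorem~\ref{thrm:2Order_NessOptCond} is the empty set, $\sigma(\lambda_*, \emptyset) = - \infty$, and the claimed inequality holds vacuously.

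I do not expect any real obstacle here. The content of the argument is that the convexity requirement on $\mathcal{T}(h)$ in Theorem~\ref{thrm:2Order_NessOptCond} becomes vacuous once the Lagrange multiplier is unique, because the supremum of the linear functional $\langle \lambda_*, \cdot \rangle$ over a set coincides with its supremum over the convex hull, so one may test with single points. The only two places requiring a line of care are the nonemptiness of $\alpha(x_*, \lambda_*)$, so that the left-hand side is not $- \infty$, and the empty second order tangent set.
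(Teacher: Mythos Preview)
Your proof is correct and follows the same overall strategy as the paper's: apply Theorem~\ref{thrm:2Order_NessOptCond} to a family of convex subsets of $T^2_K(G(x_*),D G(x_*)h)$ whose union is the whole second order tangent set, and then take the supremum of $\langle \lambda_*, \cdot \rangle$ over that family. The only difference is in the choice of family. You test with singletons $\{w\}$, which are trivially convex; the paper instead parametrises by null sequences $\sigma = \{t_n\}$ and uses the sets $\mathcal{T}_\sigma(h)$ of admissible second-order directions for that sequence, noting that each such set is convex and that their union over $\sigma$ equals $T^2_K(G(x_*),D G(x_*)h)$. Your singleton argument is shorter and makes more transparent the point you mention, namely that once $\Lambda(x_*)$ is a singleton the convexity restriction on $\mathcal{T}(h)$ carries no content because $\sigma(\lambda_*,\cdot)$ is unchanged under passage to the convex hull.
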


\begin{proof}
Let $\Sigma$ be the set consisting of all sequences $\sigma = \{ t_n \} \subset (0, + \infty)$ such that 
$\lim t_n = 0$. For any $\sigma \in \Sigma$ and $h \in C(x_*)$ denote by $\mathcal{T}_{\sigma}(h)$ the set of all those
vectors $w \in Y$ for which $\dist(G(x_*) + t_n D G(x_*) h + 0.5 t_n^2 w, K) = o(t_n^2)$. Observe that the set
$\mathcal{T}_{\sigma}(h)$ is convex, since for any $n \in \mathbb{N}$ the
function $w \mapsto \dist(G(x_*) + t_n D G(x_*) h + 0.5 t_n^2 w, K)$ is convex. Furthermore, one has
$\mathcal{T}_{\sigma}(h) \subseteq T_K^2(G(x_*), D G(x_*) h)$. Hence by Theorem~\ref{thrm:2Order_NessOptCond} for any 
$h \in C(x_*)$ the following inequality holds true:
$$
  \inf_{\sigma \in \Sigma} \Big\{ \sup_{\alpha \in \alpha(x_*, \lambda_*)} 
  \big\langle h, \nabla^2_{xx} \mathcal{L}(x_*, \lambda_*, \alpha) h \big\rangle
  - \sigma(\lambda_*, \mathcal{T}_{\sigma}(h)) \Big\} \ge 0.
$$
It remains to note that
$$
  \inf_{\sigma \in \Sigma} \big( - \sigma(\lambda_*, \mathcal{T}_{\sigma}(h)) \big)
  = - \sup_{\sigma \in \Sigma} \sup_{w \in \mathcal{T}_{\sigma}(h)} \langle \lambda_*, w \rangle
  = - \sigma\big( \lambda_*, T^2_K(G(x_*), D G(x_*) h) \big), 
$$
since $T^2_K(G(x_*), D G(x_*) h) = \bigcup_{\sigma \in \Sigma} \mathcal{T}_{\sigma}(h)$ by definition.
\end{proof}

Let us briefly discuss optimality conditions from Theorem~\ref{thrm:2Order_NessOptCond}. Firstly, note that they mainly
differ from classical optimality conditions by the presence of the sigma term $\sigma(\lambda, \mathcal{T}(h))$, which,
in a sense, represents a contribution of the curvature of the cone $K$ at the point $G(x_*)$ to optimality conditions.
This term is a specific feature of second order optimality conditions for cone constrained optimisation
problems \cite{Kawasaki,Cominetti,BonComShap98,BonComShap99,BonnansShapiro}. See
\cite{BonnansShapiro,BonnansRamirez,Shapiro2009} for explicit expressions for the critical cone $C(x_*)$, the second
order tangent set $T^2_K(G(x_*), D G(x_*)h)$, and the sigma term $\sigma(\lambda, \mathcal{T}(h))$ in various particular
cases.

Secondly, it should be pointed out that $\sigma(\lambda, \mathcal{T}(h)) \le 0$ for all $\lambda \in \Lambda(x_*)$ and
$h \in C(x_*)$. Furthermore, if $0 \in T^2_K(G(x_*), D G(x_*)h)$ (in particular, if the cone $K$ is polyhedral), then 
$\sigma(\lambda, \mathcal{T}(h)) = 0$ for all $\lambda \in \Lambda(x_*)$ and $h \in C(x_*)$
(see~\cite[pp.~177--178]{BonnansShapiro}). In this case, the optimality conditions from
Theorem~\ref{thrm:2Order_NessOptCond} take the more traditional form:
$$
  \sup_{\lambda \in \Lambda(x_*)} \sup_{\alpha \in \alpha(x_*, \lambda)} 
  \big\langle h, \nabla^2_{xx} \mathcal{L}(x_*, \lambda, \alpha) h \big\rangle \ge 0
  \quad \forall h \in C(x_*).
$$
As was noted in the proof of Theorem~\ref{thrm:2Order_NessOptCond}, the set
$\{ (\lambda, \alpha) \mid \lambda \in \Lambda(x_*), \alpha \in \alpha(x_*, \lambda_*) \}$
coincides with the set of Lagrange multipliers of problem \eqref{prob:MinMaxViaConeConstr} at the point $(x_*, F(x_*))$.
Consequently, this set is convex and weak${}^*$ compact, since RCQ for problem \eqref{prob:MinMaxViaConeConstr} holds
at $(x_*, F(x_*))$. It is easily seen that the function
$(\lambda, \alpha) \mapsto \langle h, \nabla^2_{xx} \mathcal{L}(x_*, \lambda, \alpha) h \rangle$ is weak${}^*$
continuous. Therefore, if $0 \in T^2_K(G(x_*), D G(x_*)h)$ (in particular, if the cone $K$ is polyhedral), then under
the assumptions of Theorem~\ref{thrm:2Order_NessOptCond} for any $h \in C(x_*)$ one can find $\lambda \in \Lambda(x_*)$
and $\alpha \in \alpha(x_*, \lambda)$ such that 
$\langle h, \nabla^2_{xx} \mathcal{L}(x_*, \lambda, \alpha) h \rangle \ge 0$.

Now we turn to second order sufficient optimality conditions. Similar to the case of first order optimality conditions,
we study second order sufficient optimality conditions in the context of second order growth condition. Recall that
\textit{the second order growth condition} (for the problem $(\mathcal{P})$) is said to be satisfied at a
feasible point $x_*$ of $(\mathcal{P})$, if there exist $\rho > 0$ and a neighbourhood $\mathcal{O}(x_*)$ of
$x_*$ such that $F(x) \ge F(x_*) + \rho | x - x_* |^2$ for any $x \in \mathcal{O}(x_*) \cap \Omega$, where $\Omega$ is
the feasible region of $(\mathcal{P})$.

We start with simple sufficient conditions that do not involve the sigma term.

\begin{theorem}
Let $x_* \in \interior A$ be a feasible point of the problem $(\mathcal{P})$ such that $\Lambda(x_*) \ne \emptyset$ and
for any $h \in C(x_*) \setminus \{ 0 \}$ one can find $\lambda \in \Lambda(x_*)$ and $\alpha \in \alpha(x_*, \lambda)$
such that $\langle h, \nabla^2_{xx} \mathcal{L}(x_*, \lambda, \alpha) h \rangle > 0$. Then $x_*$ is a locally optimal
solution of the problem $(\mathcal{P})$ at which the second order growth condition holds true.
\end{theorem}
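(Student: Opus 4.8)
The plan is to establish the second order growth condition directly by reductio ad absurdum; local optimality of $x_*$ is then immediate, since $F(x)\ge F(x_*)+\rho|x-x_*|^2\ge F(x_*)$ for feasible $x$ near $x_*$. Suppose the second order growth condition fails. Taking $\rho=1/n$ and the neighbourhood $B(x_*,1/n)$ for each $n\in\mathbb{N}$, one obtains $\{x_n\}\subset\Omega$ with $x_n\to x_*$ and $F(x_n)<F(x_*)+\tfrac1n|x_n-x_*|^2$. Since $x_n=x_*$ would give $F(x_*)<F(x_*)$, we have $x_n\ne x_*$, so $h_n=(x_n-x_*)/|x_n-x_*|$ and $t_n=|x_n-x_*|$ are well defined; passing to a subsequence I may assume $h_n\to h$ with $|h|=1$.

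The first step is to show $h\in C(x_*)\setminus\{0\}$. Because $x_*\in\interior A$ one has $T_A(x_*)=\mathbb{R}^d$, so $h\in T_A(x_*)$ trivially; the inclusions $G(x_n)\in K$ together with $G(x_n)=G(x_*)+t_n\,D G(x_*)h_n+o(t_n)$ give $D G(x_*)h\in T_K(G(x_*))$, exactly as in the proof of Theorem~\ref{thrm:SuffOptCond}. Finally, Hadamard directional differentiability of $F$ (formula \eqref{eq:DirectDerivOfMaxFunc}) yields $F'(x_*,h)=\lim_n (F(x_n)-F(x_*))/t_n\le\lim_n t_n/n=0$, so $h$ lies in the critical cone.

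Next I would invoke the hypothesis to fix $\lambda\in\Lambda(x_*)$ and $\alpha\in\alpha(x_*,\lambda)$ with $\langle h,\nabla^2_{xx}\mathcal{L}(x_*,\lambda,\alpha)h\rangle>0$, and compare $F$ with the integral Lagrangian $\mathcal{L}(\cdot,\lambda,\alpha)$. Since $\support(\alpha)\subseteq W(x_*)$ and $\alpha(W)=1$, one has $\int_W f(x_*,\omega)\,d\alpha(\omega)=F(x_*)$ while $\int_W f(x,\omega)\,d\alpha(\omega)\le F(x)$ for every $x$; and since $\lambda\in K^*$ with $\langle\lambda,G(x_*)\rangle=0$, one has $\langle\lambda,G(x_n)\rangle\le 0$. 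Hence $\mathcal{L}(x_*,\lambda,\alpha)=F(x_*)$ and $\mathcal{L}(x_n,\lambda,\alpha)\le F(x_n)$, so $F(x_n)-F(x_*)\ge\mathcal{L}(x_n,\lambda,\alpha)-\mathcal{L}(x_*,\lambda,\alpha)$. Under the standing smoothness assumptions of this section $\mathcal{L}(\cdot,\lambda,\alpha)$ is twice continuously differentiable near $x_*$ (one differentiates under the integral sign using compactness of $W$ and joint continuity of $\nabla_x f$ and $\nabla^2_{xx}f$), and $\nabla_x\mathcal{L}(x_*,\lambda,\alpha)=0$: this is precisely the inequality $\langle\nabla_x\mathcal{L}(x_*,\lambda,\alpha),h'\rangle\ge 0$ for all $h'\in T_A(x_*)=\mathbb{R}^d$ built into the definition of a Danskin--Demyanov multiplier. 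A second order Taylor expansion then gives $\mathcal{L}(x_n,\lambda,\alpha)-\mathcal{L}(x_*,\lambda,\alpha)=\tfrac12 t_n^2\langle h_n,\nabla^2_{xx}\mathcal{L}(x_*,\lambda,\alpha)h_n\rangle+o(t_n^2)$; combining this with $F(x_n)-F(x_*)<t_n^2/n$, dividing by $t_n^2$ and letting $n\to\infty$ yields $0\ge\tfrac12\langle h,\nabla^2_{xx}\mathcal{L}(x_*,\lambda,\alpha)h\rangle>0$, a contradiction.

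The routine ingredients are the differentiation under the integral sign and the Taylor estimate; the only places needing care are the membership $h\in C(x_*)$ and the identity $\nabla_x\mathcal{L}(x_*,\lambda,\alpha)=0$, both of which hinge on $x_*\in\interior A$. An alternative, fully equivalent route would be to apply the classical second order sufficient condition for smooth cone constrained problems from \cite{BonnansShapiro} to the reformulation \eqref{prob:MinMaxViaConeConstr}, using the correspondences $C(x_*,z_*)=C(x_*)\times\{0\}$ and $\{(\lambda,\alpha)\mid\lambda\in\Lambda(x_*),\ \alpha\in\alpha(x_*,\lambda)\}=$ (Lagrange multipliers of \eqref{prob:MinMaxViaConeConstr}) established in the proof of Theorem~\ref{thrm:2Order_NessOptCond}; since no sigma term appears, the hypothesis becomes positive definiteness of the Hessian of the Lagrangian over the critical cone, and quadratic growth for the reformulated problem transfers back to $(\mathcal{P})$. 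I would, however, favour the direct argument above, as it makes no finiteness assumption on $W$.
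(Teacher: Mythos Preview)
Your direct argument is correct. The paper, by contrast, proceeds via the reformulation route that you mention at the end: it passes to the smooth cone constrained problem \eqref{prob:MinMaxViaConeConstr_Contin}, identifies the Lagrange multipliers there with pairs $(\lambda,\alpha)$, $\lambda\in\Lambda(x_*)$, $\alpha\in\alpha(x_*,\lambda)$, identifies the critical cone with $C(x_*)\times\{0\}$, invokes \cite[Thrm.~3.63]{BonnansShapiro} to obtain quadratic growth for the reformulated problem, and then transfers the growth estimate back to $(\mathcal{P})$ by setting $z=F(x)$ and using continuity of $F$. Your approach is more self-contained and elementary: the key observation that $\mathcal{L}(\cdot,\lambda,\alpha)$ is a smooth minorant of $F$ on $\Omega$ which touches $F$ at $x_*$ and has vanishing gradient there (thanks to $T_A(x_*)=\mathbb{R}^d$) lets you bypass the reformulation entirely and avoid quoting an external second-order result. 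The paper's route has the advantage that, once the reformulation dictionary is set up in the proof of Theorem~\ref{thrm:2Order_NessOptCond}, the sufficient condition falls out with almost no extra work; your route trades that economy for transparency and independence from \cite{BonnansShapiro}.
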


\begin{proof}
Consider the following smooth cone constrained optimisation problem:
\begin{equation} \label{prob:MinMaxViaConeConstr_Contin}
  \min_{(x, z)} z \quad \text{subject to} \quad f(x, \omega) - z \le 0 \quad \omega \in W, 
  \quad G(x) \in K, \quad z \in \mathbb{R}.
\end{equation}
Let us check that sufficient optimality condition for this problem hold true at the point $(x_*, F(x_*))$. Indeed, 
the Lagrangian for problem \eqref{prob:MinMaxViaConeConstr_Contin} has the form
$$
  \mathcal{L}_0(x, z, \lambda, \alpha) = z + \int_W \big( f(x, \omega) - z \big) \, d \alpha(\omega)
  + \langle \lambda, G(x) \rangle
$$
for any $\lambda \in K^*$ and $\alpha \in rca_+(W)$. As was noted in the proof of Theorem~\ref{thrm:2Order_NessOptCond},
the critical cone for problem \eqref{prob:MinMaxViaConeConstr_Contin} at $(x_*, F(x_*))$ has the form 
$C(x_*, F(x_*)) = C(x_*) \times \{ 0 \}$. Therefore by our assumptions for any 
$\widehat{h} = (h, 0) \in C(x_*, F(x_*))$, $\widehat{h} \ne 0$, one can find $\lambda \in \Lambda(x_*)$ and 
$\alpha \in \alpha(x_*, \lambda_*)$ such that
$$
  \big\langle \widehat{h}, \nabla^2_{(x, z)(x, z)} \mathcal{L}_0(x_*, F(x_*), \lambda, \alpha) \widehat{h} \big\rangle
  = \big\langle h, \nabla^2_{xx} \mathcal{L}(x_*, \lambda, \alpha) h \big\rangle > 0
$$
As was pointed out in the proof of Theorem~\ref{thrm:2Order_NessOptCond}, the pair $(\lambda, \alpha)$ is a Lagrange
multiplier of problem \eqref{prob:MinMaxViaConeConstr_Contin} at $(x_*, F(x_*))$. Thus, one can conclude that the second
order sufficient optimality condition for problem \eqref{prob:MinMaxViaConeConstr_Contin} holds true at $x_*$, which by 
\cite[Thrm.~3.63]{BonnansShapiro} implies that $(x_*, F(x_*))$ is a locally optimal solution of
\eqref{prob:MinMaxViaConeConstr_Contin} at which the second order growth condition holds true. Thus, by definition there
exist $\rho > 0$ and $\varepsilon > 0$ such that $z \ge F(x_*) + \rho (|x - x_*|^2 + |z - F(x_*)|^2)$ for all 
$x \in B(x_*, \varepsilon)$ and $z \in \mathbb{R}$ such that $|z - F(x_*)| < \varepsilon$, $F(x) \le z$, and 
$G(x) \in K$. Note that the function $F(\cdot) = \max_{\omega \in W} f(\cdot, \omega)$ is continuous, since by our
assumptions the space $W$ is compact and the function $f$ is continuous. Consequently, there exists 
$r \in (0, \varepsilon)$ such that $|F(x) - F(x_*)| < \varepsilon$ for all $x \in B(x_*, r)$. Therefore, putting 
$z = F(x)$ one obtains that $F(x) \ge F(x_*) + \rho |x - x_*|^2$ for all $x \in B(x_*, r)$ such that $G(x) \in K$, that
is, $x_*$ is a locally optimal solution of the problem $(\mathcal{P})$ at which the second order growth condition holds
true.
\end{proof}

In the case when the space $Y$ is finite dimensional and the cone $K$ is second order regular one can strengthen
the previous theorem and obtain simple sufficient optimality conditions involving the sigma term. Recall that the cone
$K$ is said to be \textit{second order regular} at a point $y \in K$, if the following two conditions are satisfied:
\begin{enumerate}
\item{for any $h \in T_K(y)$ and any sequence $\{ y_n \} \subset K$ of the form $y_n = y + t_n h + 0.5 t_n^2 w_n$ where
$t_n > 0$ for all $n \in \mathbb{N}$, $\lim t_n = 0$, and $\lim t_n w_n = 0$ one has 
$\lim \dist(w_n, T_K^2(y, h)) = 0$;}

\item{$T^2_K(y, h) = \{ w \in Y \mid \dist(x + th + 0.5 t^2 w, K) = o(t^2), t \ge 0 \}$ for any $h \in Y$.}
\end{enumerate}
We say that the cone $K$ is second order regular, if it is second order regular at every point $y \in K$. 

For more details on second order regular sets see \cite{BonComShap98,BonComShap99} and
\cite[Sect.~3.3.3]{BonnansShapiro}. Here we only mention that the cone $\mathbb{S}^l_{-}$ of negative semidefinite
matrices is second order regular (see~\cite[p.~474]{BonnansShapiro}) and the second order cone is second order regular
by \cite[Prp.~3.136]{BonnansShapiro} and \cite[Lemma~15]{BonnansRamirez}.

Below we do not assume that $x_* \in \interior A$, but avoid the usage of the second order tangent set to the set $A$
for the sake of simplicity and due to the fact that we are mainly interested in the case when the set $A$ is polyhedral.

\begin{theorem}
Let $Y$ be a finite dimensional Hilbert space, the cone $K$ be second order regular, and $(x_*, \lambda_*)$ be a
KKT-pair of the problem $(\mathcal{P})$ such that the restriction of the function 
$\sigma(\lambda_*, T^2_K(G(x_*), \cdot))$ to its effective domain is upper semicontinuous. Suppose also that
\begin{equation} \label{eq:2OrderSuffCond_SigmaTerm}
  \sup_{\alpha \in \alpha(x_*, \lambda_*)} 
  \big\langle h, \nabla^2_{xx} \mathcal{L}(x_*, \lambda_*, \alpha) h \big\rangle
  - \sigma\big( \lambda_*, T^2_K(G(x_*), D G(x_*) h) \big) > 0
\end{equation}
for all $h \in C(x_*) \setminus \{ 0 \}$. Then $x_*$ is a locally optimal solution of the problem $(\mathcal{P})$ at
which the second order growth condition holds true.
\end{theorem}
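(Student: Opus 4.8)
The plan is to follow the strategy of the previous theorems: recast $(\mathcal{P})$ as an equivalent smooth cone constrained problem by adjoining the epigraph variable $z$, namely
\[
  \min_{(x,z)} z \quad \text{subject to} \quad \widetilde{G}(x,z) = \big( x,\, G(x),\, f(x,\cdot) - z \big) \in \widetilde{K} = A \times K \times C_{-}(W),
\]
where $C_{-}(W) = \{ y \in C(W) \mid y(\omega) \le 0 \ \forall \omega \in W \}$, and then invoke the ``no gap'' second order sufficient optimality condition for cone constrained problems with second order regular cones (see, e.g., \cite[Thrm.~3.86]{BonnansShapiro}). Exactly as in the proof of Theorem~\ref{thrm:2Order_NessOptCond}, evaluated at any Lagrange multiplier the Lagrangian $\mathcal{L}_0$ of this problem reduces to the integral Lagrangian $\mathcal{L}(\cdot, \lambda_*, \alpha_*)$ (the coefficient $1 - \alpha_*(W)$ of $z$ vanishes), so the relevant Hessian term applied to a direction $(h, 0)$ equals $\langle h, \nabla^2_{xx} \mathcal{L}(x_*, \lambda_*, \alpha_*) h \rangle$; the Lagrange multipliers of the reformulated problem at $(x_*, F(x_*))$ correspond to the pairs $(\lambda, \alpha)$ with $\lambda \in \Lambda(x_*)$ and $\alpha \in \alpha(x_*, \lambda)$ (together with a multiplier for the constraint $x \in A$ that enters only linearly and, when $A$ is polyhedral, contributes nothing to the sigma term along critical directions), and this set contains $(\lambda_*, \alpha_*)$ for every $\alpha_* \in \alpha(x_*, \lambda_*)$; finally, the critical cone of the reformulated problem at $(x_*, F(x_*))$ equals $C(x_*) \times \{ 0 \}$.

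The second step is to reduce the curvature (sigma) term. Since $\widetilde{K}$ is a product, its second order tangent set and the corresponding support function factor, so the sigma term of $\widetilde{K}$ splits into the contribution $\sigma\big( \lambda_*, T^2_K(G(x_*), D G(x_*) h) \big)$ coming from $K$ plus the contributions from $A$ and from $C_{-}(W)$. For $h \in C(x_*)$ the latter two vanish: the active components of the $C_{-}(W)$-constraint are the $\omega \in W(x_*)$, for which $\langle \nabla_x f(x_*, \omega), h \rangle \le 0$, and $0$ lies in the associated second order tangent set — and, as recorded right after Theorem~\ref{thrm:2Order_NessOptCond}, the sigma term vanishes whenever $0$ belongs to the second order tangent set, in particular for all polyhedral pieces — while for polyhedral $A$ the same holds for the $A$-component along critical directions. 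Hence, taking the supremum over $\alpha_* \in \alpha(x_*, \lambda_*)$ (a subset of the full multiplier set of the reformulated problem), assumption~\eqref{eq:2OrderSuffCond_SigmaTerm} says precisely that the supremum, over all Lagrange multipliers of the reformulated problem, of the Hessian term minus the sigma term is positive for every nonzero direction in $C(x_*) \times \{ 0 \}$, which is the hypothesis of \cite[Thrm.~3.86]{BonnansShapiro}.

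It then remains to verify the structural requirements of that theorem and to transfer the growth estimate back. The cone $\widetilde{K}$ is second order regular at $\widetilde{G}(x_*, F(x_*))$: $K$ is by assumption, the factor $A$ is automatically when it is polyhedral (the case of interest), $C_{-}(W)$ is (it reduces to $\mathbb{R}_{-}^m$, hence is polyhedral, when $W(x_*)$ is finite, as in discrete minimax, and is second order regular in general), and products of second order regular cones are second order regular. Likewise, the restriction to its effective domain of $d \mapsto \sigma\big( (\lambda_*, \alpha_*), T^2_{\widetilde{K}}(\widetilde{G}(x_*, F(x_*)), d) \big)$ is upper semicontinuous, because by the factorisation it is the sum of $\sigma(\lambda_*, T^2_K(G(x_*), \cdot))$, upper semicontinuous on its effective domain by hypothesis, and the polyhedral-type contributions from $A$ and $C_{-}(W)$. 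Applying \cite[Thrm.~3.86]{BonnansShapiro} then yields $\rho, \varepsilon > 0$ with $z \ge F(x_*) + \rho(|x - x_*|^2 + |z - F(x_*)|^2)$ for all $(x, z)$ feasible for the reformulated problem satisfying $|x - x_*| + |z - F(x_*)| < \varepsilon$. Since $F$ is continuous ($W$ is compact and $f$ is jointly continuous), substituting $z = F(x)$ for $x$ close to $x_*$ with $x \in A$ and $G(x) \in K$ gives $F(x) \ge F(x_*) + \rho |x - x_*|^2$, i.e. $x_*$ is a locally optimal solution of $(\mathcal{P})$ at which the second order growth condition holds.

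The main difficulty lies in the careful handling of the sigma term and the second order regularity of the enlarged cone $\widetilde{K}$: one must confirm that the polyhedral-type factors ($A$ and, when $W(x_*)$ is infinite, $C_{-}(W)$) contribute nothing to the sigma term along critical directions, so that hypothesis~\eqref{eq:2OrderSuffCond_SigmaTerm}, stated only in terms of $\lambda_*$ and $T^2_K(G(x_*), \cdot)$, matches exactly what the abstract smooth theorem requires. The remaining ingredients — the Hessian and critical-cone identifications (already established for the previous theorems of this section), the upper semicontinuity transfer, and the continuity argument passing from the $(x, z)$-estimate to the estimate for $F$ — are routine.
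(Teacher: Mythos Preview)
Your approach is genuinely different from the paper's and contains a real gap. The paper does \emph{not} use the epigraph reformulation here; instead it works with the Rockafellar--Wets augmented Lagrangian $\mathscr{L}(x,\lambda,c)=F(x)+\Phi(G(x),\lambda,c)$ for the $K$-constraint only, derives a second-order expansion of $\mathscr{L}(\cdot,\lambda_*,c)$ using the second-order Hadamard derivative of $\dist(\cdot,K)^2$ (this is where finite dimensionality of $Y$ and second order regularity of $K$ enter, via \cite[Thrm.~4.133]{BonnansShapiro}), and runs a direct contradiction argument on a violating sequence $\{x_n\}\subset A$. The max-function $F$ is handled by its own second-order expansion and the Danskin--Demyanov multipliers, while the set $A$ enters only through $u_n=x_n-x_*\in T_A(x_*)$ and the inequality $\langle\nabla_x\mathcal{L}(x_*,\lambda_*,\alpha),u_n\rangle\ge 0$. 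No second order tangent set to $A$ or to $C_-(W)$ appears.

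The gap in your argument is the claim that the sigma terms for the $A$- and $C_-(W)$-factors vanish along critical directions. For $A$ you simply assume polyhedrality, which the theorem does not. For $C_-(W)$ with infinite $W$ the assertion that $0$ lies in the second order tangent set is false in general: take $W=[0,1]$, $W(x_*)=\{0\}$, $y_*(\omega)=f(x_*,\omega)-F(x_*)=-\omega^2$ and $d_*(\omega)=\langle\nabla_x f(x_*,\omega),h\rangle=\omega$; then $\max_\omega(y_*(\omega)+t\,d_*(\omega))_+=t^2/4$, so $\dist(y_*+t d_*,C_-(W))\ne o(t^2)$ and $0\notin T^2_{C_-(W)}(y_*,d_*)$. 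This is exactly the Kawasaki ``envelope'' sigma term from semi-infinite programming, and it is not zero. The paper acknowledges precisely this in Remark~(ii) following the theorem: the epigraph route yields conditions that unavoidably involve the second order tangent sets to $A$ and $C_-(W)$, which is why the augmented Lagrangian route was chosen. Your plan would go through only under the additional hypotheses that $A$ is polyhedral and $W$ is finite (so $C_-(W)=\mathbb{R}_-^m$), neither of which is assumed.
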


\begin{proof}
Introduce the Rockafellar-Wets augmented Lagrangian
\begin{equation} \label{eq:RockWetsAugmLagr}
  \mathscr{L}(x, \lambda, c) = F(x) + \Phi(G(x), \lambda, c), \quad
  \Phi(y, \lambda, c) = \inf_{z \in K - y} \big\{ - \langle \lambda, z \rangle + c \| z \|^2 \big\}
\end{equation}
for the problem $(\mathcal{P})$ (see~\cite{RockafellarWets,ShapiroSun,Dolgopolik_AugmLagr}), where 
$\langle \cdot, \cdot \rangle$ is the inner product in $Y$ and $c \ge 0$ is the penalty parameter. It is easily seen
that 
\begin{equation} \label{eq:AugmLagrangian}
  \Phi(y, \lambda, c) = c \big( \dist(y + (2c)^{-1} \lambda, K) \big)^2 - \frac{1}{4c} \| \lambda \|^2.
\end{equation}
Let us compute a second order expansion of the function $x \mapsto \mathscr{L}(x, \lambda, c)$. 

Denote $\delta(y) = \dist(y, K)^2$. By a generalisation of the Danskin-Demyanov theorem
\cite[Thrm.~4.13]{BonnansShapiro} the function $\delta(\cdot)$ is continuously Fr\'{e}chet differentiable and 
$D \delta(y) = 2(y - P_K(y))$, where $P_K$ is the projection of $y$ onto $K$ (note that the projection exists,
since $Y$ is finite dimensional). Hence by the chain rule the function $x \mapsto \Phi(G(x), \lambda, c)$ is
continuously Fr\'{e}chet differentiable and 
$$
  D_x \Phi(G(x), \lambda, c) h 
  = 2c \Big\langle G(x) + (2c)^{-1} \lambda - P_K(G(x) + (2c)^{-1} \lambda), D G(x) h \Big\rangle
$$
for all $h \in \mathbb{R}^d$. To simplify this expression in the case $x = x_*$ and $\lambda = \lambda_*$ note that
$$
  \left\langle z_* - G(x_*) - \frac{1}{2c} \lambda_*, z - z_* \right\rangle \ge 0 \quad \forall z \in K,
$$
if $z_* = G(x_*)$ (recall that $\langle \lambda_*, G(x_*) \rangle = 0$ and $\lambda_* \in K^*$ by the definition of
KKT-point). Thus, the point $z = G(x_*)$ satisfies the necessary and sufficient optimality conditions for the convex
problem
$$
  \min\: \| z - G(x_*) - (2c)^{-1} \lambda_* \|^2 \quad \text{subject to} \quad z \in K,
$$
that is, $P_K(G(x_*) + (2c)^{-1} \lambda_*) = G(x_*)$. Consequently, for any $c > 0$ one has
$D_x \Phi(G(x_*), \lambda_*, c) = [D G(x_*)]^* \lambda_*$.

Recall that the cone $K$ is second order regular and the space $Y$ is finite dimensional. Therefore by
\cite[Thrm.~4.133]{BonnansShapiro} (see also \cite[Thrm.~3.1]{Shapiro2016}) for all $y, v \in Y$ there
exists the second-order Hadamard directional derivative
$$
  \delta''(y; v) := \lim_{[v', t] \to [v, +0]} 
  \frac{\delta(y + t v') - \delta(y) - t D \delta(y) v'}{\frac{1}{2} t^2}
$$
and it has the form
\begin{equation} \label{eq:SecondOrderHadamardDeriv}
  \delta''(y; v) = \min_{z \in \mathscr{C}(y)}
  \Big[ 2 \| v - z \|^2 - 2 \sigma(y - P_K(y), T^2_K(P_K(y), z) \Big],
\end{equation}
where $\mathscr{C}(y) = \{ z \in T_K(P_K(y)) \mid \langle y - P_K(y), z \rangle = 0 \}$. Bearing in mind the definition
of the second-order Hadamard directional derivative one can easily check that the function $\delta''(y, \cdot)$ is
continuous and positively homogeneous of degree two. Hence taking into account the definition of this derivative one
can easily check that for any linear operator $T \colon \mathbb{R}^d \to Y$ one has
$$
  \delta(y + T h + o(|h|)) = \delta(y) + D \delta(y) \Big( T h + o(|h|) \Big) 
  + \frac{1}{2} \delta''(y; T h) + o(|h|^2 ).
$$
Consequently, putting $y = G(x_*) + (2c)^{-1} \lambda_*$ and $T h = D G(x_*) h$, taking into account the fact that
$D \delta(y) = c^{-1} \lambda_*$, and utilising the second order expansion
$$
  G(x_* + h) = G(x_*) + D G(x_*) h + \frac{1}{2} D^2 G(x_*)(h, h) + o(|h|^2)
$$
one obtains that
\begin{align*}
  \Phi(G(x_* + h), \lambda_*, c) &= \Phi(G(x_*), \lambda_*, c) + \langle \lambda_*, D G(x_*) h \rangle
  + \frac{1}{2} \langle \lambda_*, D^2 G(x_*)(h, h) \rangle \\
  &+ \frac{c}{2} \delta''\Big(G(x_*) + \frac{1}{2c} \lambda_*; D G(x_*) h \Big) + o(|h|^2).
\end{align*}
Hence with the use of the well-known second-order expansion for the max-function of the form
$$
  F(x_* + h) - F(x_*) = \max_{\omega \in W} \Big( f(x_*, \omega) - F(x_*) 
  + \langle \nabla_x f(x_*, \omega), h \rangle 
  + \frac{1}{2} \langle h, \nabla_{xx}^{2} f(x_*, \omega) h \rangle \Big) + o(|h|^2)
$$
one finally gets that for any $c \ge 0$ there exists $r_c > 0$ such that for all $h \in B(0, r_c)$ one has
\begin{multline} \label{eq:AugmLagr_2OrderExpansion}
  \Big| \mathscr{L}(x_* + h, \lambda_*, c) - \mathscr{L}(x_*, \lambda_*, c)
  - \max_{\omega \in W} \Big( f(x_*, \omega) - F(x_*) + \langle \nabla_x f(x_*, \omega), h \rangle 
  + \frac{1}{2} \langle h, \nabla_{xx}^{2} f(x_*, \omega) h \rangle \Big) \\
  - \langle \lambda_*, D G(x_*) h \rangle
  - \frac{1}{2} \langle \lambda_*, D^2 G(x_*)(h, h) \rangle - \frac{1}{2} \omega_c(h) \Big| 
  \le \frac{1}{c} |h|^2,
\end{multline}
where
\begin{align*}
  \omega_c(h) &= c \delta''\Big( G(x_*) + (2c)^{-1} \lambda_*), D G(x_*) h) \Big) \\
  &= \min_{z \in C_0(x_*, \lambda_*)}
  \Big[ 2 c \| D G(x_*) h - z \|^2 - \sigma\big( \lambda_*, T^2_K(G(x_*), z) \big) \Big]
\end{align*}
and $C_0(x_*, \lambda_*) = \{ z \in T_K(G(x_*)) \mid \langle \lambda_*, z \rangle = 0 \}$
(see \eqref{eq:SecondOrderHadamardDeriv}). By \cite[formula~(3.63)]{BonnansShapiro} one has
$T^2_K(G(x_*), z) \subseteq T_{T_K(G(x_*))}(z)$ for all $z \in Y$. Note also that the cone $T_K(G(x_*))$ is convex,
since $K$ is a convex cone. Therefore 
$$
  T_K(G(x_*)) = \cl \Big[ \bigcup_{t \ge 0} t\big( K - G(x_*) \big) \Big], \enspace
  T_{T_K(G(x_*)}(z) = \cl \Big[ \bigcup_{t \ge 0} t\big( T_K(G(x_*)) - z \big) \Big]
$$
(see, e.g. \cite[Prp.~2.55]{BonnansShapiro}). Hence bearing in mind the facts that $\lambda_* \in K^*$ and
$\langle \lambda_*, G(x_*) \rangle = 0$ one obtains that $\langle \lambda_*, y \rangle \le 0$ for all
$y \in T_K(G(x_*))$, which implies that $\langle \lambda_*, y \rangle \le 0$ for any
$y \in T^2_K(G(x_*), z) \subseteq T_{T_K(G(x_*)}(z)$ and all $z \in C_0(x_*, \lambda_*)$. Consequently,
$\sigma(\lambda_*, T^2_K(G(x_*), z)) \le 0$ for all $z \in C_0(x_*, \lambda_*)$. Recall also that 
the restriction of the function $z \mapsto \sigma(\lambda_* , T^2_K(G(x_*), z))$ to its effective domain is upper
semicontinuous by our assumption. Therefore $\lim_{c \to + \infty} \omega_c(h) = + \infty$, if
$D G(x_*) h \notin T_K(G(x_*))$ or $\langle \lambda_*, D G(x_*) h \rangle \ne 0$, and
\begin{equation} \label{eq:SecondOrderTermLimit}
  \lim_{c \to + \infty} \omega_c(h) \ge - \sigma\big( \lambda_* , T^2_K(G(x_*), D G(x_*)) \big)
\end{equation}
otherwise. Utilising this fact and the second order expansion for the augmented Lagrangian we can easily prove the
statement of the theorem.

Indeed, let us show that there exist $\rho, c > 0$, and a neighbourhood $\mathcal{O}(x_*)$ of $x_*$ such that 
$\mathscr{L}(x, \lambda_*, c) \ge \mathscr{L}(x_*, \lambda_*, c) + \rho |x - x_*|^2$ for any 
$x \in A \cap \mathcal{O}(x_*)$. Then taking into account the facts that $\Phi(y, \lambda_*, c) \le 0$ for any
$y \in K$ thanks to \eqref{eq:AugmLagrangian} and $\Phi(G(x_*), \lambda_*, c) = 0$ due to the fact that
$P_K(G(x_*) + (2c)^{-1} \lambda_*) = G(x_*)$ one obtains that
$$
  F(x) \ge \mathscr{L}(x, \lambda_*, c) \ge \mathscr{L}(x_*, \lambda_*, c) + \rho |x - x_*|^2
  = F(x_*) + \rho |x - x_*|^2
$$
for all $x \in A \cap \mathcal{O}(x_*)$ such that $G(x) \in K$, and the proof is complete.

Arguing by reductio ad absurdum suppose that for any $n \in \mathbb{N}$ there exists 
$x_n \in A$ such that $\mathscr{L}(x_n, \lambda_*, n) < \mathscr{L}(x_*, \lambda_*, n) + n^{-1} |x_n - x_*|^2$
and $x_n \in  B(x_*, \min\{ \frac{1}{n}, r_n \})$. With the use of \eqref{eq:AugmLagr_2OrderExpansion} 
for any $n \in \mathbb{N}$ one has 
\begin{multline*}
  0 > \mathscr{L}(x_n, \lambda_*, n) - \mathscr{L}(x_*, \lambda_*, n) - \frac{1}{n} |x_n - x_*|^2 \\
  \ge \max_{\omega \in W} \Big( f(x_*, \omega) - F(x_*) + \langle \nabla_x f(x_*, \omega), u_n \rangle 
  + \frac{1}{2} \langle u_n, \nabla_{xx}^{2} f(x_*, \omega) u_n \rangle \Big) \\
  + \Big\langle \lambda_*, D G(x_*) u_n + \frac{1}{2} D^2 G(x_*)(u_n, u_n) \Big\rangle + \frac{1}{2} \omega_n(u_n)
  - \frac{2}{n} |u_n|^2,
\end{multline*}
where $u_n = x_n - x_*$. Consequently, for any $\alpha \in \alpha(x_*, \lambda_*)$ one has
\begin{multline} \label{eq:AugmLagr_2OrderExp_DanskinDemyanov}
  0 > \mathscr{L}(x_n, \lambda_*, n) - \mathscr{L}(x_*, \lambda_*, n) - \frac{1}{n} |x_n - x_*|^2
 \ge \big\langle \nabla_x \mathcal{L}(x_*, \lambda_*, \alpha), u_n \big\rangle \\
  + \frac{1}{2} \big\langle u_n, \nabla^2_{xx} \mathcal{L}(x_*, \lambda_*, \alpha) u_n \big\rangle
  + \frac{1}{2} \omega_n(u_n) - \frac{2}{n} |u_n|^2,
\end{multline}
since $\alpha \in rca_+(W)$, $\support(\alpha) \subseteq W(x_*)$, and $\alpha(W) = 1$ by the definition of
Danskin-Demyanov multipliers.

Define $h_n = u_n / |u_n|$. Without loss of generality one can suppose that the sequence $\{ h_n \}$ converges to some
$h_* \in \mathbb{R}^d$ with $|h_*| = 1$. Moreover, $h_* \in T_A(x_*)$ by virtue of the facts that the set $A$ is convex
and $\{ x_n \} \subset A$. Let us show that $[L(\cdot, \lambda_*)]'(x_*, h_*) = 0$. 

Indeed, suppose that $[L(\cdot, \lambda_*)]'(x_*, h_*) \ne 0$. Note that by the definition of Lagrange multiplier one
has $[L(\cdot, \lambda_*)]'(x_*, h_*) \ge 0$. Thus, $[L(\cdot, \lambda_*)]'(x_*, h_*) > 0$, which thanks to the
equality $D_x \Phi(G(x_*), \lambda_*, c) = [D G(x_*)]^* \lambda_*$ implies that
\begin{align*}
  \lim_{n \to \infty} 
  \frac{\mathscr{L}(x_* + \beta_n h_n, \lambda_*, c) - \mathscr{L}(x_*,\lambda_*, c) - \beta_n^2}{\beta_n} 
  &= F'(x_*, h_*) + \langle \lambda_*, D G(x_*) h_* \rangle \\
  &= [L(\cdot, \lambda_*)]'(x_*, h_*) > 0,
\end{align*}
for any $c > 0$, where $\beta_n = |u_n| = |x_n - x_*|$ (note that $x_* + \beta_n h_n = x_n$). Consequently, there
exists $n_0 \in \mathbb{N}$ such that $\mathscr{L}(x_n, \lambda_*, 1) > \mathscr{L}(x_*, \lambda_*, 1) + |x_n - x_*|^2$
for all $n \ge n_0$. As was noted above, $\Phi(G(x_*), \lambda_*, c) = 0$ for any $c > 0$. Consequently,
$\mathscr{L}(x_*, \lambda_*, 1) = \mathscr{L}(x_*, \lambda_*, c) = F(x_*)$ for any $c > 0$. Hence bearing in mind the
fact that the function $c \mapsto \mathscr{L}(x, \lambda, c)$ is obviously non-decreasing (see
\eqref{eq:RockWetsAugmLagr}) one obtains that
$$
  \mathscr{L}(x_n, \lambda_*, c) \ge \mathscr{L}(x_n, \lambda_*, 1)
  > \mathscr{L}(x_*, \lambda_*, 1) + |x_n - x_*|^2 = \mathscr{L}(x_*, \lambda_*, c) + |x_n - x_*|^2
$$
for all $c \ge 1$, which contradicts the definition of $x_n$. Thus, $[L(\cdot, \lambda_*)]'(x_*, h_*) = 0$.

Note that $\langle \nabla_x \mathcal{L}(x_*, \lambda_*, \alpha_*), u_n \big\rangle \ge 0$ for all $n \in \mathbb{N}$ due
to the definition of Danskin-Demyanov multiplier and the fact that $u_n = x_n - x_* \in T_A(x_*)$, since $A$ is a convex
set. Hence with the use of \eqref{eq:AugmLagr_2OrderExp_DanskinDemyanov} one obtains that
$$
  0 > \frac{1}{2} \big\langle u_n, \nabla^2_{xx} \mathcal{L}(x_*, \lambda_*, \alpha_*) u_n \big\rangle
  + \frac{1}{2} \omega_n(u_n) - \frac{2}{n} |u_n|^2
$$
for any $n \in \mathbb{N}$. Dividing this inequality by $|u_n|^2$ (recall that $\omega_c(\cdot)$ is positively
homogeneous of degree two), passing to the limit as $n \to \infty$ with the use of \eqref{eq:SecondOrderTermLimit}, and
taking the supremum over all $\alpha \in \alpha(x_*, \lambda_*)$ one finally gets that
$$
  0 
  \ge \sup_{\alpha \in \alpha(x_*, \lambda_*)} \big\langle h_*, \nabla^2_{xx} \mathcal{L}(x_*, \lambda, \alpha) h_*
\big\rangle
  - \sigma\big( \lambda_*, T^2_K(G(x_*), D G(x_*) h_*) \big),
$$
and $D G(x_*) h_* \in T_K(G(x_*))$, $\langle \lambda_*, D G(x_*) h_* \rangle = 0$, and 
$[L(\cdot, \lambda_*)]'(x_*, h_*) = 0$, that is, $h_* \in C(x_*)$ (see \eqref{eq:CriticalConeViaLagrangian}), which 
contradicts \eqref{eq:2OrderSuffCond_SigmaTerm}.
\end{proof}

\begin{remark}
{(i)~Note that the restriction of the function $\sigma(\lambda_*, T^2_K(G(x_*), \cdot))$ to its effective domain is
continuous in the case when $K$ is the second order cone (see~\cite[Formula~(42) and Thrm.~29]{BonnansRamirez})
or the cone $\mathbb{S}^l_{-}$ (see~\cite[Sect.~5.3.5]{BonnansShapiro}).
}

\noindent{(ii)~It should be noted that one can obtain second order sufficient optimality conditions for 
the problem $(\mathcal{P})$ involving the sigma term that are equivalent to the second order growth condition without
the additional assumption that the space $Y$ is finite dimensional. However, this condition is much more cumbersome than
the one stated in the theorem above, since it involves the second order tangent sets to $A$ and $C_{-}(W)$. That is why
we leave the derivation of such second order conditions to the interested reader (see \cite[Sect.~3.3.3]{BonnansShapiro}
for more details in the smooth case). \qed
}
\end{remark}

\section{Optimality conditions for Chebyshev problems with cone constraints}
\label{sect:ChebyshevProblems}

In this section we study optimality conditions for cone constrained Chebyshev problems of the form:
$$
  \min_x\: \max_{\omega \in W} \big| f(x, \omega) - \psi(\omega) \big| \quad 
  \text{subject to} \quad G(x) \in K, \quad x \in A.
  \eqno{(\mathcal{C})}
$$
Here $\psi \colon W \to \mathbb{R}$ is a continuous function. This problem is a particular case of the problem
$(\mathcal{P})$. Indeed, define $\widehat{W} = W \times \{ 1, -1 \}$,
$\widehat{f}(x, \omega, 1) = f(x, \omega) - \psi(\omega)$ and 
$\widehat{f}(x, \omega, -1) = - f(x, \omega) + \psi(\omega)$ for any $\omega \in W$. Then the problem $(\mathcal{C})$
can be rewritten as the problem $(\mathcal{P})$ of the form:
\begin{equation} \label{prob:EquivChebyshevProblem}
  \min_x \: \max_{\widehat{\omega} \in \widehat{W}} \widehat{f}(x, \widehat{\omega}) \quad 
  \text{subject to} \quad G(x) \in K, \quad x \in A.
\end{equation}
Therefore, optimality conditions for the problem $(\mathcal{C})$ can be easily obtained as a direct corollary to
optimality conditions for the problem $(\mathcal{P})$. Nevertheless, it is worth explicitly formulating these
conditions. Furthermore, the following sections can be viewed as a convenient and concise summary of the main results
obtained in this article.

\subsection{First order optimality conditions}

Define $F(x) = \max_{\omega \in W} |f(x, \omega) - \psi(\omega)|$, and let
$W(x) = \{ \omega \in W \mid F(x) = |f(x, \omega) - \psi(\omega)| \}$ the set of points of maximal deviation. Under our
assumptions on $f$, the function $F$ is Hadamard directionally differentiable and its Hadamard directional
derivative has the form
$$
  F'(x, h) = \max_{v \in \partial F(x)} \langle v, h \rangle 
  = \max_{\omega \in W(x)} 
  \Big( \sign(f(x, \omega) - \psi(\omega)) \big\langle \nabla_x f(x, \omega), h \big\rangle \Big)
$$
for any $h \in \mathbb{R}^d$, where 
$\partial F(x) = \co\{ \sign(f(x, \omega) - \psi(\omega)) \nabla_x f(x, \omega) \mid \omega \in W(x) \}$ is the
Hadamard subdifferential of the function $F$ at the point $x$. In this section we suppose that $\sign(0) = \{-1, 1 \}$.

For any $\lambda \in Y^*$ denote by $L(x, \lambda) = F(x) + \langle \lambda, G(x) \rangle$ the Lagrangian for
the problem $(\mathcal{C})$. A vector $\lambda_* \in Y^*$ is called \textit{a Lagrange multiplier} of the problem
$(\mathcal{C})$ at a feasible point $x_*$, if $\lambda_* \in K^*$, $\langle \lambda_*, G(x_*) \rangle = 0$, and
$[L(\cdot, \lambda_*)]'(x_*, h) \ge 0$ for all $h \in T_A(x_*)$. In this case, the pair $(x_*, \lambda_*)$ is called
\textit{a KKT-pair} of the problem $(\mathcal{C})$.

Applying Theorems~\ref{thrm:NessOptCond}--\ref{thrm:OptCond_ConvexCase} to problem
\eqref{prob:EquivChebyshevProblem} one obtains that the following results hold true.

\begin{theorem} \label{thrm:NessOptCond_Chebyshev}
Let $x_*$ be a locally optimal solution of the problem $(\mathcal{C})$ such that RCQ holds at $x_*$. Then: 
\begin{enumerate}
\item{$h = 0$ is a globally optimal solution of the linearised problem
$$
  \min_{h \in \mathbb{R}^d} \max_{v \in \partial F(x_*)} \langle v, h \rangle \quad
  \text{s.t.}	\quad	D G(x_*) h \in T_K\big( G(x_*) \big), \quad h \in T_A(x_*);
$$
}
\vspace{-5mm}
\item{the set of Lagrange multipliers at $x_*$ is a nonempty, convex, bounded, and weak${}^*$ compact subset of $Y^*$.}
\end{enumerate}
\end{theorem}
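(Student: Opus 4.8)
The plan is to deduce the theorem directly from Theorem~\ref{thrm:NessOptCond}, using that the Chebyshev problem $(\mathcal{C})$ is an instance of $(\mathcal{P})$: with the finite product $\widehat{W} = W \times \{ 1, -1 \}$ (compact Hausdorff) and $\widehat{f}(\cdot, \omega, 1) = f(\cdot, \omega) - \psi(\omega)$, $\widehat{f}(\cdot, \omega, -1) = -f(\cdot, \omega) + \psi(\omega)$, problem \eqref{prob:EquivChebyshevProblem} has the form $(\mathcal{P})$ and $\widehat{F} := \max_{\widehat{\omega} \in \widehat{W}} \widehat{f}(\cdot, \widehat{\omega}) = F$. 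First I would verify that the standing assumptions of Section~\ref{sect:FirstOrderOptCond} are inherited: $\widehat{f}$ is differentiable in $x$ for each $\widehat{\omega}$, and $\widehat{f}, \nabla_x \widehat{f}$ are jointly continuous because $f$, $\nabla_x f$, and $\psi$ are. Since $G$, $A$, and $K$ are unchanged by the reformulation, RCQ~\eqref{eq:RCQ} at $x_*$ for $(\mathcal{C})$ is precisely RCQ at $x_*$ for \eqref{prob:EquivChebyshevProblem}.

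The one step requiring genuine care is identifying the active index set and the resulting subdifferential. I would show that $\widehat{W}(x_*) := \{ \widehat{\omega} \in \widehat{W} \mid \widehat{f}(x_*, \widehat{\omega}) = \widehat{F}(x_*) \}$ consists exactly of the pairs $(\omega, \sigma)$ with $\omega \in W(x_*)$ and $\sigma \in \sign(f(x_*, \omega) - \psi(\omega))$; the degenerate case $F(x_*) = 0$, in which both $(\omega, 1)$ and $(\omega, -1)$ are active, is exactly what the convention $\sign(0) = \{ -1, 1 \}$ absorbs. Since $\nabla_x \widehat{f}(x_*, \omega, \sigma) = \sigma \nabla_x f(x_*, \omega)$, this gives $\co\{ \nabla_x \widehat{f}(x_*, \widehat{\omega}) \mid \widehat{\omega} \in \widehat{W}(x_*) \} = \partial F(x_*)$, the Hadamard subdifferential defined for $(\mathcal{C})$. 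Because a linear functional has the same supremum over a compact set and over its convex hull, $\max_{\widehat{\omega} \in \widehat{W}(x_*)} \langle \nabla_x \widehat{f}(x_*, \widehat{\omega}), h \rangle = \max_{v \in \partial F(x_*)} \langle v, h \rangle$ for every $h \in \mathbb{R}^d$, so the linearised problem~\eqref{probl:LinearisedProblem} associated with \eqref{prob:EquivChebyshevProblem} coincides with the linearised problem displayed in the statement. Part~1 is then the first assertion of Theorem~\ref{thrm:NessOptCond} applied to \eqref{prob:EquivChebyshevProblem}.

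For Part~2 I would note that the two Lagrangians agree: $\widehat{L}(x, \lambda) = \widehat{F}(x) + \langle \lambda, G(x) \rangle = F(x) + \langle \lambda, G(x) \rangle = L(x, \lambda)$, hence $[\widehat{L}(\cdot, \lambda)]'(x_*, h) = [L(\cdot, \lambda)]'(x_*, h)$, while the conditions $\lambda \in K^*$ and $\langle \lambda, G(x_*) \rangle = 0$ are the same in both formulations. Consequently the set of Lagrange multipliers of $(\mathcal{C})$ at $x_*$ is literally the set of Lagrange multipliers of \eqref{prob:EquivChebyshevProblem} at $x_*$, and the claimed nonemptiness, convexity, boundedness, and weak${}^*$ compactness follow from the second assertion of Theorem~\ref{thrm:NessOptCond}. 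No serious obstacle is anticipated; the whole argument is a translation exercise, and the only delicate point is the edge case $F(x_*) = 0$ in the computation of $\widehat{W}(x_*)$ and $\partial F(x_*)$, which is precisely why the convention $\sign(0) = \{ -1, 1 \}$ was adopted.
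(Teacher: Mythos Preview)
Your proposal is correct and follows exactly the approach the paper takes: the paper states that the result follows by ``Applying Theorems~\ref{thrm:NessOptCond}--\ref{thrm:OptCond_ConvexCase} to problem \eqref{prob:EquivChebyshevProblem}'' without spelling out the translation, and your argument carefully fills in precisely those details (identification of $\widehat{W}(x_*)$, the subdifferential, and the Lagrangian).
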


\begin{theorem} \label{Thrm:ConvexCase_Chebyshev}
Let there exist continuous functions $\phi \colon W \to \mathbb{R}^d$ and $\phi_0 \colon W \to \mathbb{R}$ such that
$f(x, \omega) = \langle \phi(\omega), x \rangle + \phi_0(\omega)$ for all $x$ and $\omega$. Suppose also that the
mapping
$G$ is $(-K)$-convex and $x_*$ is a feasible point of the problem $(\mathcal{C})$. Then:
\begin{enumerate}
\item{$\lambda_*$ is a Lagrange multiplier of $(\mathcal{C})$ at $x_*$ iff $(x_*, \lambda_*)$ is a global saddle point
of the Lagrangian $L(x, \lambda) = F(x) + \langle \lambda, G(x) \rangle$, that is, for all $x \in A$ and 
$\lambda \in K^*$ one has $L(x, \lambda_*) \ge F(x_*) \ge L(x_*, \lambda)$;
}

\item{if a Lagrange multiplier of the problem $(\mathcal{C})$ at $x_*$ exists, then $x_*$ is a globally optimal solution
of $(\mathcal{C})$; conversely, if $x_*$ is a globally optimal solution of the problem $(\mathcal{C})$ and Slater's
condition $0 \in \interior\{ G(A) - K \}$ holds true, then there exists a Lagrange multiplier of $(\mathcal{C})$ 
at $x_*$.
}
\end{enumerate}
\end{theorem}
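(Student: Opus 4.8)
The plan is to obtain Theorem~\ref{Thrm:ConvexCase_Chebyshev} as a direct corollary of Theorem~\ref{thrm:OptCond_ConvexCase} applied to the equivalent reformulation \eqref{prob:EquivChebyshevProblem} of the problem $(\mathcal{C})$. First I would check that the hypotheses of Theorem~\ref{thrm:OptCond_ConvexCase} are met by problem \eqref{prob:EquivChebyshevProblem}. Under the assumption $f(x, \omega) = \langle \phi(\omega), x \rangle + \phi_0(\omega)$, for every $\widehat{\omega} = (\omega, \pm 1) \in \widehat{W}$ the function $\widehat{f}(\cdot, \widehat{\omega})$ equals $\pm\big( \langle \phi(\omega), \cdot \rangle + \phi_0(\omega) - \psi(\omega) \big)$, hence is affine and, in particular, convex in $x$; joint continuity of $\widehat{f}$ and $\nabla_x \widehat{f}$ follows from continuity of $\phi$, $\phi_0$, $\psi$, and $\widehat{W} = W \times \{1, -1\}$ is again compact Hausdorff. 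Together with the standing hypothesis that $G$ is $(-K)$-convex, this shows that all assumptions of Theorem~\ref{thrm:OptCond_ConvexCase} hold for \eqref{prob:EquivChebyshevProblem}.

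Second, I would verify that $(\mathcal{C})$ and \eqref{prob:EquivChebyshevProblem} carry literally the same data relevant to the statement: the same objective $F$, since $\max_{\widehat{\omega} \in \widehat{W}} \widehat{f}(x, \widehat{\omega}) = \max_{\omega \in W} |f(x, \omega) - \psi(\omega)|$; the same feasible region $\Omega = \{ x \in A \mid G(x) \in K \}$; the same Hadamard subdifferential $\partial F(x)$, as recalled at the start of Section~\ref{sect:ChebyshevProblems}; and therefore the same Lagrangian $L(x, \lambda) = F(x) + \langle \lambda, G(x) \rangle$ and the same notion of Lagrange multiplier and of KKT-pair. The only point that deserves to be spelled out is the behaviour at $\omega \in W(x)$ with $f(x, \omega) = \psi(\omega)$: such $\omega$ contribute both $+\nabla_x f(x, \omega)$ (via $\widehat{\omega} = (\omega, 1)$) and $-\nabla_x f(x, \omega)$ (via $\widehat{\omega} = (\omega, -1)$) to $\co\{ \nabla_x \widehat{f}(x, \widehat{\omega}) \mid \widehat{\omega} \in \widehat{W}(x) \}$, which is exactly what the convention $\sign(0) = \{-1, 1\}$ encodes in the expression $\co\{ \sign(f(x, \omega) - \psi(\omega)) \nabla_x f(x, \omega) \mid \omega \in W(x) \}$, so the two convex hulls coincide.

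With these identifications, statement (1) is precisely part~\ref{stat:GlobalSaddlePoint} of Theorem~\ref{thrm:OptCond_ConvexCase} transported to \eqref{prob:EquivChebyshevProblem}: the saddle point inequalities $L(x, \lambda_*) \ge F(x_*) \ge L(x_*, \lambda)$ for all $x \in A$, $\lambda \in K^*$ are formally unchanged, and the characterisation of Lagrange multipliers in terms of directional derivatives of $L(\cdot, \lambda_*)$ on $T_A(x_*)$ uses only $\partial F$, $G$, $A$, $K$, which are the same. Likewise, statement (2) is part~\ref{stat:LagrangeMult_ConvexCase}: existence of a Lagrange multiplier implies global optimality, and global optimality together with Slater's condition $0 \in \interior\{ G(A) - K \}$ implies existence of a Lagrange multiplier, both transferring verbatim since Slater's condition, $K$, and $K^*$ are unaffected by the reformulation.

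The only mild obstacle is the bookkeeping described in the second paragraph: one must be careful to confirm that the $\sign(0) = \{-1,1\}$ convention makes the subdifferential, and hence the directional derivative $F'(x, \cdot)$ and the associated linearised data, of $(\mathcal{C})$ agree exactly with those of \eqref{prob:EquivChebyshevProblem}. Once this is checked, everything else is a mechanical translation and the proof reduces to a one-line invocation of Theorem~\ref{thrm:OptCond_ConvexCase}.
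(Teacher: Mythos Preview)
Your proposal is correct and follows exactly the approach the paper takes: the paper states that Theorems~\ref{thrm:NessOptCond_Chebyshev}--\ref{thrm:SuffOptCond_Chebyshev} (including Theorem~\ref{Thrm:ConvexCase_Chebyshev}) are obtained simply by applying Theorems~\ref{thrm:NessOptCond}--\ref{thrm:OptCond_ConvexCase} to the equivalent reformulation \eqref{prob:EquivChebyshevProblem}, and your write-up just makes explicit the convexity check and the bookkeeping (including the $\sign(0)=\{-1,1\}$ convention) that justify this transfer.
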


\begin{theorem} \label{thrm:SuffOptCond_Chebyshev}
Let $x_*$ be a feasible point of the problem $(\mathcal{C})$. If
\begin{equation} \label{eq:SuffOptCond_Chebyshev}
  \max_{v \in \partial F(x_*)} \langle v, h \rangle > 0
  \quad \forall h \in T_A(x_*) \setminus \{ 0 \} \colon D G(x_*) h \in T_K\big( G(x_*) \big),
\end{equation}
then the first order growth condition holds at $x_*$. Conversely, if the first order growth condition and RCQ hold at
$x_*$, then inequality \eqref{eq:SuffOptCond_Chebyshev} is valid.
\end{theorem}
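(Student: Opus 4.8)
The plan is to deduce the statement from Theorem~\ref{thrm:SuffOptCond} by means of the reformulation \eqref{prob:EquivChebyshevProblem} of $(\mathcal{C})$ as an instance of $(\mathcal{P})$ with $\widehat{W} = W \times \{1, -1\}$ and $\widehat{f}(x, \omega, s) = s\big(f(x, \omega) - \psi(\omega)\big)$. First I would record the elementary properties of this reformulation: $\widehat{W}$ is a compact Hausdorff space, and $\widehat{f}$ together with $\nabla_x \widehat{f}$ are continuous jointly in $x$ and $(\omega, s)$ because $f$, $\nabla_x f$ and $\psi$ are, so the standing assumptions of Section~\ref{sect:FirstOrderOptCond} are met by \eqref{prob:EquivChebyshevProblem}. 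Moreover $\widehat{F}(x) := \max_{(\omega, s) \in \widehat{W}} \widehat{f}(x, \omega, s) = \max_{\omega \in W} |f(x, \omega) - \psi(\omega)| = F(x)$ for every $x$, so $(\mathcal{C})$ and \eqref{prob:EquivChebyshevProblem} have the same objective, the same feasible set $\Omega$, the same RCQ at $x_*$, and hence ``the first order growth condition holds at $x_*$'' means exactly the same thing for the two problems.

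Next I would identify the active set and the subdifferential at $x_*$. A pair $(\omega, s) \in \widehat{W}$ satisfies $\widehat{f}(x_*, \omega, s) = \widehat{F}(x_*) = F(x_*)$ precisely when $\omega \in W(x_*)$ and $s \in \sign\big(f(x_*, \omega) - \psi(\omega)\big)$, where the right-hand side is the two-element set $\{-1, 1\}$ exactly when $f(x_*, \omega) = \psi(\omega)$, which forces $F(x_*) = 0$. Since $\nabla_x \widehat{f}(x_*, \omega, s) = s\,\nabla_x f(x_*, \omega)$, the set $\{\nabla_x \widehat{f}(x_*, \widehat{\omega}) \mid \widehat{\omega} \in \widehat{W}(x_*)\}$ coincides with $\{\sign(f(x_*, \omega) - \psi(\omega))\,\nabla_x f(x_*, \omega) \mid \omega \in W(x_*)\}$, so that $\partial \widehat{F}(x_*) = \partial F(x_*)$ and, for all $h \in \mathbb{R}^d$,
\[
  \max_{\widehat{\omega} \in \widehat{W}(x_*)} \langle \nabla_x \widehat{f}(x_*, \widehat{\omega}), h \rangle = \max_{v \in \partial F(x_*)} \langle v, h \rangle .
\]
Consequently the linearised problem \eqref{probl:LinearisedProblem} for \eqref{prob:EquivChebyshevProblem} at $x_*$ is exactly the linearised problem in Theorem~\ref{thrm:NessOptCond_Chebyshev}, and condition \eqref{eq:SuffOptCond} for \eqref{prob:EquivChebyshevProblem} at $x_*$ is verbatim condition \eqref{eq:SuffOptCond_Chebyshev}.

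With these identifications both halves of the theorem follow at once by applying Theorem~\ref{thrm:SuffOptCond} to \eqref{prob:EquivChebyshevProblem}: if \eqref{eq:SuffOptCond_Chebyshev} holds then the first order growth condition holds for \eqref{prob:EquivChebyshevProblem} at $x_*$, hence for $(\mathcal{C})$ since $\widehat{F} = F$; conversely, if the first order growth condition (for $F$) and RCQ hold at $x_*$, the same theorem yields \eqref{eq:SuffOptCond} for \eqref{prob:EquivChebyshevProblem}, i.e.\ \eqref{eq:SuffOptCond_Chebyshev}. The only place one could slip is the bookkeeping in the degenerate case $F(x_*) = 0$: there $\widehat{W}(x_*)$ carries both signs of every active $\omega$, and one must use the convention $\sign(0) = \{-1, 1\}$ (adopted just before the theorem) consistently so that the set $\partial F(x_*)$ defined in this section really equals $\partial \widehat{F}(x_*)$; once this is checked the argument is routine. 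Alternatively, the theorem can be reproved directly by copying the reductio ad absurdum argument of Theorem~\ref{thrm:SuffOptCond} word for word, replacing $\max_{\omega \in W(x_*)} \langle \nabla_x f(x_*, \omega), h \rangle$ throughout by $\max_{v \in \partial F(x_*)} \langle v, h \rangle = F'(x_*, h)$ and invoking the Hadamard directional differentiability of $F$ recorded at the beginning of this section together with Lemma~\ref{lem:ContingConeToFeasibleSet}; this bypasses the reformulation but merely duplicates the earlier proof.
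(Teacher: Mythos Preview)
Your proposal is correct and follows exactly the route the paper takes: the paper simply states that Theorems~\ref{thrm:NessOptCond}--\ref{thrm:OptCond_ConvexCase} applied to the reformulation \eqref{prob:EquivChebyshevProblem} yield the Chebyshev results, and your write-up supplies precisely the bookkeeping (identification of $\widehat{F}$ with $F$, of $\widehat{W}(x_*)$ with the signed active set, and of condition \eqref{eq:SuffOptCond} with \eqref{eq:SuffOptCond_Chebyshev}) that makes this application rigorous. Your care with the degenerate case $F(x_*)=0$ and the convention $\sign(0)=\{-1,1\}$ is apt and in fact more explicit than the paper itself.
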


Next we present several equivalent reformulations of necessary and sufficient optimality conditions for the problem
$(\mathcal{C})$ from Theorems~\ref{thrm:NessOptCond_Chebyshev} and \ref{thrm:SuffOptCond_Chebyshev}. Recall that
$$
  \mathcal{N}(x) = [D G(x)]^* (K^* \cap \linhull(G(x))^{\perp}) 
  = \{ i(\lambda \circ D G(x)) \mid \lambda \in K^*, \: \langle \lambda, G(x) \rangle = 0 \},
$$
where $i$ is the natural isomorphism between $(\mathbb{R}^d)^*$ and $\mathbb{R}^d$. For any $c \ge 0$ a penalty function
for the problem $(\mathcal{C})$ is denoted by $\Phi_c(x) = F(x) + c \dist(G(x), K)$. By
Lemma~\ref{lem:ConeConstrPenFunc_Subdiff} this function is Hadamard subdifferentiable and for any $x$ such that
$G(x) \in K$ its Hadamard subdifferential has the form
$$
  \partial \Phi_c(x) = \partial F(x)
  + c \Big\{ [D G(x)]^* y^* \in \mathbb{R}^d \Bigm| y^* \in Y^*, \: \| y^* \| \le 1, \:
  \langle y^*, y - G(x) \rangle \le 0 \enspace \forall y \in K \Big\}.
$$
Let us reformulate alternance optimality conditions in terms of the problem $(\mathcal{C})$. Let, as earlier, 
$Z \subset \mathbb{R}^d$ be any collection of $d$ linearly independent vectors, and
$\eta(x)$ and $n_A(x)$ be any sets such that $\mathcal{N}(x) = \cone \eta(x)$ and 
$N_A(x) = \cone n_A(x)$.

\begin{definition}
Let $p \in \{ 1, \ldots, d + 1 \}$ be given and $x_*$ be a feasible point of the problem $(\mathcal{C})$. One says that
\textit{a $p$-point alternance} exists at $x_*$, if there exist $k_0 \in \{ 1, \ldots, p \}$, 
$i_0 \in \{ k_0 + 1, \ldots, p \}$, vectors
\begin{gather} \label{eq:AlternanceDef_Ch}
  V_1, \ldots, V_{k_0} \in 
  \Big\{ \sign\big(f(x_*, \omega) - \psi(\omega) \big)\nabla_x f(x_*, \omega) \Bigm| \omega \in W(x_*) \Big\}, \\
  V_{k_0 + 1}, \ldots, V_{i_0} \in \eta(x_*), \quad
  V_{i_0 + 1}, \ldots, V_p \in n_A(x_*), \label{eq:AlternanceDef_Ch_2}
\end{gather}
and vectors $V_{p + 1}, \ldots, V_{d + 1} \in Z$ such that the $d$th-order determinants $\Delta_s$ of the matrices
composed of the columns $V_1, \ldots, V_{s - 1}, V_{s + 1}, \ldots V_{d + 1}$ satisfy the following conditions:
\begin{gather*}
  \Delta_s \ne 0, \quad s \in \{ 1, \ldots, p \}, \quad
  \sign \Delta_s = - \sign \Delta_{s + 1}, \quad s \in \{ 1, \ldots, p - 1 \}, \\
  \Delta_s = 0, \quad s \in \{ p + 1, \ldots d + 1 \}.
\end{gather*}
Such collection of vectors $\{ V_1, \ldots, V_p \}$ is called a $p$-point alternance at $x_*$. Any $(d + 1)$-point
alternance is called \textit{complete}. If the set in the right-hand side of \eqref{eq:AlternanceDef_Ch} is replaced by
$\partial F(x_*)$ and the sets $\eta(x_*)$ and $n_A(x_*)$ in \eqref{eq:AlternanceDef_Ch_2} are replaced by
$\mathcal{N}(x_*)$ and $N_A(x_*)$ respectively, then one says that \textit{a generalised $p$-point alternance} exists at
$x_*$, and the corresponding collection of vectors $\{ V_1, \ldots, V_p \}$ is called \textit{a generalised $p$-point
alternance} at $x_*$.
\end{definition}

Finally, if $x_*$ is a feasible point of $(\mathcal{C})$, then any collection of vectors $V_1, \ldots, V_p$
with $p \in \{ 1, \ldots, d + 1 \}$ satisfying \eqref{eq:AlternanceDef_Ch}, \eqref{eq:AlternanceDef_Ch_2}, and such
that $\rank([V_1, \ldots, V_p]) = \rank([V_1, \ldots, V_{i - 1}, V_{i + 1}, \ldots, V_p]) = p - 1$ for any 
$i \in \{ 1, \ldots, p \}$ is called a $p$-point \textit{cadre} for the problem $(\mathcal{C})$ at $x_*$. It is easily
seen that a collection $V_1, \ldots, V_p$ satisfying \eqref{eq:AlternanceDef_Ch}, \eqref{eq:AlternanceDef_Ch_2} is 
a $p$-point cadre at $x_*$ iff $\rank([V_1, \ldots, V_p]) = p - 1$ and
$\sum_{i = 1}^p \beta_i V_i = 0$ for some $\beta_i \ne 0$, $i \in \{ 1, \ldots, p \}$. Any such $\{ \beta_i \}$ are
called \textit{cadre multipliers}.

Applying the main results of Sections~\ref{subsect:Subdifferentials_ExactPenaltyFunc} and
\ref{subsect:Alternance_Cadre} to problem \eqref{prob:EquivChebyshevProblem} one obtains the following six equivalent
reformulations of necessary/sufficient optimality conditions for the cone constrained Chebyshev problem
$(\mathcal{C})$.

\begin{theorem} \label{thrm:EquivNeccOptCond_Chebyshev}
Let $x_*$ be a feasible point of the problem $(\mathcal{C})$. Then the following statements are equivalent:
\begin{enumerate}
\item{there exists a Lagrange multiplier of $(\mathcal{C})$ at $x_*$;}

\item{there exists $v \in \partial F(x_*)$ and $\lambda_* \in K^*$ such that $\langle \lambda_*, G(x_*) \rangle = 0$
and $\langle v, h \rangle + \langle \lambda_*, D G(x_*) h \rangle \ge 0$ for all $h \in T_A(x_*)$;}

\item{$0 \in \partial F(x_*) + \mathcal{N}(x_*) + N_A(x_*)$;}

\item{$0 \in \partial \Phi_c(x_*) + N_A(x_*)$ for some $c > 0$;}

\item{a $p$-point alternance exists at $x_*$ for some $p \in \{1, \ldots, d + 1 \}$;}

\item{a $p$-point cadre with positive cadre multipliers exists at $x_*$ for some $p \in \{ 1, \ldots, d + 1 \}$.}
\end{enumerate}
\end{theorem}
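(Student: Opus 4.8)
The plan is to deduce the whole chain of equivalences from the results already established for the minimax problem $(\mathcal{P})$, using the reduction of $(\mathcal{C})$ to problem \eqref{prob:EquivChebyshevProblem}. First I would observe that under this reduction the constraint data $G$, $K$, $A$ are untouched, so the cone $\mathcal{N}(x_*)$, the normal cone $N_A(x_*)$, Robinson's constraint qualification, the auxiliary sets $\eta(x_*)$ and $n_A(x_*)$, the Lagrangian $L(\cdot,\lambda)$, and the penalty function $\Phi_c$ are literally the same for $(\mathcal{C})$ as for \eqref{prob:EquivChebyshevProblem}. The only point needing verification is the objective side: writing $\widehat F(x)=\max_{\widehat\omega\in\widehat W}\widehat f(x,\widehat\omega)$ one has $\widehat F=F$; the active set $\widehat W(x_*)$ consists of the pairs $(\omega,1)$ with $f(x_*,\omega)-\psi(\omega)=F(x_*)$ and $(\omega,-1)$ with $f(x_*,\omega)-\psi(\omega)=-F(x_*)$; and since $\nabla_x\widehat f(x_*,\omega,\pm1)=\pm\nabla_x f(x_*,\omega)$, the set $\{\nabla_x\widehat f(x_*,\widehat\omega)\mid\widehat\omega\in\widehat W(x_*)\}$ coincides with $\{\sign(f(x_*,\omega)-\psi(\omega))\nabla_x f(x_*,\omega)\mid\omega\in W(x_*)\}$ under the convention $\sign(0)=\{-1,1\}$ (the ambiguous sign occurring precisely when $F(x_*)=0$). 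Passing to convex hulls gives $\partial\widehat F(x_*)=\partial F(x_*)$, so that $\mathcal{D}(x_*)$ is the same set for both problems and the alternance/cadre data in the Chebyshev definitions match those of \eqref{prob:EquivChebyshevProblem}.

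With this dictionary in hand, each listed equivalence becomes a citation. The equivalence (i)$\Leftrightarrow$(ii) unpacks the definition of a Lagrange multiplier: given a Lagrange multiplier $\lambda_*$, the separation argument in the proof of Theorem~\ref{thrm:EquivOptCond_Subdiff}(\ref{stat:NessOpt_Subdiff}) produces $v\in\partial F(x_*)$ with $v+[DG(x_*)]^*\lambda_*\in-N_A(x_*)$, i.e. $\langle v,h\rangle+\langle\lambda_*,DG(x_*)h\rangle\ge0$ on $T_A(x_*)$, which is (ii); and (ii) trivially gives $[L(\cdot,\lambda_*)]'(x_*,h)\ge\langle v,h\rangle+\langle\lambda_*,DG(x_*)h\rangle\ge0$, so $\lambda_*$ is a Lagrange multiplier. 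The equivalence (i)$\Leftrightarrow$(iii) is Theorem~\ref{thrm:EquivOptCond_Subdiff}(\ref{stat:NessOpt_Subdiff}) applied to \eqref{prob:EquivChebyshevProblem}; (i)$\Leftrightarrow$(iv) is Theorem~\ref{thrm:EquivOptCond_PenaltyFunc}(\ref{stat:NessOpt_PenaltyFunc}), where one additionally notes that $\partial\Phi_0(x_*)\subseteq\partial\Phi_c(x_*)$ for $c>0$ by Remark~\ref{remark:Subdiff_ConeConstrPenFunc}, so ``some $c\ge0$'' may be upgraded to ``some $c>0$''; (iii)$\Leftrightarrow$(v) is Theorem~\ref{thrm:AlternanceCond}(\ref{stat:NessOpt_Alternance}); and (v)$\Leftrightarrow$(vi) is Proposition~\ref{prp:AlternanceVsCadre}. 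Translating each back through the dictionary finishes the argument.

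The only step requiring genuine care — the ``hard part'' — is the verification that $\partial\widehat F(x_*)$ and its finite generating set really do coincide with the Chebyshev expressions, including the degenerate case $F(x_*)=0$ in which a single $\omega$ contributes both $+\nabla_x f(x_*,\omega)$ and $-\nabla_x f(x_*,\omega)$ and the convention $\sign(0)=\{-1,1\}$ is indispensable; everything else is a mechanical translation. I would also note in passing that the Hadamard directional differentiability of $F$ and the stated formula for $F'(x_*,h)$ follow from the same identification together with \cite[Thrm.~4.4.3]{IoffeTihomirov}, just as in Section~\ref{subsect:LagrangeMultipliers}.
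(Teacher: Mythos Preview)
Your proposal is correct and follows exactly the paper's approach: the paper does not give a separate proof of this theorem but simply states that it follows by applying the main results of Sections~\ref{subsect:Subdifferentials_ExactPenaltyFunc} and~\ref{subsect:Alternance_Cadre} (together with Proposition~\ref{prp:AlternanceVsCadre} and Remark~\ref{rmrk:LagrangeMultViaSubdiff}) to the equivalent minimax problem~\eqref{prob:EquivChebyshevProblem}. Your write-up is in fact more detailed than the paper's, since you make the identification of $\widehat W(x_*)$, the gradient set, and $\partial\widehat F(x_*)=\partial F(x_*)$ explicit (including the degenerate case $F(x_*)=0$), whereas the paper leaves all of this to the reader.
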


\begin{theorem} \label{thrm:EquivSuffOptCond_Chebyshev}
Let $x_*$ be a feasible point of the problem $(\mathcal{C})$. Then the following statements are equivalent:
\begin{enumerate}
\item{sufficient optimality condition \eqref{eq:SuffOptCond_Chebyshev} holds true at $x_*$;}

\item{$0 \in \interior(\partial F(x_*) + \mathcal{N}(x_*) + N_A(x_*))$;}

\item{$0 \in \interior(\partial \Phi_c(x_*) + N_A(x_*))$ for some $c > 0$;}

\item{$\Phi_c$ satisfies the first order growth condition on $A$ at $x_*$ for some $c \ge 0$.}
\end{enumerate}
Moreover, all these conditions are satisfied, if a complete alternance exists at $x_*$. In addition, if one of 
the following assumptions is valid
\begin{enumerate}
\item{$\interior \partial F(x_*) \ne \emptyset$,}

\item{$\mathcal{N}(x_*) + N_A(x_*) \ne \mathbb{R}^d$ and either $\interior \mathcal{N}(x_*) \ne \emptyset$ 
or $\interior N_A(x_*) \ne \emptyset$,}

\item{$N_A(x_*) = \{ 0 \}$ and there exists $w \in \relint \mathcal{N}(x_*) \setminus \{ 0 \}$ such that 
$0 \in \partial F(x_*) + w$ (in particular, it is sufficient to suppose that $0 \notin \partial F(x_*)$ or the cone
$\mathcal{N}(x_*)$ is pointed),}

\item{$\mathcal{N}(x_*) = \{ 0 \}$ and there exists $w \in \relint N_A(x_*) \setminus \{ 0 \}$ such that 
$0 \in \partial F(x_*) + w$,}
\end{enumerate}
then the four equivalent sufficient optimality conditions stated in this theorem are satisfied iff a generalised
complete alternance exists at $x_*$.
\end{theorem}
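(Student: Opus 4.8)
The plan is to reduce every assertion of the theorem to results already proved for the general cone constrained minimax problem $(\mathcal{P})$ by applying them to the reformulation \eqref{prob:EquivChebyshevProblem} of $(\mathcal{C})$. The first step is to set up the dictionary between $(\mathcal{C})$ and \eqref{prob:EquivChebyshevProblem}. Writing $\widehat{F}(x) = \max_{\widehat\omega \in \widehat W}\widehat f(x,\widehat\omega)$ for the objective of \eqref{prob:EquivChebyshevProblem}, one has $\widehat F = F$, and the active index set $\widehat W(x_*)$ of \eqref{prob:EquivChebyshevProblem} satisfies
$$
  \big\{ \nabla_x \widehat f(x_*, \widehat\omega) \bigm| \widehat\omega \in \widehat W(x_*) \big\}
  = \big\{ \sign(f(x_*,\omega) - \psi(\omega))\,\nabla_x f(x_*,\omega) \bigm| \omega \in W(x_*) \big\},
$$
where the set-valued convention $\sign(0) = \{-1,1\}$ takes care of the case $F(x_*) = 0$, in which both indices $(\omega,1)$ and $(\omega,-1)$ are active for every $\omega \in W(x_*)$. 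Hence the Hadamard subdifferential $\partial F(x_*)$ of $(\mathcal{C})$ coincides with $\partial\widehat F(x_*)$, and $F'(x_*,h) = \max_{v\in\partial F(x_*)}\langle v,h\rangle$. Since the constraint data $G$, $K$, $A$ are untouched by the reformulation, the cones $\mathcal{N}(x_*)$ and $N_A(x_*)$, the penalty function $\Phi_c$, and its subdifferential $\partial\Phi_c(x_*) = \partial F(x_*) + c\,\partial\varphi(x_*)$ furnished by Lemma~\ref{lem:ConeConstrPenFunc_Subdiff} are literally the same objects for $(\mathcal{C})$ as for \eqref{prob:EquivChebyshevProblem}; and the ($p$-point, generalised $p$-point, complete) alternances of the definition preceding the theorem are exactly those of Definition~\ref{def:AlternanceOptCond} for \eqref{prob:EquivChebyshevProblem}, because the generating set of $\partial F(x_*)$ together with the auxiliary sets $\eta(x_*)$, $n_A(x_*)$, $Z$ all match (for the generalised notion the generating sets are $\partial F(x_*)$, $\mathcal{N}(x_*)$, $N_A(x_*)$ themselves, which are trivially the same across the two problems).

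Granting this dictionary, the equivalence of conditions (1)--(4) is a transcription of earlier theorems: condition \eqref{eq:SuffOptCond_Chebyshev} is precisely sufficient optimality condition \eqref{eq:SuffOptCond} for \eqref{prob:EquivChebyshevProblem} written out via $F'(x_*,h) = \max_{v\in\partial F(x_*)}\langle v,h\rangle$; Theorem~\ref{thrm:EquivOptCond_Subdiff}(\ref{stat:SuffOpt_Subdiff}) identifies it with $0 \in \interior(\partial F(x_*) + \mathcal{N}(x_*) + N_A(x_*))$; and Theorem~\ref{thrm:EquivOptCond_PenaltyFunc}(\ref{stat:SuffOpt_PenaltyFunc}) identifies it with the existence of $c \ge 0$ such that $0 \in \interior(\partial\Phi_c(x_*) + N_A(x_*))$, as well as with the existence of $c \ge 0$ such that $\Phi_c$ satisfies the first order growth condition on $A$ at $x_*$. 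To match the ``$c > 0$'' of condition (3) with the ``$c \ge 0$'' of Theorem~\ref{thrm:EquivOptCond_PenaltyFunc} one only notes that $\partial\Phi_0(x_*) = \partial F(x_*) \subseteq \partial\Phi_c(x_*)$ for every $c \ge 0$ by Remark~\ref{remark:Subdiff_ConeConstrPenFunc}, so ``for some $c \ge 0$'' and ``for some $c > 0$'' are equivalent here. The implication ``complete alternance at $x_*$ $\Rightarrow$ (1)--(4)'' then follows from Theorem~\ref{thrm:AlternanceCond}(\ref{stat:CompleteAlternance_SuffOpt}) applied to \eqref{prob:EquivChebyshevProblem}, which yields $0 \in \interior\mathcal{D}(x_*)$.

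For the last assertion, under each of the four listed assumptions one invokes Theorem~\ref{thrm:GenAlternanceCond} for \eqref{prob:EquivChebyshevProblem}: it states that the existence of a generalised complete alternance is equivalent to the conjunction ``$0 \in \interior\mathcal{D}(x_*)$ and $\partial F(x_*) \neq \{0\}$'' whenever one of assumptions (1)--(4) holds. Combining this with the equivalence of (1)--(4) with $0 \in \interior\mathcal{D}(x_*)$ established above, it remains to check that, under each of those assumptions, $0 \in \interior\mathcal{D}(x_*)$ already forces $\partial F(x_*) \neq \{0\}$: this is clear in case (1) since a singleton has empty interior in $\mathbb{R}^d$; in case (2), if $\partial F(x_*) = \{0\}$ then $\mathcal{D}(x_*) = \mathcal{N}(x_*) + N_A(x_*)$ is a convex cone different from $\mathbb{R}^d$ and so cannot contain $0$ in its interior; and in cases (3)--(4) the assumed vector $w$ is nonzero with $-w \in \partial F(x_*)$ (or one of the ``in particular'' clauses produces such a $w$). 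Hence (1)--(4) hold if and only if a generalised complete alternance exists at $x_*$.

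The step I expect to require the most care is the subdifferential dictionary in the degenerate case $F(x_*) = 0$: one must verify that doubling $W$ to $\widehat W = W\times\{1,-1\}$ produces no spurious active indices and that the convention $\sign(0) = \{-1,1\}$ built into the definition of $\partial F(x_*)$ for $(\mathcal{C})$ reproduces $\partial\widehat F(x_*)$ exactly. Everything else is a mechanical transfer of Theorems~\ref{thrm:SuffOptCond}, \ref{thrm:EquivOptCond_Subdiff}, \ref{thrm:EquivOptCond_PenaltyFunc}, \ref{thrm:AlternanceCond}, and \ref{thrm:GenAlternanceCond} through the correspondence $(\mathcal{C})\leftrightarrow$ \eqref{prob:EquivChebyshevProblem}.
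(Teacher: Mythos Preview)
Your proposal is correct and follows exactly the approach the paper takes: the paper simply states that the theorem is obtained by ``applying the main results of Sections~\ref{subsect:Subdifferentials_ExactPenaltyFunc} and \ref{subsect:Alternance_Cadre} to problem \eqref{prob:EquivChebyshevProblem}'' without writing out any further details. Your dictionary between $(\mathcal{C})$ and \eqref{prob:EquivChebyshevProblem}, and your observation that under each of the four extra assumptions $0 \in \interior\mathcal{D}(x_*)$ automatically entails $\partial F(x_*) \neq \{0\}$, make explicit the verification the paper leaves implicit.
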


\begin{remark} \label{rmrk:ChebyshevSquared}
It should be noted that the Chebyshev problem $(\mathcal{C})$ can be reduced to the minimax problem $(\mathcal{P})$ in
a different way. Namely, define $F_2(\cdot) = \max_{\omega \in W} 0.5 (f(\cdot, \omega) - \psi(\omega))^2$ and consider
the following cone constrained problem:
\begin{equation} \label{prob:ChebyshevSquared}
  \min \: F_2(x) \quad \text{subject to} \quad G(x) \in K, \quad x \in A.
\end{equation}
Note that $W(x) = \{ \omega \in W \mid F_2(x) = 0.5 (f(x, \omega) - \psi(\omega))^2 \}$. Furthermore, one has $
F(x_1) \ge F(x_2)$ for some $x_1$ and $x_2$ if and only if 
$$
  |f(x_1, \omega_*) - \psi(\omega_*)| \ge |f(x_2, \omega) - \psi(\omega)|
  \quad \forall \omega_* \in W(x_1), \quad \forall \omega \in W,
$$
while this inequality is satisfied if and only if
$$
  \frac{1}{2} \big( f(x_1, \omega_*) - \psi(\omega_*) \big)^2 \ge \frac{1}{2} \big( f(x_2, \omega) - \psi(\omega) \big)
  \quad \forall \omega_* \in W(x_1), \quad \forall \omega \in W,
$$
or, equivalently, if and only if $F_2(x_1) \ge F_2(x_2)$. Therefore, $x_*$ is a locally/globally optimal solution of the
problem $(\mathcal{C})$ iff $x_*$ is a locally/globally optimal solution of problem \eqref{prob:ChebyshevSquared}.
Moreover, it is easily seen that the function $F_2$ is Hadamard subdifferentiable, $F_2'(x, h) = \max_{v \in \partial
F_2(x)} \langle v, h \rangle$ for all $h \in \mathbb{R}^d$, where
$$
  \partial F_2(x) = \big\{ (f(x, \omega) - \psi(\omega)) \nabla_x f(x, \omega) \bigm| \omega \in W(x) \big\},
$$
that is, $F_2'(x, \cdot) = F(x) F'(x, \cdot)$ and $\partial F_2(x) = F(x) \partial F(x)$ for all $x$. Consequently,
$\lambda_*$ is a Lagrange multiplier of the problem $(\mathcal{C})$ at a feasible point $x_*$ such that $F(x_*) \ne 0$
iff $F(x_*) \lambda_*$ is a Lagrange multiplier of problem \eqref{prob:ChebyshevSquared} at $x_*$. Therefore, replacing
$\partial F(x_*)$ with $F(x_*) \partial F(x_*)$ in 
Theorems~\ref{thrm:NessOptCond_Chebyshev}--\ref{thrm:EquivSuffOptCond_Chebyshev} one obtains equivalent
necessary/sufficient optimality conditions for the cone constrained Chebyshev problem $(\mathcal{C})$. \qed
\end{remark}

\subsection{Second order optimality conditions}

Let us finally formulate second order optimality conditions for the problem $(\mathcal{C})$. To this end, suppose that
the mapping $G$ is twice continuously Fr\'{e}chet differentiable in a neighbourhood of a given point $x_*$, the function
$f(x, \omega)$ is twice differentiable in $x$ in a neighbourhood $\mathcal{O}(x_*)$ of $x_*$ for any $\omega \in W$, and
the function $\nabla^2_{xx} f(\cdot)$ is continuous on $\mathcal{O}(x_*) \times W$. 

Firstly, note that if for a feasible point $x_*$ one has $F(x_*) = 0$, then $x_*$ is a globally optimal solution of the
problem $(\mathcal{C})$, since this function is nonnegative. Therefore, below we suppose that the optimal value of
the problem $(\mathcal{C})$ is strictly positive.

Let $(x_*, \lambda_*)$ be a KKT-pair of the problem $(\mathcal{C})$. Then by the second part of
Theorem~\ref{thrm:EquivNeccOptCond_Chebyshev} there exist $v \in \partial F(x_*)$ and
$\lambda_* \in K^*$ such that $\langle \lambda_*, G(x_*) \rangle = 0$ and 
$\langle v, h \rangle + \langle \lambda_*, D G(x_*) h \rangle \ge 0$ for all $h \in T_A(x_*)$. Then by the definition of
$\partial F(x_*)$ there exist $k \in \mathbb{N}$, $\omega_i \in W(x_*)$, and $\alpha_i \ge 0$, 
$i \in \{ 1, \ldots, k \}$, such that 
$$
  v = \sum_{i = 1}^k \alpha_i \sign(f(x, \omega_i) - \psi(\omega_i)) \nabla_x f(x, \omega_i), \quad
  \sum_{i = 1}^k \alpha_i = 1.
$$
Let $\alpha = \sum_{i = 1}^k \sign(f(x, \omega_i) - \psi(\omega_i)) \alpha_i \delta(\omega_i)$ be the discrete Radon
measure on $W$ corresponding to $\alpha_i$ and $\omega_i$. Then
$$
  \left\langle \int_W \nabla_x f(x, \omega) d \alpha(\omega), h \right\rangle + 
  \langle \lambda_*, D G(x_*) h \rangle \ge 0 \quad \forall h \in T_A(x_*),
  \quad |\alpha|(W) = 1,
$$
where $|\alpha| = \alpha^+ + \alpha^-$ is the total variation of the measure $\alpha$, while $\alpha^+$ and $\alpha^-$
are positive and negative variations of $\alpha$ respectively (see, e.g. \cite{Folland}). Denote by 
$\alpha(x_*, \lambda_*)$ the set of all Radon measures $\alpha \in rca(W)$ satisfying the conditions above and the
inclusions 
\begin{align*}
  \support(\alpha^+) \subseteq W_+(x_*) &:= \{ \omega \in W(x_*) \mid f(x_*, \omega) - \psi(\omega) > 0 \}, \\
  \support(\alpha^-) \subseteq W_-(x_*) &:= \{ \omega \in W(x_*) \mid f(x_*, \omega) - \psi(\omega) < 0 \}.
\end{align*}
One can easily verify that $\alpha(x_*, \lambda_*)$ is a convex, bounded and weak${}^*$ closed (and, therefore,
weak${}^*$ compact) set. Any measure $\alpha \in \alpha(x_*, \lambda_*)$ is called \textit{a Danskin-Demyanov
multiplier} corresponding to the KKT-pair $(x_*, \lambda_*)$.

For any $x \in \mathbb{R}^d$, $\lambda \in Y^*$, and $\alpha \in rca(W)$ denote by
$$
  \mathcal{L}(x, \lambda, \alpha) = \int_W f(x, \omega) d \alpha(\omega) + \langle \lambda, G(x) \rangle
$$
the integral Lagrangian for the problem $(\mathcal{C})$. It is easily seen that $\alpha_*$ is a Danskin-Demyanov
multiplier corresponding to $(x_*, \lambda_*)$ if and only if $|\alpha_*|(W) = 1$, 
$\support(\alpha_*^{\pm}) \subseteq W_{\pm}(x_*)$, 
and $\langle \nabla_x \mathcal{L}(x_*, \lambda_*, \alpha_*), h \rangle \ge 0$ for all $h \in T_A(x_*)$.

Applying the main results of Section~\ref{sect:SecondOrderOptCond} to problem~\eqref{prob:EquivChebyshevProblem} one
gets the following necessary/sufficient second order optimality conditions for the problem $(\mathcal{C})$.

\begin{theorem}
Let $W = \{ 1, \ldots, m \}$, $f(x, i) = f_i(x)$ for any $i \in W$, and $x_* \in \interior A$ be a locally optimal
solution of the problem $(\mathcal{C})$ such that RCQ holds true at $x_*$. Then for any vector $h$ from the critical
cone
$$
  C(x_*) = \Big\{ h \in T_A(x_*) \Bigm| D G(x_*) h \in T_K(G(x_*)), \enspace F'(x_*, h) \le 0 \Big\}
$$
and for any convex set $\mathcal{T}(h) \subseteq T_K^2(G(x_*), D G(x_*) h)$ one has
$$
  \sup_{\lambda \in \Lambda(x_*)} \Big\{ 
  \sup_{\alpha \in \alpha(x_*, \lambda)} \big\langle h, \nabla^2_{xx} \mathcal{L}(x_*, \lambda, \alpha) h \big\rangle
  - \sigma(\lambda, \mathcal{T}(h)) \Big\} \ge 0.
$$
Furthermore, if $\Lambda(x_*) = \{ \lambda_* \}$, then for any $h \in C(x_*)$ one has
$$
  \sup_{\alpha \in \alpha(x_*, \lambda_*)} \big\langle h, \nabla^2_{xx} \mathcal{L}(x_*, \lambda_*, \alpha) \big\rangle
  - \sigma\big( \lambda_*, T^2_K(G(x_*), D G(x_*) h) \big) \ge 0.
$$
\end{theorem}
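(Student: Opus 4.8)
The plan is to deduce the statement directly from Theorem~\ref{thrm:2Order_NessOptCond} and the corollary following it, applied to the equivalent minimax problem \eqref{prob:EquivChebyshevProblem} with $\widehat{W} = W \times \{1,-1\}$ and $\widehat{f}(x,\omega,\theta) = \theta(f(x,\omega) - \psi(\omega))$. Since $W = \{1,\ldots,m\}$, the index set $\widehat{W}$ is again finite, $x_* \in \interior A$ by hypothesis, and RCQ for $(\mathcal{C})$ is by definition RCQ for \eqref{prob:EquivChebyshevProblem}; hence all the standing assumptions of Theorem~\ref{thrm:2Order_NessOptCond} hold for \eqref{prob:EquivChebyshevProblem} at $x_*$. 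The objective $F$, the feasible set, and therefore the Lagrange multiplier set $\Lambda(x_*)$ (defined through $\partial F(x_*)$, which is the same object for both problems) coincide for $(\mathcal{C})$ and \eqref{prob:EquivChebyshevProblem}; consequently the critical cone $C(x_*)$, the tangent cone $T_A(x_*)$, the second order tangent set $T_K^2(G(x_*), D G(x_*) h)$ and the sigma term $\sigma(\lambda,\mathcal{T}(h))$ are literally unchanged. What remains is to match the Danskin--Demyanov multipliers and the integral Lagrangians.

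First I would set up the dictionary between positive Radon measures on $\widehat{W}$ and signed Radon measures on $W$: to $\widehat{\alpha} \in rca_+(\widehat{W})$ associate $\alpha \in rca(W)$ by $\alpha(E) = \widehat{\alpha}(E \times \{1\}) - \widehat{\alpha}(E \times \{-1\})$. This is a bijection under which the positive and negative variations satisfy $\alpha^+(E) = \widehat{\alpha}(E\times\{1\})$ and $\alpha^-(E) = \widehat{\alpha}(E\times\{-1\})$, so that $|\alpha|(W) = \widehat{\alpha}(\widehat{W})$. Next I would identify the support set: since the optimal value of $(\mathcal{C})$ is assumed strictly positive we have $F(x_*) > 0$, hence for each $\omega \in W(x_*)$ exactly one sign $\theta = \pm1$ gives $\widehat{f}(x_*,\omega,\theta) = F(x_*)$, namely $\theta = \sign(f(x_*,\omega)-\psi(\omega))$ (which is single-valued here because $f(x_*,\omega) - \psi(\omega) \ne 0$). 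Thus $\widehat{W}(x_*) = (W_+(x_*)\times\{1\}) \cup (W_-(x_*)\times\{-1\})$, and $\support(\widehat{\alpha}) \subseteq \widehat{W}(x_*)$ is equivalent to $\support(\alpha^+) \subseteq W_+(x_*)$ and $\support(\alpha^-) \subseteq W_-(x_*)$. For the Lagrangian I would note $\int_{\widehat{W}} \nabla_x \widehat{f}(x_*,\widehat{\omega})\, d\widehat{\alpha} = \int_W \nabla_x f(x_*,\omega)\, d\alpha$ and that the integral Lagrangian $\widehat{\mathcal{L}}(x,\lambda,\widehat{\alpha})$ of \eqref{prob:EquivChebyshevProblem} equals $\mathcal{L}(x,\lambda,\alpha) - \int_W \psi\, d\alpha$, the last term being constant in $x$; hence $\nabla^2_{xx}\widehat{\mathcal{L}}(x_*,\lambda,\widehat{\alpha}) = \nabla^2_{xx}\mathcal{L}(x_*,\lambda,\alpha)$, and the condition defining $\widehat{\alpha}$ as a Danskin--Demyanov multiplier of \eqref{prob:EquivChebyshevProblem} translates verbatim into $\alpha \in \alpha(x_*,\lambda)$ for $(\mathcal{C})$. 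With this dictionary the inequality furnished by Theorem~\ref{thrm:2Order_NessOptCond} for \eqref{prob:EquivChebyshevProblem} becomes exactly the first displayed inequality of the statement, and the corollary (in the case $\Lambda(x_*) = \{\lambda_*\}$) yields the second.

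The measure-theoretic translation is routine; the one place where care is needed — and where a careless argument would slip — is the support bookkeeping, i.e. checking that the two sign-copies of a point of maximal deviation are handled correctly and that it is the hypothesis $F(x_*) > 0$, not the convention $\sign(0) = \{-1,1\}$, that prevents a single $\omega$ from contributing with both signs. Absent $F(x_*) > 0$ the set $\widehat{W}(x_*)$ would not split as above and the identification of $\alpha^{\pm}$ with restrictions of $\widehat{\alpha}$ would break down. Everything else is an immediate specialisation of the already established minimax results, so no further estimates are required.
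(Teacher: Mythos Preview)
Your approach is correct and is exactly the route the paper takes: the theorem is obtained by applying Theorem~\ref{thrm:2Order_NessOptCond} and its corollary to the equivalent minimax problem \eqref{prob:EquivChebyshevProblem} and translating the Danskin--Demyanov multipliers via the correspondence $\widehat{\alpha}\leftrightarrow\alpha$. The only cosmetic point is that your ``bijection'' claim and the identities $\alpha^{\pm}(E)=\widehat{\alpha}(E\times\{\pm 1\})$ are false for general $\widehat{\alpha}\in rca_+(\widehat{W})$ and only hold once $\support(\widehat{\alpha})\subseteq\widehat{W}(x_*)$ (which, as you correctly note, splits into disjoint $\pm$ pieces thanks to $F(x_*)>0$); stating the correspondence after the support identification would make the argument read cleanly.
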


\begin{theorem}
Let $x_* \in \interior A$ be a feasible point of the problem $(\mathcal{C})$ such that $\Lambda(x_*) \ne \emptyset$ and
for any $h \in C(x_*) \setminus \{ 0 \}$ one can find $\lambda \in \Lambda(x_*)$ and $\alpha \in \alpha(x_*, \lambda)$
such that $\langle h, \nabla^2_{xx} \mathcal{L}(x_*, \lambda, \alpha) h \rangle > 0$. Then $x_*$ is a locally optimal
solution of the problem $(\mathcal{C})$ at which the second order growth condition holds true.
\end{theorem}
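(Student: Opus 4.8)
The plan is to deduce this statement from the second order sufficient optimality condition for the problem $(\mathcal{P})$ established above (the first of the two such results in Section~\ref{sect:SecondOrderOptCond}, the one not involving the sigma term), applied to the equivalent reformulation~\eqref{prob:EquivChebyshevProblem} of $(\mathcal{C})$ as a cone constrained minimax problem. Recall that under the standing assumption of this section the optimal value of $(\mathcal{C})$ is strictly positive, so $F(x_*) > 0$ for the feasible point $x_*$; in particular $f(x_*, \omega) - \psi(\omega) \ne 0$ for every $\omega \in W(x_*)$, so the signs appearing in $\partial F(x_*)$ and in the Danskin--Demyanov multipliers are single numbers. First I would record that the data of~\eqref{prob:EquivChebyshevProblem} inherit everything needed to apply the $(\mathcal{P})$-result: $\widehat{W} = W \times \{1, -1\}$ is compact Hausdorff, $\widehat{f}(x, \omega, \varepsilon) = \varepsilon(f(x, \omega) - \psi(\omega))$ is continuous jointly together with $\nabla_x \widehat{f}$ and $\nabla^2_{xx}\widehat{f}$, the mapping $G$ is the same twice continuously Fr\'echet differentiable mapping, $\max_{\widehat{\omega} \in \widehat{W}} \widehat{f}(\cdot, \widehat{\omega}) = F(\cdot)$, and $x_* \in \interior A$ is feasible for~\eqref{prob:EquivChebyshevProblem}.

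Next I would verify that the objects attached to $x_*$ coincide for the two problems. Since $F$ and the feasible set $\Omega$ are the same, the cones $T_A(x_*)$, $T_K(G(x_*))$, the set $\Lambda(x_*)$ of Lagrange multipliers, and hence the critical cone $C(x_*)$ are literally identical for $(\mathcal{C})$ and for~\eqref{prob:EquivChebyshevProblem}. The active set of the reformulation decomposes as $\widehat{W}(x_*) = (W_+(x_*) \times \{1\}) \cup (W_-(x_*) \times \{-1\})$, and from $\nabla_x \widehat{f}(x_*, \omega, \varepsilon) = \varepsilon \nabla_x f(x_*, \omega)$ one recovers $\partial F(x_*) = \co\{ \sign(f(x_*, \omega) - \psi(\omega)) \nabla_x f(x_*, \omega) \mid \omega \in W(x_*) \}$, as defined for $(\mathcal{C})$. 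The crucial identification is that of the Danskin--Demyanov multipliers: a nonnegative discrete Radon measure $\widehat{\alpha}$ on $\widehat{W}$, supported on $\widehat{W}(x_*)$ with $\widehat{\alpha}(\widehat{W}) = 1$, corresponds bijectively to the \emph{signed} measure $\alpha$ on $W$ whose positive variation $\alpha^+$ is the image of $\widehat{\alpha}|_{W \times \{1\}}$ and whose negative variation $\alpha^-$ is the image of $\widehat{\alpha}|_{W \times \{-1\}}$; then $|\alpha|(W) = \widehat{\alpha}(\widehat{W}) = 1$, $\support(\alpha^{\pm}) \subseteq W_{\pm}(x_*)$, and, because $\nabla_x \widehat{f}(x_*, \omega, -1) = -\nabla_x f(x_*, \omega)$ and $\nabla^2_{xx}\widehat{f}(x_*, \omega, -1) = -\nabla^2_{xx}f(x_*, \omega)$,
$$
  \int_{\widehat{W}} \nabla_x \widehat{f}(x_*, \widehat{\omega}) \, d\widehat{\alpha}(\widehat{\omega})
  = \int_W \nabla_x f(x_*, \omega) \, d\alpha(\omega), \qquad
  \int_{\widehat{W}} \nabla^2_{xx} \widehat{f}(x_*, \widehat{\omega}) \, d\widehat{\alpha}(\widehat{\omega})
  = \int_W \nabla^2_{xx} f(x_*, \omega) \, d\alpha(\omega).
$$
Hence $\nabla_x \widehat{\mathcal{L}}(x_*, \lambda, \widehat{\alpha}) = \nabla_x \mathcal{L}(x_*, \lambda, \alpha)$ and $\nabla^2_{xx} \widehat{\mathcal{L}}(x_*, \lambda, \widehat{\alpha}) = \nabla^2_{xx} \mathcal{L}(x_*, \lambda, \alpha)$, so this correspondence maps the Danskin--Demyanov multipliers of~\eqref{prob:EquivChebyshevProblem} corresponding to $(x_*, \lambda)$ onto the set $\alpha(x_*, \lambda)$ defined for $(\mathcal{C})$, preserving the quadratic form $h \mapsto \langle h, \nabla^2_{xx}\mathcal{L}(x_*, \lambda, \cdot)\, h \rangle$.

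With these identifications the hypothesis of the theorem --- for every $h \in C(x_*) \setminus \{0\}$ there exist $\lambda \in \Lambda(x_*)$ and $\alpha \in \alpha(x_*, \lambda)$ with $\langle h, \nabla^2_{xx} \mathcal{L}(x_*, \lambda, \alpha) h \rangle > 0$ --- is exactly the hypothesis of the $(\mathcal{P})$-version of the second order sufficient condition applied to~\eqref{prob:EquivChebyshevProblem}. That result then gives that $x_*$ is a locally optimal solution of~\eqref{prob:EquivChebyshevProblem} at which the second order growth condition holds; since the objective of~\eqref{prob:EquivChebyshevProblem} equals $F$ and its feasible set equals $\Omega$, this is precisely the assertion that $x_*$ is a locally optimal solution of $(\mathcal{C})$ at which the second order growth condition holds.

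I expect the only genuinely delicate step to be the measure-theoretic bookkeeping in the Danskin--Demyanov identification: one must track the sign flip $\nabla_x \widehat{f}(x_*, \cdot, -1) = -\nabla_x f(x_*, \cdot)$ through the total-variation normalisation $|\alpha|(W) = 1$ and the separate support restrictions on $\alpha^{+}$ and $\alpha^{-}$, and check that $\widehat{\alpha} \mapsto \alpha$ maps \emph{onto} $\alpha(x_*, \lambda)$ (not merely into it) while respecting the first-order inequality $\langle \nabla_x \widehat{\mathcal{L}}(x_*, \lambda, \widehat{\alpha}), h \rangle \ge 0$ that defines the two multiplier sets. Everything else --- transfer of the smoothness hypotheses, compactness of $\widehat{W}$, and equality of the critical cones --- is routine.
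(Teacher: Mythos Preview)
Your proposal is correct and follows exactly the route the paper takes: the paper simply states that these second order conditions for $(\mathcal{C})$ are obtained by ``applying the main results of Section~\ref{sect:SecondOrderOptCond} to problem~\eqref{prob:EquivChebyshevProblem}'', and your write-up spells out precisely this reduction, including the measure-theoretic correspondence between the nonnegative Danskin--Demyanov multipliers on $\widehat{W}$ and the signed ones on $W$. The identification you give is the intended one, and your check that the map $\widehat{\alpha} \mapsto \alpha$ is a bijection preserving $\nabla_x \mathcal{L}$ and $\nabla^2_{xx}\mathcal{L}$ is the only point the paper leaves implicit.
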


\begin{theorem}
Let $Y$ be a finite dimensional Hilbert space, the cone $K$ be second order regular, and $(x_*, \lambda_*)$ be a
KKT-pair of the problem $(\mathcal{C})$ such that the restriction of the function 
$\sigma(\lambda_*, T^2_K(G(x_*), \cdot))$ to its effective domain is upper semicontinuous. Suppose also that
$$
  \sup_{\alpha \in \alpha(x_*, \lambda_*)} 
  \big\langle h, \nabla^2_{xx} \mathcal{L}(x_*, \lambda_*, \alpha) h \big\rangle
  - \sigma\big( \lambda_*, T^2_K(G(x_*), D G(x_*) h) \big) > 0
$$
for all $h \in C(x_*) \setminus \{ 0 \}$. Then $x_*$ is a locally optimal solution of the problem $(\mathcal{C})$ at
which the second order growth condition holds true.
\end{theorem}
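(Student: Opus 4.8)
The plan is to derive this theorem from the second order sufficient optimality condition for cone constrained minimax problems established at the end of Section~\ref{sect:SecondOrderOptCond}, applied to the minimax reformulation \eqref{prob:EquivChebyshevProblem} of $(\mathcal{C})$ over the compact Hausdorff space $\widehat{W} = W \times \{ 1, -1 \}$ with $\widehat{f}(x, \omega, 1) = f(x, \omega) - \psi(\omega)$ and $\widehat{f}(x, \omega, -1) = \psi(\omega) - f(x, \omega)$. Since the constraint mapping $G$, the cone $K$, the set $A$ and the point $x_*$ are literally the same for $(\mathcal{C})$ and \eqref{prob:EquivChebyshevProblem}, and the standing smoothness assumptions on $f$ and $G$ carry over verbatim to $\widehat{f}$ and $G$, the hypotheses that $Y$ is a finite dimensional Hilbert space, that $K$ is second order regular, that $(x_*, \lambda_*)$ is a KKT-pair, and that the restriction of $\sigma(\lambda_*, T^2_K(G(x_*), \cdot))$ to its effective domain is upper semicontinuous transfer unchanged, as do the second order tangent sets $T^2_K(G(x_*), D G(x_*) h)$ and the sigma term. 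Thus the whole argument reduces to matching, under this reformulation, the data appearing in the statement — the set $W(x_*)$ of points of maximal deviation, the subdifferential $\partial F(x_*)$, the critical cone $C(x_*)$, the set $\Lambda(x_*)$ of Lagrange multipliers, the set $\alpha(x_*, \lambda_*)$ of Danskin--Demyanov multipliers, and the integral Lagrangian $\mathcal{L}$ — with the corresponding objects for the minimax problem \eqref{prob:EquivChebyshevProblem}.

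First I would record the elementary identities. One has $\widehat{F}(x) := \max_{\widehat{\omega} \in \widehat{W}} \widehat{f}(x, \widehat{\omega}) = F(x)$, and, since the standing assumption that the optimal value of $(\mathcal{C})$ is strictly positive forces $F(x_*) > 0$ at every feasible point, for each $\omega \in W(x_*)$ one has $f(x_*, \omega) \neq \psi(\omega)$; hence $\widehat{W}(x_*) = \big( W_+(x_*) \times \{ 1 \} \big) \sqcup \big( W_-(x_*) \times \{ -1 \} \big)$, where $W_{\pm}(x_*)$ are the sets defined just before the theorem, and $\nabla_x \widehat{f}(x_*, \omega, \pm 1) = \sign(f(x_*, \omega) - \psi(\omega)) \nabla_x f(x_*, \omega)$ for the matching sign. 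Consequently the Hadamard subdifferential of $\widehat{F}$ at $x_*$ equals $\partial F(x_*)$, so $\widehat{F}'(x_*, \cdot) = F'(x_*, \cdot)$ and the two critical cones coincide. Because the Lagrangians $L(\cdot, \lambda) = F(\cdot) + \langle \lambda, G(\cdot) \rangle$ of $(\mathcal{C})$ and \eqref{prob:EquivChebyshevProblem} are the same function, the sets $\Lambda(x_*)$ of Lagrange multipliers and the notion of KKT-pair coincide as well.

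The only step that is not a pure substitution is the identification of the Danskin--Demyanov multipliers. A signed measure $\alpha \in rca(W)$ with $|\alpha|(W) = 1$ and $\support(\alpha^{\pm}) \subseteq W_{\pm}(x_*)$ corresponds bijectively to the nonnegative measure $\widehat{\alpha} := \alpha^+ \otimes \delta_1 + \alpha^- \otimes \delta_{-1} \in rca_+(\widehat{W})$, which satisfies $\widehat{\alpha}(\widehat{W}) = \alpha^+(W) + \alpha^-(W) = 1$ and $\support(\widehat{\alpha}) \subseteq \widehat{W}(x_*)$; moreover
$$
  \int_{\widehat{W}} \widehat{f}(x, \widehat{\omega}) \, d\widehat{\alpha}(\widehat{\omega})
  = \int_W \big( f(x, \omega) - \psi(\omega) \big) \, d\alpha^+(\omega)
  + \int_W \big( \psi(\omega) - f(x, \omega) \big) \, d\alpha^-(\omega)
  = \int_W f(x, \omega) \, d\alpha(\omega) - \int_W \psi(\omega) \, d\alpha(\omega),
$$
so the integral Lagrangian of \eqref{prob:EquivChebyshevProblem} evaluated at $\widehat{\alpha}$ differs from $\mathcal{L}(x, \lambda_*, \alpha)$ only by the $x$-independent constant $\int_W \psi \, d\alpha$. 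Hence $\nabla_x$ and $\nabla^2_{xx}$ of the two integral Lagrangians agree, which on the one hand shows that $\widehat{\alpha}$ is a Danskin--Demyanov multiplier for \eqref{prob:EquivChebyshevProblem} at $(x_*, \lambda_*)$ exactly when $\alpha \in \alpha(x_*, \lambda_*)$ in the sense of the present section (the correspondence being onto), and on the other hand shows that the displayed hypothesis of the present theorem is word-for-word the hypothesis of the minimax sufficient condition applied to \eqref{prob:EquivChebyshevProblem}. That theorem then yields that $x_*$ is a locally optimal solution of \eqref{prob:EquivChebyshevProblem} at which the second order growth condition holds, and since $(\mathcal{C})$ and \eqref{prob:EquivChebyshevProblem} have the same objective values on the same feasible set, the same is true for $(\mathcal{C})$. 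The main obstacle is making the $\alpha \leftrightarrow \widehat{\alpha}$ dictionary airtight — in particular checking that the Jordan decomposition and the sign/support conditions align with the decomposition of $\widehat{W}$ and the convention $\sign(0) = \{ -1, 1 \}$ used in the definition of $\partial F$ — but once $F(x_*) > 0$ is in force this is routine measure-theoretic bookkeeping.
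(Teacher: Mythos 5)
Your proposal is correct and follows essentially the same route as the paper: the paper obtains this theorem precisely by applying the second order sufficient condition with the sigma term from Section~\ref{sect:SecondOrderOptCond} to the reformulated minimax problem \eqref{prob:EquivChebyshevProblem}, and your dictionary (equality of $F$, $\partial F(x_*)$, the critical cone, $\Lambda(x_*)$, and the correspondence $\alpha \leftrightarrow \alpha^+\otimes\delta_1 + \alpha^-\otimes\delta_{-1}$ between signed Danskin--Demyanov multipliers and nonnegative ones on $\widehat W$, using $F(x_*)>0$) is exactly the translation the paper leaves implicit. No gaps.
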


\section{Conclusions}

In this article we presented a unified theory of first and second order necessary and sufficient optimality
conditions for minimax and Chebyshev optimisation problems with cone constraints, including such problems with equality
and inequality constraints, problems with second order cone constraints, problems with semidefinite constraints, as well
as problems with semi-infinite constraints. We analysed different, but equivalent forms of first order optimality
conditions and demonstrated how they can be reformulated in a more convenient way for particular classes of
cone constrained minimax problems. These results can be utilised to develop new methods for solving cone constrained
minimax and Chebyshev problems based on structural properties of optimal solutions (cf. such methods for discrete
minimax problems \cite{ConnLi92}, problems of rational $\ell_{\infty}$-approximation \cite{BarrodalePowellRoberts},
and synthesis of a rational filter \cite{MalozemovTamasyan}). A development of such methods is an interesting topic of
future research.

\section*{Acknowledgements}

The author wishes to express his sincere gratitude to prof. V.N.~Malozemov and the late prof. V.F.~Demyanov. Their
research on minimax problems and alternance optimality conditions, as well as inspiring lectures, were the main source
of inspiration for writing this article. In particular, the main results of Section~\ref{subsect:Alternance_Cadre} are
a natural continuation of their research on alternance optimality conditions
\cite{DemyanovMalozemov_Alternance,DemyanovMalozemov_Collect}.


\bibliographystyle{abbrv}  
\bibliography{AlternanceConditions_bibl}

\begin{thebibliography}{10}

\bibitem{BarrodalePowellRoberts}
I.~Barrodale, M.~Powell, and F.~Roberts.
\newblock The differential correction algorithm for rational
  $\ell_\infty$-approximation.
\newblock {\em SIAM J. Numer. Anal.}, 9:493--504, 1972.

\bibitem{Bauschke}
H.~H. Bauschke.
\newblock {\em Projection Algorithms and Monotone Operators}.
\newblock PhD thesis, Simon Fraser University, Department of Mathematics,
  Burnaby, British Columbia, Canada, 1996.

\bibitem{Bazaraa}
M.~S. Bazaraa, H.~D. Sherali, and C.~M. Shetty.
\newblock {\em Nonlinear Programming: Theory and Algorithms}.
\newblock John Wiley \& Sons, Inc., New Jersey, 2006.

\bibitem{BonComShap98}
J.~F. Bonnans, R.~Cominetti, and A.~Shapiro.
\newblock Sensitivity analysis of optimization problems under second order
  regular constraints.
\newblock {\em Math. Oper. Res.}, 23:806--831, 1998.

\bibitem{BonComShap99}
J.~F. Bonnans, R.~Cominetti, and A.~Shapiro.
\newblock Second order optimality conditions based on parabolic second order
  tangent sets.
\newblock {\em SIAM J. Optim.}, 9:466--492, 1999.

\bibitem{BonnansRamirez}
J.~F. Bonnans and H.~{Ram\'{i}rez C.}
\newblock Perburation analysis of second-order cone programming problems.
\newblock {\em Math. Program.}, 104:205--227, 2005.

\bibitem{BonnansShapiro}
J.~F. Bonnnans and A.~Shapiro.
\newblock {\em Perturbation analysis of optimization problems}.
\newblock Springer, New York, 2000.

\bibitem{CaristiFerrara}
G.~Caristi and M.~Ferrara.
\newblock Necessary conditions for nonsmooth multiobjective semi-infinite
  problems using {M}ichel-{P}enot subdifferential.
\newblock {\em Decis. Econ. Finance}, 40:103--113, 2017.

\bibitem{ChaoWangLiagnHu}
M.-T. Chao, Z.-X. Wang, Y.-M. Liang, and Q.-J. Hu.
\newblock Quadratically constraint quadratical algorithm model for nonlinear
  minimax problems.
\newblock {\em Appl. Math. Comput.}, 205:247--262, 2008.

\bibitem{Cheney}
E.~W. Cheney.
\newblock {\em Introduction to Approximation Theory}.
\newblock AMS Chelsea Publishing, Providence, Rhode Island, 1982.

\bibitem{Cominetti}
R.~Cominetti.
\newblock Metric regularity, tangent sets, and second-order optimality
  conditions.
\newblock {\em Appl. Math. Optim.}, 21:265--287, 1990.

\bibitem{ConnLi92}
A.~R. Conn and Y.~Li.
\newblock A structure-exploiting algorithm for nonlinear minimax problems.
\newblock {\em SIAM J. Optim.}, 2:242--263, 1992.

\bibitem{Daugavet}
V.~A. Daugavet.
\newblock Alternance properties of the solutions of non-linear minimax problems
  with non-linear constraints.
\newblock {\em USSR Comput. Math. Math. Phys.}, 16:236--241, 1976.

\bibitem{DaugavetMalozemov75}
V.~A. Daugavet and V.~N. Malozemov.
\newblock Alternance properties of solutions of nonlinear minimax problems with
  nonconvex constraints.
\newblock {\em Soviet Math. Dokl.}, 16:1474--1476, 1975.

\bibitem{DaugavetMalozemov79}
V.~A. Daugavet and V.~N. Malozemov.
\newblock Nonlinear approximation problems.
\newblock In N.~N. Moiseyev, editor, {\em The state of the art of the
  operations research theory}, pages 336--363. Nauka, Moscow, 1979.
\newblock [in Russian].

\bibitem{DaugavetMalozemov81}
V.~A. Daugavet and V.~N. Malozemov.
\newblock Quadratic rate of convergence of a linearization method for solving
  discrete minimax problems.
\newblock {\em USSR Comput. Math. Math. Phys.}, 21:19--28, 1981.

\bibitem{DemyanovMalozemov_Collect}
V.~F. Demyanov and V.~N. Malozemov.
\newblock Alternance form of optimality conditions in the finite-dimensional
  space.
\newblock In V.~F. Demyanov, P.~M. Pardalos, and M.~Batsyn, editors, {\em
  Constructive Nonsmooth Analysis and Related Topics}, pages 185--205.
  Springer, New York, 2014.

\bibitem{DemyanovMalozemov_Alternance}
V.~F. Demyanov and V.~N. Malozemov.
\newblock Optimality conditions in terms of alternance: two approaches.
\newblock {\em J. Optim. Theory Appl.}, 162:805--820, 2014.

\bibitem{DemyanovRubinov}
V.~F. Demyanov and A.~M. Rubinov.
\newblock {\em Constructive Nonsmooth Analysis}.
\newblock Peter Lang, Frankfurt am Main, 1995.

\bibitem{Descloux}
J.~Descloux.
\newblock D\'{e}g\'{e}n\'{e}rescence dans les approximations de {T}schebyscheff
  lin\'{e}aires et discr\`{e}tes.
\newblock {\em Numerische Mathematik}, 3:180--187, 1961.

\bibitem{Dolgopolik_ExPen}
M.~V. Dolgopolik.
\newblock A unifying theory of exactness of linear penalty functions.
\newblock {\em Optim.}, 65:1167--1202, 2016.

\bibitem{Dolgopolik_AugmLagr}
M.~V. Dolgopolik.
\newblock Augmented {L}agrangian functions for cone constrained optimization:
  the existence of global saddle points and exact penalty property.
\newblock {\em J. Glob. Optim.}, 71:237--296, 2018.

\bibitem{Dolgopolik_MCD}
M.~V. Dolgopolik.
\newblock A convergence analysis of the method of codifferential descent.
\newblock {\em Comput. Optim. Appl.}, 71:879--913, 2018.

\bibitem{DunfordSchwartz}
N.~Dunford and J.~T. Schwartz.
\newblock {\em Linear Operators, Part 1: General Theory}.
\newblock Interscience Publishers, New York, 1958.

\bibitem{Folland}
G.~B. Folland.
\newblock {\em Real Analysis: Modern Techniques and Their Applications}.
\newblock A Wiley-{I}nterscience Publication, New York, 1984.

\bibitem{FukushimaLuoTseng}
M.~Fukushima, Z.-Q. Luo, and P.~Tseng.
\newblock Smoothing functions for second-order-cone complementarity problems.
\newblock {\em SIAM J. Optim.}, 12:436--460, 2001.

\bibitem{Gadhi}
N.~A. Gadhi.
\newblock Necessary optimality conditions for a nonsmooth semi-infinite
  programming problem.
\newblock {\em J. Glob. Optim.}, 74:161--168, 2019.

\bibitem{GolestaniNobakhtian2015}
M.~Golestani and S.~Nobakhtian.
\newblock Optimality conditions for nonsmooth semidefinite programming via
  convexificators.
\newblock {\em Positivity}, 19:221--236, 2015.

\bibitem{HeLiuWang2016}
S.~He, X.~Liu, and C.~Wang.
\newblock A nonlinear {L}agrange algorithm for minimax problems with general
  constraints.
\newblock {\em Numer. Funct. Anal. Optim.}, 37:680--698, 2016.

\bibitem{HeNie2013}
S.~He and Y.~Nie.
\newblock A class of nonlinear {L}agrangian algorithms for minimax problems.
\newblock {\em J. Ind. Manag. Optim.}, 9:75--97, 2013.

\bibitem{HeZhou2011}
S.~He and S.~Zhou.
\newblock A nonlinear augmented {L}agrangian for constrained minimax problems.
\newblock {\em Appl. Math. Comput.}, 218:4567--4579, 2011.

\bibitem{Higham}
N.~J. Higham.
\newblock Computing a nearest symmetric positive semidefinite matrix.
\newblock {\em Linear Algebra Appl.}, 103:103--118, 1988.

\bibitem{HornJohnson}
R.~A. Horn and C.~R. Johnson.
\newblock {\em Matrix Analysis}.
\newblock Cambridge University Press, Cambridge, 1985.

\bibitem{HuChenLi2009}
Q.~Hu, Y.~Chen, N.~Chen, and X.~Li.
\newblock A modified {S}{Q}{P} algorithm for minimax problems.
\newblock {\em J. Math. Anal. Appl.}, 360:211--222, 2009.

\bibitem{IoffeTihomirov}
A.~D. Ioffe and V.~M. Tihomirov.
\newblock {\em Theory of Extremal Problems}.
\newblock North-{H}olland, Amsterdam, 1979.

\bibitem{JianChao2010}
J.~Jian and M.~Chao.
\newblock A sequential quadratically constrained quadratic programming method
  for unconstrained minimax problems.
\newblock {\em J. Math. Anal. Appl.}, 362:34--45, 2010.

\bibitem{JianMoQiu2014}
J.~Jian, X.~Mo, L.~Qiu, S.~Yang, and F.~Wang.
\newblock Simple sequential quadratically constrained quadratic programming
  feasible algorithm with active identification sets for constrained minimax
  problems.
\newblock {\em J. Optim. Theory Appl.}, 160:158--188, 2014.

\bibitem{JianQuanZhang2007}
J.~Jian, R.~Quan, and X.~Zhang.
\newblock Feasible generalized monotone line search {S}{Q}{P} algorithm for
  nonlinear minimax problems with inequality constraints.
\newblock {\em J. Comput. Appl. Math.}, 205:406--429, 2007.

\bibitem{Kanzi2011}
N.~Kanzi.
\newblock Necessary optimality conditions for nonsmooth semi-infinite
  programming problems.
\newblock {\em J. Glob. Optim.}, 49:713--725, 2011.

\bibitem{KanziNobakhtian}
N.~Kanzi and S.~Nobakhtian.
\newblock Optimality conditions for non-smooth semi-infinite programming.
\newblock {\em Optim.}, 59:717--727, 2010.

\bibitem{Kawasaki}
H.~Kawasaki.
\newblock An envelope-like effect of infinitely many inequality constrains on
  second-order necessary conditions for minimization problems.
\newblock {\em Math. Program.}, 41:73--96, 1988.

\bibitem{MadsenSchjaer}
K.~Madsen and H.~Schj{\ae}r-{J}acobsen.
\newblock Linearly constrained minimax optimization.
\newblock {\em Math. Program.}, 14:208--223, 1978.

\bibitem{MakelaNeittaanmaki}
M.~M. M\"{a}kel\"{a} and P.~Neittaanm\"{a}ki.
\newblock {\em Nonsmooth Optimization. Analysis and Algorithms with
  Applications to Optimal Control}.
\newblock World Scientific Publishing Co., Singapore, 1992.

\bibitem{Malick}
J.~Malick.
\newblock A dual approach to semidefinite least-squares problems.
\newblock {\em SIAM J. Matrix Anal. Appl.}, 26:272--284, 2004.

\bibitem{Malozemov77}
V.~N. Malozemov.
\newblock Alternance form of optimality conditions.
\newblock In V.~F. Demyanov and V.~N. Malozemov, editors, {\em Questions of the
  Theory of Minimax Problems and Software}, pages 7--18. Izd. Leningrad Univ.,
  Leningrad, 1977.
\newblock [in Russian].

\bibitem{MalozemovPevnyi}
V.~N. Malozemov and A.~B. Pevnyi.
\newblock Alternation properties of solutions of nonlinear minimax problems.
\newblock {\em Soviet Math. Dokl.}, 14:1303--1306, 1973.

\bibitem{MalozemovTamasyan}
V.~N. Malozemov and G.~{\relax Sh}. Tamasyan.
\newblock Synthesis of a rational filter in the presence of complete
  alternance.
\newblock {\em Comput. Math. Math. Phys.}, 57:919--930, 2017.

\bibitem{MordukhovichNghia}
B.~S. Mordukhovich and T.~Nghia.
\newblock Nonsmooth cone-constrained optimization with applications to
  semi-infinite programming.
\newblock {\em Math. Oper. Res.}, 39:301--324, 2014.

\bibitem{ObasanTzalRustem}
E.~Obasanjo, G.~Tzallas-Regas, and B.~Rustem.
\newblock An interior-point algorithm for nonlinear minimax problems.
\newblock {\em J. Optim. Theory Appl.}, 144:291--318, 2010.

\bibitem{Polak}
E.~Polak.
\newblock {\em Optimization: Algorithms and Consistent Approximations}.
\newblock Springer-Verlag, New York, 1997.

\bibitem{Rice}
J.~R. Rice.
\newblock {\em The Approximation of Functions, vol. 2. Nonlinear and
  Multivariate Theory}.
\newblock Addison-{W}esley Publishing Company, Reading, Massachusetts, 1969.

\bibitem{Robinson75}
S.~M. Robinson.
\newblock Stability theory for systems of inequalities. {P}art {I}: linear
  systems.
\newblock {\em SIAM J. Numer. Anal.}, 12:754--769, 1975.

\bibitem{Robinson76}
S.~M. Robinson.
\newblock Stability theory for systems of inequalities, {P}art {I}{I}:
  differentiable nonlinear systems.
\newblock {\em SIAM J. Numer. Anal.}, 13:497--513, 1976.

\bibitem{Rockafellar}
R.~T. Rockafellar.
\newblock {\em Convex Analysis}.
\newblock Princeton University Press, Princeton, 1970.

\bibitem{RockafellarWets}
R.~T. Rockafellar and R.~J.-B. Wets.
\newblock {\em Variational Analysis}.
\newblock Springer-Verlag, Berlin, Heidelberg, 1998.

\bibitem{RustemNguyen98}
B.~Rustem and Q.~Nguyen.
\newblock An algorithm for the inequality-constrained discrete min-max
  problems.
\newblock {\em SIAM J. Optim.}, 8:265--283, 1998.

\bibitem{RustemZakovicParpas}
B.~Rustem, S.~\v{Z}akovi\'{c}, and P.~Parpas.
\newblock An interior point algorithm for continuous minimax: implementation
  and computation.
\newblock {\em Optim. Method Softw.}, 23:911--928, 2008.

\bibitem{Shapiro97}
A.~Shapiro.
\newblock First and second order analysis of nonlinear semidefinite programs.
\newblock {\em Math. Program.}, 77:301--320, 1997.

\bibitem{Shapiro2009}
A.~Shapiro.
\newblock Semi-infinite programming, duality, discretization and optimality
  conditions.
\newblock {\em Optim.}, 58:133--161, 2009.

\bibitem{Shapiro2016}
A.~Shapiro.
\newblock Differentiability properties of metric projections onto convex sets.
\newblock {\em J. Optim. Theory Appl.}, 169:953--964, 2016.

\bibitem{ShapiroSun}
A.~Shapiro and J.~Sun.
\newblock Some properties of the augmented {L}agrangian in cone constrained
  optimization.
\newblock {\em Math. Oper. Res.}, 29:479--491, 2004.

\bibitem{Tung}
L.~T. Tung.
\newblock Karush-{K}uhn-{T}ucker optimality conditions for nonsmooth
  multiobjective semidefinite and semi-infinite programming.
\newblock {\em J. Appl. Numer. Optim.}, 1:63--75, 2019.

\bibitem{YuGao2002}
Y.~H. Yu and L.~Gao.
\newblock Nonmonotone line search algorithm for constrained minimax problems.
\newblock {\em J. Optim. Theory Appl.}, 115:419--446, 2002.

\bibitem{Zeigler}
G.~M. Zeigler.
\newblock {\em Lectures on Polytopes}.
\newblock Springer-Verlag, New York, 1995.

\bibitem{ZhaoGao2006}
W.~Zhao and Y.~Gao.
\newblock Optimality conditions with quasidifferential for nonsmooth
  semidefinite programming.
\newblock {\em World J. Model. Simul.}, 2:247--254, 2006.

\bibitem{ZhenYang2007}
X.~Y. Zheng and X.~Yang.
\newblock Lagrange multipliers in nonsmooth semi-infinite optimization
  problems.
\newblock {\em Math. Oper. Res.}, 32:168--181, 2007.

\end{thebibliography}

\end{document}